\definecolor{theblue}{rgb}{0.2,0.04,0.7}%
\definecolor{thered}{rgb}{0.8,0.04,0.07}%
\definecolor{thegreen}{rgb}{0.06,0.44,0.08}%
\definecolor{thegrey}{gray}{0.5}%
\definecolor{theshade}{gray}{0.92}%
\newtheorem*{theorem*}{Theorem}
\newtheorem{theorem}{Theorem}[section]
\newtheorem{proposition}[theorem]{Proposition}
\newtheorem*{proposition*}{Proposition}
\newtheorem{lemma}[theorem]{Lemma}
\newtheorem{corollary}[theorem]{Corollary}
\newtheorem{lemma-def}[theorem]{Lemma-Definition}
\theoremstyle{definition}
\newtheorem{definition}[theorem]{Definition}
\newtheorem{example}[theorem]{Example}
\newtheorem*{example*}{Example}
\newtheorem*{construction*}{Construction}
\newtheorem{remark}[theorem]{Remark}
\newtheorem*{remark*}{Remark}
\newtheorem{remarks}[theorem]{Remarks}
\newtheorem*{variant*}{Variant}
\newtheorem*{notation*}{Notation}
\numberwithin{equation}{section}%
\newcommand{\strong}[1]{\textbf{#1}}
\newcommand{\bfl}{butterfly\xspace}
\newcommand{\bfls}{butterflies\xspace}
\newcommand{\Bfls}{Butterflies\xspace}
\newcommand{\iso}{\simeq}
\newcommand{\lto}{\longrightarrow}
\newcommand{\To}{\Rightarrow}
\newcommand{\isoto}{\overset{\sim}{\to}}
\newcommand{\lisoto}{\overset{\sim}{\lto}}
\newcommand{\cat}[1]{\mathbf{#1}}
\newcommand{\C}{\cat{C}} %site
\newcommand{\T}{\cat{T}} %topos
\newcommand{\one}{\cat{1}} %singleton category
\newcommand{\cN}{\cat{N}}\newcommand{\NN}{\cN}
\newcommand{\cS}{\cat{S}}\renewcommand{\SS}{\cS} %\SS is for uppercase ß
\newcommand{\bicat}[1]{\mathfrak{#1}}
\newcommand{\bB}{\bicat{B}}
\newcommand{\bC}{\bicat{C}}
\newcommand{\bD}{\bicat{D}}
\newcommand{\stack}[1]{\mathscr{#1}}
\newcommand{\stG}{\stack{G}}
\newcommand{\stH}{\stack{H}}
\newcommand{\stK}{\stack{K}}
\newcommand{\stM}{\stack{M}}
\newcommand{\stR}{\stack{R}}
\newcommand{\Cat}{\bicat{Cat}}
\newcommand{\xmod}{\bicat{BXMod}}
\newcommand{\grst}{\bicat{BGrSt}}
\newcommand{\sxmod}{\bicat{SXMod}}
\newcommand{\sgrst}{\bicat{SGrSt}}
\DeclareMathOperator{\tors}{Tors}
\DeclareMathOperator{\sttors}{\textsc{Tors}}
\DeclareMathOperator{\grdHom}{\cat{Hom}}
\DeclareMathOperator{\stHom}{\textsc{Hom}}
\DeclareMathOperator{\grdbiext}{\cat{Biext}}
\DeclareMathOperator{\stbiext}{\textsc{Biext}}
\DeclareMathOperator{\grdbiadd}{\cat{Hom}}
\DeclareMathOperator{\stbiadd}{\textsc{Hom}}
\DeclareMathOperator{\grdnadd}{\cat{Hom}}
\DeclareMathOperator{\stnadd}{\textsc{Hom}}
\DeclareMathOperator{\grdbfl}{\cat{Biext}}
\DeclareMathOperator{\grdnbfl}{\cat{MExt}}
\DeclareMathOperator{\stbfl}{\textsc{Biext}}
\DeclareMathOperator{\stnbfl}{\textsc{MExt}}
\newcommand{\HML}{\mathit{HML}}
\newcommand{\del}{\partial}
\renewcommand{\epsilon}{\varepsilon}
\renewcommand{\phi}{\varphi}
\newcommand{\braces}[2]{\lbrace #1,#2\rbrace}
\newcommand{\abs}[1]{\lvert#1\rvert}
\newcommand{\binmid}{\mathbin{|}}
\newcommand{\field}[1]{\ensuremath{\mathbb{#1}}}
\newcommand{\ZZ}{\field{Z}}
\newcommand{\GG}{\field{G}}
\DeclareMathOperator{\Aut}{Aut}
\DeclareMathOperator{\B}{\mathit{B}} %class. space
\DeclareMathOperator{\coker}{coker}
\DeclareMathOperator{\Hom}{Hom}
\DeclareMathOperator{\Id}{Id}
\DeclareMathOperator{\id}{id}
\DeclareMathOperator{\im}{Im}
\renewcommand{\Im}{\im}
\DeclareMathOperator{\Ker}{Ker} %note that latex defines \ker
\DeclareMathOperator{\M}{M\mspace{-1.5mu}}
\DeclareMathOperator{\N}{\mathit{N}} %nerve
\author{Ettore Aldrovandi\footnote{\url{aldrovandi@math.fsu.edu}}\\
  \small Department of Mathematics, Florida State University}
\title{Biextensions, bimonoidal functors, multilinear functor calculus, and categorical rings}
\date{}
\begin{document}
\maketitle
\begin{abstract}
  We associate to a bimonoidal functor, i.e. a bifunctor which is monoidal in each variable, a nonabelian version of a biextension. We show that such a biextension satisfies additional triviality conditions which make it a bilinear analog of the kind of spans known as butterflies and, conversely, these data determine a bimonoidal functor. We extend this result to $n$-variables, and prove that, in a manner analogous to that of butterflies, these multi-extensions can be composed. This is phrased in terms of a multilinear functor calculus in a bicategory.  As an application, we study a bimonoidal category or stack, treating the multiplicative structure as a bimonoidal functor with respect to the additive one. In the context of the multilinear functor calculus, we view the bimonoidal structure as an instance of the general notion of pseudo-monoid. We show that when the structure is ring-like, i.e. the pseudo-monoid is a stack whose fibers are categorical rings, we can recover the classification by the third Mac~Lane cohomology of a ring with values in a bimodule.
\end{abstract}

\phantomsection  % required if using hyperref
\addcontentsline{toc}{section}{Introduction}  
\section*{Introduction}
\label{sec:introduction}

Let $\stH$ and $\stG$ be monoidal stacks in a topos $\T$ and $F$ a monoidal functor $F\colon \stH\to \stG$. It is well known that $F$ can be represented by a special kind of span. More precisely, if the monoidal laws are group-like, or if we restrict to the invertible objects (restricting to the invertible objects does not affect the characteristic class), we can find presentations $H_\bullet$ for $\stH$ and $G_\bullet$ for $\stG$ by crossed modules of $\T$ so that $F$ is represented by a diagram of group-objects of the form
\begin{equation*}
  \xymatrix@-1pc{%
    H_1 \ar[dd] \ar[dr] && G_1 \ar[dd] \ar[dl] \\ & E \ar[dl] \ar[dr]\\ H_0 && G_0
  }
\end{equation*}
where the salient feature is that the sequence $G_1\to E\to H_0$ of objects of $\T$ is an extension of $H_0$ by $G_1\to G_0$: an exact sequence in which the conjugation action of $E$ on $G_1$ is compatible with that of $G_0$ \cite{ButterfliesI}. Such an extension is conveniently described in geometric terms as a $G_1$-bitorsor $E$ over $H_0$, with the property that one of the actions is obtained by way of the crossed module structure of $G_\bullet$~\cite{MR92m:18019}. A converse of this correspondence is also available, establishing an equivalence between the groupoid of monoidal functors from $\stH$ to $\stG$ and that of diagrams like the one above (a morphism between two diagrams with the same wings $H_\bullet$ and $G_\bullet$ is a homomorphism of $E\to E'$ compatible with all the maps).

One of the main results of this paper is an analogous result for \emph{bi}monoidal functors $F\colon \stH\times \stK\to \stG$, that is, bifunctors which are monoidal in each variable.

The concept of bimonoidal functor requires $\stG$ to be braided.  In addition, $F$ must satisfy a compatibility condition requiring that applying the monoidal condition on both of its variables in the two possible orders lead to the same result. (We can think of $F$ as an obvious generalization of a bilinear map, in which case this condition is trivially satisfied.)  This condition is formally equal to the compatibility one between the two partial multiplication laws of a biextension \cite{MR0257089,MR0354656-VII}. Recall that for abelian groups $A$, $B$, $C$ of $\T$, a biextension of $B$ and $C$ by $A$ consists of an $A$-torsor $E$ over $B\times C$ equipped with two partial commutative and associative product laws, making $E$, for each generalized point $b\in B$, an extension of $C$ by $A$, and, similarly, for each $c\in C$, an extension of $B$ by $A$. The coincidence between these conditions is not purely formal: the bilinear point of view is to regard a biextension as providing a bifunctor $\phi_E\colon B\times C\to \tors(A)$, which is then monoidal (i.e.\ a homomorphism) in each variable. Notable examples are certain duality pairings where, in particular,  $C=B^\vee$ and $A=\GG_m$ \cite[see, e.g.\ ][]{MR2734333,MR2600696}.

Our result begins from a generalization of these ideas, starting with that of biextension.  Thus, for two groups $H$ and $K$ of $\T$, and a braided crossed module $(G_1,G_0)$, we define a biextension of $H,K$ by $(G_1,G_0)$ as a $G_1$-bitorsor (or, more appropriately, using the terminology of \cite{MR92m:18019}, a $(G_1,G_0)$-torsor) $E$ over $H\times K$ such that, for each point $x\in H$ (resp.\ $y\in K$), $E$ is an extension of $K$ (resp.\ of $H$) by the crossed module $G_1\to G_0$.  The braiding is required by the compatibility between the two partial product laws of $E$. The relative diagram is formally the same as in the classical situation, with some differences due to the fact that none of the groups involved is assumed abelian—although the crossed module $G_1\to G_0$ and its associated stack are braided.

Now, let $F$ be a bimonoidal functor $F\colon \stH\times \stK\to \stG$, where each of $\stK,\stH$, and $\stG$ has a presentation by a crossed module. We show that $F$ determines a biextension of $H_0,K_0$ by the crossed module $G_1\to G_0$, equipped with a pair of compatible trivializations for the two pullbacks along the maps $\del\times\id\colon H_1\times K_0\to H_0\times K_0$ and $\id\times \del\colon H_0\times K_1\to H_0\times K_0$.  This is what we call, in this context, a \bfl—the bilinear version of the notion introduced in~\cites()(over a point){ButterfliesI}[earlier][]{Noohi:weakmaps}.  Viceversa, given a biextension $E$ of $(H_0,K_0)$ by $G_1\to G_0$, we obtain a bimonoidal functor
\begin{equation*}
  \phi_E \colon H_0\times K_0 \lto \sttors(G_1,G_0),
\end{equation*}
where the right hand side denotes the stack of $(G_1,G_0)$-torsors equivalent to $\stG$, and where $H_0$ and $K_0$, are interpreted as discrete monoidal stacks.  The additional data provided by the two compatible trivializations allow to conclude that $\phi_E$ is compatible with descent along the presentations $\xymatrix@1@C-1pc{H_0\ltimes H_1 \ar@<0.5ex>[r]\ar@<-0.4ex>[r]& H_0} \to \stH$ and $\xymatrix@1@C-1pc{K_0\ltimes K_1 \ar@<0.5ex>[r]\ar@<-0.4ex>[r]& K_0} \to \stK$ by the groupoids determined by their respective crossed modules, so it determines a bimonoidal functor $F_E\colon \stH\times \stK\to \stG$.  With the obvious notion of morphism, we obtain:
\begin{theorem*}[Theorem~\ref{thm:1} in the main text]
    There is an equivalence of (pointed) groupoids
  \begin{equation*}
    u\colon \grdbfl(H_\bullet,K_\bullet;G_\bullet) \lisoto
    \grdbiadd(\stH,\stK;\stG),
  \end{equation*}
  where the right hand side denotes the groupoid of bimonoidal functors, and left hand side that of biextensions equipped with the aforementioned trivializations.
\end{theorem*}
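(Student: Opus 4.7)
The plan is to construct $u$ explicitly, produce a quasi-inverse, and then verify full faithfulness and essential surjectivity. The strategy reduces the problem, in each variable separately, to the one-variable monoidal case recalled in the introduction, and concentrates the real work on the cross-variable coherence.

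First, given a \bfl $E$, that is, a biextension of $(H_0,K_0)$ by $G_\bullet$ equipped with compatible trivializations of $(\del\times\id)^*E$ and $(\id\times\del)^*E$, the underlying $(G_1,G_0)$-torsor structure produces, pointwise, a functor
\[
  \phi_E\colon H_0\times K_0 \lto \sttors(G_1,G_0)\iso \stG.
\]
The two partial product laws endow $\phi_E$ with a monoidal structure in each variable, and their compatibility — which is formally identical to the bimonoidal axiom, and the reason the braiding on $G_\bullet$ is required — makes $\phi_E$ bimonoidal on the discrete monoids $H_0,K_0$. The trivializations on $H_1\times K_0$ and $H_0\times K_1$ are precisely the descent data along the crossed-module presentations $H_0\ltimes H_1\to H_0$ of $\stH$ and $K_0\ltimes K_1\to K_0$ of $\stK$; their mutual compatibility allows descent to be carried out in both variables simultaneously, yielding the desired bimonoidal functor $F_E\colon \stH\times\stK\to \stG$. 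Conversely, given $F\colon \stH\times\stK\to\stG$, one pulls back along the presentations $H_0\to\stH$ and $K_0\to\stK$ to obtain a bimonoidal functor $F_0\colon H_0\times K_0\to \sttors(G_1,G_0)$, extracts a biextension by applying the one-variable butterfly correspondence of \cite{ButterfliesI} to each of the two partial monoidal structures, and reads off the two trivializations — and their compatibility — from the descent data of $F$.

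The main obstacle is not any single-variable construction, since each one is essentially \cite{ButterfliesI} applied separately in the $H$- and $K$-direction, but the verification that the cross-variable coherence on each side matches the cross-variable coherence on the other. Concretely, one has to check that the cube expressing the equality of the two orders in which the bimonoidal axiom evaluates $F$ at $(x\cdot x',\,y\cdot y')$ corresponds exactly to the compatibility axiom between the two partial products of a biextension, and that the joint descent coherence for $F$ along both groupoid presentations corresponds to the compatibility imposed on the pair of trivializations. Full faithfulness will then follow by applying the same mechanism at the level of morphisms: a morphism of \bfls (a bitorsor map compatible with both partial products and with both trivializations) descends to a bimonoidal natural transformation, and, because descent with coefficients in $\sttors(G_1,G_0)$ is itself an equivalence, the induced correspondence on morphisms is a bijection. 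Essential surjectivity is then immediate from the explicit construction of the \bfl $E$ from $F$ described above.
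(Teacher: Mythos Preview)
Your plan is correct and matches the paper's approach closely. Both directions are built the same way: $u$ sends $E$ to the functor $(h,k)\mapsto E_{h,k}$ and uses the trivializations $s_1,s_2$ as descent data along the two groupoid presentations (the paper packages this globally as $((Y,y),(Z,z))\mapsto \Hom_{H_1,K_1}(Y,Z;E)_{(y,z)}$, with the coordinate version you describe spelled out in Remark~\ref{rem:4}); the quasi-inverse $v$ is the fiber product $E=(H_0\times K_0)\times_{\bar F,\stG,\pi}G_0$, which is exactly what your ``apply the one-variable correspondence in each variable'' unwinds to, since both partial extensions share the same underlying $(G_1,G_0)$-torsor. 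The only real difference is bookkeeping: the paper verifies directly that $u\circ v\iso\id$ and $v\circ u\iso\id$ via the standard identification $\Hom_{G_1}(G_1,P)\isoto P$, rather than arguing separately for full faithfulness and essential surjectivity as you propose, but the content is the same.
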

This result is actually valid over a variable object $S$ of $\T$, hence we have a similar statement where the left and right side above are replaced by the corresponding stacks, which one obtains by letting $S\to \grdbfl(H_\bullet\rvert_S,K_\bullet\rvert_S;G_\bullet\rvert_S)$, and similarly for the right hand side.

Bimonoidal categories or stacks provide examples of bimonoidal morphisms.  If $\stR$ is bimonoidal, it has two monoidal structures, say $\boxplus$ and $\boxtimes$, satisfying an appropriate set of axioms \cite{MR0335598,MR0335599}; the distributivity one, in  particular, says that $\boxtimes$ is a bimonoidal functor $\boxtimes\colon \stR\times \stR\to \stR$ with respect to the other structure, $\boxplus$. Now, assuming $\boxplus$ to be group-like (see \cite{MR3010546} for passing from a merely additive monoidal to a full group-like one, i.e. from ``rig'' to ``ring'') and $\stR$ to have a presentation of the form $\xymatrix@1@C-1pc{R_0\ltimes R_1 \ar@<0.5ex>[r]\ar@<-0.4ex>[r]& R_0} \to \stR$ as above, the bifunctor $\boxtimes$ can be described by a biextension $E_\boxtimes$ of $(R_0,R_0)$ by $R_1\to R_0$.  We are interested in extracting informations about $\stR$, especially of a cohomological nature such as the characteristic class, from the biextension $E_\boxtimes$. Informations of this kind,  which reduces to (usually complicated) cocycle calculations, ordinarily come from the coherence diagrams of $\boxtimes$, among others.  These diagrams involve morphisms like $\boxtimes\circ (\boxtimes \times \id)$ and $\boxtimes\circ (\id \times \boxtimes)$, which are monoidal in each of the three variables, and higher iterations of compositions involving $\boxtimes$ and $\id$. Ideally, each of those multi-functors corresponds to some kind of iteration of $E_\boxtimes$. So we need to generalize the representation of bimonoidal functors by biextensions to an arbitrary number of variables.

Extending the concept of bimonoidal functor to $n$ variables is immediate, the only difference being the same compatibility condition we have in the $n=2$ case must hold for each pair of variables. Therefore, for the multivariable analog of the right hand side in the above statement, the only substantial difference is in the bookeping aspect, and we immediately see that monoidal functors in $n$ variables can be composed—provided we restrict ourselves to considering braided objects. While this is easy to verify, it has the far-reaching consequence that braided monoidal categories, or more generally stacks, comprise the 2-categorical analog of a multicategory \cite{MR2770076,Shulman2010Talk} we denote $\M\grst$, where $\grst$ is the 2-category they form relative to ordinary unary monoidal functors.

As for the left hand side of the equivalence in the theorem, we must extend the concept of biextension to $n$ variables and investigate whether such objects admit a composition law. We define a multi-extension, or an $n$-extension, by $G_1\to G_0$ as a $(G_1,G_0)$-torsor (same as for biextensions) over an $n$-fold product, this time equipped with $n$ partial product laws which are required to be pairwise compatible in the same manner as those in a biextension. (The generalization of a biextension to $n$ variables, in the fully abelian context, is outlined in a remark in~\cite{MR0354656-VII}.) Given $n$ crossed modules ${H}_{i,\bullet}=(\del\colon H_{i,1}\to H_{i,0})$, $i=1,\dots, n$, we are interested in the $n$-extensions of $H_{1,0}\times \dots \times H_{n,0}$ by $G_1\to G_0$ which are also equipped with $n$ compatible trivializations of their pullbacks along each of the $n$ morphisms $\del_i=\id\times \dots \times \del\times\dots \times \id$.  We call them $n$-\bfls.  Now, a direct extension of the above theorem (cf.\ Theorem~\ref{thm:4} below) provides an equivalence 
\begin{equation*}
  \grdnbfl(H_{1,\bullet},\dots, H_{n,\bullet};G_\bullet) \lisoto
  \grdnadd(\stH_1,\dots,\stH_n;\stG)
\end{equation*}
of pointed groupoids, where each $H_{i,\bullet}$ is a presentation of the corresponding $\stH_i$, and $G_\bullet$ of $\stG$, the left hand side is the groupoid of $n$-extensions, and the right hand side denotes the groupoid of $n$-monoidal functors $\stH_1\times\dots \times \stH_n\to \stG$.

These multi-\bfls can be composed in a way reminiscent of the unary case discussed in \cite{ButterfliesI,Noohi:weakmaps}, that is by ``wing juxtaposition,''  which turns out to be associative up to coherent isomorphism. Wing juxtaposition (we informally use this term thanks to the shape of the composition diagrams in the unary case) is an operation which allows to associate to \bfls $F_1,\dots,F_n,E$, where $E$ is an $n$-\bfl, a new one, denoted $E(F_1,\dots,F_n)$, whose underlying bitorsor is a quotient of the pullback of $E$ to the product $F_1\times\dots \times F_n$—each $F_i$ is equipped with an equivariant section with values in the $i^{\mathrm{th}}$ factor in the base of $E$; the pullback is along the product of these maps.  The upshot is that braided crossed modules of $\T$ now form a bi-multicategory (i.e. \emph{bi}categorical analog of a multi-category) of their own if we use these multi-\bfls as morphisms. In addition, the composition is compatible with the previous equivalence. More precisely, we have:
\begin{theorem*}[Theorem~\ref{thm:5}, Propositions~\ref{prop:10} and~\ref{prop:11}, and Theorem~\ref{thm:6}]
  Braided crossed modules of $\T$, equipped with the groupoids $\grdnbfl(H_{1,\bullet},\dots, H_{n,\bullet};G_\bullet)$ as $\Hom$-categories, form a bi-multicategory $\M\xmod$.  Further, there is an equivalence of bi-multicategories
  \begin{equation*}
    \M\xmod \lisoto \M\grst,
  \end{equation*}
  induced by the associated stack functor, carrying each $\grdnbfl(H_{1,\bullet},\dots, H_{n,\bullet};G_\bullet)$ to $\grdnadd(\stH_1,\dots,\stH_n;\stG)$.
\end{theorem*}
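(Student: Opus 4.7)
The plan is to establish both the bi-multicategorical structure on braided crossed modules and its equivalence with $\M\grst$ in parallel, leveraging the preceding groupoid-level equivalence (Theorem~\ref{thm:4}) to transport coherence data between the two sides. First I would pin down wing juxtaposition: given an $n$-\bfl\ $E$ with wings $H_{i,\bullet}$ ($i=1,\dots,n$) and $G_\bullet$, together with, for each $i$, a $k_i$-\bfl\ $F_i$ with wings $K_{i,j,\bullet}$ ($j=1,\dots,k_i$) and $H_{i,\bullet}$, one defines $E(F_1,\dots,F_n)$ as the quotient of the fiber product $(F_1\times\dots\times F_n)\times_{H_{1,0}\times\dots\times H_{n,0}} E$ by the diagonal action of $H_{1,1}\times\dots\times H_{n,1}$, where each $H_{i,1}$ acts on $F_i$ via its crossed-module structure and on $E$ via the $i$-th partial product law restricted through $\del_i$. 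The key checks are that the resulting object is a $(G_1,G_0)$-torsor over $\prod_{i,j} K_{i,j,0}$; that it inherits $k_1+\dots+k_n$ pairwise compatible partial product laws from those of the $F_i$ and of $E$; and that the trivializations over each $\del_{i,j}\colon K_{i,j,1}\to K_{i,j,0}$ descend from those of $F_i$.

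Next, I would establish the bi-multicategorical structure by producing identities and coherence 2-cells. The identity $1$-\bfl\ on $G_\bullet$ is the canonical one whose underlying torsor is $G_0\times G_1\to G_0$ with both actions given by the crossed-module structure; composition with it reduces to the trivial quotient. Associativity of wing juxtaposition is witnessed by a canonical isomorphism of bitorsors identifying two iterated quotient-pullback constructions: the universal property of the quotient by the combined group actions yields a natural isomorphism, and the pentagon coherence reduces to the associativity of fiber products together with the compatibility of the relevant diagonal actions. Units and unitors follow from the structure of the identity \bfl. This yields Theorem~\ref{thm:5} and Propositions~\ref{prop:10}--\ref{prop:11}.

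For the equivalence $\M\xmod\lisoto\M\grst$ of Theorem~\ref{thm:6}, I would define the underlying pseudofunctor on objects by $G_\bullet\mapsto \stG$, the associated stack (as in \cite{MR92m:18019}), and on each multi-$\Hom$ groupoid by the equivalence $u$ of Theorem~\ref{thm:4}. The nontrivial content is that $u$ commutes with composition up to coherent 2-cells: for $E$ and $F_1,\dots,F_n$ as above, the bimonoidal functor $\phi_{E(F_1,\dots,F_n)}$ must be canonically equivalent to $\phi_E\circ(\phi_{F_1}\times\dots\times\phi_{F_n})$. Since both are multi-monoidal functors between the same stacks, Theorem~\ref{thm:4} reduces the task to producing an isomorphism of the classifying multi-\bfls; this one obtains by examining sections of the source crossed modules and matching them across the two quotient descriptions.

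The main obstacle is verifying that wing juxtaposition actually produces a well-defined $(k_1+\dots+k_n)$-\bfl. In the unary case treated in \cite{ButterfliesI,Noohi:weakmaps} there is a single partial product law and no compatibility constraint; here, after juxtaposition, one must simultaneously check all pairwise compatibilities among the new partial laws, organized according to whether the index pairs $(i,j)$ and $(i',j')$ lie in a common factor---in which case the compatibility is inherited from $F_i$---or in distinct factors, where it is inherited from $E$. The combinatorial bookkeeping required to align the trivializations and to track the action of each $H_{i,1}$ simultaneously on $F_i$ and on $E$ is the essential step; once in place, all assertions of Theorem~\ref{thm:5}, Propositions~\ref{prop:10}--\ref{prop:11}, and Theorem~\ref{thm:6} follow by combining it with the groupoid-level equivalence.
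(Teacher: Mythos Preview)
Your proposal is correct and follows essentially the same approach as the paper: the juxtaposition product is defined as the quotient of the fiber product $(F_1\times\dots\times F_n)\times_{H_{1,0}\times\dots\times H_{n,0}} E$ by $H_{1,1}\times\dots\times H_{n,1}$, the main technical work is the two-case compatibility check (same vs.\ different first index) you identify, and the equivalence with $\M\grst$ is deduced from Theorem~\ref{thm:4} together with compatibility of $u$ with composition via the fiber product construction. Your identification of the bookkeeping for the partial laws and trivializations as the essential obstacle is exactly right.
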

This is the multi-categorical analog of the unary case treated in \cite{ButterfliesI}\footnote{In fact the unary version proved therein holds without any commutativity assumption.}, which states the bicategory $\xmod$ of braided crossed modules, equipped with groupoids of spans as categories of morphisms, is equivalent (as a bicategory) to $\grst$. The proof, in particular that of Theorem~\ref{thm:5}, is geometrical, unlike that of corresponding result in~\cite{ButterfliesI,Noohi:weakmaps}. It requires showing first that the pullback of the bitorsor $E$ to the product $F_1\times\dots \times F_n$ is equivariant under the left and right actions of $H_{1,1}\times \dots \times H_{n,1}$, and that it descends to the base of $F_1\times\dots \times F_n$; second, that this descended bitorsor has the structure of a multi-\bfl.

We can now discuss bimonoidal categories from the broader and more convenient perspective afforded by the idea of a bi-multicategory. If $\M\bC$ is a bi-multicategory, we informally look into monoids with respect to the multi-composition structure of $\M\bC$. Formally, we say that an object $X$ of $\M\bC$ is a (pseudo-)monoid of $\bC$, the underlying bicategory of $\M\bC$, if it is a pseudo-algebra over the \emph{club} $\NN$, where $\NN$, after~\cite{MR0340371}, denotes the natural numbers equipped with the category structure described in section~\ref{sec:formalism}; a club is a multicategory with one object.\footnote{For the sake of simplicity we are ignoring the ``extraordinary'' structure given by the action of permutations.}  The monoidal structure carried by $X$ is of the ``unbiased'' kind \cite{MR2094071}, namely we have an $n$-ary operation $m_n\colon X\times \dots \times X\to X$ for each object $n\in\NN$, as opposed to a privileged binary one, with the various coherence conditions being taken care of by the pseudo-algebra structure.

We apply the previous observation to $\bC=\grst$ or $\bC=\xmod$. We say that $\stR$ is weakly ring-like (for want of a better name) if it is a pseudo-monoid of $\grst$.  The ``weakly'' adverb refers to the fact that the underlying categorical group structure of $\stR$ is only braided commutative, whereas we usually define categorical rings as having underlying braided \emph{symmetric} categorical groups \cite[see e.g.\ ][]{MR2369166}. Thus ``weak'' does not mean ``not strict,'' but it signals the commutative law of the additive structure of $\stR$ is weaker than usually required.

We make a similar definition for a pseudo monoid in $\xmod$: this is a novel object comprised of a braided crossed module $R_\bullet$ and multi-extensions $E_n$ of $(R_0,\dots,R_0)$ ($n$ factors) by $R_1\to R_0$ for each $n\in \NN$, subject to the conditions dictated by the pseudo-algebra structure.  We show that even in the realm of a genuine bicategory such as $\xmod$, we obtain coherence conditions resembling a weak version of Mac~Lane's pentagon (see sections~\ref{sec:formalism-monoid} and~\ref{sec:pentagons}, in particular diagram~\eqref{eq:74}).

Since pseudo-monoids are transported across equivalences, pseudo-monoids in $\grst$ are presented by those in $\xmod$. We have:
\begin{proposition*}[Proposition~\ref{prop:13}, Proposition~\ref{prop:14}, and Corollary~\ref{cor:1}]
  Let $\stR$ be (weakly) ring-like. Let $\xymatrix@1@C-1pc{R_0\ltimes R_1 \ar@<0.5ex>[r]\ar@<-0.4ex>[r]& R_0} \to \stR$ be a presentation by a braided crossed module. Then $R_\bullet$ is equipped with biextensions $E_n$, $n\in\NN$ as above so that $(R_\bullet, E_\bullet)$ is a pseudo-monoid in $\xmod$. Furthermore, if $A=\pi_0(\stR)\iso \coker\del$ and $M=\pi_1(\stR)\iso \ker\del$, then $A$ is a (possibly non-unital) ring of $\T$ and $M$ is an $A$-bimodule. The converse also holds.
\end{proposition*}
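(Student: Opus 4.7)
My plan is to leverage the equivalence of bi-multicategories $\M\xmod \lisoto \M\grst$ established in Theorem~\ref{thm:6}. The first assertion---that the presentation $R_\bullet$ is a pseudo-monoid in $\xmod$---is a transport-of-structure argument. Since a pseudo-monoid is by definition a pseudo-algebra over the club $\NN$, it is described entirely by multi-morphisms and coherence 2-cells internal to the ambient bi-multicategory, and any equivalence of bi-multicategories therefore carries pseudo-monoids to pseudo-monoids. Because $\stR$ is a pseudo-monoid in $\grst$ by hypothesis, the braided crossed module $R_\bullet$ inherits such a structure in $\xmod$: explicitly, a family of $n$-\bfls $E_n\in \grdnbfl(R_\bullet,\dots,R_\bullet;R_\bullet)$ for each $n\in\NN$, subject to the coherence constraints developed in section~\ref{sec:pentagons}.

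To identify $A = \pi_0(\stR)$ as a ring and $M = \pi_1(\stR)$ as an $A$-bimodule I would work on the $\grst$ side and apply the homotopy group functors. The binary component $m_2\colon \stR \times \stR \to \stR$ of the pseudo-monoid is a bimonoidal functor with respect to the additive law $\boxplus$; applying $\pi_0$, which converts a bimonoidal functor between braided group-like stacks into a $\ZZ$-bilinear map of abelian groups, yields the multiplication $A\times A \to A$. The associator of the pseudo-monoid, a 2-isomorphism between $m_2\circ (m_2\times \id)$ and $m_2\circ(\id \times m_2)$ mediated by the ternary $m_3$, collapses on $\pi_0$ to a strict equality, giving associativity. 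For the bimodule structure, bifunctoriality of $m_2$ induces commuting left and right actions of $A$ on $M\iso \ker\del$; the bimonoidal property forces $\ZZ$-linearity of these actions, and the associator coherence yields associativity of each action together with the compatibility $(am)b = a(mb)$. The ``possibly non-unital'' qualifier reflects that no nullary operation $m_0$ is being imposed.

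For the converse, given a (non-unital) ring $A$ and an $A$-bimodule $M$ of $\T$, I would construct a braided crossed module $R_\bullet$ presenting the stack with $\pi_0 = A$ and $\pi_1 = M$---for instance, $R_0=A$, $R_1=M$ with $\del=0$ and the braiding supplied by the bimodule structure. The $n$-fold ring product together with the bimodule actions defines an $n$-linear datum that, via Theorems~\ref{thm:1} and~\ref{thm:4}, determines multi-extensions $E_n$. The ring and bimodule axioms then translate into the pseudo-algebra coherences, so $R_\bullet$ becomes a pseudo-monoid in $\xmod$; transporting back across Theorem~\ref{thm:6} yields a weakly ring-like $\stR$ with the prescribed homotopy groups.

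The hardest step is the coherence verification in the converse. The pentagon-type diagrams described in sections~\ref{sec:formalism-monoid} and~\ref{sec:pentagons} involve wing-juxtapositions of the multi-\bfls $E_n$, and while one expects ring associativity alone to imply them, unwinding the geometric composition of multi-\bfls into purely ring-theoretic conditions requires care. The critical calculation is producing the associator 2-cell between $E_2(E_2,\id)$ and $E_2(\id,E_2)$ in $\grdnbfl(R_\bullet,R_\bullet,R_\bullet;R_\bullet)$ from the equality $(ab)c=a(bc)$ on $A$ together with the bimodule compatibilities on $M$; higher pentagons should then follow formally, but this is where I expect the bulk of the technical work to lie.
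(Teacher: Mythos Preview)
Your forward direction is essentially the paper's argument. The transport of the pseudo-monoid structure via the equivalence of Theorem~\ref{thm:6} is exactly the one-line proof of Proposition~\ref{prop:13}, and your derivation of the ring structure on $A$ and the $A$-bimodule structure on $M$ by applying $\pi_0$ and exploiting the bifunctoriality of $m_2$ matches the paper's treatment (Proposition~\ref{prop:14} phrases the bimodule part through the standard identification of $\Aut(0_\stR)$ with $\Aut(X)$ for arbitrary $X$, but the content is the same).

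Where you go astray is the converse. In the paper the ``converse'' is not a realizability statement starting from a pair $(A,M)$: it is simply that a pseudo-monoid $R_\bullet$ in $\xmod$ gives rise, under the associated-stack functor, to a weakly ring-like $\stR$. This is the \emph{easy} direction, dispatched in one sentence just before Proposition~\ref{prop:13} by invoking Theorem~\ref{thm:6} together with the observation (Definition~\ref{def:13}) that pseudo-functors between bi-multicategories carry pseudo-monoids to pseudo-monoids. Your proposed construction---taking $R_0=A$, $R_1=M$, $\del=0$ and building multi-extensions from the ring product---addresses a different and harder question (existence of \emph{some} $\stR$ with prescribed $\pi_0,\pi_1$). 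Incidentally, the phrase ``braiding supplied by the bimodule structure'' is off: with $\del=0$ and $A$ abelian the braiding condition $\del\braces{a}{b}=0$ is vacuous and the braiding may be taken trivial; the bimodule structure enters through the biextension $E_2$, not through $\braces{-}{-}$. The pentagon verification you flag as the hardest step is entirely avoided once the converse is read correctly.
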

We may ask whether the weakly ring-like structure is strong enough to force the underlying braiding to become symmetric.  The answer is affirmative, at least in the more interesting (to us) case where the multiplicative structure has a unit object. As we shall see, this is tied to the seemingly unrelated problem of computing a characteristic class for weakly categorical rings.

Our procedure is a bit nonstandard in two aspects. The first is that to compute the class corresponding to $\stR$ we only choose local data for the underlying braided group-like structure of $\stR$, or which is the same thing, for the presentation. We compute the rest of the cocycle from the pseudo-monoidal structure carried by the presentation, by examining the biextension's behavior with respect to the chosen local data.

The second aspect is that we choose to initially dispense with the symmetry condition, therefore we start with a choice of local data for the underlying braided group-like structure of $\stR$. This yields an invariant in the Eilenberg Mac~Lane group $H^4(K(A,2),M)$, lifting to $H^5(K(A,3),M)$ if braiding is symmetric \cite{MR1250465,MR1702420}.\footnote{In a general topos we should interpret them as hypercohomology groups. See the discussion in \cites[\S 2]{MR0258842}[\S 7]{MR95m:18006}[\S 6]{MR1702420}.} In fact the initial choice for the local data steers the computation into being based on the Eilenberg Mac~Lane iterated bar construction \cite{MR0056295,MR0065162}, in place of the more customary Mac~Lane's $Q$-construction. (In the stack context, the iterated bar construction emerges as part of the decomposition of $\stR$ as a gerbe over $A$, together with its additional braiding structure \cite{MR95m:18006,MR1702420}.)

Over a point, a standard categorical ring gives rise to an element in the third Mac~Lane cohomology group $\HML^3(A,M)$ \cites()(for general definitions){MR2369166}[a gap was filled in][]{MR3085798}[see also][chap.\ 13]{MR1600246}, so the question is whether
our procedure yields anything standard, and in particular the expected invariant in $\HML^3(A,M)$.

That it does, at least in the unital case, is due to the fact that, by \cite[\S 11]{MR0098773}, the third Mac~Lane cohomology can also be computed by employing the  infinite bar construction $B^\infty(A) \coloneqq B(A,1) \subseteq B(A,2)\subseteq \dotsm$ instead of the cubical complex $Q(A)$.\footnote{Recall that classically Mac\~Lane cohomology is defined as the Hochschild cohomology of the complex $Q(A)$. The latter computes the stable homology of the Eilenberg-Mac\ Lane spaces, that is, $H_q(Q(A))\iso H_{n+q}(K(A,n))$, $n>q$.} One needs a product structure on $B^\infty(A)$, explicitly up to degrees corresponding to the subcomplex $B(A,2)$, to use in the multiplicative bar construction.  It follows that we can plug one of $B(A,2)$, $B(A,3)$, or $B^\infty(A)$ to calculate a cohomology of $A$ with values in the bimodule $M$. In the unital case, all these choices give rise to isomorphic cohomologies.  The product structure on $B(A,2)$, which is unusual, is explicitly given in Mac~Lane's work. We recover that product, and hence the rest of the structure, directly from the pseudo-monoidal structure attached to the presentation of $\stR$, namely the biextension plus the various compatibility conditions it satisfies.

We do not obtain the representing cocycle for the class of $\stR$ in the form corresponding to class in $\HML^3(A,M)$ right away. Instead, we find that the class of $\stR$ is represented by a twisted cocycle in the multiplicative bar resolution for $B(A,2)$, but the twisting disappears in the unital case, which yields the desired result. In particular this implies that the braiding is necessarily symmetric.  More precisely, we have:
\begin{theorem*}[Theorem~\ref{thm:7}]
  There is a bijective correspondence between equivalence classes of  weak ring-like stacks $\stR$, with $A=\pi_0(\stR)$ and $M=\pi_1(\stR)$, and twisted classes in $\widetilde H^3_2(A,M)$ defined below (cf.\ Definition~\ref{def:8}).  In the unital case, i.e.\ when the external monoidal structure of $\stR$ has a unit element, $A$ is likewise unital and the underlying braiding of $\stR$ is necessarily symmetric. Hence the weak ring-like structure of $\stR$ is fully ring-like, and $[\stR]\in \widetilde H^3_2(A,M)\iso \HML^3(A,M)$.
\end{theorem*}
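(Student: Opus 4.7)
The plan is to extract a cocycle from a convenient presentation of $\stR$, show it lands in the twisted complex $\widetilde C^3_2(A,M)$, and then analyze the consequences of a multiplicative unit. By the preceding proposition, we may choose a presentation $R_\bullet\to \stR$ by a braided crossed module $R_\bullet=(R_1\to R_0)$. The pseudo-monoid structure on $\stR$, transported along the equivalence $\M\xmod\lisoto\M\grst$ of Theorem~\ref{thm:5}, packages the multiplicative operations as multi-extensions $E_n$ of $(R_0,\dots,R_0)$ by $R_1\to R_0$, satisfying the coherences of Sections~\ref{sec:formalism-monoid}--\ref{sec:pentagons} (in particular the weak pentagon~\eqref{eq:74}). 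The first step is to choose a hypercovering adapted to the decomposition of $\stR$ as a braided gerbe over $A$ with band $M$; the underlying group-like part of this data yields a cocycle representing the class in $H^4(K(A,2),M)$ computed via the iterated bar construction $B(A,2)$, following \cite{MR95m:18006,MR1702420}.

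Next, I would pull back the biextension $E_2=E_\boxtimes$ along this local data. The underlying bitorsor becomes locally trivial, so local trivializations combined with the two partial product laws of $E_2$ produce cochains that measure, respectively: the deviation from strict bilinearity in each variable, the failure of the two partial products to commute on the nose (this is where the braiding enters, through the biextension compatibility), and, comparing $E_3$ with iterations of $E_2$, the pseudo-associativity of $\boxtimes$. These cochains assemble into a degree-$3$ cochain on the multiplicative bar complex built from $B(A,2)$, in the spirit of \cite[\S 11]{MR0098773}. Because no symmetry has been assumed on the underlying braiding, the cocycle condition closes only up to a twist reflecting the non-triviality of the square of the braiding, yielding a class in $\widetilde H^3_2(A,M)$ as in Definition~\ref{def:8}. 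Well-definedness under change of local data and change of presentation is handled by Theorem~\ref{thm:4}; the inverse direction assembles, from a twisted cocycle, the data of multi-extensions satisfying the pseudo-monoid axioms, then integrates them to $\stR$ by applying Theorem~\ref{thm:4} once again.

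For the unital case, a unit $1_\stR$ for $\boxtimes$ lifts to a distinguished point $e\in R_0$ mapping to a unit of $A$; hence $A$ is unital. The unit constraints $1_\stR\boxtimes x\iso x\iso x\boxtimes 1_\stR$ translate into global trivializations of the restrictions of $E_2$ to $\{e\}\times R_0$ and $R_0\times\{e\}$. Propagating these through the biextension's two-sided compatibility forces the self-composite of the braiding on $M=\pi_1(\stR)$ to be trivial, so the underlying braiding of $\stR$ is symmetric. The same global trivializations render the twisting cochain exact, reducing $\widetilde H^3_2(A,M)$ to the ordinary cohomology of the multiplicative bar resolution of $B(A,2)$ with coefficients in $M$, which by \cite[\S 11]{MR0098773} coincides with $\HML^3(A,M)$.

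The principal obstacle is this last step: one must show rigorously that the global unit trivializations, when propagated through pseudo-associativity and through the biextension's two-sided compatibility, simultaneously trivialize the twist and force the braiding on $M$ to square to the identity. This is a categorified Eckmann--Hilton-type computation adapted to the bar complex, and the delicacy lies in carefully tracking which instance of the braiding is cancelled by which unit constraint. Once this bookkeeping is in place, the comparison of the resulting untwisted cocycle with Mac~Lane's classical complex follows from the product structure on $B(A,2)$ given in \cite[\S 11]{MR0098773}.
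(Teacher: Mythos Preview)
Your overall strategy matches the paper's: choose a presentation $R_\bullet\to\stR$, decompose the underlying braided group-like stack to obtain the pair $(f_+,g_+)$, then extract the remaining components $(f,\alpha_1,\alpha_2)$ from the biextension $E_2$ and the associator $\mu$, and verify the twisted cocycle condition~\eqref{eq:51}. The paper carries this out explicitly in \S\ref{sec:cohom-class} (Proposition~\ref{prop:16}), and the inverse construction is Proposition~\ref{prop:17}, which rebuilds the biextension on a chosen presentation directly from the cocycle data. (A small correction: the equivalence $\M\xmod\isoto\M\grst$ is Theorem~\ref{thm:6}, not Theorem~\ref{thm:5}.)

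Where your route diverges is in the unital case, and here you have made the easiest step into the hardest. You propose to work geometrically: lift the unit to a point of $R_0$, use the resulting trivializations of $E_2$ over $\{e\}\times R_0$ and $R_0\times\{e\}$, and run a categorified Eckmann--Hilton argument to force the square of the braiding to be trivial. You flag this as the ``principal obstacle.'' The paper avoids this entirely. The argument is purely algebraic at the level of the cocycle already extracted: once $A$ is unital, one substitutes $c=1$ (respectively $a=1$) into the twisted relations~\eqref{eq:52}, and after the evident normalization of the $\alpha_i$ at the unit this yields $g_+(a,b)+g_+(b,a)=0$, i.e.\ the last equation of~\eqref{eq:50}. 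Thus the twisted and untwisted cocycle conditions coincide (Lemma~\ref{lem:9}), and $\widetilde H^3_2(A,M)\iso\HML^3(A,M)$ then follows from Proposition~\ref{prop:15} and \cite[\S 11]{MR0098773}. The symmetry of the braiding is a \emph{consequence} of this cocycle identity---every class admits a representative with symmetric $g_+$, hence $\stR$ is equivalent to a stack with symmetric underlying braiding (Corollary~\ref{cor:2})---rather than something to be established beforehand by a categorical argument on $E_2$. Your proposed route may well work, but it is considerably more delicate than the one-line substitution the paper actually uses, and it is not needed.
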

Hence, unital categorical rings in the sense of the present paper \emph{are} categorical rings tout-court and, by \cite{MR0098773}, we re-obtain their classification in terms of $\HML^3(A,M)$. Had we chosen to work with braided symmetric objects from the beginning there is little doubt the same procedure—i.e.\ a decomposition of the underlying group-like stack followed by an analysis of the attached biextension—would have yielded a Mac~Lane cocycle in the right group right from the start. However, part of the interest was to test to what extent our framework successfully captures the standard theory.

\subsection*{Organization of the material}
\label{sec:org}
We have collected various preliminary items in section~\ref{sec:preliminaries}, including a quick review of standard biextensions. The recapitulation of the relation between group-like stacks and their presentations by crossed modules, though standard, involved a contravariance issue which becomes relevant, therefore we provided a brief outline.

Aside from the preliminary section, we can divide the rest in roughly three parts, plus another containing some appendices.

We develop the generalization of biextensions and their symmetry properties in sections~\ref{sec:biadditive-morphisms} to~\ref{sec:properties}, and the \bfl special kind, including the representation of bimonoidal functors, from section~\ref{sec:butterflies} to~\ref{sec:comm-struct}.

The extension to an arbitrary number of variables takes sections~\ref{sec:auxiliary} to~\ref{sec:bi-mult-braid}.  In particular, the composition of \bfls is discussed in section~\ref{sec:comp-n-bfls}. There, Theorem~\ref{thm:5} establishes the existence of the wing juxtaposition operation.  Its proof is technically involved bookkeeping, but it ultimately is straightforward; it is reproduced in full because, while the juxtaposition product has features similar to the unary case, the proof itself is not an immediate generalization of the corresponding one in \cite{ButterfliesI,Noohi:weakmaps}. Note also that the constructions in the proof are used in the cocycle computations of sect.~\ref{sec:cohom-class}.

The bi-multicategories comprised by braided group-like stacks on one side, and braided crossed modules of $\T$ on the other, are discussed in section~\ref{sec:bi-mult-braid}.

The idea of categorical ring, or ring-like stack, and its presentation as a pseudo-monoids in their respective bicategories is expounded in sections~\ref{sec:bi-mult-braid} and~\ref{sec:pres-categ-ring}, where we also discuss some specific facts about the presentations. In section~\ref{sec:decomp} we discuss the cohomology of rings and the computation of the characteristic class. We do this in slightly simplified form by omitting the additional computations which arise because the choices for the various trivializations required for the cocycle computations are necessarily local. The complete computations are deferred to Appendix~\ref{sec:hyper}.

To avoid unnecessary and possibly long detours, parts of the material have been placed in a number of appendices. Some of this material is necessary for self-consistency but known to the experts. Thus, bi-multicategories are discussed in appendices~\ref{sec:formalism} and~\ref{sec:formalism-monoid}, the analog of the pentagon in a bicategory in~\ref{sec:pentagons}, and  appendix~\ref{sec:lemmas} contains just some technical lemmas pertaining to section~\ref{sec:properties}. Finally, the resolution of a simplicial object by hypercovers and the complete hyper-cohomology computations necessary for a full calculation of the characteristic class are contained in Appendix~\ref{sec:hyper}.

\subsection*{What to read}
\label{sec:what-read}

One possibility is to only read Part~\ref{part:part1} followed by Part~\ref{part:part3}, if the reader is willing to only skim through Part~\ref{part:part2}.  For the latter, it is possible to just read the statements, in particular for the $n$-fold composition of multi-extensions theorem proved in sect.~\ref{sec:comp-n-bfls}, only referring to the proof of~\ref{thm:5} when needed in sect.~\ref{sec:cohom-class}. Alternatively, one can read Part~\ref{part:part3}, in particular section~\ref{sec:decomp}, only skimming through the previous two, and omitting most of section~\ref{sec:decomp}. Parts~\ref{part:part2} and~\ref{part:part3} depend on the multivariable functor calculus in a bicategory, an account of which is contained in the first two appendices, which may only be referred to when needed. Most their content should be known to the experts.  An account of what the pentagonal axiom would look like in a bicategory is contained in Appendix~\ref{sec:pentagons}, and in general a reader will need only equation~\eqref{eq:73} and diagram~\eqref{eq:74}. While in section ~\ref{sec:cohom-class}, the reader can refer to Appendix~\ref{sec:hyper} for the full hyper-cohomology arguments.

\subsection*{Notations and conventions}
\label{sec:notat-conv}

The convention we use is to equate ``additive'' with ``monoidal,'' therefore the term ``biadditive'' means ``bimonoidal'' in the sense used above in the introduction.  By extension, ``$n$-additive'' means ``$n$-monoidal,'' i.e.\ monoidal in each of the $n$-variables. 

A notation of the form $G_\bullet$ or $\mathsf{G}$ denotes a crossed module $(G_1,G_0,\del)$, where $\del \colon G_1\to G_0$ is the homomorphism, and the (right) action is denoted element-wise by $(x,g)\mapsto g^x$, but is not otherwise labeled.  Very often we will simply write $(G_1,G_0)$.

For ease of notation we will often use the convention: $(G_1,G_0)\cong (G,\Pi)$ and, later in the paper, $(R_1,R_0)\cong (R,\Lambda)$ when we discuss ring-like structures.

Up to section~\ref{sec:bimon-struct}, all monoidal structures are notated multiplicatively, with $I$ denoting the identity object.  We switch to an additive notation for one of the two monoidal structures in a bimonoidal situation. In that case, the identity object is $0$. If this is the structure for which we construct a presentation by a crossed module, then we use an additive notation for the groups in the crossed module, even though they are by no means assumed commutative.

The notation $G_S$ denotes the base-change of $G$ to $S$, that is, $S\times G \to S$. If $G$ is a group object, then $G_S$ is group over $S$, with group law $(s,g)(s,g')= (s,gg')$, in set-theoretic notation.

\subsection*{Acknowledgements}
\label{sec:ack}
I would like to thank Sandra Mantovani, Giuseppe Metere, Cristiana Bertolin, and Enrico Vitale for interesting discussions and the kind hospitality extended to me by their institutions in Milan, Turin, and Louvain-la-Neuve, where parts of this work were presented.

Special thanks are also due to the anonymous referee, whose pointed criticisms and inquiries led to a much improved version of this work.

\tableofcontents

\part{}\label{part:part1}
\section{Preliminaries}
\label{sec:preliminaries}

In the following, let $\T$ be a topos which is assumed to be $\mathrm{Sh}(\C)$ for a site $\C$. For the reader's convenience we recall some well know facts on monoidal stacks and their relations with crossed modules of $\T$.

\subsection{Group-like stacks, crossed modules, and bitorsors}
\label{sec:cat-groups-cross}

We follow \cites{MR92m:18019}{ButterfliesI}, to which we refer for further details.
A \strong{crossed module} of $\T$ consists of a group homomorphism $\del \colon G\to\Pi$ of $\T$ and a right action of $\Pi$ on $G$ such that: (1) $\del$ is a morphism of right $\Pi$-objects, with $\Pi$ acting on itself by conjugation; (2) the action of $G$ on itself induced by the $\Pi$-action via $\del$ coincides with conjugation. If $g,h$ are (generalized) points of $G$ and $x$ of $\Pi$ we have the familiar conditions:
\begin{align*}
  \del (g^x) &= x^{-1}\,\del(g)\, x \\
  g^{\del (h)} &= h^{-1}\, g\, h,
\end{align*}
where the exponent notation denotes the action.

The crossed module $\mathsf{G}$ gives rise to a groupoid $\Gamma \colon \xymatrix@1@C-1pc{\Pi\ltimes G \ar@<0.5ex>[r]\ar@<-0.4ex>[r]& \Pi}$, and hence, via the nerve construction, to a simplicial group $\N \mathsf{G}$ whose object in degree $p$ is $(\dotsm (\Pi\ltimes G)\ltimes \dots \ltimes G)$ ($p$ factors). The groups $\pi_1=\ker\del$ and $\pi_0=\coker\del$ are the homotopy (sheaves of) groups of $G_\bullet$ are in fact the only two nonzero homotopy sheaves of $\N \mathsf{G}$. It is easily verified that $\pi_1$ is abelian and central in $G$ and that $\pi_0$ is the sheaf of connected components of the groupoid corresponding to $\mathsf{G}$ (and hence of $\N\mathsf{G}$).

The group laws of $G$ and $\Pi\ltimes G$ equip the groupoid $\Gamma$ with a structure of strict categorical group, that is, a morphism
\begin{equation*}
  m\colon \Gamma\times \Gamma \lto \Gamma
\end{equation*}
of groupoids satisfying the usual axioms of a group-like, strict monoidal category \cite[see also][]{MR1250465}.

The associated stack construction performed on $\Gamma$ yields a stack $\stG$ of $\T$, which inherits the—now lax—group-like structure. (``The'' associate stack is only unique up to equivalence. The specific model we consider is the one whose fibered categories consist of descent data for $\Gamma$ and their morphisms—see \cite[Lemme 3.2]{MR1771927}. It is easy to see that in the case at hand these descent data are cocycles with values in $\Gamma$—see\cite[\S 3.3-4]{ButterfliesI}.) Thus, $\stG$ is a monoidal stack, in the sense that there exists a stack morphism
\begin{equation*}
  m\colon \stG\times \stG\lto \stG,
\end{equation*}
and an associator $\mu\colon m\circ (m\times 1)\Rightarrow m\circ (1\times m)$ 
satisfying the axioms of a monoidal category. We specify the rest of the group-like structure by requiring the existence of a unit object $I$, which we can identify with a morphism $I\colon e\to \stG$ (where $e$ is the terminal object of $\T$), and that left and right-multiplications by a fixed object $x$ induce functorial equivalences
\begin{equation*}
  m(x,-)\colon \stG\lto \stG\,\quad m(-,x)\colon \stG\lto \stG.
\end{equation*}
The group-like stack obtained in this way from a crossed module will be denoted $[\del\colon G\to\Pi]\sptilde$. 

There is a compelling geometric picture for the associated stack obtained in this way from a crossed module. Recall the stack $\sttors(G,\Pi)$ is the stack of right $G$-torsors $P$ whose structure group extension $P\wedge^{G} \Pi$ is isomorphic to the trivial torsor \parencite[see][]{MR546620}. Equivalently, an object of this stack is a pair $(P,s)$ where $P$ is a $G$-torsor and $s\colon P\to \Pi$ a $G$-right-equivariant morphism, where $G$ acts on $\Pi$ on the right via multiplication.  Objects of $\sttors(G,\Pi)$ are called $(G,\Pi)$-torsors.  The notion makes sense for each group homomorphism, but if $\del\colon G\to\Pi$ is a crossed module, then $\sttors(G,\Pi)$ becomes group-like. Indeed, each torsor is also equipped with a \emph{left} $G$-action defined by
\begin{equation*}
  g u = u g^{s(u)},
\end{equation*}
where $u\in P$ and $g\in G$, which makes it into a $G$-\emph{bi}\hspace{0.05pc}torsor. The monoidal structure is given by the contracted product
\begin{equation*}
  m((P,s),(Q,t)) = (P\wedge^{G}Q,st),
\end{equation*}
and it is easily proved to be group-like.  Finally, the stack $\sttors(G,\Pi)$ is equivalent, as  a group-like stack, to $[\del\colon G\to \Pi]\sptilde$, obtained via the associated stack construction described above.

Conversely, for any group-like stack $\stG$ there exists a \emph{presentation} by a crossed module, namely there exists a crossed module $\mathsf{G}=(G,\Pi,\del)$ and an equivalence $\stG\iso [\del\colon G\to \Pi]\sptilde$.  For any group-like stack $\stG$ we can define $\pi_1=\Aut_\stG(I)$, and $\pi_0$ as the sheaf associated to the presheaf of connected components. Upon choosing a presentation, its homotopy sheaves are isomorphic to the ones of $\stG$.

In view of the previous geometric interpretation, $\stG$ is also equivalent to $\sttors(G,\Pi)$. In addition, the sequence
\begin{equation}
  \label{eq:1}
  \xymatrix@1@C-0.5pc{G \ar[r]^\del & \Pi \ar[r]^\pi & \stG},
\end{equation}
where each object is regarded as a stack, is exact in a homotopical sense.  If $\stG$ is identified with $\sttors(G,\Pi)$, the projection $\Pi\to \stG$ sends $x\in \Pi$ to the bitorsor $(G,x)$, that is the trivial right $G$-torsor whose left action on itself is given by $g \cdot u = g^x u$, with $u,g\in G$. (This follows from the fact that $x$ is identified with the equivariant section that assigns $x$ to the unit section $e_{G}$.)  In particular, $(G,e_{\Pi})$ can be identified with the unit object of $\stG$.
\begin{remark}
  \label{rem:1}
  Since a morphism $\phi\colon (P,s)\to (Q,t)$ in $\sttors(G,\Pi)$ has the form
  \begin{equation*}
    \xymatrix@-1pc{%
      P \ar[dr]_s \ar[rr]^\phi & & Q \ar[dl]^t \\ & \Pi
    }
  \end{equation*}
  it is immediately seen that the equivalence
  \begin{equation*}
    [\del\colon G\to \Pi]\sptilde \lto \sttors(G,\Pi)
  \end{equation*}
  is \emph{contravariant.}  In particular, if $g\colon x\to x'$ is a morphism in the strict categorical group $\Gamma$, that is, $x'=x\, \del g$, the corresponding morphism of $(G,\Pi)$-torsors is $(G,x') \to (G,x)$.\footnote{There appears to be no good way to get around the issue.  A ``fix'' is to replace the crossed module with a left one. This restores the expected direction of the arrows, at the cost of turning one of the monoidal laws into the opposite one \cite[see e.g.][after Théorème 4.5]{MR92m:18019}.}
\end{remark}
A \strong{morphism} of group-like stacks is a stack morphism $F\colon \stH\to \stG$ preserving the monoidal structure.  A morphism of crossed modules determines one between the associated group-like stacks in the obvious way. In the converse direction, a morphism $F\colon \stH\to \stG$ only determines a morphism in the \emph{homotopy category} between corresponding crossed modules. This morphism can be represented by a \strong{butterfly,} namely a diagram of group objects of $\T$ of the form:
\begin{equation*}
  \xymatrix@-1pc{%
    H \ar[dd]_\del \ar[dr]^\sigma && G \ar[dd]^\del \ar[dl]_\imath \\
    & E \ar[dl]_\pi \ar[dr]^\jmath \\
    \Sigma && \Pi
  }
\end{equation*}
(where the vertical arrows are crossed modules) from which a fraction representing the morphism $F\colon \stH\to \stG$ can be obtained (see loc.~cit.\ and below for more details).

\subsection{Braidings}
\label{sec:braid-cross-mod}

A \strong{braided crossed module,} \cite[see][]{MR1250465,ButterfliesI}, is a crossed module $\mathsf{G}=(G,\Pi,\del)$ equipped with a bracket
\begin{equation*}
  \braces - -  \colon \Pi\times \Pi\lto G
\end{equation*}
such that $\del \braces x y = y^{-1}x^{-1} y x$, for all points $x,y$ of $\Pi$. The bracket satisfies:
\begin{equation}
  \label{eq:2}
\begin{aligned}
  \braces x {yz} &= {\braces x y}^z \braces x z,& \braces x {\del h} & = h^{-1}h^x, \\
  \braces {xy} z &= {\braces y z} {\braces x z}^y,& \braces {\del g} y &= g^{-y}g,
\end{aligned}
\end{equation}
for all $x,y,z\in \Pi$ and $g,h\in G$. These properties all arise from the observation that the bracket $\braces - -$ corresponds to a braiding in the usual sense for the categorical group determined by $G_\bullet$. Thus  $c_{x,y}=\braces x y$ gives a family of functorial isomorphisms
\begin{equation*}
  c_{x,y} \colon xy \lto yx
\end{equation*}
for each pair $(x,y)$ of objects in $\Gamma$, so that $c\colon m\circ \tau\Rightarrow m\colon \Gamma\times \Gamma\to \Gamma$, where $\tau$ is the interchange functor. The properties above can be derived from functoriality and Mac~Lane's hexagon diagrams. Conversely, if $\Gamma$ is a braided strict categorical group, setting $\braces x y = y^{-1}x^{-1}c_{x,y}\colon e\to y^{-1}x^{-1} y x$ defines a braiding on the corresponding crossed module.

The braiding is \strong{symmetric} if it has the property that $\braces y x={\braces x y}^{-1}$ for all $x,y\in \Pi$. A symmetric braiding is \strong{Picard} if in addition it satisfies the condition $\braces x x = e$. These conditions match the corresponding ones for the categorical group $\Gamma$.  As observed in \textcite[\S 1]{MR95m:18006}, for a Picard crossed module the bracket is a full lift of the commutator map.

The braided, symmetric, and Picard structures translate in the expected manner to the associated group-like stack $\stG\iso [\del\colon G\to \Pi]\sptilde$, and conversely, if $\stG$ is a braided (resp.\ symmetric, Picard) stack with presentation given by $[\del\colon G\to \Pi]$, then the latter acquires a braided (resp.\ symmetric, Picard) structure as above. Let us observe here that a braiding on $\stG=\sttors(G,\Pi)$ induces one on the crossed module by way of the butterfly representing the morphism $m\colon \stG\times \stG\to \stG$. In the Picard case this leads to two possible presentations: by a braided crossed module satisfying the Picard condition, or, according to \textcite{0259.14006}, by a length-one complex of abelian sheaves. We refer the reader to \cite[\S 7]{ButterfliesI} for full details on this correspondence. Here we limit ourselves to observe that if $(P,s)$ and $(Q,t)$ are two $(G,\Pi)$-torsors, the choice of two sections $u\in P$ and $v\in Q$ allows us to write the braiding morphism $c_{P,Q}\colon P\wedge^GQ\to Q\wedge^GP$ as $c_{P,Q}(u\wedge v) = (v\wedge u)\, \chi_{u,v}$, where $\chi\colon P\wedge^G Q\to G$, is a coordinate representation of $c_{P,Q}$ subject to certain equivariance conditions dictated by the requirements:
\begin{align*}
  c_{P,Q}(gu\wedge v) &= g\, c_{P,Q}(u\wedge v),\\
  c_{P,Q}(ug\wedge v) &= c_{P,Q}(u\wedge g\, v),\\
  c_{P,Q}(u\wedge vg) &= c_{P,Q}(u\wedge v)\,g.\\
\end{align*}
\begin{lemma}
  \label{lem:1}
  The coordinate $\chi\colon P\wedge^GQ\to G$ has the expression:
  \begin{equation}
    \label{eq:3}
    \chi_{u,v} = \braces{s(u)}{t(v)}^{-1}.
  \end{equation}
\end{lemma}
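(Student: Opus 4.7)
The strategy is to transport the braiding from the strict categorical group $\Gamma=\Pi\ltimes G$ (where it is given elementwise by $\braces{-}{-}$) to $\sttors(G,\Pi)$ via the equivalence $\Phi\colon[\del\colon G\to\Pi]\sptilde\isoto\sttors(G,\Pi)$ recalled in \S\ref{sec:cat-groups-cross}, after reducing to the trivial torsor case.

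The formula is local on the base, so I would first choose local sections $u\in P$ and $v\in Q$, which identify $(P,s)\iso(G,s_x)$ with $s_x(g)=x\del g$ and $x=s(u)$ (and similarly $(Q,t)\iso(G,s_y)$ with $y=t(v)$) through $g\mapsto u\cdot g$; under these identifications $u$ and $v$ both become the identity element $e\in G$. The three displayed equivariance conditions on $\chi$ then force $\chi$ to be determined by the single value $\chi_{e,e}$, so it suffices to verify the formula there. Next, I would fix the monoidal constraint $\alpha_{x,y}\colon(G,s_x)\wedge^G(G,s_y)\isoto(G,s_{xy})$ given by $[h_1,h_2]\mapsto h_1^y h_2$, which sends $[e,e]$ to $e$; it translates the braiding $c_{(G,s_x),(G,s_y)}$ into a map $(G,s_{xy})\to(G,s_{yx})$. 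By the equivalence $\Phi$, this latter map is the image of the braiding from $\Gamma$; but $\Phi$ is \emph{contravariant} on morphisms (Remark~\ref{rem:1}), so $c_{x,y}=\braces{x}{y}\colon xy\to yx$ is carried to a morphism $(G,s_{yx})\to(G,s_{xy})$, and the braiding on $\sttors$ therefore corresponds to its inverse $\braces{x}{y}^{-1}\colon yx\to xy$. Using the explicit form of $\Phi$ on morphisms (an element $g\in G$ viewed as a morphism $a\to a\del g$ in $\Gamma$ is sent to the torsor morphism $(G,s_{a\del g})\to(G,s_a)$, $h\mapsto gh$), the braiding becomes left multiplication by $\braces{x}{y}^{-1}$. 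Tracing $[e,e]$ through $\alpha_{x,y}$, this left multiplication, and $\alpha_{y,x}^{-1}$ yields $c_{P,Q}(u\wedge v)=[e,\braces{x}{y}^{-1}]=(v\wedge u)\,\braces{x}{y}^{-1}$, so $\chi_{e,e}=\braces{s(u)}{t(v)}^{-1}$, as asserted.

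\textbf{Main obstacle.} The only substantive subtlety, and the reason the answer involves $\braces{x}{y}^{-1}$ rather than $\braces{x}{y}$, is correctly handling the contravariance of $\Phi$ from Remark~\ref{rem:1}: the directional flip means that the crossed-module braiding $\braces{x}{y}$ corresponds to the \emph{inverse} of the braiding on $\sttors(G,\Pi)$. Aligning this reversal with the monoidal constraint $\alpha_{x,y}$ constitutes essentially the whole content of the calculation; once done, matching against the defining relation $c_{P,Q}(u\wedge v)=(v\wedge u)\chi_{u,v}$ gives \eqref{eq:3} at the identity, and equivariance propagates it to arbitrary $u,v$.
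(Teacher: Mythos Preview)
Your proposal is correct and follows essentially the same approach as the paper's proof: reduce to the case of trivial torsors $(G,x)$ and $(G,y)$ via local sections, invoke the contravariance from Remark~\ref{rem:1} to account for the inverse in $\braces{s(u)}{t(v)}^{-1}$, and then extend to arbitrary $(P,s)$, $(Q,t)$ via the equivariance conditions (what the paper calls ``descent''). Your write-up is simply more explicit than the paper's sketch, spelling out the monoidal constraint $\alpha_{x,y}$ and tracing $[e,e]$ through the composite, but the underlying strategy is identical.
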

\begin{proof}[Proof (Sketch)]
  The relation~\eqref{eq:3} is immediate for $(P,s)=(G,x)$ and $(Q,t)=(G,y)$, where $x,y\in \Pi$, using Remark~\ref{rem:1}.  The general case follows by descent by exploiting the equivariance conditions for $\chi$ and for $\braces--$ recalled above.
\end{proof}

\subsection{Biextensions}
\label{sec:biextensions}

Biextensions were introduced in \textcite{MR0257089}, and later reexamined by \textcite{MR0354656-VII}. We refer to the latter and the text by \textcite{MR823233} for details on the standard concept of biextensions. We briefly recall the basic definitions, and later extend them to introduce crossed modules as coefficients.

Let $H$, $K$, $G$ be groups of $\T$, with $G$ assumed to be abelian. A \strong{biextension} of $H$, $K$ by $G$ is a $G_{H\times K}$-torsor $E$ on $H\times K$ equipped with two partial composition laws $\times_1$ (resp.\ $\times_2$) making $E$ a central extension of $H_K$ (resp. $K_H$) by $G_K$ (resp.\ by $G_H$). This means that $\times_1$ (resp.\ $\times_2$) is only defined on $E\times_KE$ (resp.\ $E\times_H E$), where $E\times_KE$ (resp.\ $E\times_H E$) means fiber product over $K$ (resp.\ $H$) with respect to the obvious projection. These composition laws are required to be compatible with one another.  Usually one also requires $\times_1$ and $\times_2$ to be commutative, which make sense whenever $H$ and $K$ are abelian. Our definition, in which $\times_1$ and $\times_2$ are not required to be commutative is then referred to as a \strong{weak biextension.} Even when $H$ and $K$ are commutative, this notion of biextension is weaker than that of loc.~cit.\ in which the partial multiplication laws are commutative.

Analogously to extensions, the composition laws can be represented by morphisms of $G$-torsors. Let $\B H$ and $\B K$ be the standard classifying simplicial objects of $\T$.\footnote{That is, $\B H=\N H[1]$, where $H[1]$ is the groupoid with a single object $e$ and $\Aut(e)=H$.} The bisimplicial object $\B H\times \B K$ has face maps $d^h$ and $d^v$ \parencite[see e.g.][]{MR1711612}. In particular, for $i=0,1,2$ we consider $d^h_i\colon H\times H\times K\to H\times K$ and $d^v_i\colon H\times K\times K\to H\times K$. Then $\times_1$ and $\times_2$ correspond to morphisms of $G$-torsors
\begin{subequations}
  \label{eq:4}
  \begin{align}
    \gamma^1\colon & {d^h_2}^*E \wedge^G {d^h_0}^*E \lto {d^h_1}^*E \\
    \intertext{and}
    \gamma^2\colon & {d^v_2}^*E \wedge^G {d^v_0}^*E \lto {d^v_1}^*E .
  \end{align}
\end{subequations}
The associativity and commutativity diagrams for $\gamma^1$ and $\gamma^2$ are the obvious ones. More interesting is the compatibility condition of the two structures, which can be expressed as follows.

Let $E_{h,k}$ be the fiber of $E$ over a generalized point $(h,k)$ of $H\times K$. Then the compatibility condition reads
\begin{equation}
  \label{eq:5}
  \vcenter{%
  \xymatrix@C-2pc{%
    & E_{h,k}E_{h',k}E_{h,k'}E_{h',k'} \ar[rr]
    \ar[dl]_(.6){\gamma^1_{h,h';k}\gamma^1_{h,h';k'}} & &
    E_{h,k}E_{h,k'}E_{h',k}E_{h',k'} \ar[dr]^(.6){\gamma^2_{h;k,k'}\gamma^2_{h';k,k'}} \\
    E_{hh',k} E_{hh',k'} \ar[drr]_{\gamma^2_{hh';k,k'}}
    & & & & E_{h,kk'}E_{h',kk'} \ar[dll]^{\gamma^1_{h,h';kk'}}\\
    & & E_{hh',kk'}
  }}
\end{equation}
where we have suppressed the torsor contraction symbol. The horizontal arrow is the canonical symmetry map swapping the factors in $\sttors(G)$, which exists whenever $G$ is abelian.  A more intrinsic way to express the same thing is to observe that the above compatibility condition amounts to the equality
\begin{equation}
  \label{eq:6}
  \gamma^2_{{d_1^h}^*E} \circ ({d^v_2}^*\gamma^1{d^v_0}^*\gamma^1)
  = (1\times c \times 1) \circ
  \gamma^1_{{d_1^v}^*E} \circ ({d^h_2}^*\gamma^2{d^h_0}^*\gamma^2)
\end{equation}
as morphisms from $(d_0^hd_2^v)^*E \wedge (d_2^hd_0^v)^*E \wedge (d_0^hd_0^v)^*E \wedge (d_1^hd_1^v)^*E$ to $(d_1^hd_1^v)^*E$ over $H\times H\times K\times K$. $c$ is the symmetry morphism which swaps the two terms in the middle.

With the straightforward notion of morphism, when $H$, $K$ are also abelian, biextensions of $\T$ form a Picard category $\grdbiext(H,K;G)$. The relative version of it, where we consider the various $\grdbiext(H_S,K_S;G_S)$ over a variable base $S$, is a Picard stack, denoted $\stbiext(H,K;G)$. As observed in 
\textcite{MR0354656-VII} and \textcite{MR823233}, $\stbiext(H,K;G)$ is biadditive in all three variables.

\subsection{Schreier-Grothendieck-Breen theory of extensions}
\label{sec:schr}

It is well known that an extension $E$ of $K$ with a possibly nonabelian kernel $G$ determines a morphism $\jmath\colon E\to \Aut(G)$ from the action of $E$ on $G$ by conjugation \parencite[see, e.g.][]{maclane:hom}. A refinement of this situation is when $G$ is part of a crossed module $\del\colon G\to \Pi$. Since part of the crossed module data is precisely a  homomorphism $\Pi\to \Aut(G)$, an extension of $K$ by that crossed module, or by $(G,\Pi)$ for short, is a commutative diagram of group objects of the form \parencite[\S 8]{MR92m:18019}
\begin{equation*}
  \xymatrix{%
    1 \ar[r] & G \ar[r]^\imath \ar[d]_\del & E \ar[r]^p \ar[dl]^\jmath &
    K \ar[r] & 1\\
    & \Pi
  }
\end{equation*}
where the row is exact and $\imath(g) e = e \imath (g^{\jmath(e)})$, for points $g$ of $G$ and $e$ of $E$.

An equivalent characterization is that $E$ is a $(G_K,\Pi_K)$-torsor over $K$, equipped with $(G,\Pi)$-torsor isomorphisms
\begin{equation*}
  \gamma_{k,k'} \colon E_k\wedge^{G}E_{k'} \lisoto E_{kk'}
\end{equation*}
satisfying the standard associativity condition \parencite{MR0354656-VII,MR92m:18019,ButterfliesII}, 
such that the equivariant structural morphism to $\Pi$ is a homomorphism.
By loc.~cit.\ and \parencite{ButterfliesII}, such an extension corresponds to a group-like stack morphism $F\colon K\to [G\to \Pi]\sptilde$, where $K$ is identified with the (discrete) group-like stack. In this sense the above diagram is a butterfly representing this morphism.

In all these characterizations the morphism $\jmath$ arises as the structural equivariant section of the $(G,\Pi)$-torsor, whereas $\imath$ is the identification $G\iso E_1$, where the $(G,\Pi)$-torsor $E_1$, isomorphic to the unit one, is the fiber over the unit section of $K$. This identification is explicitly given as $\imath(g) = g\, e_0=e_0\,g$, where $e_0$ is the central section of $E_1$ corresponding to the unit section of $G$. Since $\jmath$ is a homomorphism, we obtain the commutativity of the ``wing'' in the diagram above. The conjugation property satisfied by $\imath$ and $\jmath$ reflects the structure of $(G,\Pi)$-torsor of $E$. In particular, we have that $\imath(g)e$ (resp. $e\imath(g)$), product in $E$, agrees with the left (resp.\ right) $G_1$-action on $E$.

\section{Biadditive morphisms}
\label{sec:biadditive-morphisms}

If $H,K,G$ are groups of $\T$, a biadditive (or bimultiplicative) morphism is a map
\begin{math}
  f\colon H\times K\to G
\end{math}
which is a homomorphism in each variable. For abelian groups this is none other than a $\ZZ$-bilinear morphism. 

More generally, if $\cat{H}$, $\cat{K}$, and $\cat{G}$ are categories, which we assume to be group-like, we say that a bifunctor is \strong{biadditive} (or \strong{bimultiplicative}) if it is monoidal in both variables, in a compatible way.\footnote{A better choice would be to use ``bi-monoidal'' in place of biadditive. The latter is motivated by continuity with the naming convention in \cite{ButterfliesI}, where morphisms between gr-stacks are called additive—a convention derived from the particular but significant Picard case.}  For this, we must also assume $\cat{G}$ be endowed with a braiding $c$.  More precisely, we have:
\begin{definition}
  \label{def:1}
  A bifunctor $F\colon \cat{H}\times \cat{K}\to \cat{G}$ is biadditive if:
  \begin{enumerate}
  \item \label{item:1} it has the structure of additive functor with respect to each variable, namely there exist functorial  (iso)morphisms
    \begin{equation*}
      \lambda^1_{h,h';k} \colon F(h,k)F(h',k) \lto F(hh',k)
      \quad \text{and} \quad
      \lambda^2_{h;k,k'} \colon F(h,k)F(h,k') \lto F(h,kk')
    \end{equation*}
    satisfying the standard associativity conditions and compatibility with the braiding of $\cat{G}$; 
  \item \label{item:2} for all objects $h,h'$ of $\cat{H}$ and $k,k'$ of $\cat{K}$ we have a functorial commutative diagram
    \begin{equation}
      \label{eq:7}
      \vcenter{%
        \xymatrix{%
          \bigl(F(h,k)F(h',k)\bigr)\,
          \bigl(F(h,k')F(h',k')\bigr) \ar[rr]^{\Hat c}
          \ar[d]_{\lambda^1_{h,h';k}\lambda^1_{h,h';k'}} & &
          \bigl(F(h,k)F(h,k')\bigr)\,\bigl(F(h',k)F(h',k')\bigr)
          \ar[d]^{\lambda^2_{h;k,k'}\lambda^2_{h';k,k'}} \\
          F(hh',k) F(hh',k') \ar[r]_{\lambda^2_{hh';k,k'}}
           & F(hh',kk') & F(h,kk')F(h',kk') \ar[l]^{\lambda^1_{h,h';kk'}}
        }}
    \end{equation}
    where the upper horizontal arrow is the ``commuto-associativity'' morphism
    \begin{equation*}
      (x\,y)\,(z\,w) \lto (x\,z)\,(y\,w)
    \end{equation*}
    arising from the braiding of $\cat{G}$ (an explicit definition can be found in \cite{MR2369166});
  \item \label{item:3} the two morphisms $F(I_{\cat{H}}, I_{\cat{K}})\to I_{\cat{G}}$ that can be deduced from the first condition coincide.
  \end{enumerate}
\end{definition}
The second condition ensures that the two possible ways to compute $F(hh',kk')$ agree.  The obvious similarity with Diagram~\eqref{eq:5} will be exploited below.

The definition can be extended to the case where all three categories are braided. In this case, we add to the biadditivity the condition that $F$ be \emph{braided} in each variable, namely that the following diagrams commute:
\begin{equation*}
  \vcenter{%
    \xymatrix{%
      F(h,k)F(h',k) \ar[r]^(.6){\lambda^1} \ar[d]_{c_{F,F}}
      & F(hh',k) \ar[d]^{F(c,k)} \\
      F(h',k)F(h,k) \ar[r]_(.6){\lambda^1} & F(h'h,k)
    }}
  \qquad\text{and}\qquad
  \vcenter{%
    \xymatrix{%
      F(h,k)F(h,k') \ar[r]^(.6){\lambda^2} \ar[d]_{c_{F,F}}
      & F(h,kk') \ar[d]^{F(h,c)} \\
      F(h,k')F(h,k) \ar[r]_(.6){\lambda^2} & F(h,k'k)
    }}
\end{equation*}
A natural transformation $\phi\colon F\Rightarrow F'$ between two biadditive morphisms is a natural transformation of bifunctors such that in each variable the conditions to be a natural transformation of additive functors are satisfied. We denote by $\grdbiadd(\cat{H},\cat{K}; \cat{G})$ the resulting groupoid of biadditive morphisms.

It is clear that the same definitions can be stated in the case of group-like stacks as opposed to categories. Similarly to the pointwise case, we obtain a groupoid $\grdbiadd(\stH,\stK;\stG)$ of biadditive morphisms. For the latter, by way of a standard process of restricting to variable base $S$, we obtain a stack $\stbiadd(\stH,\stK;\stG)$ arising from the various fiber categories $S\to \grdbiadd(\stH\rvert_S,\stK\rvert_S;\stG\rvert_S)$.

\section{Biextensions by braided crossed modules}
\label{sec:biext-braid}

We extend the notion of biextension by adapting it to include braided crossed modules as coefficients.

Let $\mathsf{G}=(\del\colon G\to \Pi, \braces \cdot \cdot )$ be a braided crossed module of $\T$, and let $H$, $K$ be groups of $\T$.
\begin{definition}
  \label{def:2}
  A biextension of $H$, $K$ by $(G,\Pi)$ is a $(G,\Pi)_{H\times K}$-torsor $E$ over $H\times K$ equipped with two partial composition laws $\times_1$ (resp.\ $\times_2$) making $E$ into an extension of $H_K$ (resp.\ $K_H$) by $(G,\Pi)_K$ (resp.\ by $(G,\Pi)_H$)—cf.\ sect.~\ref{sec:schr}. We require the composition laws to be compatible in the manner described by a diagram formally identical to~\eqref{eq:5} in sect.~\ref{sec:biextensions}.
\end{definition}

To be precise, in the last condition of the above definition we must specify that the horizontal arrow in diagram~\eqref{eq:5}, adapted to the present case,
\begin{equation}
  \label{eq:8}
  \Hat c \colon
  (E_{h,k} \wedge^G E_{h',k}) \wedge^G (E_{h,k'} \wedge^G E_{h',k'})
  \lto
  (E_{h,k} \wedge^G E_{h,k'}) \wedge^G (E_{h',k} \wedge^G E_{h',k'})
\end{equation}
is given by the braiding of the stack $\stG=[\del\colon G\to \Pi]\sptilde$ in a manner analogous to the horizontal arrow of diagram~\eqref{eq:7} in Definition~\ref{def:1}. A global version of the compatibility condition is given by equation~\eqref{eq:6}.

The two partial composition laws give rise to morphisms of $(G,\Pi)$-torsors
\begin{align*}
  \gamma^1_{h,h';k} &\colon E_{h,k}\wedge^G E_{h',k} \lisoto E_{hh',k} \\
  \intertext{and}
  \gamma^2_{h;k,k'} &\colon E_{h,k}\wedge^G E_{h,k'} \lisoto E_{h,kk'}
\end{align*}
each satisfying the obvious associativity condition, relative to the relevant variables, which can be obtained, mutatis mutandis, from \parencite[refs.][]{MR0354656-VII,MR823233}.  Again, a global version is given by~\eqref{eq:4}.

The morphism~\eqref{eq:8} can be explicitly computed through the braiding of $\mathsf{G}$ as follows.
\begin{proposition}
  \label{prop:1}
  Let $u,u',v,v'$ be points of $E_{h,k}, E_{h',k}, E_{h,k'}$, and $E_{h',k'}$, respectively. Then we have
  \begin{equation}
    \label{eq:9}
    (u\times_1 u')\times_2 (v\times_1 v') =
    (u\times_2 v)\times_1 (u'\times_2 v') \,
    \braces{f(u')}{f(v)}^{-f(v')},
  \end{equation}
  where $f$ is the equivariant section into the trivial $\Pi$-torsor.
\end{proposition}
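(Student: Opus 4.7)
The plan is to evaluate both branches of the compatibility diagram~\eqref{eq:5} on the element $(u\wedge u')\wedge(v\wedge v')$ of the fourfold contracted product, and to explicitly compute the element of $G$ produced by the central braiding morphism $\hat c$; the identity~\eqref{eq:9} will then follow by reading off the equality of the two branches. The left-hand branch reduces, by definition of $\gamma^1$ and $\gamma^2$, to $(u\times_1 u')\times_2(v\times_1 v')$, so the content of the proof lies in computing the right-hand branch.

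For the right branch, $\hat c$ is, up to the associators of $\sttors(G,\Pi)$, the braiding $c_{E_{h',k},E_{h,k'}}$ applied to the two innermost factors. By Lemma~\ref{lem:1}, applied with $(P,s)=E_{h',k}$ and $(Q,t)=E_{h,k'}$,
\[
  c_{E_{h',k},E_{h,k'}}(u'\wedge v) = (v\wedge u')\cdot \chi_{u',v},\qquad \chi_{u',v}=\braces{f(u')}{f(v)}^{-1}.
\]
After the swap, the fourfold product therefore becomes $u\wedge v\wedge (u'\chi_{u',v})\wedge v'$.

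The key step is to transport $\chi_{u',v}\in G$ through the reassociation to the grouping $(u\wedge v)\wedge(u'\wedge v')$. The contraction relation $(u'\chi_{u',v})\wedge v' = u'\wedge(\chi_{u',v}\cdot v')$, combined with the bitorsor identity $g\cdot v' = v'\,g^{f(v')}$ (which holds because $E_{h',k'}$ is a $(G,\Pi)$-torsor with structural section $f$), yields $(u'\chi_{u',v})\wedge v' = (u'\wedge v')\cdot \chi_{u',v}^{\,f(v')}$. Hence
\[
  \hat c\bigl((u\wedge u')\wedge(v\wedge v')\bigr) = (u\wedge v)\wedge (u'\wedge v')\cdot \braces{f(u')}{f(v)}^{-f(v')}.
\]

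To finish, I would apply $\gamma^2$ to each of the pairs $(u,v)$ and $(u',v')$, and then $\gamma^1$ to the result; since both are morphisms of $(G,\Pi)$-torsors, the right $G$-action factors through, so the right branch evaluates to $\bigl((u\times_2 v)\times_1(u'\times_2 v')\bigr)\cdot \braces{f(u')}{f(v)}^{-f(v')}$. Equating with the left branch produces~\eqref{eq:9}. The main technical obstacle is the bookkeeping in the middle step: one must carefully distinguish the left and right $G$-actions on the bitorsor $E_{h',k'}$, since it is precisely the bitorsor twist $g\cdot v' = v'\,g^{f(v')}$ that accounts for the exponent $f(v')$ in the final formula—a conjugation is easy to misplace given the combination of associators, braiding, and two-sided actions at play.
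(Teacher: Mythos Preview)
Your proof is correct and follows exactly the approach the paper sketches: apply Lemma~\ref{lem:1} to the inner braiding $E_{h',k}\wedge^G E_{h,k'}\to E_{h,k'}\wedge^G E_{h',k}$, then use the bitorsor relation $g\cdot v'=v'\,g^{f(v')}$ to push the resulting element of $G$ to the right, which is precisely what the paper means by ``the right action by $f(v')$ arises from the relation between the left and right $G$-torsor structures of $E$.'' Your version simply makes explicit the bookkeeping the paper leaves to the reader.
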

\begin{proof}
  Use Lemma~\ref{lem:1} to express the morphism $E_{h',k}\wedge^G E_{h,k'}\to E_{h,k}\wedge^G E_{h',k}$. The right action by $f(v')$ arises from the relation between the left and right $G$-torsor structures of $E$.
\end{proof}
\begin{example}
  \label{ex:1}
  A standard biextension $E$ of $H,K$ by $G$ (all groups assumed commutative) is a biextension in the sense specified above with coefficients in the complex $G[1]: [G \to 0]$.  The interchange law, when expressed in terms of points, takes the standard form
  \begin{equation*}
    (u\times_1 u')\times_2 (v\times_1 v') =
    (u\times_2 v)\times_1 (u'\times_2 v').
  \end{equation*}
  Dropping the commutativity of the partial multiplication laws, $E$ becomes a biextension of the sort mentioned in the ``variants'' in \cite{MR0354656-VII}. For such a biextension each extension determined by the partial multiplication laws is a central extension.
\end{example}
\begin{remark}
  \label{rem:2}
  We do not require the partial multiplication laws to be commutative, even if the groups $H$ and $K$ are. If we do require the multiplication laws to be commutative, then $G$ must be abelian and the composite of $\del\colon G\to \Pi$ with $\Pi\to \Aut(G)$ arising from the crossed module structure, factors through zero. This follows from the fact that the strict commutativity forces the braiding to be identically equal to the identity morphisms, and hence, by Lemma~\ref{lem:1}, we have $\chi=1$. By evaluating on torsors of the form $(G,x)$ we obtain $\braces \cdot \cdot = 1$ identically on $\Pi\times \Pi$. Then from the properties~\eqref{eq:2} it follows at once that $G$ and $\Pi$ must be abelian. As a consequence, the action of $\del g$, for any $g\in G$, is trivial, therefore the biextension reduces to one of the type mentioned in Example~\ref{ex:1} above. See however below for a more appropriate notion of commutativity.
\end{remark}
\begin{definition}
  A morphism of biextensions of $H,K$ by $(G,\Pi)$ is a morphism of the underlying $(G,\Pi)$-torsors compatible with the partial composition laws. 
\end{definition}
The compatibility with the composition laws means that there are commutative diagrams of $(G,\Pi)$-torsors
\begin{equation*}
  \vcenter{%
    \xymatrix{%
      E_{h,k}\wedge^G E_{h',k} \ar[d]_{\phi\wedge\phi} \ar[r]^{\gamma_1}
      &  E_{hh',k} \ar[d]^\phi \\
      E'_{h,k}\wedge^G E'_{h',k} \ar[r]_{\gamma'_1} & E'_{hh',k} 
    }}
  \qquad
  \text{and}
  \qquad
  \vcenter{%
    \xymatrix{%
      E_{h,k}\wedge^G E_{h,k'} \ar[d]_{\phi\wedge\phi} \ar[r]^{\gamma_2}
      & E_{h,kk'} \ar[d]^\phi \\
      E'_{h,k}\wedge^G E'_{h,k'} \ar[r]_{\gamma'_2} & E'_{h,kk'}.
    }}
\end{equation*}
The \strong{trivial biextension} is $I=(G_{H\times K},e_\Pi)$. More generally, a biextension will be considered trivial if it is isomorphic, in the sense specified above, to the trivial one. Such an isomorphism amounts to the existence of a ``central'' section of the underlying $(G,\Pi)$-torsor, namely a global $s \colon H\times K\to E$ commuting with the left and right actions of $G$ on $E$, obtained as the image by $\phi$ of the unit section $e_G$ of $I$. Then the two above diagrams show that $s$ is a homomorphism in each variable.

With the above notion of morphism and trivial object, biextensions of $H,K$ by $\mathsf{G}$ form a \emph{pointed} groupoid $\grdbiext(H,K;\mathsf{G})$. A standard argument based on descent shows that over a variable base $S$, the collection of all $\grdbiext(H_S,K_S;\mathsf{G}_S)$ forms a stack, equally pointed, denoted $\stbiext(H,K;\mathsf{G})$.  The question of whether either of $\grdbiext(H,K; \mathsf{G})$ and $\stbiext(H,K; \mathsf{G})$ possess a monoidal structure is more delicate, and it will require additional commutativity properties from $\mathsf{G}$, as discussed below.

The groupoid $\grdbiext(H,K;\mathsf{G})$ is evidently additive with respect to the last variable, namely
\begin{equation*}
  \grdbiext(H,K;\mathsf{G}_1\times \mathsf{G}_2) \iso \grdbiext(H,K;\mathsf{G}_1)\times \grdbiext(H,K;\mathsf{G}_2),
\end{equation*}
and the same holds for $\stbiext(H,K;\mathsf{G})$.  Additivity on the other two variables relies on the commutativity of the partial multiplications for the same reason as \cite[\S 1.2]{MR823233}, hence it will not hold in our case.

Biextensions by $(G,\Pi)$ have certain structural properties analogous to those of extensions reviewed in sect.~\ref{sec:schr}, more precisely, we have:
\begin{lemma}
  \label{lem:2}
  If $E\to H\times K$ is a biextension of $H$, $K$ by $(G,\Pi)$, then $E$ is equipped with a map $\jmath\colon E\to \Pi$ which is a homomorphism for each partial composition law. For all points $e\in E_{h,1}$, or $e\in E_{1,k}$, and $g\in G$ we have the relation
  \begin{equation}
    \label{eq:10}
    \imath (g) e = e \imath (g^{\jmath (e)}).
  \end{equation}
  and the diagram
  \begin{equation}
    \label{eq:11}
    \begin{gathered}
      \xymatrix{%
        G \ar[d]_\del \ar[r]^\imath & E \ar[r]^(.4)p \ar[dl]^\jmath & H\times K \\
        \Pi
      }
    \end{gathered}
  \end{equation}
  commutes, where $\imath$ is the identification $(G,e_\Pi)\isoto E_{1,1}$ as $(G,\Pi)$-torsors.
\end{lemma}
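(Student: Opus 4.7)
The proof is essentially a bookkeeping exercise that reduces to the Schreier--Grothendieck--Breen structure of an ordinary extension by $(G,\Pi)$ recalled in sect.~\ref{sec:schr}. The map $\jmath\colon E \to \Pi$ already exists as the structural $G$-equivariant section that is part of the datum of $E$ being a $(G,\Pi)$-torsor; so the only real content of the lemma is the three assertions concerning $\jmath$.

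To prove that $\jmath$ is a homomorphism for $\times_1$, I would pull back the entire biextension structure along the unit section $H \hookrightarrow H\times K$, $h\mapsto (h,1)$. By Definition~\ref{def:2}, the first partial composition law endows this pullback with the structure of a $(G,\Pi)$-extension of $H$ in the sense of sect.~\ref{sec:schr}, and the characterization recalled there immediately says that the corresponding equivariant section (which is simply the restriction of $\jmath$) is a homomorphism. The same argument applied along $\{1\}\times K$ with $\times_2$ gives the second homomorphism property. To upgrade this from the axes to the full base, the same argument runs internally over $K$ (resp.\ $H$), exploiting the fact that $\times_1$ is specified to make $E$ into an extension of $H_K$ \emph{over $K$}; the statements of sect.~\ref{sec:schr} are of course available internally in $\T$ over any base.

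The commutativity of diagram~\eqref{eq:11} follows because $\imath\colon (G_1,e_\Pi)\isoto E_{1,1}$ is, by construction, the canonical identification of the unit fiber with the trivial $(G,\Pi)$-torsor. Since this is an isomorphism of $(G,\Pi)$-torsors, the equivariant section $g\mapsto \del(g)$ of $(G_1,e_\Pi)$ is transported to $\jmath\rvert_{E_{1,1}}\circ \imath$; in other words $\jmath\circ \imath=\del$. One must check that the two a priori identifications $(G_1,e_\Pi)\iso E_{1,1}$ arising from the two extension structures coincide, but this is automatic because both single out the common unit element of the two partial multiplications on $E_{1,1}$.

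Finally, the identity $\imath(g)e = e\imath(g^{\jmath(e)})$ is just the Schreier crossed-module relation for an extension by $(G,\Pi)$, applied to $E\rvert_{H\times \{1\}}$ (with $\times_1$) when $e\in E_{h,1}$ and to $E\rvert_{\{1\}\times K}$ (with $\times_2$) when $e\in E_{1,k}$. The main, and very mild, obstacle is therefore just ensuring that the parametrized versions of the results in sect.~\ref{sec:schr} apply, which they do since those results are geometric in nature; no genuine additional work is needed beyond careful tracking of which partial law is in play.
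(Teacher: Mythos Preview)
Your proposal is correct and follows essentially the same route as the paper: reduce to the Schreier--Grothendieck--Breen theory of extensions by $(G,\Pi)$, applied to the two relative extension structures of $E$ over $K$ and over $H$. The paper is slightly more direct in that it works internally over $K$ (resp.\ $H$) from the outset rather than first restricting to the axes and then ``upgrading,'' and it is a bit more careful about why the two candidate unit sections of $E_{1,1}$ coincide (citing \cite[\S 2.2]{MR0354656-VII} rather than declaring it automatic), but these are matters of presentation only.
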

The identification of the fiber $E_{1,1}$ above the unit $(e_H,e_K)$ of $H\times K$ with the unit torsor $I=(G,e_\Pi)$ amounts to the definition of exactness of the row in~\eqref{eq:11}.
\begin{proof}
The arguments are virtually the same as those of \cite{MR0354656-VII}, except for the part concerning the morphism $\jmath$. Following loc.\ cit., we can use the facts about extensions recalled in sect.~\ref{sec:schr} by working over the terminal object $K$ (resp. $H$) when we view $E$ as an extension of $H_K$ (resp.\ $K_H$) by $(G,\Pi)_H$ (resp.\ $(G,\Pi)_K$). 

With this in mind, let $f=(\jmath,p)\colon E\to {\Pi}_{H\times K}$, with $\jmath\colon E\to \Pi$, be the equivariant morphism which is part of the $(G,\Pi)_{H\times K}$-torsor structure. As an extension of $H_K$ by $(G,\Pi)_K$, $E$ has a structure of $(G,\Pi)_K\times_K H_K$-torsor over $H_K$, hence $f \colon E\to \Pi_K\times_K H_K\iso \Pi\times H\times K$ is written as $f=(\jmath_K,p)$, where $\jmath_K\colon E\to \Pi_K$. We have $\jmath_K=(\jmath,p_K)$, where $p_K$ (resp. $p_H$) is the composite of $p$ with the projection to $K$. A similar picture holds by exchanging $H$ with $K$. Since $\jmath_K$ and $\jmath_H$ are homomorphisms for the corresponding product structure, $\jmath$ is a homomorphism for both.

Let $e_{H/K}=(e_H,\id_K)$ be the unit section of $H_K$ and $e_{K/H}=(\id_H,e_K)$ that of $K_H$. Since the fibers $E_{1,k} = e_{H/K}^*E$ and $E_{h,1}=e_{K/H}^*E$ are respectively identified with $G_K$ and $G_H$ as $(G,\Pi)$-torsors, we have the unit section $e_{E/K}$ (resp.\ $e_{E/H}$) in $E_{1,k}$ (resp. $E_{h,1}$). Now let $E_{1,1}=(e_H,e_K)^*E$ be the fiber over the unit section $(e_H,e_K)$ of $H\times K$. This is the common pullback of both $E_{1,k}$ and $E_{h,1}$. The same argument as in \cite[\S 2.2]{MR0354656-VII} shows that the restriction $e_{E/K}$ and $e_{E/H}$  to $(e_H,e_K)$ agree, so let $e_E\in E_{1,1}$ be their common value.  From \cite[\S 2.1]{MR0354656-VII} $e_E$ is central, since both $e_{E/K}$ and $e_{E/H}$ are, and we define $\imath$ by $\imath (g) = g \, e_E = e_E\, g$. Note, $\imath$ is the common restriction of $\imath_K$ and $\imath_H$ respectively defined by $e_{E/K}$ and $e_{E/H}$.

Analogously to the case of extensions, the commutativity of the ``wing'' in~\eqref{eq:11} follows from the equivariance of $f$, and hence of $\jmath$, having observed that $\jmath(e_E)$ must be equal to the unit of $\Pi$.  Finally, the relation~\eqref{eq:10} follows from the definition of the bitorsor structure underlying that of a $(G,\Pi)$-bitorsor (cf.\ the end of sect.~\ref{sec:cat-groups-cross}).
\end{proof}

\begin{remark}
  In the converse direction to Lemma~\ref{lem:2} a biextension of $H,K$ by $(G,\Pi)$ can be recovered from the datum $(E,\times_1,\times_2,\imath,\jmath)$, where $p\colon E\to H\times K$ is an object of $\T/_{H\times K}$ equipped with two partial composition laws satisfying the interchange law, and $\imath$ and $\jmath$ satisfy the conditions of the lemma.
\end{remark}

If $E'$ is a second biextension, with maps $\imath'\colon G\to E$ and $\jmath'\colon E\to \Pi$ satisfying the conditions of Lemma~\ref{lem:2}, a biextension morphism $\phi\colon E\to E'$ makes the following diagram
\begin{equation*}
  \xymatrix{%
    G \ar@/^2pc/[rr]^{\imath'} \ar[d]_\del \ar[r]^\imath & E \ar[r]^(.4)\phi \ar[dl]^\jmath
    & E' \ar@/^1pc/[dll]^{\jmath'}\\
    \Pi
  }
\end{equation*}
commutative.  

For a biextension which is globally of the form $(G_{H\times K},x)$, where $x\colon H\times K\to \Pi$ characterizes the equivariant structural morphism, we obtain a decomposition in terms of a pair of nonabelian cocycles, which we briefly describe.

The partial multiplication laws are described by
\begin{subequations}
  \label{eq:12}
  \begin{align}
    \label{eq:13}
    (h,k,a)(h',k,a') &= (hh',k,g_1(h,h';k)a^{x(h',k)}a'), \\
    \label{eq:14}
    (h,k,b)(h,k',b') &= (h,kk',g_2(h;k,k')b^{x(h,k')}b').
  \end{align}
\end{subequations}
The actions stem from the change from right to left action,
\begin{equation*}
  (h,k,a)(h',k,a') = ((h,k,e_G) a)(h',k,a') = (h,k,e_G) (a(h',k,a'))
  = (h,k,e_G) (h',k,a^{x(h',k)}a'),
\end{equation*}
and similarly for the other one. From the associativity property and the fact that the equivariant section $G_{H\times K}\to \Pi$ must be a homomorphism for both laws, we find that $(g_1,g_2,x)$ must satisfy a pair of nonabelian cocycle conditions:
\begin{subequations}
  \label{eq:15}
  \begin{align}
    \label{eq:16}
    g_1(hh',h'';k)g_1(h,h';k)^{x(h'',k)} &=
    g(h,h'h'';k)g_1(h',h'';k) \\
    \label{eq:17}
    x(h,k)x(h',k) &= x(hh',k)\,\del g_1(h,h';k) \\
    \intertext{and}
    \label{eq:18}
    g_2(h;kk',k'')g_2(h;k,k')^{x(h,k'')} &=
    g_2(h;k,k'k'')g_2(h;k',k'') \\
    \label{eq:19}
    x(h,k)x(h,k') &= x(h,kk')\,\del g_2(h;k,k')  
  \end{align}
\end{subequations}
The cocycles $(g_1,x)$ and $(g_2,x)$ are not independent: from the compatibility between the partial multiplication laws, using Proposition~\ref{prop:1}, we find:
\begin{multline}
  \label{eq:20}
  g_2(hh';k,k')g_1(h,h';k)^{x(hh',k')}g_1(h,h';k') = \\
  g_1(h,h';kk')g_2(h,k,k')^{x(h',kk')}g_2(h';k,k')
  \braces{x(h',k)}{x(h,k')}^{-x(h',k')}.
\end{multline}
These cocycles are a coboundary if there exists $u\colon H\times K\to G$ such that:
\begin{subequations}
  \label{eq:21}
  \begin{align}
    u(hh',k)\, g_1(h,h';k) &= u(h,k)u(h',k)\\
    u(h,kk')\, g_2(h;k,k') &= u(h,k)u(h,k')\\
    x(h,k) &= \del\, u(h,k).
  \end{align}
\end{subequations}

\section{Biadditive morphisms and biextensions}
\label{sec:biadditive-biext}

Let $E$ be a biextension of $H,K$ by $(G,\Pi)$. Then $E$ defines a biadditive morphism
\begin{equation*}
  F_E \colon H\times K\lto \stG,
\end{equation*}
in the sense of Definition~\ref{def:1}, where $\stG$ is the group-like associated stack, by assigning to a point $(h,k)\in H\times K$ the $(G,\Pi)$-torsor $E_{h,k}$.  Indeed, it is easily seen that the isomorphisms $\gamma^1$ and $\gamma^2$ plus the compatibility of the two composition laws expressed by~\eqref{eq:5} satisfy the required conditions. It is also easily verified that a morphism of biextensions $\phi\colon E\to E'$ induces a natural transformation (denoted with the same symbol)
\begin{equation*}
  \phi\colon F_E\Rightarrow F_{E'} \colon H\times K\lto \stG.
\end{equation*}
This establishes a functor
\begin{equation*}
  u\colon \grdbiext(H,K;\mathsf{G}) \lto \grdbiadd(H,K;\stG),
\end{equation*}
and in fact one between stacks
\begin{equation*}
  u\colon \stbiext(H,K;\mathsf{G}) \lto \stbiadd(H,K;\stG),
\end{equation*}
These functors are evidently fully faithful. In the converse direction, we have:
\begin{proposition}
  \label{prop:2}
  Let $F\colon H\times K\to \stG$ be a biadditive morphism, and let $\stG$ have a presentation by way of the braided crossed module $\mathsf{G}=(G,\Pi,\del,\braces\cdot\cdot)$. Then the pullback of the sequence~\eqref{eq:1} along $F$ determines a biextension $E=E_F$ of $H,K$ by $\mathsf{G}$.
\end{proposition}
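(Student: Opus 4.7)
The natural candidate for $E_F$ is the pullback of the universal $(G,\Pi)$-torsor $\pi\colon \Pi\to\stG$ from sequence~\eqref{eq:1} along $F\colon H\times K\to\stG$. Concretely, because $\stG\simeq\sttors(G,\Pi)$ and $\pi$ sends a point $x\in \Pi$ to the trivial $(G,\Pi)$-torsor $(G,x)$, the pullback $E_F=\Pi\times_{\stG}(H\times K)$ is a $(G,\Pi)$-torsor over $H\times K$ whose fiber over $(h,k)$ is canonically identified with $F(h,k)$; it is equipped with an equivariant section $\jmath\colon E_F\to \Pi$ and an identification $\imath\colon G\isoto E_{1,1}$ arising from the pullback of the unit $e_\Pi$, thereby yielding the diagram~\eqref{eq:11} essentially for free.

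The two partial composition laws will come, one for each variable, from the monoidal structure of $F$ in that variable. For fixed $k$, the slice $F(-,k)\colon H\to\stG$ is an additive (monoidal) functor between group-like stacks, where $H$ is viewed as discrete. By the Schreier-Grothendieck-Breen theory recalled in sect.~\ref{sec:schr}, such a functor is represented by a butterfly whose ``central'' object is an extension of $H_K$ by $(G,\Pi)_K$; this extension is precisely the restriction of $E_F$ to $H\times\{k\}$, equipped with a partial product $\times_1$ given fiberwise by a map $E_{h,k}\wedge^G E_{h',k}\to E_{hh',k}$ that implements the structural morphism $\lambda^1_{h,h';k}$ of $F$. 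Interchanging roles yields the second law $\times_2$ from $\lambda^2_{h;k,k'}$. Both extension structures rest on the same underlying $(G,\Pi)$-torsor because both are obtained by pulling back the same universal torsor $\pi\colon \Pi\to \stG$ along $F$.

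It remains to verify (i) that the $\times_i$ are morphisms of $(G,\Pi)$-torsors with $\jmath$ a homomorphism for each, and (ii) the interchange/compatibility condition~\eqref{eq:5}. Condition (i) follows at once from functoriality of $\lambda^1$ and $\lambda^2$ together with the equivariance built into the pullback construction. For (ii), the critical observation is that condition~\eqref{eq:7} in Definition~\ref{def:1}—the commutativity of the square involving the ``commuto-associativity'' morphism $\hat c$ arising from the braiding of $\stG$—is formally identical to the biextension interchange diagram~\eqref{eq:5} after the identification of fibers $E_{h,k}=F(h,k)$. The arrow $\hat c$ appearing in~\eqref{eq:7} is, by construction, the same braiding-induced swap~\eqref{eq:8} that appears in the biextension compatibility; under Lemma~\ref{lem:1}, $\hat c$ is computed in coordinates by the bracket $\braces--$ of $\mathsf{G}$, which is precisely the correction term appearing in Proposition~\ref{prop:1}. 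This is the technical point where the presence of the braiding on $\mathsf{G}$ (and not merely a crossed-module structure) is essential, and it is the step I expect to require the most care.

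Finally, checking functoriality in $F$—so that a natural transformation $\phi\colon F\Rightarrow F'$ produces a morphism of biextensions $E_F\to E_{F'}$—is a routine consequence of the 2-functoriality of pullback and the compatibility of $\phi$ with the $\lambda^i$. This, together with the fully faithful functor $u$ constructed before the proposition, will ultimately set up the equivalence stated in the main theorem, but for the present proposition only the object-level construction is needed.
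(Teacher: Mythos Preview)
Your proposal is correct and follows essentially the same route as the paper: form the pullback $E=(H\times K)\times_\stG\Pi$, read off the $(G,\Pi)$-torsor structure from the universality of $\Pi\to\stG$, obtain $\times_1,\times_2$ from $\lambda^1,\lambda^2$, and deduce the interchange~\eqref{eq:5} from condition~\eqref{eq:7}. The only cosmetic differences are that the paper phrases the partial laws directly via the identity $f^*\Pi\wedge^G g^*\Pi\iso (fg)^*\Pi$ rather than invoking sect.~\ref{sec:schr}, and it supplements the argument with an explicit description of points of $E$ as triples $((h,k),a,x)$ with $a\colon F(h,k)\to (G,x)$—so be aware that the fiber $E_{h,k}$ is not literally $F(h,k)$ but the torsor of its trivializations (isomorphic to it, with the contravariance of Remark~\ref{rem:1}).
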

\begin{proof}
  Let $E = (H\times K)\times_\stG \Pi$ be the pullback. Set $p\colon E\to H\times K$ (resp.\ $\jmath\colon E\to \Pi$) equal to the first (resp.\ second) projection. We claim that $E$ is a biextension of $H,K$ by $\mathsf{G}$.

  The sequence~\eqref{eq:1} can be seen as the universal extension by $(G,\Pi)$.  In particular, $\Pi$ can be identified with the universal $G,\Pi)$-torsor (the identity $\id_\Pi$ is tautologically the structural equivariant section). This makes apparent that the pullback to $H\times K$ is a $(G,\Pi)$-torsor with structural map $\jmath$.  As for the partial multiplication laws, observe that, with the same notation as sect.~\ref{sec:biextensions}, the biadditivity of $F$ amounts to a pair of natural transformations
  \begin{equation*}
    (F\circ d^h_2)\,(F\circ d^h_0) \Longrightarrow (F\circ d^h_1) \colon
    H\times H\times K \lto \stG, \qquad
    (F\circ d^h_2)\,(F\circ d^h_0) \Longrightarrow (F\circ d^h_1) \colon
    H\times K\times K \lto \stG,
  \end{equation*}
  and the equality of two transformations (see eqns.~\eqref{eq:6} and~\eqref{eq:7})
  \begin{equation*}
    (F d_0^hd_2^v) \, (Fd_2^hd_0^v) \, (Fd_0^hd_0^v) \, (Fd_1^hd_1^v)
    \Longrightarrow (Fd_1^hd_1^v) \colon
    H\times H\times K\times K \lto \stG.
  \end{equation*}
  This translates into the required properties for $E$ once we observe that the universality of $\Pi$ (as a $(G,\Pi)$-torsor over $\stG$) implies that for any pair $f,g\colon S\to \stG$ we have an isomorphism $f^*\Pi\wedge^G g^*\Pi\iso (fg)^*\Pi$ of $(G,\Pi)$-torsors. Last, the morphism $\imath$ is the image, under a further pullback along the unit section $(e_H,e_K)\in H\times K$, of $\del\colon G\to \Pi$. It is clear $\imath$ and $\jmath$ have the properties stated in Lemma~\ref{lem:2}.

  In more down-to-earth terms, everything can be explicitly checked by writing out explicit expressions for the points of the pullback. Thus, a point $e\in E$ is a triple $e=((h,k),a,x)$, where $(h,k)\in H\times K$, $x\in \Pi$, and $a\colon F(h,k)\to (G,x)$.  The $G$-action becomes evident by looking at two points $e$, $e'$ in the same fiber. We must have a commutative diagram of $(G,\Pi)$-torsors
  \begin{equation*}
    \xymatrix{%
      F(h,k) \ar[r]^{a'} \ar[dr]^a & (G,x') \ar[d]^g \\ & (G,x)
    }
  \end{equation*}
  where the vertical arrow, as a map of $(G,\Pi)$-torsors whose underlying $G$-torsor is trivial, is completely determined by a point $g\in G$ (the image of the unit section $e_G$) such that
  \begin{equation*}
    x' = x \,\del g.
  \end{equation*}
  The situation for the left action is completely analogous, with $x'=\del g\, x$.
  
  When $(h,k)=(e_H,e_K)$, the point $e_E=((e_H,e_K),l, e_\Pi)$, where $l$ is the unique morphism coming from the third condition in Definition~\ref{def:1}, is the central unit section of $E$. This gives the morphism $\imath\colon G\to E$.  The partial multiplications are obtained as follows. Let $e=((h,k),a,x)$ and $e=((h',k),a',x')$. Then $ee' = ((hh',k),a'', xx')$, where the morphism $a''$ is obtained from the diagram
  \begin{equation*}
    \xymatrix{%
      F(h,k)F(h',k) \ar[d]_{\lambda^1_{hh',k}}\ar[r]^{aa'} & (G,xx') \\
      F(hh',k) \ar[ur]_{a''}
    }
  \end{equation*}
  The second multiplication is defined in the same way. The interchange law immediately follows from the above and~\eqref{eq:7}.
\end{proof}
Since it is clear that if $\phi\colon F\Rightarrow F'$ is a natural transformations of biadditive morphisms by pullback we obtain a corresponding isomorphim $E_F\to E_{F'}$ of biextensions, the construction of Proposition~\ref{prop:2} provides functors
\begin{equation*}
  v\colon \grdbiadd(H,K;\stG) \lto \grdbiext(H,K;\mathsf{G})
\end{equation*}
and
\begin{equation*}
  v\colon \stbiadd(H,K;\stG) \lto \stbiext(H,K;\mathsf{G}).
\end{equation*}
\begin{proposition}
  \label{prop:3}
  The functors $u$ and $v$ are quasi-inverses.
\end{proposition}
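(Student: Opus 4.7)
The strategy is to leverage what has already been established. Lemma~\ref{lem:3} shows $u$ is essentially surjective, and the remark immediately preceding Proposition~\ref{prop:2} notes $u$ is fully faithful, so $u$ is an equivalence of (pointed) groupoids. To conclude that $v$ is a quasi-inverse it suffices to exhibit a natural isomorphism $\epsilon\colon u\circ v\Rightarrow \id$: combined with $u$ being an equivalence with some quasi-inverse $u^{-1}$, this forces $v\iso u^{-1}\circ u\circ v\iso u^{-1}$, so that $v\circ u\iso \id$ follows automatically. For transparency I would still produce the unit $\eta\colon \id\Rightarrow v\circ u$ explicitly.

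The natural isomorphism $\epsilon$ is essentially already at hand in the proof of Lemma~\ref{lem:3}: for a biadditive $F\colon H\times K\to \stG$, that argument provides for each $(h,k)$ an equivalence $F(h,k)\isoto (E_F)_{h,k}$ of $(G,\Pi)$-torsors, sending an object of $F(h,k)$ to the pair consisting of itself and its induced structural section to $\Pi$. The step is to verify that this family is natural in $(h,k)$ and is compatible with the partial additivity data $\lambda^1,\lambda^2$ of $F$ and of $F_{E_F}=u(v(F))$. That compatibility is essentially tautological, because by the construction in Proposition~\ref{prop:2} the $\lambda^i$ on $F_{E_F}$ are defined directly in terms of those on $F$ via the pullback description of $E_F$. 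Naturality in $F$ is the universal property of the pullback along $F$ of the sequence~\eqref{eq:1}.

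For the unit $\eta$, given a biextension $E$, a generalized point $e\in E_{h,k}$ with $\jmath(e)=x$ determines a unique $(G,\Pi)$-isomorphism $a_e\colon E_{h,k}\isoto (G,x)$ sending $e$ to $e_G$; the assignment $e\mapsto ((h,k),a_e,x)$ is then a morphism of $(G,\Pi)$-torsors over $H\times K$ into $v(u(E))=(H\times K)\times_\stG \Pi$, with obvious inverse $((h,k),a,x)\mapsto a^{-1}(e_G)$. That this morphism intertwines the partial multiplications $\times_1,\times_2$ and the structural maps $\imath,\jmath$ on $E$ and on $v(u(E))$ follows by inspection from the explicit formulas of Proposition~\ref{prop:2}, where the $\gamma^1,\gamma^2$ on $v(F_E)$ are manufactured precisely from the $\lambda^1,\lambda^2$ of $F_E$, which in turn come from the original $\gamma^1,\gamma^2$ on $E$.

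The only real work is bookkeeping: verifying all the compatibilities—partial laws, interchange, the structural morphisms $\imath,\jmath$, and the unit—for the two canonical comparison maps. No new ingredient beyond the universal role of~\eqref{eq:1} and the constructions of Proposition~\ref{prop:2} and Lemma~\ref{lem:3} is needed, and the same argument run over a variable base $S$ of $\T$ yields the stack version for $\stbiext(H,K;\mathsf{G})$ and $\stbiadd(H,K;\stG)$.
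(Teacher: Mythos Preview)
Your proposal is correct and follows essentially the same approach as the paper: the statement is presented there as a summary of the preceding discussion (with a bare \qed), relying on $u$ being fully faithful and on Lemma~\ref{lem:3}, whose proof already supplies the comparison $F\to u(v(F))$. You spell out more than the paper does—explicitly constructing the unit $\eta\colon E\to v(u(E))$ and checking the compatibilities—but this is just making explicit what the paper leaves as ``evident.''
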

\begin{proof}
  We need to show that the functor $u$ is essentially surjective.
  
  For each $F$ biadditive, we construct a morphism $F\to F'=F_E$, where $E=(H\times K)\times_\stG\Pi$. The biadditive morphism determined by $E$ assigns to each $(h,k)\in H\times K$ the $(G,\Pi)$-torsor $E_{h,k}$ consisting of pairs $(f,x)$ such that
  \begin{equation*}
    f\colon F(h,k) \lisoto (G,x).
  \end{equation*}
  Having observed this, the proof proceeds in a manner identical to that of Proposition 4.4.2 in~\cite{ButterfliesI} (in particular, cf. 4.4.2.1 and 4.4.2.2).
\end{proof}
To summarize the previous discussion, $\grdbiadd(H,K;\stG)$ and $\grdbiext(H,K;\mathsf{G})$ are equivalent, \emph{pointed} groupoids. The distinguished point is the trivial biextension, corresponding to the trivial biadditive morphism sending $(h,k)\in H\times K$ to the trivial $(G,\Pi)$-torsor. The same holds for $\stbiadd(H,K;\stG)$ and $\stbiext(H,K;\mathsf{G})$.

\section{Symmetry properties of biextensions}
\label{sec:properties}

It is well known (and see the discussion at the end of \cite{ButterfliesII}) that the groupoid of extensions of $K$ by $(G,\Pi)$ admits a monoidal structure if and only if the crossed module $\del\colon G\to \Pi$ (or equivalently the associated stack $\stG$) is braided. This monoidal structure is itself equipped with a commutativity isomorphism if and only if the braiding carried by $\stG$ is symmetric.

For biextensions the situation is more rigid. This is already apparent from the fact that in the very definition of a biextension a braided structure is required in order to be able to formulate the compatibility condition between the two multiplication laws.

If $E$ and $F$ are biextensions, we can use the braiding of $\stG$ to construct the partial multiplication laws in the expected manner:
\begin{equation}
  \label{eq:76}
  \xymatrix@1@C+1.5pc{%
    (E_{h,k}\wedge^G F_{h,k}) \wedge^G (E_{h',k}\wedge^G F_{h',k})
    \ar[r]^{\Hat c} &
    (E_{h,k}\wedge^G E_{h',k}) \wedge^G (F_{h,k}\wedge^G F_{h',k})
    \ar[r]^(.6){\gamma^1\wedge\mu^1}&
    E_{hh',k} \wedge F_{hh',k} \,,
  }
\end{equation}
and similarly for the second partial multiplication.
\begin{proposition}
  \label{prop:4}
  For any pair of biextensions $E$ and $F$ of $(H,K)$ by $(G,\Pi)$, the partial multiplication laws~\eqref{eq:76} comprise a biextension structure on the contracted  product (as $(G,\Pi)$-torsors) $E\wedge^GF$ if the braiding on the crossed module $\mathsf{G}=(G,\Pi,\del,\braces--)$ (or equivalently $\stG$) is symmetric.
\end{proposition}
\begin{proof}
  With reference to \eqref{eq:76}, let us simply denote the first multiplication morphism so obtained by $\gamma^1\mu^1$, and let $\gamma^2\mu^2$ denote the second multiplication structure obtained in exactly the same way.

  For the compatibility condition between the two multiplications we must write diagram~\eqref{eq:5} for $\gamma^1\mu^1$ and $\gamma^2\mu^2$. To simplify notations and readability, let us suppress the symbol $\wedge^G$ and the associativity morphisms. Set:\ %
  $a=E_{h,k}$, $x=F_{h,k}$,\ %
  $y=E_{h',k}$, $z=F_{h',k}$,\ %
  $u=E_{h,k'}$, $v=F_{h,k'}$,\ % 
  $w=E_{h',k'}$, $b=F_{h',k'}$.
  Also, let us use parentheses to denote the various codomains of $\gamma$, $\mu$, namely $(ay)=E_{hh',k}$, $(xz)=F_{hh',k}$, etc. Consider the following diagram:
  \begin{small}
  \begin{equation*}
    \xymatrix@C-3pc{%
      & axyzuvwb
      \ar[rrrrrr] \ar[dr] \ar[dddl] &&&&&&
      axuvyzwb \ar[dl] \ar[dddr]\\
      & &  ayxzuwvb \ar[dr] \ar[ddll] &&&&
      auxvywzb \ar[dl] \ar[ddrr]\\
      & && ayuwxzvb \ar[rr] \ar[d] &&
      auywxvzb \ar[d] \\
      %%E_{hh',k}F_{hh',k}E_{hh',k'}F_{hh',k'}
      (ay)(xz)(uw)(vb)
      \ar[rrr] \ar[drrrr] & &&
      %%E_{hh',k}E_{hh',k'}F_{hh',k'}F_{hh',k'}
      (ay)(uw)(xz)(vb)
      \ar[dr] &&
      %%E_{h,kk'}E_{h',kk'}F_{h,kk'}F_{h',kk'}
      (au)(yw)(xv)(zb)
      \ar[dl] && &
      %%E_{h,kk'}F_{h,kk'}E_{h',kk'}F_{h',kk'}
      (au)(xv)(yw)(zb)
      \ar[lll] \ar[dllll]\\
      & && & E_{hh',kk'}F_{hh',kk'}
    }
  \end{equation*}
\end{small}
The outer pentagon corresponds to the sought-after compatibility for the partial multiplications of $E\wedge^GF$, whereas the little inner one is obtained by juxtaposing the compatibility conditions for $E$ and $F$. Thus the inner pentagon commutes. The triangles commute by definition, since they just hold the definitions of $\gamma^1\mu^1$ and $\gamma^2\mu^2$. The quadrangles are commutative by inspection. This leaves the big hexagon in the middle. The position of the external variables is unchanged throughout, hence the analysis of the diagram reduces to the one in Lemma~\ref{lem:11}. This proves the proposition.
\end{proof}

Once the coefficient crossed module carries a symmetric braiding, we are provided with a product of biextensions. An easy argument, based on the definition of the partial multiplication laws in the proof of Proposition~\ref{prop:4} and Lemma~\ref{lem:10}, or alternatively \cite[\S 8.2]{ButterfliesII}, shows that the braiding provides a morphism
\begin{equation*}
  c\colon E\wedge^GF \lto F\wedge^GE
\end{equation*}
of biextensions, which is evidently symmetric. We summarize the previous discussion as
\begin{proposition}
  For any $H,K$ and $\mathsf{G}=(G,\Pi,\del,\braces--)$, 
  $\grdbiext(H,K;\mathsf{G})$ (resp.\ $\stbiext(H,K;\mathsf{G})$) is a braided symmetric group-like groupoid (resp.\ stack) if and only if \ $\mathsf{G}$ carries a symmetric braiding.\qed
\end{proposition}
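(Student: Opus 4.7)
The plan is to deduce both implications from Proposition~\ref{prop:4} together with a direct verification that the braiding of $(G,\Pi)$-torsors descends to biextensions.

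For the ``only if'' direction, the assertion that $\grdbiext(H,K;\mathsf{G})$ is group-like entails in particular the existence of a product of biextensions. By Proposition~\ref{prop:4}, the existence of such a product forces the braiding on $\mathsf{G}$ to be symmetric, and we are done.

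For the ``if'' direction, assume that $\mathsf{G}$ is symmetrically braided. Then Proposition~\ref{prop:4} already supplies the monoidal product $E\wedge^G F$, equipped with the partial composition laws $\gamma^1\mu^1$ and $\gamma^2\mu^2$ constructed in its proof. I would take the trivial biextension $I=(G_{H\times K},e_\Pi)$ as the unit object; associativity, unitality, and pentagon/triangle axioms all reduce to the corresponding statements for the contracted product of $(G,\Pi)$-torsors, together with the routine check that the torsor associator and unitor are compatible with the two partial composition laws (this last point is formally identical to the commutativity arguments in the proof of Proposition~\ref{prop:4}). The braiding on $\grdbiext(H,K;\mathsf{G})$ is then defined to be the braiding
\[
  c_{E,F}\colon E\wedge^G F \lto F\wedge^G E
\]
of the underlying $(G,\Pi)$-torsors, transported from the braided structure on $\stG$; its symmetry (i.e.\ $c_{F,E}\circ c_{E,F}=\id$) is immediate from the hypothesis that the braiding on $\mathsf{G}$ is symmetric, via Lemma~\ref{lem:1}.

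The substantive point to check is that $c_{E,F}$ is a morphism of biextensions, i.e.\ that it intertwines $\gamma^1\mu^1$ with $\mu^1\gamma^1$ and $\gamma^2\mu^2$ with $\mu^2\gamma^2$. This is the technical core of the proof, but it is essentially tautological once one unfolds the definitions: the construction of $\gamma^1\mu^1$ in the proof of Proposition~\ref{prop:4} is itself built from the morphism $\Hat c$ of~\eqref{eq:8}, so compatibility of $c_{E,F}$ with $\gamma^1\mu^1$ (and likewise with $\gamma^2\mu^2$) reduces to a hexagon-style identity for $\Hat c$, which is a direct instance of Lemma~\ref{lem:10} applied in the same way as in the final step of the proof of Proposition~\ref{prop:4}. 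The main obstacle, therefore, is purely combinatorial bookkeeping of the braiding terms produced by~\eqref{eq:9}; the symmetry of the braiding on $\mathsf{G}$ is exactly what is required to make these terms cancel. The relative version for the stack $\stbiext(H,K;\mathsf{G})$ follows by descent, since all the structural morphisms above are natural in the base $S$.
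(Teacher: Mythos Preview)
Your proposal is correct and follows essentially the same approach as the paper: the ``only if'' direction comes from Proposition~\ref{prop:4}, and the ``if'' direction uses Proposition~\ref{prop:4} for the product together with Lemma~\ref{lem:10} to verify that the braiding $c_{E,F}$ is a morphism of biextensions. The paper's own argument is extremely terse (one paragraph preceding the statement, ending with a \qed), so your version is in fact a more detailed unpacking of the same strategy; you correctly identified the key lemma and the mechanism by which symmetry is used.
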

By equivalence, the same statement holds for $\grdbiadd(H,K;\stG)$ and $\stbiadd(H,K;\stG)$, respectively; the resulting symmetric  monoidal structure is given by the pointwise product of functors (with respect to the monoidal structure of $\stG$).

Let $\stG$ be symmetric. If $F_1,F_2\colon H\times K\to \stG$ correspond, via Proposition~\ref{prop:3}, to the biextensions $E_1$ and $E_2$, then it is not difficult to see that $E_1\wedge^G E_2$ is the biextension corresponding to $F_1\,F_2 \colon H\times K\to \stG$, where $F_1\,F_2$ is computed pointwise, i.e.\ for all $h\in H$ and $k\in K$ we set $(F_1\,F_2)(h,k)\coloneqq F_1(h,k)\,F_2(h,k)$, where the juxtaposition on the right indicates the monoidal structure in $\stG$.  Thus, Proposition~\ref{prop:3} is upgraded to an equivalence of symmetric group-like groupoids or stacks. More precisely, we get:
\begin{proposition}
  \label{prop:5}
  The assignment defined by $u$ in Proposition~\ref{prop:3} gives an equivalence
  \begin{equation*}
    u\colon \grdbiext(H,K;\mathsf{G}) \lisoto \grdbiadd(H,K;\stG)
  \end{equation*}
  (resp.\ 
  \begin{equation*}
    u\colon \stbiext(H,K;\mathsf{G}) \lisoto \stbiadd(H,K;\stG))
  \end{equation*}
  of symmetric group-like categories (resp.\ stacks).\qed
\end{proposition}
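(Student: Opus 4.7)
The strategy is to upgrade the equivalence $u$ of Proposition~\ref{prop:3} — which is a priori only an equivalence of pointed groupoids — to a symmetric monoidal equivalence, by producing natural comparison isomorphisms $u(E_1 \wedge^G E_2) \iso u(E_1)\cdot u(E_2)$ (where on the right, juxtaposition denotes the pointwise product inside $\stG$) and checking that they intertwine the symmetry. Throughout, I would exploit the fact that both monoidal structures are constructed directly from the contracted product $\wedge^G$ and the braiding of $\stG$, so every comparison datum is essentially tautological.

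First, I would establish the pointwise identification. By construction, $u(E)(h,k)$ is the fiber $(G,\Pi)$-torsor $E_{h,k}$, and the monoidal structure on $\stG\iso\sttors(G,\Pi)$ is precisely the contracted product $\wedge^G$. Consequently, for any $(h,k)\in H\times K$ one has a canonical identification
\begin{equation*}
  u(E_1\wedge^G E_2)(h,k) = (E_1)_{h,k}\wedge^G (E_2)_{h,k} = u(E_1)(h,k)\cdot u(E_2)(h,k),
\end{equation*}
which is natural in $(h,k)$ and in the biextensions.

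Second, I would verify that this identification is compatible with the biadditive-functor data $\lambda^1,\lambda^2$ of Definition~\ref{def:1}. The partial multiplications on $E_1\wedge^G E_2$ were constructed in Proposition~\ref{prop:4} as the composite of the shuffle $\Hat c$, built from the symmetric braiding of $\stG$, with $\gamma^i\wedge\mu^i$. On the biadditive side, the $\lambda^i$ of the pointwise product $u(E_1)\cdot u(E_2)$ are obtained by exactly the same recipe: swap the middle pair of factors using the braiding of $\stG$, then apply the $\lambda^i$ of each factor. Under the identification of the previous paragraph the two composites coincide on the nose, so the pointwise identification is a morphism of biadditive functors. The third (unit) compatibility in Definition~\ref{def:1} follows because the trivial biextension $I=(G_{H\times K},e_\Pi)$ is sent by $u$ to the unit of the pointwise product.

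Third, I would check compatibility with the symmetry. On the biextension side the braiding $c\colon E_1\wedge^G E_2 \to E_2\wedge^G E_1$ is computed, fiberwise, from the symmetric braiding of $\stG$; on the biadditive side the symmetry $u(E_1)\cdot u(E_2)\To u(E_2)\cdot u(E_1)$ is defined pointwise from the same braiding. Hence the two agree under the identification above. Associativity and unit coherences are inherited from those of $\wedge^G$ in $\sttors(G,\Pi)$. The stack-level statement follows by applying the groupoid statement over each variable base $S\in\T$ and invoking that both sides are already known to form stacks.

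The main obstacle is not conceptual but rather bookkeeping: one must verify that the shuffle $\Hat c$ used to define the partial multiplications on $E_1\wedge^G E_2$ is consistent with the shuffle used in the interchange diagram~\eqref{eq:7} for the pointwise product. Once one fixes a convention and writes out Proposition~\ref{prop:1} on both sides (using Lemma~\ref{lem:1} to convert the braiding into the bracket $\braces\cdot\cdot$), the compatibility reduces to the symmetry of the braiding on $\mathsf{G}$, which is exactly the hypothesis in force.
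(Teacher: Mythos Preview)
Your proposal is correct and matches the paper's approach: the paper states the proposition with a \qed, having just observed in the preceding paragraph that $E_1\wedge^G E_2$ corresponds under $u$ to the pointwise product $F_1\,F_2$, which is precisely your fiberwise identification together with the compatibility of partial multiplications. Your write-up supplies the routine verifications (compatibility of the $\lambda^i$ via the shuffle $\Hat c$, symmetry via the common braiding) that the paper leaves implicit.
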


\section{\Bfls}
\label{sec:butterflies}

Let $H_\bullet: H_1\overset{\del}{\to} H_0$ and $K_\bullet: K_1\overset{\del}{\to}K_0$ be a pair of crossed modules, and let 
$\del\colon G_1\to G_0$ be a crossed module equipped with a braiding $\braces--$. We denote by $\stH$, $\stK$, and $\stG$ the corresponding associated stacks.

\begin{definition}
  \label{def:3}
  A \strong{\bfl} from $H_\bullet \times K_\bullet$ to $G_\bullet$ is a biextension $E\in \grdbiext(H_0,K_0;G_\bullet)$ equipped with trivializations of its pullbacks $(\del,\id)^*E$ and $(\id,\del)^*E$, i.e.\ maps $s_1\colon H_1\times K_0\to E$ and $s_2\colon H_0\times K_1\to E$ subject to the following conditions:
  \begin{enumerate}
  \item\label{item:4} (Restriction) $s_1$ and $s_2$ agree on $H_1\times K_1$: $(\id,\del)^*s_1 = (\del,\id)^*s_2$.
  \item\label{item:5} (Compatibility) For all $(h,z)\in H_1\times K_0$ and $(y,k)\in H_0\times K_1$, and $e\in E_{y,z}$, $s_1$ and $s_2$ satisfy:
    \begin{equation}
      \label{eq:22}
      \begin{aligned}
        s_1(h,z) \times_1 e &= e\times_1 s_1(h^y,z)\,,\\
        s_2(y,k) \times_2 e &= e\times_2 s_2(y,k^z)\,.
      \end{aligned}
    \end{equation}
  \end{enumerate}
  A morphism $\phi\colon (E,s_1,s_2)\to (E',s'_1,s'_2)$ is a morphism of the underlying biextensions preserving the trivializations.
  \Bfls form a pointed groupoid, denoted $\grdbfl(H_\bullet,K_\bullet; G_\bullet)$. The distinguished object is the trivial biextension.
\end{definition}
\begin{remark}
  \label{rem:17}
  As noted, a trivialization of a biextension is given by a (central) global section. In the previous definition the pullbacks are $(\del,\id)^*E = (H_1\times K_0)\times_{H_0\times K_0} E$ and $(\id,\del)^*E = (H_0\times K_1)\times_{H_0\times K_0} E$, so, for example, a global section $\hat s_1\colon H_1\times K_0\to (\del,\id)^*E$ is equivalently given by a global map $s_1\colon H_1\times K_0\to E$. Same for $s_2$.
\end{remark}
If we identify the sections corresponding to the trivializations with $s_1\colon H_1\times K_0\to E$ and $s_2\colon H_0\times K_1\to E$, condition~\ref{item:4} in the Definition implies that
\begin{equation*}
    s_1(h,\del k) = s_2(\del h, k).
  \end{equation*}
Let $s\colon H_1\times K_1\to E$ be the resulting morphism. The next lemmas characterize these objects in terms of explicit diagrams (see~\eqref{eq:23} and~\eqref{eq:27} below). Their shape justifies the name.
\begin{lemma}
  \label{lem:4}
  The trivializations $s_1$  and $s_2$, as maps $s_1\colon H_1\times K_0\to E$ and $s_2\colon H_0\times K_1\to E$, render the following diagrams commutative
  \begin{equation}
    \label{eq:23}
        \vcenter{%
      \xymatrix@C-1pc@!C{%
        H_1\times K_0 \ar[dd]_{(\del,\id)} \ar[dr]^{s_1} & &
        G_1 \ar[dd]^\del \ar[dl]_\imath \\
        & E \ar[dl]^p \ar[dr]_\jmath \\
        H_0\times K_0 & & G_0 
      }} \qquad\qquad
        \vcenter{%
      \xymatrix@C-1pc@!C{%
        H_0\times K_1 \ar[dd]_{(\id,\del)} \ar[dr]^{s_2} & &
        G_1 \ar[dd]^\del \ar[dl]_\imath \\
        & E \ar[dl]^p \ar[dr]_\jmath \\
        H_0\times K_0 & & G_0 
      }}
  \end{equation}
  with $\jmath\circ s_1$ and $\jmath\circ s_2$ equal to the trivial map (identically equal to the unit $e_0$ of $G_0$). In addition, $s_1$ and $s_2$ have the following properties:\footnote{In some of the following formulas we explicitly write the the product symbols $\times_1$ and $\times_2$ to avoid ambiguities.}
  \begin{enumerate}
  \item\label{item:6} (Multiplicative) $s_1$ and $s_2$ are multiplicative in each variable, namely
    \begin{equation}
      \label{eq:24}
      \begin{aligned}
        s_1(h,z)\times_1 s_1(h',z) &= s_1(hh',z),  & s_1(h,z)\times_2 s_1(h,z') &= s_1(h,zz'),\\
        s_2(y,k)\times_1 s_2(y',k) &= s_2(yy',k),  & s_2(y,k)\times_2 s_2(y,k') &= s_2(y,kk');
      \end{aligned}
    \end{equation}
  \item\label{item:7} (Central) For all $g\in G_1$, and whenever $\del h$ (resp.\ $\del k$) is equal to the unit section of $H_0$ (resp.\ $K_0$), we have:
    \begin{equation}
      \label{eq:25}
      \begin{aligned}
        s_1(h,z)\imath(g) & = \imath(g)s_1(h,z),\\ s_2(y,k)\imath(g) & = \imath(g)s_2(y,k).
      \end{aligned}
    \end{equation}
  \end{enumerate}
\end{lemma}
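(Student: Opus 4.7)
The plan is to unpack the content of ``trivialization of the pullback biextension'' and reduce all three claims to direct consequences of the definition combined with Lemma~\ref{lem:2}. By the universal property of the pullback, a trivialization of $(\del,\id)^{*}E$ as a $(G_1,G_0)$-torsor over $H_1\times K_0$ is the same datum as a morphism $s_1\colon H_1\times K_0\to E$ with $p\circ s_1 = (\del,\id)$ and with $\jmath\circ s_1$ constant equal to the unit $e_0$ of $G_0$; similarly for $s_2$. This immediately gives the commutativity of the left triangles in~\eqref{eq:23}, together with the assertion that the composites $\jmath\circ s_i$ are trivial, while the right triangles coincide with the wing $\jmath\circ \imath = \del$ coming from Lemma~\ref{lem:2}, diagram~\eqref{eq:11}.

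For the multiplicativity identities~\eqref{eq:24}, I would observe that the pullback $(\del,\id)^{*}E$ inherits a biextension structure on $H_1\times K_0$: the first partial product descends from that of $E$ because $(\del h)(\del h') = \del(hh')$, while the second is unaltered on the $K_0$-factor. Viewing the trivialization $s_1$ as a morphism of biextensions from the trivial one (whose partial products are the obvious ones), compatibility with each of $\gamma^1$ and $\gamma^2$ yields the two identities involving $s_1$. The analogous argument applied to $s_2$ on $H_0\times K_1$ produces the remaining two.

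The centrality relations~\eqref{eq:25} then follow from equation~\eqref{eq:10} of Lemma~\ref{lem:2}. When $\del h = e_{H_0}$ one has $s_1(h,z) \in E_{1,z}$; applying~\eqref{eq:10} in this fiber (where the relevant partial product is $\times_2$), one obtains
\begin{equation*}
  \imath(g)\times_2 s_1(h,z) = s_1(h,z)\times_2 \imath\bigl(g^{\jmath(s_1(h,z))}\bigr).
\end{equation*}
Because $\jmath(s_1(h,z)) = e_0$ and $e_0$ acts trivially on $G_1$, the right-hand side collapses to $s_1(h,z)\times_2 \imath(g)$, yielding the first identity of~\eqref{eq:25}. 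The second is obtained symmetrically, using $s_2(y,k) \in E_{y,1}$ and~\eqref{eq:10} with product $\times_1$.

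Overall the proof is essentially bookkeeping; the only subtle point is recognizing that ``trivialization'' of the pullback is meant in the biextension sense, so that the multiplicativity of $s_i$ is already built in, and then correctly identifying the partial product relevant in each formula ($\times_2$ for the $s_1$ statements, $\times_1$ for those concerning $s_2$). The Restriction and Compatibility conditions of Definition~\ref{def:3} do not enter the proof of this lemma---they become essential only later, when establishing the equivalence with biadditive morphisms.
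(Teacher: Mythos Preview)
Your proposal is correct and follows essentially the same approach as the paper's proof: both derive the diagrams and multiplicativity from the fact that the pullbacks split as biextensions, and both obtain centrality from equation~\eqref{eq:10}. Your derivation of~\eqref{eq:25} is in fact slightly more direct than the paper's—the paper first records that a trivializing section of a $(G_1,G_0)$-bitorsor is central for the $G_1$-action (equation~\eqref{eq:26}) and then combines this with~\eqref{eq:10}, whereas you apply~\eqref{eq:10} immediately with $\jmath(s_1(h,z))=e_0$; the two arguments are equivalent since centrality of the section and triviality of $\jmath\circ s_1$ encode the same fact about the bitorsor structure.
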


\begin{proof}
  The relations~\eqref{eq:24}, as well as the fact that the compositions  $\jmath\circ s_1$ and $\jmath\circ s_2$ must be equal to the unit element, follow from the condition that the pullbacks $(\del,\id)^*E$ and $(\id,\del)^*E$ split as biextensions. Indeed, as it was previously described, the trivialization of, say, $(\del,\id)^*E$ amounts to an isomorphism of biextensions $G_{H_1\times K_0} \iso (\del,\id)^*E$. The image of the unit central section of $G_{H_1\times K_0}$ defines the trivializing one for $(\del,\id)^*E$, and, hence, the map $s_1$. In practice, the isomorphism is written $(h,z,g)\mapsto s_1(h,z)g$, and the central condition on the trivializing section means that
  \begin{equation*}
    s_1(h,z)\, g = g\, s_1(h,z)\,,
  \end{equation*}
  where the juxtaposition indicates the $G_1$-action. Of course, similar considerations hold for $s_2$, as well.

  Then $\jmath\circ s_1$ and $\jmath\circ s_2$ are trivial because, under the isomorphisms determined by $s_1$ and $s_2$, they must indeed be equal to the equivariant section for the trivial bitorsor which simply sends the unit $e_1\in G_1$ to $e_0\in G_1$.

  The relations~\eqref{eq:24} follow from the fact that the isomorphisms determined by $s_1$ and $s_2$ are compatible with the partial multiplication laws. For example:
  \begin{equation*}
    (s_1(h,z)\, g)\times_1 (s_1(h',z)\, g')
    = s_1(h,z)\times_1 (g\,s_1(h',z)\, g')
    = s_1(h,z)\times_1 s_1(h',z)\: gg'\,,
  \end{equation*}
  on the other hand in the trivial bitorsor $(h,z,g)\times_1(h',z,g') = (hh',z,gg')$ is mapped to $s_1(hh',z)\, gg'$, which proves the first of~\eqref{eq:24}. The others are similar.

  The relations~\eqref{eq:25} follow from the centrality and~\eqref{eq:10} of Lemma~\ref{lem:2}.
\end{proof}
\begin{remark}
  \label{rem:3}
  An alternative way to characterize a \bfl is to say that it consists of a biextension $E$ of $(H_0,K_0)$ by $G_\bullet$ such that $s_1$ and $s_2$ provide it with the structure of \bfls in the ordinary sense of \cite{ButterfliesI} from $(H_\bullet)_{K_0}$ (resp.\ $(K_\bullet)_{H_0}$) to $G_\bullet$ in a compatible way.
\end{remark}
It is clear that properties similar to those pertaining to $s_1$ and $s_2$ stated in Lemma~\ref{lem:4} hold for their common restriction $s\colon H_1\times K_1\to E$. In particular, the following diagram
\begin{equation}
  \label{eq:27}
  \vcenter{%
    \xymatrix@C-1pc@!C{%
      H_1\times K_1 \ar[dd]_{(\del,\del)} \ar[dr]^s & &
      G_1 \ar[dd]^\del \ar[dl]_\imath \\
      & E \ar[dl]^p \ar[dr]_\jmath \\
      H_0\times K_0 & & G_0 
    }}
\end{equation}
is commutative, with $\jmath\circ s$ equal to the trivial map (identically equal to the unit $e_0$ of $G_0$). In addition, $s$ is multiplicative in each variable:
\begin{equation}
  \label{eq:28}
  \begin{aligned}
    s(h,k)\times_1 s(h',k) &= s(hh',k)\\
    s(h,k)\times_2 s(h,k') &= s(h,kk').
  \end{aligned}
\end{equation}
Relations~\eqref{eq:25} (collapsed into one) and~\eqref{eq:22} also hold. As an easy consequence we have the following
\begin{lemma}
  \label{lem:5}
  Let $(E,s_1,s_2)\in \grdbfl(H_\bullet,K_\bullet; G_\bullet)$.
  The  pullback $(\del,\del)^*E$ is isomorphic (via $s$) to the trivial biextension in $\grdbiext(H_1,K_1;G_\bullet)$.\qed
\end{lemma}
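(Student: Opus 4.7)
The plan is to exhibit a canonical isomorphism $\phi\colon (G_1,e_0)_{H_1\times K_1} \isoto (\del,\del)^*E$ of biextensions using the section $s\colon H_1\times K_1\to E$ provided by Definition~\ref{def:3}. The trivial biextension has underlying $(G_1,G_0)$-torsor the trivial one $(G_1,e_0)$, with both partial multiplications given by the product of $G_1$.

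First, I would observe that diagram~\eqref{eq:27} gives $p\circ s = (\del,\del)$, so $s$ factors through the pullback to produce a section $\tilde s\colon H_1\times K_1 \to (\del,\del)^*E$ of the underlying $(G_1,G_0)$-torsor. The same diagram shows $\jmath\circ s = e_0$, so $\tilde s$ lies in the preimage of the unit of $G_0$ under the equivariant structural map. It follows that the assignment $g\mapsto g\cdot \tilde s$ defines an isomorphism of $(G_1,G_0)$-torsors $\phi\colon (G_1,e_0)_{H_1\times K_1}\isoto(\del,\del)^*E$.

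Second, I would verify that $\phi$ intertwines the two partial multiplications: since both $\times_i$ on the trivial side are just the product of $G_1$, the task reduces to checking identities of the form
\begin{equation*}
(g\cdot \tilde s(h,k))\times_1 (g'\cdot \tilde s(h',k)) = gg'\cdot \tilde s(hh',k),
\end{equation*}
and analogously for $\times_2$. These follow by combining the multiplicativity of $s$ in each variable (relations~\eqref{eq:28}) with the centrality of the trivializing section in the bitorsor sense (relations~\eqref{eq:25} and~\eqref{eq:26}, valid for $s$ because its images lie over the unit of $G_0$) together with the left/right action relation~\eqref{eq:10} of Lemma~\ref{lem:2}. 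These allow the $G_1$-factors $g,g'$ to be collected through the biextension's partial products.

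The only delicate point is keeping track of the interplay between the left and right $G_1$-actions on $E$ as they traverse $\times_1$ and $\times_2$, but centrality of $\tilde s$ and the fact that $\jmath\circ s$ is the unit make this routine bookkeeping rather than a genuine difficulty. Notably, no appeal to the compatibility hexagon~\eqref{eq:5} is needed here, since the statement concerns only the existence of a trivializing morphism, and the hexagon is already built into both biextensions. This completes the plan.
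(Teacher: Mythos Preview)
Your proposal is correct and is precisely the argument the paper has in mind: the lemma is stated with a bare \qed\ as an ``easy consequence'' of the properties of $s$ collected just before it (diagram~\eqref{eq:27}, the multiplicativity relations~\eqref{eq:28}, and centrality), and you have simply written out those routine verifications in full. There is no alternative route here to compare.
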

\begin{remark}
  The correspondence $(E,s_1,s_2)\mapsto (E,s)$ determines a morphism
  \begin{equation*}
    \grdbfl (H_\bullet,K_\bullet; G_\bullet) \lto
    \mathrm{H}\Ker \bigl(
    \xymatrix@1@C+0.1pc{%
      \grdbiext (H_0,K_0;G_\bullet) \ar[r]^{(\del,\del)^*} &
      \grdbiext (H_1,K_1;G_\bullet)
    }\bigr)
  \end{equation*}
  where $\mathrm{H}\Ker$ denotes the homotopy kernel (recall both groupoids are pointed) which is not an equivalence, in general. Requesting that the biextension $E$ become trivial when pulled back to $H_1\times K_1$ is a weaker condition for it does not provide for the two other pullbacks to be trivializable.
\end{remark}
\begin{remark}
  A \bfl has a description in terms of cocycles if the underlying $(G_1,G_0)$-torsor $E$ is globally trivial as a right $G_1$-torsor. Therefore, as seen at the end of sect.~\ref{sec:biext-braid}, if $E$ has the form ($H_0\times K_0\times G_1,x)$, with $x\colon H_0\times K_0\to G_0$, then a cocyclic description~\eqref{eq:12}, \eqref{eq:15}, \eqref{eq:20} (with $h,h'\in H_0$ and $k,k'\in K_0$) is available. A \bfl will described by a cocycle consisting of that set \emph{plus} additional relations for the trivializations of the two pullbacks to $H_1\times K_0$ and $H_0\times K_1$. (The required triviality conditions are the same as those for a nonabelian 1-cocycles of the sort that appears in the theory of extensions.)

  Since $E = H_0\times K_0\times G_1$, there exist $u_1\colon H_1\times K_0\to G_1$ and $u_2\colon H_0\times K_1\to G_1$, such that $s_1$ and $s_2$ are written as $s_1(h,z) = (\del h, z, u_1(h,z)^{-1})$ and $s_2(y,k) = (y, \del k, u_2(y,k)^{-1})$. The conditions in Lemma~\ref{lem:4} then express the triviality of the cocycle pair $(g_1,g_2)$ when pulled back to $H_1\times K_0$ and $H_0\times K_1$, namely we get:
  \begin{subequations}
    \label{eq:29}
    \begin{align}
      u_1(hh',z)\,g_1(\del h, \del h'; z) &= u_1(h,z)\,u_1(h',z) \\
      u_1(h,zz')\,g_2(\del h;z, z') &= u_1(h,z)\,u_1(h,z') \\
      x (\del h,z) &= \del \, u_1(h,z) \\
      \intertext{and}
      u_2(yy',k)\,g_1(y, y'; \del k) &= u_2(y,k)\,u_2(y',k) \\
      u_2(y,kk')\,g_2(y; \del k, \del k') &= u_2(y,k)\,u_2(y,k') \\
      x (y,\del k) &= \del \, u_2(h,z)
    \end{align}
\end{subequations}
  for all pairs $(h,z),(h',z)\in H_1\times K_0$ and $(y,k),(y,k')\in H_0\times K_1$.  Moreover, since the two trivializations must agree when pulled back to $H_1\times K_1$, it follows that $u_1(h,\del k) = u_2(\del h,k)$. Denoting this common restriction by  $u\colon H_1\times K_1\to G_1$, the two previous sets coalesce into
  \begin{subequations}
    \label{eq:30}
  \begin{align}
    \label{eq:31}
    u(hh',k)\, g_1(\del h,\del h';\del k) &= u(h,k)u(h',k)\\
    u(h,kk')\, g_2(\del h;\del k,\del k') &= u(h,k)u(h,k')\\
    x(\del h,\del k) &= \del\, u(h,k).
  \end{align}
\end{subequations}
for all $h,h'\in H_1$ and $k,k'\in K_1$.
\end{remark}

\section{Biadditive morphisms and \bfls}
\label{sec:biadd-morph-butt}

We now consider biadditive morphisms
\begin{equation*}
  F\colon \stH\times \stK\lto \stG
\end{equation*}
with $\stG$ braided (cf.\ sect.~\ref{sec:biadditive-morphisms}).  As before, we assume we have presentations by crossed modules $\stH\iso [H_1\to H_0]\sptilde$, $\stK\iso [K_1\to K_0]\sptilde$, and $\stG\iso [G_1\to G_0]\sptilde$, the latter equipped with a braiding structure $\braces--$. Our purpose is to prove the following
\begin{theorem}
  \label{thm:1}
  There is an equivalence of (pointed) groupoids
  \begin{equation*}
    u\colon \grdbfl(H_\bullet,K_\bullet;G_\bullet) \lisoto
    \grdbiadd(\stH,\stK;\stG).
  \end{equation*}
\end{theorem}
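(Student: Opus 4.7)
The plan is to build on Proposition~\ref{prop:5} and interpret the additional data of a \bfl---namely the trivializations $s_1$ and $s_2$ together with their compatibility conditions---as descent data that let a biadditive morphism on the level of the presenting groups $H_0\times K_0$ be lifted along the presentations $H_0\ltimes H_1\rightrightarrows H_0\to \stH$ and $K_0\ltimes K_1\rightrightarrows K_0\to \stK$ to a biadditive morphism on $\stH\times\stK$. I would construct two quasi-inverse functors and check they match the distinguished points.

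For the functor $u$, start with a \bfl $(E,s_1,s_2)$ and apply Proposition~\ref{prop:5} to the underlying biextension to obtain a biadditive morphism $F_E^0\colon H_0\times K_0\to \stG$, viewing $H_0$ and $K_0$ as discrete group-like stacks. The trivialization $s_1\colon H_1\times K_0\to E$ of $(\del,\id)^*E$ translates, under the dictionary of Proposition~\ref{prop:5}, into a natural isomorphism from $F_E^0\circ(\del\times\id_{K_0})$ to the constant functor at the unit $(G_1,G_0)$-torsor; equivalently, it furnishes a natural isomorphism $F_E^0\circ d_0\Rightarrow F_E^0\circ d_1$ on the groupoid $H_0\ltimes H_1$, which is precisely the datum required for descent along $H_0\to \stH$ in the first variable. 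The multiplicativity relations~\eqref{eq:24} for $s_1$ ensure that this descent datum respects the additive structure in the first variable, while~\eqref{eq:22} expresses the cocycle condition that makes the descent coherent with the partial product $\times_1$. Entirely analogous remarks apply to $s_2$ in the second variable, and the restriction condition $(\id,\del)^*s_1=(\del,\id)^*s_2$ guarantees that the two descents agree on the common overlap indexed by $H_1\times K_1$. The output is a biadditive morphism $F\colon \stH\times \stK\to\stG$.

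For the quasi-inverse $v$, a biadditive $F\colon \stH\times\stK\to\stG$ is pulled back along the canonical maps $H_0\to\stH$ and $K_0\to\stK$ to obtain a biadditive $F_0\colon H_0\times K_0\to\stG$, to which Proposition~\ref{prop:5} associates a biextension $E_F$ of $(H_0,K_0)$ by $G_\bullet$. The trivializations are then forced on us by the exact sequence~\eqref{eq:1} applied to $\stH$: the composite $H_1\xrightarrow{\del}H_0\to\stH$ comes with a canonical natural isomorphism to the constant map at the unit object of $\stH$. Whiskering with the second variable of $F$ and invoking Proposition~\ref{prop:5} yields a canonical section $s_1$ trivializing $(\del,\id)^*E_F$; analogously for $s_2$. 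The multiplicativity~\eqref{eq:24} of $s_1,s_2$ and the restriction condition follow from the naturality of these canonical isomorphisms, while the compatibility~\eqref{eq:22} is precisely the translation of the interchange diagram~\eqref{eq:7} restricted to morphisms arriving from $H_1$ or $K_1$.

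The main technical obstacle is to show that the apparently modest conditions~\eqref{eq:22} are exactly what is needed for the descent data carried by $s_1$ and $s_2$ to assemble into a true biadditive structure on the descended functor, as opposed to merely an additive structure in each variable separately: this is where the interchange diagram~\eqref{eq:7} must be matched, pointwise, with the biextension compatibility~\eqref{eq:5} in the presence of both trivializations. Once this matching is established, the verification that $u$ and $v$ are quasi-inverse proceeds in parallel with Lemma~\ref{lem:3} and Proposition~\ref{prop:3}: full faithfulness reduces to the observation that a morphism of \bfls is by definition a morphism of biextensions preserving the trivializations, which corresponds term for term to a natural transformation of the descended biadditive morphisms; essential surjectivity follows by constructing, for a given biadditive $F$, the canonical comparison with $F_{E_F}$ exactly as in the unary case. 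The distinguished points match by inspection, since the trivial \bfl corresponds under $u$ to the constant biadditive morphism at the unit of $\stG$.
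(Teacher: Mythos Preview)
Your approach is correct and closely parallels the paper's. For $v$ you reproduce exactly the pullback construction of Proposition~\ref{prop:6} and Lemma~\ref{lem:6}. For $u$, you take a descent viewpoint—start from the biadditive morphism $H_0\times K_0\to\stG$ given by the underlying biextension and use $s_1,s_2$ as descent data along the groupoid presentations of $\stH$ and $\stK$—whereas the paper's primary construction (Proposition~\ref{prop:7}) is global, assigning to a pair of torsors $((Y,y),(Z,z))$ the $(G_1,G_0)$-torsor $\Hom_{H_1,K_1}(Y,Z;E)_{(y,z)}$ of biequivariant lifts. The paper itself records your descent approach as the ``coordinate version'' in Remark~\ref{rem:4}, so the two are equivalent; the Hom construction simply produces the descended functor on arbitrary objects in one step.

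Two small corrections. First, throughout you should cite Proposition~\ref{prop:3} rather than Proposition~\ref{prop:5}: Theorem~\ref{thm:1} assumes $\stG$ only braided, not symmetric. Second, the roles you assign to~\eqref{eq:22} and~\eqref{eq:24} are swapped. It is the multiplicativity~\eqref{eq:24} (namely $s_1(h,z)\times_1 s_1(h',z)=s_1(hh',z)$) that gives the cocycle condition for descent of the assignment $(y,z)\mapsto E_{y,z}$ along $H_0\to\stH$. Condition~\eqref{eq:22} is instead exactly what makes the partial product $\gamma^1$ descend: checking compatibility of $\gamma^1\colon E_{y_1,z}\wedge E_{y_2,z}\to E_{y_1y_2,z}$ with a shift $y_1\mapsto y_1\,\del h$ in the first factor, and using the crossed-module identity $\del h\cdot y_2 = y_2\cdot\del(h^{y_2})$, reduces precisely to $s_1(h,z)\times_1 e_2 = e_2\times_1 s_1(h^{y_2},z)$.
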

In essence, the theorem states that any biadditive functor $F\colon \stH\times \stK\to \stG$ can be represented by a \bfl involving the presentations. This equivalence is compatible with restriction and base-change so that, via the usual mechanism of considering the above equivalence relative to a variable object $S$ of $\T$, we obtain a corresponding equivalence
\begin{equation*}
  u\colon \stbfl(H_\bullet,K_\bullet;G_\bullet) \lisoto
  \stbiadd(\stH,\stK;\stG).
\end{equation*}
Following~\cite{ButterfliesI} and sect.~\ref{sec:biadditive-biext}, we exhibit a pair of quasi-inverse functors. We will essentially confine ourselves to just exhibit the relevant definitions, for the methods are quite similar to those in \cite[\S 4]{ButterfliesI}, with the exception of biadditivity. The latter is discussed explicitly, where the need arises.

Let $\bar F$ be the composition of the projection $\pi\colon H_0\times K_0\to \stH\times \stK$ with $F$. It is evidently biadditive. Consider the pullback $E=(H_0\times K_0)\times_{\bar F,\stG,\pi}G_0$. By Proposition~\ref{prop:2} it is a biextension.
\begin{lemma}
  \label{lem:6}
  $E$ defined above satisfies the conditions in the statement of Lemma~\ref{lem:4}.
\end{lemma}
\begin{proof}
  We must verify the pullbacks of $E$ to $H_1\times K_0$ and $H_0\times K_1$ are trivializable.  This is a direct consequence of the fact that $\xymatrix@1@C-1pc{H_1\ar[r]& H_0\ar[r] &  \stH}$ and $\xymatrix@1@C-1pc{K_1\ar[r]&  K_0\ar[r] &  \stK}$ are (homotopically) exact.  Furthermore, the decomposition $(\del,\del) = (\id,\del)\circ(\del,\id) = (\del,\id)\circ (\id,\del)$ and condition~\ref{item:3} of definition~\ref{def:1} ensure that the restriction condition in definition~\ref{def:3} is satisfied.

  In explicit terms, a point of the pullback $(\del,\id)^*E$ is given by a triple $((h,z),f,x)$ where $(h,z)\in H_1\times K_0$, $x\in G_0$ and $f$ is an isomorphism
  \begin{equation*}
    \xymatrix@1{F(\pi(\del h),\pi(z)) \ar[r]^(0.6)f& (G_1,x).}
  \end{equation*}
  We have $\pi(\del h)=(H_1,\del h)$, so there must be an isomorphism
  \begin{math}
    I_\stH=(H_1,e_{h_0}) \lto (H_1,\del h).
  \end{math}
  Thus, there is a chain of isomorphisms
  \begin{equation*}
    \xymatrix{%
      F(I_\stH,\pi (z)) \ar[d] \ar[r] & F(\pi(\del h),\pi(z))
      \ar[r]^(0.6)f & (G_1,x) \\
      I_\stG
    }
  \end{equation*}
  so that we must have $x=\del g$, $g\in G_1$. This provides an explicit trivializing section $s_1$ for $(\del,\id)^*E$.  Similarly for the other one, $s_2$, and their common restriction $s$.  

  A computation based on the biadditivity of $F$ and the same technique at the end of the proof of~\ref{prop:2} shows that both $s_1$ and $s_2$ are multiplicative, i.e. each satisfies condition~\ref{item:6} of lemma~\ref{lem:4}, with respect to both variables. Condition~\ref{item:5} also follows from a direct calculation, as in \cite[\S 4.3.6]{ButterfliesI}.
\end{proof}
\begin{proposition}
  \label{prop:6}
  There exists a functor $v\colon \grdbiadd(\stH,\stK;\stG)\to \grdbfl(H_\bullet,K_\bullet; G_\bullet)$ defined by assigning to an object $F$ the \bfl whose underlying bitorsor is $E=(H_0\times K_0)\times_{\bar F,\stG,\pi}G_0$.
\end{proposition}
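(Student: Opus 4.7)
The plan is to equip the biextension produced by Proposition~\ref{prop:2} with the two trivializing sections required by Definition~\ref{def:3}, then verify functoriality. Writing $\bar F = F\circ(\pi_H\times\pi_K)\colon H_0\times K_0\to\stG$ with $\pi_H\colon H_0\to\stH$ and $\pi_K\colon K_0\to\stK$ the canonical projections, the biadditivity of $F$ and the (strict) monoidality of the projections (source discrete) make $\bar F$ biadditive. Proposition~\ref{prop:2} then produces the biextension $E=(H_0\times K_0)\times_{\bar F,\stG,\pi}G_0$, with partial multiplications $\gamma^1,\gamma^2$ built from the additivity data $\lambda^1,\lambda^2$ of $F$.

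For the trivialization $s_1$, I would exploit the homotopical exactness of~\eqref{eq:1} applied to $H_\bullet$: by Remark~\ref{rem:1}, each $h\in H_1$ yields a canonical morphism $\tau_h\colon\pi_H(\del h)\to I_\stH$ in $\stH$, the image of the arrow $e\to\del h$ in the strict categorical group $\Gamma_H$. The composite
\[
a_{h,z}\colon F(\pi_H(\del h),\pi_K(z))\xrightarrow{F(\tau_h,\id)}F(I_\stH,\pi_K(z))\xrightarrow{\sim}I_\stG,
\]
where the second arrow is the unit constraint of $F$ in the first slot, together with the $G_0$-coordinate $e_{G_0}$, defines a point $s_1(h,z)\in E_{\del h,z}$ via the explicit description of the pullback given in the proof of Proposition~\ref{prop:2}. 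The trivialization $s_2$ is defined symmetrically using $\tau_k\colon\pi_K(\del k)\to I_\stK$ and the unit constraint in the second slot.

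I would then verify the axioms of Definition~\ref{def:3}. The restriction condition reduces on $H_1\times K_1$ to showing that the two paths $F(\pi_H(\del h),\pi_K(\del k))\to I_\stG$—trivializing first the $H$-slot or first the $K$-slot—coincide, which is exactly item~\ref{item:3} of Definition~\ref{def:1}. The multiplicativity~\eqref{eq:24} follows because $\tau$ is a homomorphism in the weak sense, i.e.\ $\tau_{hh'}$ matches the composition of $\tau_h$ and $\tau_{h'}$ through the monoidality iso $\pi_H(\del h\cdot\del h')\iso\pi_H(\del h)\cdot\pi_H(\del h')$, transported by $\lambda^1$. Centrality~\eqref{eq:25} is immediate because $s_1,s_2$ take values at the $G_0$-coordinate $e_{G_0}$, on which the $G_1$-action coming from Lemma~\ref{lem:2} reduces to the trivial shift.

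The main obstacle will be the compatibility~\eqref{eq:22}. For $e=((y,z),x,a)\in E_{y,z}$, both sides of $s_1(h,z)\times_1 e=e\times_1 s_1(h^y,z)$ land in $E_{\del h\cdot y,\,z}$ (using the crossed-module identity $\del(h^y)=y^{-1}(\del h)y$, hence $\del h\cdot y=y\cdot\del(h^y)$) with common $G_0$-coordinate $x$; the real content is the equality of the underlying isomorphisms into $(G_1,x)$. By the definition of $\gamma^1$ in Proposition~\ref{prop:2}, this reduces to naturality of $\lambda^1$ in $F$ applied to $\tau_h$, combined with the relation $\tau_{h^y}=\pi_H(y)^{-1}\cdot\tau_h\cdot\pi_H(y)$ in $\stH$—itself just the definition of $h^y$ read as a morphism $e\to\del(h^y)$ in $\Gamma_H$, made compatible with $\pi_H$ by monoidality. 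The case of $s_2$ is analogous. Finally, functoriality is automatic: any natural transformation $\phi\colon F\Rightarrow F'$ induces by pullback a morphism $E_F\to E_{F'}$, and because the trivializations on both sides are built from the same intrinsic $\tau_h,\tau_k$ transported through $F$ and $F'$ respectively, the naturality of $\phi$ forces $s_1,s_2$ to be preserved.
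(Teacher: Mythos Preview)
Your proposal is correct and follows essentially the same approach as the paper: both construct the trivializations from the homotopical exactness of $H_1\to H_0\to\stH$ (resp.\ $K_\bullet$), reduce the restriction condition to item~\ref{item:3} of Definition~\ref{def:1}, and obtain multiplicativity from the biadditivity of $F$. The paper handles the compatibility condition~\eqref{eq:22} by deferring to \cite[\S 4.3.6]{ButterfliesI}, whereas you sketch the underlying mechanism (naturality of $\lambda^1$ together with the conjugation identity $\tau_{h^y}=\pi_H(y)^{-1}\,\tau_h\,\pi_H(y)$); this is a welcome elaboration rather than a different strategy.
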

\begin{proof}
  The pullback biextension construction is functorial (cf.\ the end of section~\ref{sec:biadditive-biext}). In particular, if $\phi\colon F\To F'$ is a morphism of biadditive functors, the resulting morphism of biextensions is compatible with the trivializations. This observation proves Proposition~\ref{prop:6}.
\end{proof}

To get a biadditive functor from a \bfl in functorial way, we proceed as in ref.\ \cite[\S\S 4.3.2–4.3.4]{ButterfliesI}. Specifically, an object of $\stH\times \stK$ will be represented by a pair of torsors, as $((Y,y),(Z,z))$ where $(Y,y)$ is an $(H_1,H_0)$-torsor and $(Z,z)$ an $(K_1,K_0)$-torsor. Let $(E,s)$ be an object of $\grdbfl(H_\bullet,K_\bullet;G_\bullet)$. We define a $F_{E,s_1,s_2}\colon \stH\times \stK\to \stG$ by assigning to the above pair the $(G_1,G_0)$-torsor $(X,x)$ where:
\begin{equation*}
  X\coloneqq \Hom_{H_1,K_1} (Y,Z;E)_{(y,z)}, \qquad
  \begin{aligned}[t]
    x\colon & X \lto G_0 \\
    & e \longmapsto \jmath\circ e.
  \end{aligned}
\end{equation*}
$X$ consists of separately $H_1$ or $K_1$-equivariant local lifts of $(y,z)\colon Y\times Z\to H_0\times K_0$ to $E$.  By this we mean that the lift of $(y\,\del h,z)$ is related to that of $(y,z)$ by $e(y\,\del h,z) = e (y,z)\, s_1(h,z)$. Similarly, $e(y,z\,\del k) = e (y,z)\, s_2(y,k)$. The notion is consistent thanks to Conditions~\ref{item:4} and the multiplicativity of $s_1$ and $s_2$. This construction is obviously functorial with respect to each of its arguments. It gives the defining part of the
\begin{proposition}
  \label{prop:7}
  There exists a functor 
  \begin{equation*}
    u\colon \grdbfl (H_\bullet,K_\bullet;G_\bullet) \lto
    \grdbiadd (\stH,\stK;\stG),
  \end{equation*}
  whose value at $(E,s_1,s_2)$ is the bifunctor $F_{E,s_1,s_2}$ defined above.
\end{proposition}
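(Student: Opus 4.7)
The plan is to verify four items implicit in the statement, in the spirit of \cite[\S\S 4.3.2--4.3.4]{ButterfliesI}: (a) the assignment $((Y,y),(Z,z))\mapsto (X,x)$ actually lands in $(G_1,G_0)$-torsors; (b) it is (strictly) functorial in each argument, giving a well-defined bifunctor $F_{E,s_1,s_2}\colon \stH\times \stK\to \stG$; (c) it is biadditive in the sense of Definition~\ref{def:1}; and (d) a morphism of \bfls $\phi\colon (E,s_1,s_2)\to (E',s'_1,s'_2)$ induces, by postcomposition, a natural transformation $F_{E,s_1,s_2}\Rightarrow F_{E',s'_1,s'_2}$.

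For (a), observe that the set $X$ of separately $H_1$- and $K_1$-equivariant lifts of $(y,z)\colon Y\times Z\to H_0\times K_0$ is locally nonempty because $(Y,y)$ and $(Z,z)$ are locally trivial and $E$ is locally trivial as a $(G_1,G_0)$-torsor over $H_0\times K_0$. The left and right $G_1$-actions on $E$ extend pointwise to $X$ because $G_1$ acts centrally on the sections $s_1$ and $s_2$ by~\eqref{eq:25}, so the result of acting commutes with the equivariance property imposed in the definition of $X$. The map $x\colon X\to G_0$, $e\mapsto \jmath\circ e$, is then well-defined and $G_1$-equivariant because $\jmath\circ s_1$ and $\jmath\circ s_2$ are trivial by Lemma~\ref{lem:4}, so $\jmath\circ e$ does not depend on the choice of local representative for $Y$ and $Z$; standard checks then make $(X,x)$ a $(G_1,G_0)$-torsor. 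Functoriality in $(Y,y)$ and $(Z,z)$ is a formal consequence of the $\Hom$ construction and yields the bifunctor of (b).

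For (c), the biadditive structure is defined directly from the partial multiplication laws of $E$. Given $(Y,y),(Y',y')$ and $(Z,z)$, a pair of lifts $e\in F(Y,Z)$ and $e'\in F(Y',Z)$ is sent by $\times_1$ to a lift of the product $(Y\wedge^{H_1}Y',Z)$; the multiplicativity relations~\eqref{eq:24} together with the commutation relations~\eqref{eq:22} ensure that $e\times_1 e'$ is again separately $H_1$- and $K_1$-equivariant, which defines $\lambda^1$; analogously $\times_2$ provides $\lambda^2$. Associativity of $\lambda^1$ and $\lambda^2$ is inherited from the associativity of the two partial multiplication laws, and compatibility with the braiding of $\stG$ is transported from its characterization via~\eqref{eq:3} and Proposition~\ref{prop:1}. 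The third condition in Definition~\ref{def:1} follows from the identification $E_{1,1}\iso I$ used in Lemma~\ref{lem:5}: both candidate morphisms $F(I_\stH,I_\stK)\to I_\stG$ come from the central unit $e_E$, hence coincide.

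The main obstacle is the compatibility square~\eqref{eq:7}. The two compositions agree precisely because of the interchange relation~\eqref{eq:5} (equivalently~\eqref{eq:6}) between $\times_1$ and $\times_2$ on $E$; here we must identify the ``commuto-associativity'' morphism appearing in~\eqref{eq:7} with the morphism $\Hat c$ of~\eqref{eq:8}, which is exactly the content of Proposition~\ref{prop:1}. Once this identification is in place, the square commutes on the nose on representatives. Finally for (d), postcomposition with a \bfl morphism $\phi$ sends equivariant lifts to equivariant lifts because $\phi$ intertwines $s_1,s_2$ with $s'_1,s'_2$; it commutes with $\lambda^1,\lambda^2$ because $\phi$ preserves $\times_1,\times_2$; and it is compatible with $\jmath,\jmath'$ by the diagram following Lemma~\ref{lem:2}. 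This makes the assignment $(E,s_1,s_2)\mapsto F_{E,s_1,s_2}$ a functor, completing the proof.
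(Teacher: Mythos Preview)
Your proof is correct and follows essentially the same approach as the paper, which only sketches the biadditivity step (the isomorphism $F(Y_1,Z)\wedge^{G_1}F(Y_2,Z)\isoto F(Y_1\wedge^{H_1}Y_2,Z)$ via the partial law $\gamma^1$) and leaves the remaining checks to the reader or to \cite[\S\S 4.3.2--4.3.4]{ButterfliesI}. You have simply made explicit the items (a), (b), (d) that the paper outsources, and your treatment of the interchange condition~\eqref{eq:7} via~\eqref{eq:5} and Proposition~\ref{prop:1} is exactly the intended argument.
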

\begin{proof}
  We must verify the biadditivity property, namely that $F(Y_1,Z)\wedge^{G_1}F(Y_2,Z) \lisoto F(Y_1\wedge^{H_1}Y_2,Z)$, which follows from the diagram:
  \begin{equation*}
    \xymatrix{%
      & & E \times_{K_0} E \ar[r]^{\gamma^1} \ar[d]^{(p,p)} & E \ar[d]^p \\
      Y_1\times Y_2 \times Z \ar[r]^{\Delta_Z} & Y_1 \times Z \times Y_2 \times Z \ar[r] \ar[ur]^{(e_1,e_2)}
      & (H_0\times K_0)\times_{K_0} (H_0\times K_0) \ar[r] & H_0\times K_0
    }
  \end{equation*}
  It is easy (and left to the reader) to check that the diagram is invariant under replacing $(e_1g,e_2)$ by $(e_1,ge_2)$ and $(y_1h,y_2)$ by $(y_1,hy_2)$. (The first arrow to the left is the diagonal of $Z$ followed by the swap of the two inner factors.)
\end{proof}
\begin{remark}
  \label{rem:4}
  A coordinate version of the construction of the functor $u$ above is as follows.  According to the beginning of section~\ref{sec:biadditive-biext}, if $E$ is the underlying biextension of an object in $\grdbfl(H_\bullet,K_\bullet;G_\bullet)$, we obtain a biadditive morphism $H_0\times K_0\to \stG$ by sending the pair $(y,z)$ to the $(G_1,G_0)$-torsor $E_{y,z}$. Since $E$ is part of a \bfl, this construction is compatible with morphisms in $\stH$ and $\stK$ (in fact, in the prestacks defined by the presentations) because, if say $y' = y\, \del h$, we have
  \begin{equation*}
    E_{y',z} \longleftarrow E_{y,z}\wedge^{G_1} E_{\del h,z} \overset{\sim}{\longleftarrow} E_{y,z},
  \end{equation*}
  since from the definition the existence of $s_1$ implies $E_{\del h,z}$ is a trivial $(G_1,G_0)$-torsor. (Similarly for $E_{y,\del k}$.)
  We have a similar calculation whenever $z' = z\,\del k$, and the properties of the \bfl (plus the compatibility of $\times_1$ and $\times_2$) ensure we obtain a unique morphism
  \begin{equation*}
    E_{y,z} \lto E_{y',z'}\,,
  \end{equation*}
  which allows us to define $u(E)$ on more general objects by descent. The connection with the global version above is of course that $\Hom_{H_1,K_1} (Y,Z;E)_{(y,z)}$ reduces to $E_{y,z}$ when $(Y,y)=(H_1,y)$ and $(Z,z)=(K_1,z)$.
\end{remark}
\begin{proof}[Proof of Theorem~\ref{thm:1}]
  We prove that $u$ and $v$ are quasi-inverses. To this end, recall that for a $G_1$-torsor $P$ we have the isomorphism
  \begin{equation}
    \label{eq:32}
    \Hom_{G_1}(G_1,P) \isoto P,\qquad m \mapsto m(e_{G_1}),
  \end{equation}
\cite[Proposition III 1.2.7]{MR49:8992}. This extends to $(G_1,G_0)$-torsors by assigning to $m\colon G_1\to P$ the element $s(m(e_{G_1}))\in G_0$ (see \cite[n.\ 4.4.2]{ButterfliesI}). We apply this observation to both $v(u(E))$ and $u(v(F))$, where $(E,s_1,s_2)$ is a \bfl with underlying biextension $E$ and $F\colon \stH\times \stK\to \stG$ is biadditive. In the first case, a point of the biextension $v(u(E))$ is given by a tuple $((y,z),f,x)$ with $(y,z)\in H_0\times K_0$ and $x\in G_0$, such that
\begin{equation*}
  f\colon E_{y,z} \lisoto (G_1,x).
\end{equation*}
Considering the fiber over $(y,z)$ we have the following chain of morphisms of $(G_1,G_0)$-torsors:
\begin{equation}
  \label{eq:33}
  v(u(E))_{y,z} \lisoto \Hom_{G_1}(G_1,E_{y,z}) \lisoto E_{y,z},
\end{equation}
where the first arrow sends $(f,x)$ to $f^{-1}$, and the projection $(f,x)\to x$, namely the equivariant section, to $\jmath\circ f^{-1}$. Therefore we have obtained an isomorphism of biextensions $v(u(E))\isoto E$, by virtue of the result quoted at the beginning. This is (tautologically) an isomorphism of \bfls. For this, consider the composite isomorphism $I_\stG=(G_1,e)\isoto E_{\del h,z}\isoto (G_1,x)$ (resp.\ $I_\stG=(G_1,e)\isoto E_{y,\del k}\isoto (G_1,x)$) which provides the trivialization of the pullback $v(u(E))$ to $H_1\times K_0$ (resp.\ $H_0\times K_1$), as in the proof of Lemma~\ref{lem:6}. The resulting identifications of the pair $(f,x)$ with an element $g\in G_1$ is clearly compatible via the chain~\eqref{eq:33}, with the trivialization $(G_1,e)\isoto E_{y,z}$.

In the second case, $v(F) = (H_0\times K_0)\times_\stG G_0$ and $u(v(F))$ is the biadditive morphism that assigns to $(y,z)\in H_0\times K_0$ the $(G_1,G_0)$-torsor
\begin{equation*}
  v(F)_{y,z} = \lbrace (f,x) \rvert F(\pi(y),\pi(z))\overset{f}{\lto} (G_1,x) \rbrace\lisoto F(\pi(y),\pi(z)),
\end{equation*}
where the isomorphism on the right is by way of~\eqref{eq:32} and loc.\ cit.  The pullback $v(F)$ has the required trivializations by Lemma~\ref{lem:6}, which are evidently compatible with $F(\pi(\del h),\pi (z))\isoto (G_1,e)$ and $F(\pi(y),\pi(\del k))\isoto (G_1,e)$, showing that the above isomorphism holds for general objects and is functorial, proving there is an isomorphism $u(v(F))\iso F$.
\end{proof}

\begin{theorem}[Theorem~\ref{thm:1}, symmetric version]
  \label{thm:2}
  Let the monoidal structure of $\stG$ be symmetric. Then the equivalence in Theorem~\ref{thm:1} extends to one of group-like groupoids or stacks.
\end{theorem}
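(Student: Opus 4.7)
The strategy is to promote the equivalence of Theorem~\ref{thm:1} from one of pointed groupoids to one of symmetric group-like groupoids (and, in the relative setting, of stacks), by endowing each side with a compatible monoidal structure and verifying that $u$ preserves it. The key point is that, by Proposition~\ref{prop:4} and the discussion leading to Proposition~\ref{prop:5}, the symmetry of the braiding on $\mathsf{G}=(G_1,G_0,\del,\braces--)$ is exactly what is needed to equip $E\wedge^{G_1}E'$ with a biextension structure, with partial composition laws $\gamma^1\mu^1$ and $\gamma^2\mu^2$ obtained by interleaving those of $E$ and $E'$ through the symmetry $\Hat c$ of $\stG$. Dually, on $\grdbiadd(\stH,\stK;\stG)$ the symmetry of $\stG$ makes the pointwise product $(F_1\cdot F_2)(h,k)\coloneqq F_1(h,k)\,F_2(h,k)$ biadditive and symmetric, giving a group-like structure whose unit is the trivial biadditive functor.

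First I would define the product of two butterflies $(E,s_1,s_2)$ and $(E',s'_1,s'_2)$ as the triple $(E\wedge^{G_1}E',\,s_1\wedge^{G_1}s'_1,\,s_2\wedge^{G_1}s'_2)$, where the contracted sections are defined in the obvious way from the individual trivializations (each viewed as a morphism from the trivial $(G_1,G_0)$-torsor into the appropriate pullback). Condition~\ref{item:4} of Definition~\ref{def:3} is inherited factor-wise: if the restrictions to $H_1\times K_1$ agree for $E$ and for $E'$, they agree for $E\wedge^{G_1}E'$. Condition~\ref{item:5} has to be checked against the new composition laws $\gamma^1\mu^1$ and $\gamma^2\mu^2$; here the centrality property~\eqref{eq:25} of the trivializing sections is crucial, because it ensures that the symmetry factors introduced by $\Hat c$ act trivially on the slots occupied by the sections, reducing the verification to the compatibility~\eqref{eq:22} separately satisfied in each factor. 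The symmetry isomorphism $c\colon E\wedge^{G_1}E'\to E'\wedge^{G_1}E$ of biextensions, already available after Proposition~\ref{prop:4}, is then readily seen to carry $s_1\wedge s'_1$ to $s'_1\wedge s_1$ (and similarly for $s_2$), so $\grdbfl(H_\bullet,K_\bullet;G_\bullet)$ becomes a symmetric group-like groupoid.

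To show that $u$ is a symmetric monoidal equivalence, the cleanest route is via the pointwise description of Remark~\ref{rem:4}: $u(E,s_1,s_2)$ assigns $E_{y,z}$ to $(y,z)$, and tautologically $(E\wedge^{G_1}E')_{y,z}\iso E_{y,z}\wedge^{G_1}E'_{y,z}=(u(E)\cdot u(E'))(y,z)$; the contracted trivializations match the corresponding pointwise identifications, and the symmetries on the two sides are matched by construction. The relative statement, which yields the version for the stacks $\stbfl(H_\bullet,K_\bullet;G_\bullet)$ and $\stbiadd(\stH,\stK;\stG)$, follows by letting the base $S$ vary, as in the passage to stacks already invoked just after Theorem~\ref{thm:1}. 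The main obstacle is the verification of condition~\ref{item:5} for the contracted sections, since $\gamma^1\mu^1$ and $\gamma^2\mu^2$ introduce the symmetry $\Hat c$, and one has to ensure that the extra commutations it introduces are fully absorbed by the centrality~\eqref{eq:25} of the trivializing sections; this is precisely the place where one actually needs the braiding of $\mathsf{G}$ to be symmetric rather than merely braided, mirroring the corresponding point in Proposition~\ref{prop:4}.
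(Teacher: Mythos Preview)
Your approach is essentially the same as the paper's: define the product of butterflies as the contracted product of the underlying biextensions together with the contracted trivializations, check Definition~\ref{def:3} for the product, and observe that $u$ is monoidal via the pointwise description. The paper's proof sketch invokes exactly the same steps, citing the construction of the partial laws from the proof of Proposition~\ref{prop:4} together with Lemma~\ref{lem:1} for the verification of condition~\ref{item:5}.

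One point of attribution is slightly off. You pin the verification of condition~\ref{item:5} on the centrality relation~\eqref{eq:25} and claim this is where the \emph{symmetry} of the braiding is actually used. In fact, the symmetry hypothesis is consumed already at the level of Proposition~\ref{prop:4}, to ensure that $E\wedge^{G_1}E'$ is a biextension at all. Once that is in place, the verification of~\ref{item:5} goes through for any braiding: writing out $(s_1\wedge s'_1)(h,z)\times_1(e\wedge e')$ with the law $\gamma^1\mu^1$, the swap $\Hat c$ introduces, by Lemma~\ref{lem:1}, the factor $\braces{\jmath(s'_1(h,z))}{\jmath(e)}^{-1}$, which is the identity because $\jmath\circ s'_1$ is trivial (Lemma~\ref{lem:4}); the remaining step is the compatibility~\eqref{eq:22} in each factor. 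So the operative ingredients are Lemma~\ref{lem:1} and the vanishing of $\jmath\circ s_i$, rather than~\eqref{eq:25}; the latter (which is only stated under the restriction $\del h=e$ or $\del k=e$) is not what absorbs the braiding correction here.
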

\begin{proof}[Proof (Sketch)]
  We need only check the trivializations. Suppose $F$, $F'$ are two biadditive morphisms and  $E$, $E'$ are the corresponding \bfls. Let us also use the letters $E,E'$ to denote the underlying torsors, and let $s_1,s_2$ and $s'_1,s'_2$ be the trivialization morphisms of $E$ and $E'$, respectively. Let us denote by $F\wedge F'$ the biadditive morphism $\stH\times \stK \to \stG$, $(y,z) \mapsto F(y,z)F'(y,z)$. (The unnamed juxtaposition denotes the product structure in $\stG$.) Consider the bitorsor $E\wedge^G E'$, equipped with the biextension structure of Proposition~\ref{prop:4}.

  We claim that the pairs $(s_1,s'_1)$ and $(s_2,s'_2)$ give the two trivialization morphisms turning the biextension $E\wedge^G E'$ into a \bfl.

  Indeed, the restriction condition~\ref{item:4} in Definition~\ref{def:3} is immediate. The compatibility condition~\ref{item:5} can be verified by a simple computation, using the definition of the partial multiplications from the proof of Proposition~\ref{prop:4} and Lemma~\ref{lem:1}.
\end{proof}

\section{Commutative structures}
\label{sec:comm-struct}

With the equivalence between $u\colon \grdbfl(H_\bullet,K_\bullet; G_\bullet)\to\grdbiadd(\stH,\stK;\stG)$ at our disposal, we can discuss commutativity conditions for biextensions (cf.\ Remark~\ref{rem:2} above).
\begin{definition}
  \label{def:4}
  Let $\stH$, $\stK$, and $\stG$ all be equipped with a braiding structure. Then we say that $(E,s_1,s_2)\in \grdbfl(H_\bullet,K_\bullet;G_\bullet)$ (or simply $E$ by abuse of language) is \strong{braided} if the biadditive morphism $u(E)$ is braided in the sense of sect.~\ref{sec:biadditive-morphisms}.
\end{definition}
In order to turn the definition into actual diagrams, we use the explicit variance of the biextension $E$, Remark~\ref{rem:4}. Here the point of Remark~\ref{rem:1} becomes relevant. Whereas the morphism $u(E)\colon \stH\times \stK\to \stG$ is covariant in each variable, the biextension $E$ itself is a \emph{$(G_0,G_1)$-torsor} over $H_0\times K_0$, and $\sttors(G_1,G_0)$ is anti-equivalent to $\stG$.

Using the trivialization $s_1\colon H_1\times K_0\to E$ and $s_2\colon H_0\times K_1\to E$, and the braidings $\braces--_H$ and $\braces--_K$ for the presentations, we have morphisms $\eta^1$ and $\eta^2$:
\begin{align*}
  \eta^1_{y,y';z} \colon E_{y'y,z} & \lto E_{yy',z} &
  \eta^2_{y;z,z'} \colon E_{y,z'z} & \lto E_{y,zz'} \\
  e & \longmapsto e\times_1 s_1(\braces{y'}{y}_H,z)  &
  e' & \longmapsto e'\times_2 s_2(y,\braces{z'}{z}_K).
\end{align*}
Thus, for $y,y'\in H_0$ and $z,z'\in K_0$ the following diagrams must commute:
\begin{equation}
  \label{eq:34}
  \vcenter{%
    \xymatrix{%
      E_{y,z}\wedge^{G_1} E_{y',z} \ar[r]^(0.6){\gamma^1_{y,y';z}} \ar[d]_{c_{y,y';z}}
      & E_{yy',z} \ar@{<-}[d]^{\eta^1_{y,y';z}} \\
      E_{y',z}\wedge^{G_1} E_{y,z} \ar[r]_(0.6){\gamma^1_{y',y;z}}
      & E_{y'y,z} 
    }}\qquad\text{and} \qquad
  \vcenter{%
    \xymatrix{%
      E_{y,z}\wedge^{G_1} E_{y,z'} \ar[r]^(0.6){\gamma^2_{y;z,z'}} \ar[d]_{c_{y;z,z'}}
      & E_{y,zz'} \ar@{<-}[d]^{\eta^2_{y;z,z'}} \\
      E_{y,z'}\wedge^{G_1} E_{y,z} \ar[r]_(0.6){\gamma^2_{y;z',z}}
      & E_{y,z'z} 
    }}
\end{equation}
In both diagrams the vertical arrow to the left comes from the braiding in $\stG$. The two vertical arrows on the right express the functoriality with respect to the braiding structures of $H_\bullet$ and $K_\bullet$.

Alternatively, the directions in the right vertical arrows of both diagrams in~\eqref{eq:34} can be restored if we interpret them as morphism in $\sttors(G_1,G_0)$ arising from the morphisms
\begin{equation*}
  (H_0,yy') \lto (H_0,y'y) \qquad \text{and} \qquad (K_0,zz') \lto (K_0,z'z)
\end{equation*}
in $\sttors(H_1,H_0)$ and $\sttors(K_1,K_0)$, respectively.  This requires writing $\eta^1$ and $\eta^2$ (going in the opposite direction) in terms of
\begin{equation*}
  s_1(\braces{y'}{y}^{-1}_H,z)  \qquad \text{and} \qquad
  s_2(y,\braces{z'}{z}^{-1}_K).
\end{equation*}
Using Lemma~\ref{lem:1} to express the braiding morphisms $c_{y,y';z}$ and $c_{y;z,z'}$ we arrive at the expressions, valid for $e\in E_{y,z}$, $e'\in E_{y',z}$:
\begin{subequations}
  \label{eq:35}
  \begin{align}
    e'\times_1 e &= \bigl( e\times_1 e'\times_1 s_1(\braces{y'}{y}^{-1}_H,z)\bigr)\,\braces{\jmath(e)}{\jmath(e')}_G; \\
    \intertext{and for $e\in E_{y,z}$, $e'\in E_{y,z'}$:}
    e'\times_2 e &= \bigl( e\times_2 e'\times_2 s_2(y,\braces{z'}{z}^{-1}_K)\bigr)\,\braces{\jmath(e)}{\jmath(e')}_G.
  \end{align}
\end{subequations}
It follows that the (ordinary) butterfly corresponding to each variable must be braided in the sense of \cite[\S 7.4.1]{ButterfliesI}. 

Theorems~\ref{thm:1} and~\ref{thm:2} specialize to this situation. We will use  superscript $(\ )^b$ to denote the groupoids (or stacks) of braided biadditive morphisms and \bfls.
\begin{theorem}[Theorems~\ref{thm:1}, \ref{thm:2} fully commutative case]
  \label{thm:3}
  Let $\stK$, $\stH$, and $\stG$ be all braided and have presentations by braided crossed modules. The equivalence $u$ of Theorem~\ref{thm:1} restricts to an equivalence
  \begin{equation*}
    u\colon \grdbfl^b(H_\bullet,K_\bullet;G_\bullet) \lisoto
    \grdbiadd^b(\stH,\stK;\stG)
  \end{equation*}
  of groupoids. Furthermore, if $\stG$ is braided symmetric, $u$ (from Theorem~\ref{thm:2}) becomes an equivalence of group-like groupoids. Similar statements hold for $\stbfl^b(-,-;-)$ and $\stbiadd^b(-,-;-)$.
\qed
\end{theorem}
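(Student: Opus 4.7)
The first observation is that Definition~\ref{def:4} is crafted precisely so that the restriction of $u$ to braided objects is well-defined on the nose: by construction, $(E,s_1,s_2)$ is declared braided exactly when the biadditive morphism $u(E)=F_E$ is braided in the sense of sect.~\ref{sec:biadditive-morphisms}. Hence $u$ restricts to a functor $\grdbfl^b(H_\bullet,K_\bullet;G_\bullet)\to \grdbiadd^b(\stH,\stK;\stG)$, and since the morphisms in both categories are those of the ambient unbraided groupoids (the braided condition is a property, not extra structure), the restriction is fully faithful. For essential surjectivity on the braided subcategories, suppose $F\in \grdbiadd^b(\stH,\stK;\stG)$; then from Theorem~\ref{thm:1} we have $u(v(F))\iso F$, so $u(v(F))$ is braided, which by Definition~\ref{def:4} means $v(F)$ is braided. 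Thus $v$ also restricts and the two are still quasi-inverses.

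It remains to verify that Definition~\ref{def:4} admits the concrete description encoded by the diagrams~\eqref{eq:34} and the formulas~\eqref{eq:35}, so that ``braided'' becomes an intrinsic geometric condition on $(E,s_1,s_2)$. For this I would use the coordinate description of $u$ from Remark~\ref{rem:4}: the braiding requirement on $F_E$ applied to $(y,z)$ and $(y',z)$ (resp. to $(y,z)$ and $(y,z')$) becomes a statement about the partial multiplications $\gamma^1$, $\gamma^2$ and the braiding of $\stG$ on the fibers $E_{y,z}$. Using Lemma~\ref{lem:1} to express the braiding $c_{P,Q}$ of $(G_1,G_0)$-torsors through the bracket $\braces--_G$, together with the braidings $\braces--_H$, $\braces--_K$ on the presentations of $\stH$ and $\stK$ (which give rise to the vertical maps $\eta^1$, $\eta^2$ through the trivializations $s_1$ and $s_2$), one obtains precisely the equations in~\eqref{eq:35}. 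The same computation, applied variable by variable, shows that each of the unary restrictions of $E$ (to $K_0$ and to $H_0$ respectively) is a braided butterfly in the sense of \cite[\S 7.4.1]{ButterfliesI}, in agreement with Remark~\ref{rem:3}.

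For the symmetric upgrade, I combine the previous step with Theorem~\ref{thm:2}. When $\stG$ is braided symmetric, Proposition~\ref{prop:4} equips $\grdbfl(H_\bullet,K_\bullet;G_\bullet)$ with a tensor product via the contracted product $E\wedge^{G_1}E'$, and the proof of Theorem~\ref{thm:2} identifies its trivializations with the pairs $(s_1,s'_1)$ and $(s_2,s'_2)$. The task is then to show this tensor structure preserves the braided condition: if $E$ and $E'$ satisfy~\eqref{eq:34}, so does $E\wedge^{G_1}E'$. This follows by pasting the two braided pentagons for $E$ and $E'$ with the symmetry $\Hat c$ of~\eqref{eq:8}, using the explicit form of $\gamma^1\mu^1$ and $\gamma^2\mu^2$ from the proof of Proposition~\ref{prop:4}: the outer braiding in $\stG$ now acts on the product fiber $E_{y,z}\wedge^{G_1}E'_{y,z}$, and its compatibility with the individual braidings of $E$ and $E'$ is guaranteed by the fact that the symmetry $\Hat c$ is itself a morphism of biextensions once the braiding of $\mathsf{G}$ is symmetric.

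The main obstacle, in my view, is purely bookkeeping: unwinding the covariance/contravariance issue flagged in Remark~\ref{rem:1} to correctly orient the arrows $\eta^1,\eta^2$ in~\eqref{eq:34}, and making sure that the brackets $\braces--_H$, $\braces--_K$, $\braces--_G$ enter with the right variances in~\eqref{eq:35}. Once this is settled, the relative/stack version (for $\stbfl^b$ and $\stbiadd^b$) follows because every construction used is stable under restriction along morphisms $S'\to S$ in $\T$.
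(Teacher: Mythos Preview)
Your proposal is correct and matches the paper's own approach: the paper marks Theorem~\ref{thm:3} with a terminal \qed\ and gives no separate proof, since the first statement is tautological from Definition~\ref{def:4} (braided \bfls\ are \emph{defined} as those whose image under $u$ is braided) and the symmetric upgrade is inherited from Theorem~\ref{thm:2}. Your first paragraph captures exactly this; the remaining paragraphs, verifying the concrete form~\eqref{eq:34}--\eqref{eq:35} and checking that the contracted product preserves the braided condition, go beyond what the paper spells out but are consistent with the surrounding discussion and do no harm.
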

As an application of these additional assumptions, we can discuss whether $\grdbfl (H_\bullet,K_\bullet; G_\bullet)$, $\grdbiadd(\stH,\stK;\stG)$, etc.\ are biadditive in their variables. This means asking whether, for example, we have an equivalence:
\begin{equation*}
  \grdbiadd^b(\stH_1\times \stH_2,\stK;\stG) \iso
  \grdbiadd^b(\stH_1,\stK;\stG) \times
  \grdbiadd^b(\stH_2,\stK;\stG)\,,
\end{equation*}
and a similar one relative to the variable $\stK$, as well as similar statements for $\grdbfl(\stH,\stK;\stG)$, $\stbfl (H_\bullet,K_\bullet; G_\bullet)$, and $\stbiadd(\stH,\stK;\stG)$.

We have observed that regardless the commutativity assumptions (but keeping $\stG$ braided symmetric) they always are biadditive in the third variable. Biadditivity in the first and second variables only holds under the additional commutativity hypotheses. More precisely, we have
\begin{proposition}
  \label{prop:8}
  The groupoid $\grdbiadd^b (-,-;-)$ is biadditive in all variables for symmetric group-like stacks. (The same conclusion holds for $\grdbfl^b(-,-;-)$, $\stbfl^b(-,-;-)$ and $\stbiadd^b(-,-;-)$.)
\end{proposition}
\begin{proof}
  We can use the same argument as \cite[\S 1.2]{MR823233}. Specifically, consider the three (additive) functors $d_i\colon \stH\times \stH\to \stH$ ($d_1$ is additive since $\stH$ is braided).  Let $F\colon \stH\times \stK\to \stG$ be a biadditive morphism, and let $E=v(F)$ be the corresponding \bfl. Then
  \begin{equation*}
    \lambda^1\colon d_2^*F d_0^*F \lto d_1^*F
  \end{equation*}
  is a morphism in $\grdbiadd^b (\stH\times \stH,\stK;\stG)$. As in loc.\ cit.\ this follows from the compatibility between $\lambda^1$ and $\lambda^2$ and the fact that $\lambda^1$ is an additive morphism thanks to the (symmetric) braiding. In view of the equivalence $\grdbiadd^b(-,-;-)\iso \grdbfl^b(-,-;-)$, the above morphism corresponds to a morphism of biextensions
  \begin{equation*}
    \gamma^1\colon d_2^*E \wedge^{G_1} d_0^*E \lto d_1^*E
  \end{equation*}
  which is in fact a morphism of \bfls in $\grdbfl^b(H_\bullet\times H_\bullet,K_\bullet; G_\bullet)$, after one checks the trivializations. From Proposition~\ref{prop:4} we see that the braiding must be symmetric.
  Now, for additive morphisms $R,S\colon \stH'\to \stH$, we obtain a morphism
  \begin{equation*}
    (R\times I)^*F\, (S\times I)^*F \lto ((RS)\times I)^*F
  \end{equation*}
  in $\grdbiadd^b(\stH',\stK;\stG)$, showing biadditivity in the first variable. The second variable is treated analogously.
\end{proof}
\begin{remark}
  \label{rem:5}
  In the proof, if $E_R$ and $E_S$ are the butterflies corresponding to $R,S \colon \stH'\to \stH$, we obtain the morphism of \bfls in $\grdbfl (H'_\bullet,K_\bullet;G_\bullet)$ is
  \begin{equation*}
    \bigl( (E_R\times I)\times^{H_1}_{H_0} E \bigr) \wedge^{G_1} \bigl( (E_S\times I)\times^{H_1}_{H_0} E \bigr) \lto
    \bigl( (E_R\wedge^{H_1}E_S)\times I\bigr)\times^{H_1}_{H_0} E
  \end{equation*}
  expressing the biadditivity in the first variable at the level of biextensions. Here $I$ is a shorthand for the diagram corresponding to the identity morphism. (This kind of compositions is systematically studied in the next Part~\ref{part:part2}.)
\end{remark}

\part{}\label{part:part2}
\section{Multiextensions and compositions}
\label{sec:auxiliary}
The generalizations of the previous notions of biextension and biadditive morphism to the case of $n$-variables is straightforward \cites(see)()[\S 2.10.2]{MR0354656-VII}[\S 7]{MR1702420}. Let $(G,\Pi,\del,\braces--)$ be a braided crossed module.  A \strong{multiextension,} or \strong{$n$-extension,} of $(H_1,\dots H_n)$ by $(G,\Pi)$ is a $(G,\Pi)_{H_1\times \dots \times H_n}$-torsor $E$ over $H_1\times \dots \times H_n$ equipped with $n$ partial multiplication laws $\times_1,\dots,\times_n$, plus a compatibility relation of the type \eqref{eq:5} for each pair $(\times_i,\times_j)$.

Lemma~\ref{lem:2} remains valid, as well as the analogs of Propositions~\ref{prop:2} and~\ref{prop:3}.  In fact, the notion of \bfl can be extended to this case. Let us consider $\stH_1,\dots,\stH_n$ and $\stG$, the latter equipped with a braiding as usual. There is an evident notion of $n$-additive functor $F\colon \stH_1\times\dots\times \stH_n\to \stG$, which can be defined by an appropriate generalization of Definition~\ref{def:1}.  If we suppose that each $\stH_i$ has a presentation $\stH_i\iso [H_{i,1}\to H_{i,0}]\sptilde$, we can define a \bfl (or, more precisely, an $n$-\bfl) from $H_{1,\bullet}\times \dots \times H_{n,\bullet}$ to $G_\bullet$ as an $n$-extension $E$ of $(H_{1,0}\times \dots \times H_{n,0})$ by $G_\bullet$ equipped with $n$ trivializations $s_i\colon H_{1,0}\times \dots \times H_{i,1}\times \dots \times H_{n,0}\to E$, each satisfying the conditions of Definition~\ref{def:3}, with the obvious modifications. Theorem~\ref{thm:1} and its symmetric variant~\ref{thm:2} generalize to this case.  Let us state this independently for future reference. Denoting by $\grdnbfl$ the groupoid of $n$-\bfls, we have
\begin{theorem}
  \label{thm:4}
  There is an equivalence of (pointed) groupoids:
  \begin{equation*}
    u\colon \grdnbfl(H_{1,\bullet},\dots, H_{n,\bullet};G_\bullet) \lisoto
    \grdnadd(\stH_1,\dots,\stH_n;\stG),
  \end{equation*}
  sending a multi-extension $E$, to the $n$-additive functor $u(E)$ that to the object $(y_1,\dots,y_n)$ of $\stH_1\times\dots \times\stH_n$, where each $y_i$ is in the essential image of $H_{i,0}$, assigns the $(G,\Pi)$-torsor $E_{y_1,\dots,y_n}$. It is an equivalence of group-like groupoids whenever $\stG$ is symmetric.  \qed
\end{theorem}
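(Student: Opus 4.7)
The plan is to reproduce the argument of Theorems~\ref{thm:1} and~\ref{thm:2} with multi-index bookkeeping in place of the bi-index one, since the pattern of partial multiplications and trivializations generalizes uniformly. First I would define $u$ on objects by the same recipe as in Proposition~\ref{prop:7}: given $(E,s_1,\dots,s_n)\in \grdnbfl(H_{1,\bullet},\dots,H_{n,\bullet};G_\bullet)$, send an $n$-tuple of torsors $\bigl((Y_1,y_1),\dots,(Y_n,y_n)\bigr)$ representing an object of $\stH_1\times\dots\times\stH_n$ to the $(G,\Pi)$-torsor $\Hom_{H_{1,1},\dots,H_{n,1}}(Y_1,\dots,Y_n;E)_{(y_1,\dots,y_n)}$ of separately equivariant local lifts, with equivariant map $e\mapsto \jmath\circ e$. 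The $n$-additive structure comes from the $n$ partial multiplication laws $\times_1,\dots,\times_n$ of $E$: the $i^{\mathrm{th}}$ law yields the coherence isomorphism $\lambda^i$ in the $i^{\mathrm{th}}$ variable, and the pairwise compatibility relations between $\times_i$ and $\times_j$ (of the form~\eqref{eq:5}) are exactly what translates into the required pairwise compatibility diagrams~\eqref{eq:7} between $\lambda^i$ and $\lambda^j$ in the $n$-variable analogue of Definition~\ref{def:1}.

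In the opposite direction, I would define $v$ by pulling back the universal extension~\eqref{eq:1} along the composite $F\circ\pi\colon H_{1,0}\times\dots\times H_{n,0}\to\stG$, obtaining $E_F=(H_{1,0}\times\dots\times H_{n,0})\times_\stG G_0$ exactly as in Propositions~\ref{prop:2} and~\ref{prop:6}. The $n$-additivity of $F$ produces the $n$ partial products on $E_F$, the pairwise compatibilities in the definition of an $n$-additive functor produce the pairwise compatibilities of the $\times_i$, and the homotopical exactness of each sequence $H_{i,1}\to H_{i,0}\to \stH_i$ yields a trivialization $s_i$ of the pullback of $E_F$ along $\id\times\dots\times\del\times\dots\times\id$, as in the proof of Lemma~\ref{lem:6}. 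Condition~\ref{item:3} of Definition~\ref{def:1}, extended to $n$ variables, guarantees that any two $s_i$ and $s_j$ agree on the common restriction to $H_{1,0}\times\dots\times H_{i,1}\times\dots\times H_{j,1}\times\dots\times H_{n,0}$, which is the multi-variable analogue of the restriction condition~\ref{item:4} of Definition~\ref{def:3}.

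To show $u$ and $v$ are quasi-inverses I would follow the proof of Theorem~\ref{thm:1} pointwise. The key input is the isomorphism~\eqref{eq:32}, applied in each of the $n$ factors, which supplies a functorial chain
\begin{equation*}
  v(u(E))_{(y_1,\dots,y_n)} \lisoto \Hom_{G}(G,E_{(y_1,\dots,y_n)}) \lisoto E_{(y_1,\dots,y_n)}
\end{equation*}
of $(G,\Pi)$-torsor isomorphisms; this is compatible with all $n$ partial multiplications and with all $n$ trivializations by tautological verifications like those at the end of the proof of Theorem~\ref{thm:1}. Conversely, $u(v(F))(y_1,\dots,y_n)\iso F(\pi(y_1),\dots,\pi(y_n))$ on the essential image of the presentations, and extends to all of $\stH_1\times\dots\times\stH_n$ by descent because $v(F)$ is an $n$-\bfl.

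The main obstacle is purely bookkeeping: keeping track of the $n$ trivializations $s_i$ and their $\binom{n}{2}$ pairwise agreements on $H_{i,1}\times H_{j,1}$-restrictions, and of the $\binom{n}{2}$ pairwise compatibility conditions between the partial multiplications; no new geometric input beyond the bilinear case is required. Finally, for the group-like statement in the symmetric case, I would extend the proof of Theorem~\ref{thm:2} by observing that if $(s_1,\dots,s_n)$ and $(s'_1,\dots,s'_n)$ trivialize $E$ and $E'$ respectively, then the pairs $(s_i,s'_i)$ assemble into trivializations of the contracted product $E\wedge^G E'$; the restriction and compatibility conditions reduce to direct verifications using the partial products on $E\wedge^G E'$ constructed as in Proposition~\ref{prop:4} together with Lemma~\ref{lem:1}, which is where the symmetry of $\stG$ is used.
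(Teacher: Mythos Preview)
Your proposal is correct and matches the paper's own treatment: the paper states Theorem~\ref{thm:4} with a \qed\ and merely remarks that Theorem~\ref{thm:1} and its symmetric variant Theorem~\ref{thm:2} generalize directly to the $n$-variable case, with the axioms of Definition~\ref{def:3} ensuring compatibility with descent. Your outline of defining $u$ and $v$ by the multi-index analogues of Propositions~\ref{prop:7} and~\ref{prop:6}, and verifying they are quasi-inverses via~\eqref{eq:32} as in the proof of Theorem~\ref{thm:1}, is exactly the intended argument, including your correct identification of the only obstacle as bookkeeping rather than new geometry.
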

Once again, the axioms of Definition~\ref{def:3} ensure this is compatible with the descent, ensuring $u(E)$ is a well defined morphism. Just like for biextensions, the equivalence is compatible with localization, with the stacks $\stnbfl$ and $\stnadd$ in place of the global groupoids.

Multi-additive functors can be composed in the following way.  Let $\stG$, $\stH_1,\dots,\stH_n$, $\stK_{i,1},\dots,\stK_{i,m_i}$, $i=1,\dots,n$, be group-like stacks, with $\stG$ and $\stH_1,\dots,\stH_n$ braided. Let $F\in \grdnadd(\stH_1,\dots,\stH_n;\stG)$, and for $i=1,\dots,n$ let $G_i\in \grdnadd(\stK_{i,1},\dots,\stK_{i,m_i};\stH_i)$. 
Then if $x_{1,1},\dots,x_{n,m_n}$, collectively denoted $x_1,\dots,x_m$, are objects of $\stK_{1,1},\dots,\stK_{n,m_n}$, define $F(G_1,\dots,G_n)$ as usual by
\begin{equation}
  \label{eq:36}
  F(G_1,\dots,G_n)(x_1\dots,x_m) \coloneqq F(G_1(x_1,\dotsc),\dots,G_n(\dotsc,x_m)).
\end{equation}
\begin{proposition}
  \label{prop:9}
  The composition defined in~\eqref{eq:36} assigns to the tuple $(F,G_1,\dots,G_n)$ a well defined object $F(G_1,\dots,G_n)$ of $\grdnadd(\stK_{1,1},\dots,\stK_{n,m_n};\stG)$.  This composition is associative. 

  An identical statement holds with $\grdnadd$ replaced by $\stnadd$.
\end{proposition}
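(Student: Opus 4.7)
The plan is to verify the multi-additivity axioms of Definition~\ref{def:1} (generalized to $m = m_1+\dots+m_n$ variables) for the composite $F(G_1,\dots,G_n)$ defined pointwise by~\eqref{eq:36}, then deduce associativity by unwinding definitions, and finally transport everything to stacks by localization. For each variable slot $(i,j)$, with $1\leq i\leq n$ and $1\leq j\leq m_i$, I would build the additivity morphism at slot $(i,j)$ as a two-step composite: first apply $F$'s $i$-th additivity to push the product outside $F$'s $i$-th argument, then apply $F$'s functoriality in slot $i$ to transport the $j$-th additivity morphism of $G_i$ from inside that argument to the outside. Associativity in the variable $x_{i,j}$, the unit axiom, and the compatibility with the braiding on $\stG$ then follow from the corresponding properties inherited from $F$ and $G_i$ via functoriality.

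The key step is the interchange hexagon~\eqref{eq:7} for each pair of distinct slots $((i,j),(i',j'))$, which I would split into two cases. When $i\neq i'$, the two additivity morphisms act through different slots of $F$; since $G_i$ and $G_{i'}$ operate on disjoint tuples of arguments and commute inside $F$ by functoriality, the required hexagon reduces, via the naturality of the two $F$-level additivity morphisms, to the $(i,i')$-compatibility of $F$ itself. When $i=i'$ the two additivity morphisms both act through $F$'s $i$-th slot; after pushing all four factors inside $F$ via two applications of $F$'s $i$-th additivity, one obtains, inside $F$'s $i$-th argument, precisely the interchange hexagon of $G_i$ at the pair $(j,j')$, while the outer commuto-associativity arrow $\Hat c$ for $\stG$ is matched with the corresponding $\Hat c$ for $\stH_i$ via the braiding-compatibility built into $F$'s $i$-th additivity (item~\ref{item:1} of Definition~\ref{def:1}).

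Associativity of the composition is then essentially tautological: for further multi-additive families $H_{i,j}$, both iterated composites $F(G_1,\dots,G_n)(H_{1,1},\dots,H_{n,m_n})$ and $F(G_1(H_{1,1},\dotsc),\dotsc,G_n(\dotsc,H_{n,m_n}))$ unfold pointwise via~\eqref{eq:36} to the same nested assignment, and the induced additivity and compatibility morphisms are built from the same underlying data composed in the same order. The stack version for $\stnadd$ follows from the fact that~\eqref{eq:36} is compatible with restriction along any $S'\to S$, so the fiberwise composition on $S\mapsto \grdnadd(\stH_1\rvert_S,\dots,\stH_n\rvert_S;\stG\rvert_S)$ sheafifies. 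The main obstacle is the case $i=i'$ of the interchange check: the matching of the $\Hat c$ of $\stG$ with the $\Hat c$ of $\stH_i$ after the two applications of $F$'s additivity requires a careful diagram chase exploiting $F$'s braiding-compatibility, whereas all remaining verifications are routine consequences of functoriality and naturality.
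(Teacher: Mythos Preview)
Your proposal is correct and follows essentially the same approach as the paper's own proof: both reduce the verification to the interchange condition~\eqref{eq:7}, split into the same two cases (variables in the same slot versus different slots), handle the different-slot case via $F$'s own interchange and the same-slot case via $G_i$'s interchange transported through $F$, and declare associativity immediate. Your treatment is in fact slightly more explicit than the paper's, which leaves the diagram chases to the reader and does not separately mention the braiding-matching step you flag as the main obstacle.
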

\begin{proof}
  The only thing to check is that the composition $F(G_1,\dots,G_n)$ satisfy the conditions~\eqref{eq:7} for each pair $(i,j)$ of indices within the list $\lbrace 1,\dots, m_1+m_2+\dots +m_n\rbrace$, where $m_k$ is the arity of $G_k$.  There are two cases depending on whether the variables corresponding to the pair $(i,j)$ belongs to the same ``slot,'' say relative to $G_k$, or when $i$ and $j$ fall into two different slots, relative to $G_k$ and $G_l$, with $k\neq l$.  We now indicate the main points of the verification, leaving the easy task of writing the complete diagrams to the reader. 

  In the former case, the mechanics of the verification are completely captured by considering the values $n=1$, $m_1=2$. First we write the pentagonal diagram corresponding to~\eqref{eq:7} for $F(G)$. In it, we use the functoriality of $F$ to reduce the arrow
  \begin{equation*}
    \xymatrix@1{
      (F(G(x,y))\, F(G(x',y)))\; (F(G(x,y'))\, F(G(x',y'))) \ar[r] &
      (F(G(x,y))\, F(G(x,y')))\; (F(G(x',y))\, F(G(x',y'))) }
  \end{equation*}
  to the arrow
  \begin{equation*}
    \xymatrix@1{
      F((G(x,y)\, G(x',y))\, (G(x,y')\, G(x',y'))) \ar[r] &
      F((G(x,y)\, G(x,y'))\, (G(x',y)\, G(x',y'))) },
  \end{equation*}
  and then use the fact that $G$ itself satisfies~\eqref{eq:7}.  For the latter case, the general situation is captured by considering $n=2$, $m_1=m_2=1$, so we need to write the diagram~\eqref{eq:7} for $F(G_1,G_2)$. For this, the interchange law for $F$ gives us a unique morphism from
  \begin{equation*}
      (F(G_1(x),G_2(y))\, F(G_1(x'),G_2(y)))\;
      (F(G_1(x),G_2(y'))\, F(G_1(x'),G_2(y'))) 
    \end{equation*}
    to
    \begin{equation*}
      F(G_1(x)\,G_1(x'),G_2(y)\,G_2(y')).
    \end{equation*}
    Now the functoriality of $F$ gives the morphism
  \begin{equation*}
    F(G_1(x)\,G_1(x'),G_2(y)\,G_2(y')) \lto F(G_1(xx'),G_2(yy'))
  \end{equation*}
  by way of the square
  \begin{equation*}
    \xymatrix{%
      F(G_1(x)\,G_1(x'),G_2(y)\,G_2(y')) \ar[r] \ar[d]&
      F(G_1(x)\,G_1(x'),G_2(yy')) \ar[d] \\
      F(G_1(xx'),G_2(y)\,G_2(y')) \ar[r] &
      F(G_1(xx'),G_2(yy'))
    }
  \end{equation*}
  The statement about associativity is immediate.
\end{proof}

\section{Compositions of $n$-\bfls}
\label{sec:comp-n-bfls}

From Theorem~\ref{thm:4}, the multilinear functors can be expressed in terms of multi-extensions, and from Proposition~\ref{prop:9} multilinear functors can be composed.  Hence we expect an analogous composition exists for multi-extensions. In this section we provide a construction of this composition, generalizing the composition for single \bfls found in \cite[\S 5.1]{ButterfliesI}.

For $i=1,\dots,n$, and integers $j_i=1,\dots, m_i$, let $G_\bullet$, $H_{1,\bullet},\dots H_{n,\bullet}$ and $K_{i,1,\bullet},\dots,K_{i,m_i,\bullet}$ crossed modules, of which $G_\bullet$, $H_{1,\bullet},\dots H_{n,\bullet}$ are assumed to be braided.  Let $E$ be a braided $n$-\bfl from $H_{1,\bullet}\times\dots\times H_{n,\bullet}$ to $G_\bullet$, and for $i=1,\dots,n$ let $F_i$ be an $m_i$-\bfl from $K_{i,1,\bullet}\times\dots\times K_{i,m_i,\bullet}$ to $H_{i,\bullet}$.
In the following we let 
\begin{equation*}
  P = 
  (F_1\times\dots\times F_n) \varprod_{H_{1,0}\times\dots \times H_{n,0}} E,
\end{equation*}
be the $(H_{1,1}\times\dots\times H_{n,1})$-equivariant $(G_1,G_0)$-torsor \emph{over} $F_1\times\dots \times F_n$.
\begin{theorem}
  \label{thm:5}
  The  $(G_1,G_0)$-torsor $P$ has a well defined quotient by $H_{1,1}\times\dots\times H_{n,1}$, denoted
  \begin{equation*}
    (F_1\times\dots\times F_n) \varprod^{H_{1,1}\times\dots\times H_{n,1}}_{H_{1,0}\times\dots
      \times H_{n,0}} E,
  \end{equation*}
  which carries a structure of $m_1+\dots+m_n$-extension (=\bfl) from $K_{1,1,\bullet}\times \dots \times K_{n,m_n,\bullet}$ to $G_\bullet$.
\end{theorem}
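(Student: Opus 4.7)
I would proceed in four stages: (i) construct a compatible action of $H_{1,1}\times\dots\times H_{n,1}$ on $P$ lifting the base action on $F_1\times\dots\times F_n$; (ii) descend $P$ to a $(G_1,G_0)$-torsor $E(F_1,\dots,F_n)$ over $K_{1,1,0}\times\dots\times K_{n,m_n,0}$; (iii) transport the partial products of $E$ and of the $F_i$'s to the descended object, producing $m_1+\dots+m_n$ partial products together with trivializations along each $\del_{(i,j)}\colon K_{i,j,1}\to K_{i,j,0}$; (iv) verify the restriction and interchange conditions of Definition~\ref{def:3}.

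\textbf{Equivariance and descent.} Each $F_i$ comes equipped with a wing $\sigma_i\colon F_i\to H_{i,0}$ which is $H_{i,1}$-equivariant on both sides, acting on the base by multiplication by $\del$. To make $P$ equivariant, for each $i$ and each $h_i\in H_{i,1}$ I use the $i$-th trivialization $s_i^{E}$ of the $n$-\bfl $E$ to produce a canonical isomorphism
\[
  E_{\sigma_1(f_1),\dots,\sigma_i(f_i),\dots,\sigma_n(f_n)}
  \lisoto
  E_{\sigma_1(f_1),\dots,\sigma_i(f_i)\,\del h_i,\dots,\sigma_n(f_n)}
\]
by $\times_i$-multiplication with $s_i^{E}(\dots,h_i,\dots)$. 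The multiplicativity and centrality properties of $s_i^E$ (the analogs of Lemma~\ref{lem:4} in the $n$-variable setting), combined with the interchange law between $\times_i$ and $\times_{i'}$ in $E$, ensure that the actions for distinct indices $i$ commute and that each is a bitorsor action. Standard descent for the $(G_1,G_0)$-torsor $P$ under the properly free action of $H_{1,1}\times\dots\times H_{n,1}$ then yields the quotient $E(F_1,\dots,F_n)$ over $K_{1,1,0}\times\dots\times K_{n,m_n,0}$.

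\textbf{Partial products and trivializations.} For the $(i,j)$-th partial product, take two points of $P$ agreeing in all variables except the $j$-th variable of the $i$-th slot. I combine the $\times_j$-product on $F_i$ (with all other $F_{i'}$-factors kept fixed) with the $i$-th partial product $\times_i$ of $E$ on the $E$-component to produce a new element of $P$; associativity of $\times_i$ shows this descends through the $H_{i,1}$-quotient. The trivialization along $\del\colon K_{i,j,1}\to K_{i,j,0}$ is induced from the corresponding wing $s_{i,j}^{F_i}$ of the multi-\bfl $F_i$: pulling back $E$ along $s_{i,j}^{F_i}$ and descending yields a section of $E(F_1,\dots,F_n)$. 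Its multiplicativity and centrality follow from the corresponding properties of $s_{i,j}^{F_i}$ and the associativity of $\times_i$ in $E$; the restriction condition between $s_{(i,j)}$ and $s_{(i,j')}$ is inherited from the restriction condition on $F_i$ itself.

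\textbf{Verification of the axioms and the main obstacle.} The interchange condition between the $(i,j)$- and $(i',j')$-th partial products splits into two cases. \emph{Same slot} ($i=i'$): the condition reduces to the interchange of $\times_j$ and $\times_{j'}$ on $F_i$ combined with the associativity of a single $\times_i$ on $E$, and is formal. \emph{Different slots} ($i\neq i'$): here the $n$-extension structure of $E$ enters essentially, because swapping the order of $\times_i$ and $\times_{i'}$ on the $E$-factor incurs a braiding discrepancy computed by Proposition~\ref{prop:1} and Lemma~\ref{lem:1}. One must show that this discrepancy precisely matches the braiding factor appearing in the target diagram~\eqref{eq:5} for $E(F_1,\dots,F_n)$. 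The plan is to assemble a large commutative diagram analogous to the one in the proof of Proposition~\ref{prop:4}, with an outer pentagon encoding the sought-after interchange for $E(F_1,\dots,F_n)$, inner interchange pentagons for $E$ and for each $F_i$, and quadrangles expressing the equivariance and descent already established. The main obstacle is exactly this bookkeeping: keeping track of the many bitorsor factors, shuffling them with the correct braidings, and confirming that the braiding corrections line up. The single-slot analog of this computation is carried out in \cite{ButterfliesI}; the new ingredient is the coupling between distinct slots, which is mediated precisely by the $n$-extension structure of $E$ and has no counterpart in the unary setting.
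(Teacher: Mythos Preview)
Your overall plan—equivariance via the trivializations $s_i^E$, descent, transport of partial products and of the wings $s_{i,j}^{F_i}$—matches the paper's proof, and those parts are fine.  However, you have the two interchange cases reversed in difficulty, and this conceals a genuine gap.

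The different-slot case ($i\neq i'$) is in fact the easy one.  On the $E$-component the two orders of multiplication are $(u_i\times_i u'_i)\times_{i'}(u_{i'}\times_i u'_{i'})$ versus $(u_i\times_{i'} u_{i'})\times_i(u'_i\times_{i'} u'_{i'})$, and a single application of the interchange law for $\times_i$ and $\times_{i'}$ in $E$ (Proposition~\ref{prop:1}) yields exactly the required $G$-braiding correction.  No large diagram is needed.

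The same-slot case ($i=i'$, $j\neq l$) is the subtle one and is \emph{not} formal.  Both partial products $\times_{i,j}$ and $\times_{i,l}$ act on the $E$-component through the \emph{same} operation $\times_i$, so after associativity one side reads $u_i\times_i u'_i\times_i t_i\times_i t'_i$ and the other $u_i\times_i t_i\times_i u'_i\times_i t'_i$; these differ by swapping the two middle factors, and $\times_i$ is not commutative.  What actually performs the swap is the hypothesis—never invoked in your outline—that $E$ is a \emph{braided} $n$-\bfl in the sense of Definition~\ref{def:4}.  The interchange law in $F_i$ produces a braiding element of $H_{i,1}$; passing it across the quotient relation makes it act on the $E$-component via $s_i^E$; and the braided-\bfl identity~\eqref{eq:35} converts that $s_i^E$-action into the swap $u'_i\leftrightarrow t_i$ together with the correct $G$-braiding factor $\braces{\jmath(u'_i)}{\jmath(t_i)}_G^{-\jmath(t'_i)}$.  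Without the braided hypothesis on $E$ this step fails, so ``associativity of a single $\times_i$ on $E$'' is insufficient and the claim that the same-slot case is formal is a real gap.
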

\begin{definition}
  \label{def:5}
  The composition $E(F_1,\dots,F_n)$ of $E$ with $F_1,\dots,F_n$ is
  \begin{equation*}
    E(F_1,\dots,F_n) \coloneqq
    (F_1\times\dots\times F_n) \varprod^{H_{1,1}\times\dots\times H_{n,1}}_{H_{1,0}\times\dots \times H_{n,0}} E.
  \end{equation*}
\end{definition}
We call this composition the \emph{juxtaposition product,} since it entails placing the \bfls wing-by-wing next to one another.
\begin{proof}[\textbf{Proof of Theorem~\ref{thm:5}–Construction of the juxtaposition product}]
  We will use element notation throughout. The procedure consists of several steps. The first is to form the fiber product exactly as in loc.\ cit.
\begin{equation*}
  P = 
  (F_1\times\dots\times F_n) \varprod_{H_{1,0}\times\dots \times H_{n,0}} E,
\end{equation*}
which as noted is an $(H_{1,1}\times\dots\times H_{n,1})$-equivariant $(G_1,G_0)$-torsor \emph{over} $F_1\times\dots \times F_n$, and to mod out the left and right actions of $H_{1,1}\times\dots\times H_{n,1}$. Let us use the notation
\begin{equation*}
  (y_1,\dots,y_n) \coloneqq
  (\jmath_1(v_1),\dots \jmath_n(v_n)) = p(u)
\end{equation*}
for the tuple in $H_{1,0}\times\dots \times H_{n,0}$,  where $\jmath_i\colon F_i\to H_{i,0}$ for each $(H_{i,1},H_{i,0})$-torsor $F_i$, $i=1,\dots,n$.

Consider first the right action of elements of the form $(1,\dots,h_i,\dots, 1)$, where $h_i\in H_{i,1}$. The action on $P$ is given by sending a point $(v_1,\dots,v_n,u)$ to
\begin{equation*}
  (v_1,\dots,v_i\,h_i, \dots v_n, u\times_i s_i(y_1,\dots ,h_i,\dots, y_n)),
\end{equation*}
For the left action, we have
\begin{equation*}
  (v_1,\dots,h_i\,v_i, \dots v_n, s_i(y_1,\dots ,h_i,\dots, y_n) \times_i u) = 
  (v_1,\dots,v_i\,h_i^{y_i}, \dots v_n, u\times_i s_i(y_1,\dots ,h_i^{y_i},\dots, y_n)),
\end{equation*}
which follows from Definition~\ref{def:3}, adapted to the multi-variable case.  These are actions thanks to the multiplicativity properties of $s_j$ established in Lemma~\ref{lem:4}.
These actions are compatible for $i\neq j$.  Acting with $h_i$ and then with $h_j$, assuming for example that $i < j$, we have
\begin{multline*}
  \bigl(u \times_i s_i(y_1,\dots,h_i,\dots,y_n)\bigr)\times_j
  s_j(y_1,\dots,y_i\del h_i,\dots, h_j,\dots, y_n) \\
  = \bigl(u \times_i s_i(y_1,\dots,h_i,\dots,y_n)\bigr)\times_j
  \bigl(s_j(y_1,\dots,y_i\dots, h_j,\dots, y_n) \times_i
  s_j(y_1,\dots,\del h_i,\dots, h_j,\dots, y_n)\bigr),
\end{multline*}
where we used the multiplicativity property of $s_j$. 
Now, using the interchange property for $\times_i$ and $\times_j$, and the compatibility between the trivializations $s_i$ and $s_j$, the right hand side becomes
\begin{equation*}
  \bigl(u \times_j s_j(y_1,\dots,h_j,\dots,y_n)\bigr)\times_i
  \bigl(s_i(y_1,\dots,h_i,\dots, y_n) \times_j
  s_i(y_1,\dots, h_i,\dots, \del h_j,\dots, y_n)\bigr),
\end{equation*}
which coincides with the action of $h_j$ first, followed by that of $h_i$.  This ensures that following formula for the right action of a generic point $(h_1,\dots,h_n)$ is well defined:
\begin{equation}
  \label{eq:37}
  \bigl( v_1\,h_1,\dots v_n\,h_n,
  \bigl( \dotsm \bigl( u \times_1 s_1(h_1,y_2,\dots,y_n) \bigr) \times_2
  s_2(y_1\del h_1,h_2,\dots,y_n) \dotsm \bigr) \times_n
  s_n(y_1\del h_1,\dots,y_{n-1}\del h_{n-1}, h_n)\bigr).
\end{equation}
The reader will have no difficulty in writing the corresponding formulas for the left action. Since each $F_i$ is an $(H_{i,1},H_{i,0})$-torsor, the action is free.

Note also that the $G_1$-actions on $P$ (both left and right) are compatible with the action of $H_{1,1}\times \dots \times H_{n,1}$ thanks to the relations ~\eqref{eq:25} in Lemma~\ref{lem:4} (property \ref{item:7}).  In particular, the $G_1$-actions happen by way of those on the last element of the tuple; denoting the class of a tuple by brackets we have:
\begin{equation*}
  [v_1,\dots,v_n,u]\,g = [v_1,\dots,v_n,u\,g], \qquad
  g\, [v_1,\dots,v_n,u] = [v_1,\dots,v_n,g\, u]
  = [v_1,\dots,v_n,u\,g^{\jmath{u}}].
\end{equation*}
Finally, the equivariant section from the quotient of $P$ to $G_0$ is defined to be
\begin{equation*}
  (v_1,\dots, v_n,u) \longmapsto \jmath (u).
\end{equation*}

Next, we define $m_1+\dots +m_n$ partial product structures $\times_{i,j}$ for $i=1,\dots,n$ and $j=1,\dots,m_i$ as follows.  Using our established index convention, let us use the notation
\begin{equation*}
  (z_{1,1},\dots, z_{i,j}, \dots, z_{n,m_n}) \in
  K_{1,1,0}\times \dots \times K_{i,j,0} \times\dots \times K_{n,m_n,0}
\end{equation*}
for a point of the base of $P/(H_{1,1}\times\dots\times H_{n,1})$.  To begin with, consider the special case of two points $(v_1,\dots,v_j,\dots,v_n,u)$ and $(v_1,\dots, v'_j,\dots, v_n,u')$ of $P$ such that
\begin{equation*}
  p_i(v_i) = (z_{i,1},\dots, z_{i,j}, \dots, z_{i,m_i}), \quad
  p_i(v'_i) = (z_{i,1},\dots, z'_{i,j}, \dots, z_{i,m_i}),
\end{equation*}
in $K_{i,j,0}$, for fixed $i$ and $j\in \lbrace 1,\dots,m_i\rbrace$, everything else being equal.  Define:
\begin{equation}
  \label{eq:38}
  (v_1,\dots,v_j,\dots,v_n,u) \times_{i,j} (v_1,\dots,v'_j,\dots,v_n,u) \coloneqq
  (v_1,\dots,v_j\times_{i,j} v'_j,\dots,v_n,u\times_i u').
\end{equation}
The symbol $\times_{i,j}$ on the right hand side of~\eqref{eq:38} denotes the $j$\textsuperscript{th} product structure (within $j=1,\dots,m_i$) of $F_i$, and the resulting point of $P$ projects onto the point $(z_{1,1},\dots, z_{i,j}z'_{i,j}, \dots, z_{n,m_n})$.

In general, let us consider points $e=[v_1,\dots,v_n,u]$ and $e'=[v'_1,\dots,v'_n,u']$ of $P/(H_{1,1}\times\dots\times H_{n,1})$ above $(z_{1,1},\dots, z_{i,j}, \dots, z_{n,m_n})$ and $(z_{1,1},\dots, z'_{i,j}, \dots, z_{n,m_n})$, respectively.  (As before, only the $z_{i,j}$ and $z'_{i,j}$ coordinates are different.) For $j=1,\dots,n$, $j\neq i$, there exist unique $h_j\in H_{j,1}$ such that $v'_j = v_j\, h_j$.  As a result,
\begin{equation*}
  [v'_1,\dots,v'_n,u'] = [v_1,\dots,v'_i,\dots,v_n,u''],
\end{equation*}
where $u''$ is related to $u'$ by an application of~\eqref{eq:37}.  Then we define $e \times_{i,j} e'$ as the class
\begin{equation*}
  [v_1,\dots,v_j\times_{i,j} v'_j,\dots,v_n,u\times_i u''],
\end{equation*}
which is computed using~\eqref{eq:38} above. We must show that this is independent of the various choices involved through the use of~\eqref{eq:37}.  The computation is quite elaborate, but otherwise not illuminating nor eventful, therefore we omit it. We also omit the easy verification that each of these partial multiplication laws is associative.

To claim that we have constructed a genuine multi-extension, we must prove the partial multiplication laws just defined obey pairwise compatibility (interchange) laws. This we prove explicitly. Like in the proof of Proposition~\ref{prop:9}, there are two distinct cases to address, depending on whether the two partial product laws have the same first index. The easiest is when they do not, so we treat it first. Thus, let $i\neq k\in \lbrace 1,\dots n\rbrace$. For brevity let $Q = P/(H_{1,1}\times\dots\times H_{n,1})$ and consider:
\begin{gather*}
  e_{i,j} = [v_1,\dots,v_i,\dots,v_k,\dots,v_n,u_i] \in
  Q_{z_{1,1},\dots,z_{i,j},\dots,z_{k,l},\dots,z_{n,m_n}} \\
  e'_{i,j} = [v_1,\dots,v'_i,\dots,v_k,\dots,v_n,u'_i] \in
  Q_{z_{1,1},\dots,z'_{i,j},\dots,z_{k,l},\dots,z_{n,m_n}} \\
  e_{k,l} = [v_1,\dots,v_i,\dots,v'_k,\dots,v_n,u_k] \in
  Q_{z_{1,1},\dots,z_{i,j},\dots,z'_{k,l},\dots,z_{n,m_n}} \\
  e'_{k,l} = [v_1,\dots,v'_i,\dots,v'_k,\dots,v_n,u'_k] \in
  Q_{z_{1,1},\dots,z'_{i,j},\dots,z'_{k,l},\dots,z_{n,m_n}}.
\end{gather*}
We first compute the products with $\times_{i,j}$:
\begin{equation*}
  e_{i,j} \times_{i,j} e'_{i,j} =
  [v_1,\dots,v_i\times_{i,j} v'_i,\dots,v_k,\dots,v_n,u_i \times_i u'_i]
  \qquad
  e_{k,l} \times_{i,j} e'_{k,l} =
  [v_1,\dots,v_i\times_{i,j} v'_i,\dots,v'_k,\dots,v_n,u_k \times_i u'_k].
\end{equation*}
Then
\begin{align*}
  (e_{i,j} \times_{i,j} e'_{i,j}) \times_{k,l}
  (e_{k,l} \times_{i,j} e'_{k,l}) &=
  [v_1,\dots,v_i\times_{i,j} v'_i,\dots,v_k \times_{k,l}v'_k,\dots,v_n,
  (u_i \times_i u'_i)\times_{k}(u_k \times_i u'_k)] \\
  &= [v_1,\dots,v_i\times_{i,j} v'_i,\dots,v_k \times_{k,l}v'_k,\dots,v_n,
  (u_i \times_k u_k)\times_{i}(u'_i \times_k u'_k) \,
  \braces{\jmath(u'_i)}{\jmath(u_k)}^{-\jmath(u'_k)} ] \\
  &= (e_{i,j} \times_{k,l} e_{k,l}) \times_{i,j}
  (e'_{i,j} \times_{k,l} e'_{k,l})\, \braces{\jmath(u'_i)}{\jmath(u_k)}^{-\jmath(u'_k)},
\end{align*}
where in the second line we have used the interchange law for the $i$\textsuperscript{th} and $k$\textsuperscript{th} product laws in $E$, Proposition~\ref{prop:1}.

The other case is the one with the same first index, say $\times_{i,j}$ and $\times_{i,l}$, for $j\neq l$. In this situation we consider instead:
\begin{align*}
  e_{i,j} &= [v_1,\dots,v_i,\dots,v_n,u_i] \in
  Q_{z_{1,1},\dots,z_{i,j},\dots,z_{i,l},\dots,z_{n,m_n}} \\
  e'_{i,j} &= [v_1,\dots,v'_i,\dots,v_n,u'_i] \in
  Q_{z_{1,1},\dots,z'_{i,j},\dots,z_{i,l},\dots,z_{n,m_n}} \\
  e_{i,l} &= [v_1,\dots,w_i,\dots,v_n,t_i] \in
  Q_{z_{1,1},\dots,z_{i,j},\dots,z'_{i,l},\dots,z_{n,m_n}} \\
  e'_{i,l} &= [v_1,\dots,w'_i,\dots,v_n,t'_i] \in
  Q_{z_{1,1},\dots,z'_{i,j},\dots,z'_{i,l},\dots,z_{n,m_n}}.
\end{align*}
Proceeding in an analogous way we first get:
\begin{equation*}
  e_{i,j} \times_{i,j} e'_{i,j} =
  [v_1,\dots,v_i\times_{i,j} v'_i,\dots,v_k,\dots,v_n,u_i \times_i u'_i],
  \qquad
  e_{k,l} \times_{i,j} e'_{k,l} =
  [v_1,\dots,w_i\times_{i,j} w'_i,\dots,v_n,t_k \times_i t'_k].
\end{equation*}
And then:
\begin{align*}
  (e_{i,j} \times_{i,j} e'_{i,j}) \times_{i,l}
  (e_{i,l} \times_{i,j} e'_{i,l}) &=
  [v_1,\dots,(v_i\times_{i,j} v'_i) \times_{i,l} (w_i\times_{i,j}w'_i),\dots,v_n,
  (u_i \times_i u'_i)\times_{i}(t_i \times_i t'_i)] \\
  &= [v_1,\dots, (v_i\times_{i,j} v'_i) \times_{i,l} (w_i\times_{i,j}w'_i) \,
  \braces{\jmath_i(w_i)}{\jmath_i(v'_i)}_{H_i}^{-\jmath_i(w'_i)},\dots,v_n, \\
  &  (u_i \times_i u'_i)\times_{i}(t_i \times_i t'_i)\times_i
  s_i(y_1,\dots,\braces{\jmath_i(w_i)}{\jmath_i(v'_i)}_{H_i}^{-\jmath_i(w'_i)},\dots,y_n) ] \\
  &= [v_1,\dots,(v_i\times_{i,l} w_i) \times_{i,j} (v'_i\times_{i,l}w'_i),\dots,v_n,
  (u_i \times_i t_i)\times_{i}(u'_i \times_i t'_i) \,
  \braces{\jmath(u'_i)}{\jmath(t_i)}_G^{-\jmath(t'_i)} ],
\end{align*}
where in the intermediate equality we used the the action by an element of $H_{i,1}$, whereas in the last one the interchange law (within the multi-extension $F_i$) and~\eqref{eq:35} were used. But the last line equals
\begin{equation*}
  (e_{i,j} \times_{i,l} e_{i,l}) \times_{i,j}
  (e'_{i,j} \times_{i,l} e'_{i,l}) \, \braces{\jmath(u'_i)}{\jmath(t_i)}_G^{-\jmath(t'_i)}, 
\end{equation*}
as wanted.

The final step is to show that $Q$, in addition to being a multi-extension of $K_{1,1,0}\times \dots \times K_{n,m_n,0}$ by $G_\bullet$, is in fact a \bfl, that is, it carries a trivialization for each of its pullbacks to $K_{1,1,0}\times \dots \times K_{i,j,1}\times \dots \times K_{n,m_n,0}$.  Such trivializations can be defined as follows.

For each $i=1,\dots,n$ and for each $j=1,\dots,m_i$, let $s_{i,j}$ be the trivialization  $s_{i,j}\colon K_{i,1,0}\times\dots \times K_{i,j,1}\times\dots \times K_{i,m_i,0}\to F_i$. Observe that the fiber of $P$ above $(v_1,\dots, s_{i,j}(z_{i,1},\dots,k_{i,j},\dots,z_{i,m_i}),\dots, v_n) \in F_1\times\dots \times F_i\times \dots \times F_n$ (where $v_k$ is a point of $F_k$, for $k\neq i$) can be identified with $G_1$.  This is because that point maps via $(\jmath_1,\dots,\jmath_n)$ to $(y_1,\dots, 1,\dots, y_n)$, and the fiber of $E$ above it is identified with $G_1$. (In turn, this follows from the fact that biextensions in particular, and multi-extensions in general, are canonically trivialized over identity sections, see sect.~\ref{sec:biext-braid}.) It follows that the fiber of $Q$ over $(z_{1,1},\dots,\del k_{i,j},\dots, z_{n,m_n})$ is similarly identified with $G_1$.  Thus, we obtain a trivializing section $\hat s_{i,j}$ of the pullback of $Q$ to $K_{1,1,0}\times \dots \times K_{i,j,1}\times \dots \times K_{n,m_n,0}$ by sending the point $(z_{1,1},\dots, k_{i,j},\dots, z_{n,m_n})$ to $[v_1,\dots, s_{i,j}(z_{i,1},\dots,k_{i,j},\dots,z_{i,m_i}),\dots, v_n,e_i]$, where $e_i$ is the unit (central) section of the restriction of $E$ to $H_{1,0}\times \dots \{1\}\times \dots\times H_{n,0}$. Observe this does not depend on the choice of $v_j\in F_j$, for $j\neq i$, since $F_j/H_j$ is just $(z_{j,1},\dots, z_{j,m_j})$.  Note that $\jmath\colon Q\to G_0$ composed with any of $\hat s_{i,j}$ is identically equal to $1$, since so is the result of applying $\jmath_E\colon E\to G_0$ to $e_i$. The conditions (Compatibility and Restriction) of Definition~\ref{def:3} are easy to verify and so this task is left to the reader.

This ends the proof.
\end{proof}
\begin{proposition}
  \label{prop:10}
  The composition in Definition~\ref{def:5} gives rise to a well-defined composition functor
  \begin{equation*}
    \grdnbfl(K_{1,1,\bullet},\dots;H_{1,\bullet}) \times\dots \times
    \grdnbfl(\dots, K_{n,m_n,\bullet};H_{n,\bullet}) \times
    \grdnbfl(H_{1,\bullet},\dots,H_{n,\bullet};G_\bullet) \lto
    \grdnbfl(K_{1,1,\bullet},\dots,K_{n,m_n,\bullet};G_\bullet),
  \end{equation*}
  where we are using the same convention for the indices as above and in sect.~\ref{sec:formalism}.  Same with $\stnbfl$ in place of $\grdnbfl$.
\end{proposition}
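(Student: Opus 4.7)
The plan is to establish the proposition in three steps: define the composition on morphisms, verify that it produces a morphism of multi-butterflies, and check functoriality.

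First I would recall that a morphism in $\grdnbfl$ is a morphism of the underlying $(G_1,G_0)$-torsors which intertwines the partial product laws $\times_i$ and respects the trivialising sections. Given $\phi_i\colon F_i\to F'_i$ for $i=1,\dots,n$ and $\psi\colon E\to E'$, the functoriality of the fibre product over $H_{1,0}\times\dots\times H_{n,0}$ supplies a canonical morphism $P\to P'$, where $P$ and $P'$ are the pullback torsors from the proof of Theorem~\ref{thm:5}. The essential point is that this morphism descends to the quotient by $H_{1,1}\times\dots\times H_{n,1}$: the action formula~\eqref{eq:37} is manufactured entirely out of the $(H_{i,1},H_{i,0})$-actions on the $F_i$, the trivialising sections $s_i$ of $E$, and the partial products $\times_i$, all of which $\phi_i$ and $\psi$ are required to preserve by the very definition of morphism in $\grdnbfl$. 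Consequently the induced map is equivariant and descends to a morphism $\Phi\colon E(F_1,\dots,F_n)\to E'(F'_1,\dots,F'_n)$.

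Second I would verify that $\Phi$ is itself a morphism of $(m_1+\dots+m_n)$-butterflies. Compatibility of $\Phi$ with each $\times_{i,j}$ splits along the same dichotomy used in the proof of Theorem~\ref{thm:5}: when the two first indices agree the check reduces to $\phi_i$ preserving the partial product $\times_{i,j}$ of $F_i$; when the first indices differ it reduces to $\psi$ preserving the two relevant partial products of $E$, with the braided correction supplied by Proposition~\ref{prop:1} being automatically transported because $\psi$ is a morphism of biextensions of the ambient braided stack. Compatibility of $\Phi$ with the induced trivialisations $\hat s_{i,j}$ is then immediate, since $\hat s_{i,j}$ was built in the proof of Theorem~\ref{thm:5} from the $s_{i,j}$ of $F_i$ together with canonical identity sections of $E$ over a slot containing the unit of $H_{i,0}$, and both of these are preserved by $\phi_i$ and $\psi$ (for $\psi$ by the torsor part of Lemma~\ref{lem:2}).

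Third I would check that the assignment is functorial in each argument: identities go to identities and composition is preserved. Both follow at once from the analogous properties of the fibre product and of the free quotient, so no calculation is required. The statement for $\stnbfl$ in place of $\grdnbfl$ is then obtained by running the whole construction over each variable base $S$: fibre products, free quotients, and trivialisations all sheafify, so the local composition functors glue into a morphism of the corresponding stacks.

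The main obstacle is the equivariance check in the first step; the calculation underlying~\eqref{eq:37} is intricate, and one must confirm that every piece of that formula consists of data a butterfly morphism is contractually bound to preserve. Once this observation is registered, the remaining verifications are pure bookkeeping and flow directly from Theorem~\ref{thm:5} and the definition of morphism in $\grdnbfl$.
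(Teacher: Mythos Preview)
Your proposal is correct and follows the same approach as the paper's proof, which simply states that one must check that \bfl isomorphisms $f\colon E\to E'$ and $g_i\colon F_i\to F'_i$ induce a well-defined isomorphism $f(g_1,\dots,g_n)$ of the composites, and that this is ``easy to check from the construction of the juxtaposition product.'' Your three-step expansion is precisely this check carried out in detail; the only quibble is that in your second step the ``dichotomy'' from Theorem~\ref{thm:5} concerns the interchange law between two products $\times_{i,j}$ and $\times_{k,l}$, whereas verifying that $\Phi$ preserves a \emph{single} $\times_{i,j}$ follows directly from formula~\eqref{eq:38} and only requires $\phi_i$ to preserve $\times_{i,j}$ and $\psi$ to preserve $\times_i$---no case split is needed there.
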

\begin{proof}
  We need only check that if $f\colon E\to E'$ and $g_i\colon F_i\to F'_i$ are \bfl isomorphisms, then we get a well defined isomorphism of $k=m_1+\dots+m_n$-\bfls
  \begin{equation*}
    f(g_1,\dots,g_n) \colon E(F_1,\dots,F_n)\lto E'(F'_1,\dots,F'_n).
  \end{equation*}
  This is easy to check from the construction of the juxtaposition product provided above.
\end{proof}
Associativity only holds up to isomorphism. This is due to the same phenomenon, ultimately a manifestation of the same lack of strict associativity for fiber products, observed in the unary case \cite[see][]{ButterfliesI,Noohi:weakmaps}. We omit the proof.
\begin{proposition}
  \label{prop:11}
  The composition in Definition~\ref{def:5} is associative up to coherent isomorphism. \qed
\end{proposition}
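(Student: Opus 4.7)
The plan is to transport the associativity question across the equivalence of Theorem~\ref{thm:4} to the side of multi-additive functors, where composition is strictly associative by Proposition~\ref{prop:9}. Concretely, given a triple of composable data $E$, $F_i$, $G_{i,j}$ of multi-\bfls, one forms the two parenthesizations
\begin{equation*}
  \bigl(E(F_1,\dots,F_n)\bigr)\bigl(G_{1,1},\dots,G_{n,m_n}\bigr)
  \quad\text{and}\quad
  E\bigl(F_1(G_{1,1},\dots),\dots,F_n(\dots,G_{n,m_n})\bigr),
\end{equation*}
and one must exhibit a canonical isomorphism between them, together with the pentagon-type coherence for four-fold composition.

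The first step is to establish a compatibility isomorphism
\begin{equation*}
  u\bigl(E(F_1,\dots,F_n)\bigr) \iso u(E)\bigl(u(F_1),\dots,u(F_n)\bigr)
\end{equation*}
of multi-additive morphisms. This uses Remark~\ref{rem:4}: the value of $u$ on a multi-\bfl $X$ at a tuple of objects in the essential image of the presentations is the fiber of $X$ over the corresponding base point, extended to all objects by descent via the trivializations. Tracing through the construction of the juxtaposition product, the fiber of $E(F_1,\dots,F_n)$ above $(z_{1,1},\dots,z_{n,m_n})$ is computed as a quotient of a fiber product that precisely realizes $u(E)\bigl(u(F_1)(z_{1,\bullet}),\dots,u(F_n)(z_{n,\bullet})\bigr)$; the $H_{i,1}$-quotients correspond to the descent conditions built into the definition of $u(F_i)$. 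Naturality in all arguments is immediate from the element-level formulas in the proof of Theorem~\ref{thm:5}, and compatibility with morphisms of \bfls is given by Proposition~\ref{prop:10}.

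Once this compatibility is in place, the associator for juxtaposition is defined as the unique isomorphism that $u$ transports to the identity on the functor side (using that $u$ is an equivalence, Theorem~\ref{thm:4}, and that functor composition is strictly associative, Proposition~\ref{prop:9}). The coherence pentagon for four-fold composition then holds automatically because it maps under $u$ to a commutative diagram of identities. The main obstacle is purely bookkeeping: one must check that the compatibility isomorphism above respects not only the $(G_1,G_0)$-torsor structure but also all $n$ partial multiplications $\times_{i,j}$ and the trivializations $\hat s_{i,j}$ constructed at the end of the proof of Theorem~\ref{thm:5}. This check is case-by-case depending on whether two indices $(i,j),(k,l)$ share a first index, and is carried out in the same spirit as the interchange verifications in the proof of Theorem~\ref{thm:5} and of Proposition~\ref{prop:9}; it is completely analogous to the unary case treated in \cite{ButterfliesI,Noohi:weakmaps}, to which we refer for the model argument.
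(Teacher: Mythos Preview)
The paper omits this proof entirely, remarking only that the non-strictness of associativity ``is due to the same phenomenon, ultimately a manifestation of the same lack of strict associativity for fiber products, observed in the unary case.'' The implicit argument is therefore the direct one: the juxtaposition product is built from iterated fiber products and quotients, and the associator
\begin{equation*}
  \bigl(E(F_1,\dots,F_n)\bigr)(G_{1,1},\dots,G_{n,m_n}) \lisoto
  E\bigl(F_1(G_{1,1},\dots),\dots,F_n(\dots,G_{n,m_n})\bigr)
\end{equation*}
is the canonical isomorphism of iterated pullbacks (as in \cite{ButterfliesI,Noohi:weakmaps}), descended through the respective $H_{i,1}$- and $K_{i,j,1}$-quotients. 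Coherence is then inherited from the standard coherence of pullback associators.

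Your route---transporting across $u$ to the functor side, where composition is strict---is a genuinely different and reasonable strategy, but there is an imprecision in your argument that hides exactly the work the paper's direct approach does. The pentagon of associators does \emph{not} map under $u$ to a ``diagram of identities.'' Write $\phi$ for the compatibility isomorphism $u(E(F_1,\dots,F_n))\iso u(E)(u(F_1),\dots,u(F_n))$ you construct. Your associator $\alpha$ is characterised by a diagram whose outer edges are instances of $\phi$ and whose ``identity'' is the strict equality on the functor side; consequently $u(\alpha)$ is a composite of $\phi$'s, not the identity. The pentagon for $\alpha$ therefore becomes, under $u$, a coherence condition on $\phi$ itself (two iterated pastings of $\phi$ agree). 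This is precisely the statement that $(u,\phi)$ is a homomorphism of bi-multicategories, which is part of Theorem~\ref{thm:6} in the paper and is proved \emph{after} Proposition~\ref{prop:11}, not before.

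So your plan is not circular, but it does not avoid the verification: establishing the coherence of $\phi$ amounts to unravelling the fiber-product description of both sides and identifying the canonical pullback associator---i.e.\ the paper's direct argument. What your approach buys is a clean conceptual explanation of \emph{why} associativity holds (strictness on the functor side), at the cost of front-loading the proof that $u$ preserves composition coherently. The paper's approach trades that for an entirely local computation with fiber products and leaves the compatibility with $u$ to Theorem~\ref{thm:6}.
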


\section{The bi-multicategories of group-like stacks and crossed modules}
\label{sec:bi-mult-braid}

By using Proposition~\ref{prop:9} we can rely on the framework recalled in the technical section~\ref{sec:formalism} to promote the 2-category of braided group-like stacks to a 2-multicategory. In particular, using Definition~\ref{def:10}, we define the categories $\lbrace\stH,\stG\rbrace$, where to each $n$ we assign
\begin{equation*}
  \grdnadd (\stH^n;\stG) = \grdnadd
  (\underbrace{\stH,\dots,\stH}_{n\mathrm{-times}};\stG).
\end{equation*}
We ambiguously use the same symbol to denote the stack obtained using $\stnadd$ in place of $\grdnadd$, as which version will be in use will be clear from the context.  From Definition \ref{def:14} we immediately obtain:
\begin{proposition}
  \label{prop:12}
  For group-like stacks $\stK$, $\stH$, and $\stG$, we have the assembly map
  \begin{equation}
    \label{eq:39}
    \medcirc\colon \braces{\stH}{\stG}\wr \braces{\stK}{\stH} \lto
    \braces{\stK}{\stG}
  \end{equation}
  which to the object $(F;G_1,\dots,G_n)$ assigns the composition $F(G_1,\dots,G_n)$.

  In particular, $M\stG\coloneqq \braces{\stG}{\stG}$ is a monoid for this composition, whenever $\stG$ is braided, with identity object given by $(\id;\ )$ (the empty string in the second slot). \qed
\end{proposition}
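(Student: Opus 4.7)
The plan is to unpack the assembly map as a repackaging of Proposition~\ref{prop:9}, and then deduce the monoid claim from Proposition~\ref{prop:9} together with the evident unit.

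First I would unfold the definitions from the formalism of section~\ref{sec:formalism}. An object of $\braces{\stH}{\stG}\wr\braces{\stK}{\stH}$ is a tuple $(F;G_1,\dots,G_n)$ with $F\in\grdnadd(\stH^n;\stG)$ and $G_i\in\grdnadd(\stK^{m_i};\stH)$ for some $n\geq 0$ and $m_i\geq 0$; morphisms are componentwise tuples of morphisms in the relevant $\grdnadd$'s. On objects, the assembly map must send such a tuple to the composition $F(G_1,\dots,G_n)$ of Definition~\eqref{eq:36}. Proposition~\ref{prop:9} says precisely that this is a well-defined object of $\grdnadd(\stK^{m};\stG)\subseteq\braces{\stK}{\stG}$, with $m=m_1+\dots+m_n$, so the assignment on objects is the one required.

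Next I would verify functoriality. Given a morphism $(\phi;\psi_1,\dots,\psi_n)$ in the wreath product, I define $\phi(\psi_1,\dots,\psi_n)$ pointwise via the formula analogous to~\eqref{eq:36}, using the naturality of $F$ in each slot. Checking that the result is a morphism in $\grdnadd(\stK^{m};\stG)$ amounts to verifying the multi-additivity compatibilities for each pair of variables: for a pair landing in the same slot the check reduces to the multi-additivity of $\psi_i$ followed by that of $\phi$, while for a pair landing in different slots it reduces to the multi-additivity of $\phi$ combined with the interchange laws of $F$, exactly as in the second half of the proof of Proposition~\ref{prop:9}.

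For the monoid claim, take $\stK=\stH=\stG$ braided. Associativity of the composition is the second assertion of Proposition~\ref{prop:9}; since the composition is defined here by substitution of functors (formula~\eqref{eq:36}), which is strictly associative, no coherence correction is needed at this level. The unit is the identity functor $\id_\stG\in\grdnadd(\stG;\stG)\subseteq M\stG$ with its canonical $1$-additive structure; the unit laws $\id_\stG(F)=F$ and $F(\id_\stG,\dots,\id_\stG)=F$ are immediate from~\eqref{eq:36}. The version with $\stnadd$ in place of $\grdnadd$ follows by the same argument applied fibrewise over a variable base $S\in\T$. The main obstacle is purely bookkeeping: tracking the indices across the wreath product and ensuring that the different-slot interchange is correctly derived, with no new mathematical content beyond Proposition~\ref{prop:9} required.
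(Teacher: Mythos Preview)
Your proposal is correct and follows exactly the route the paper intends: the statement is marked with \qed\ and is presented as an immediate consequence of Proposition~\ref{prop:9} together with the formalism of section~\ref{sec:formalism}, which is precisely what you unpack. The only cosmetic point is that the paper writes the unit as the pair $(\id;\,)$ with empty second slot, but this is the same object you identify as $\id_\stG\in\grdnadd(\stG;\stG)$.
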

\begin{remark}
  \label{rem:6}
  As alluded in sect.~\ref{sec:formalism}, the formalism includes the symmetry structure, which makes it slightly more general than needed in the sequel. All objects of interest shall have augmentations factoring through the discrete subcategory $\NN$ of $\SS$, which amounts to ignoring the permutation structure and hence symmetry conditions.
\end{remark}

From Propositions~\ref{prop:10} and~\ref{prop:11} and the formalism of sect.~\ref{sec:formalism} the bicategory of braided crossed modules of $\T$ equipped with \bfls as morphisms \cite[see][Theorem 5.3.6]{ButterfliesI} is promoted to a genuine bi-multicategory.
In particular, we obtain objects $\lbrace H_\bullet,G_\bullet\rbrace$ and an analog of Proposition~\ref{prop:12} with assembly map 
\begin{equation}
  \label{eq:40}
  \medcirc \colon \lbrace H_\bullet,G_\bullet\rbrace \wr \lbrace
  K_\bullet,H_\bullet\rbrace \lto \lbrace K_\bullet,G_\bullet\rbrace
\end{equation}
for braided crossed modules $K_\bullet$, $H_\bullet$, and $G_\bullet$.

The equivalence between the 2-category $\grst$ of (braided) group-like stacks and the bicategory $\xmod$ of (braided) crossed modules~\cite[cf.][Theorem 5.3.6]{ButterfliesI} lifts to the present case. Let $\M\grst$ (resp.\ $\M\xmod$) the bi-multicategory with morphisms $\grdHom(\stH_1,\dots,\stH_n;\stG)$ (resp.\
$\grdnbfl(K_{1,1,\bullet},\dots,K_{n,m_n,\bullet};G_\bullet)$).
\begin{theorem}
  \label{thm:6}
  There is an equivalence of bi-multicategories
  \begin{equation*}
    \M a\colon 
    \M\xmod \lisoto \M\grst
  \end{equation*}
  induced by the associated stack functor.
\end{theorem}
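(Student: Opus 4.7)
The plan is to deduce this equivalence from Theorem~\ref{thm:4} by upgrading the hom-wise equivalences $u\colon \grdnbfl(H_{1,\bullet},\dots,H_{n,\bullet};G_\bullet) \lisoto \grdnadd(\stH_1,\dots,\stH_n;\stG)$ at every arity to a coherent equivalence of bi-multicategorical structures. The underlying unary equivalence $a\colon \xmod \lisoto \grst$ is already known from~\cite[Theorem 5.3.6]{ButterfliesI}, so $\M a$ is essentially surjective on objects: every braided group-like stack admits a presentation by a braided crossed module. Applied arity by arity, Theorem~\ref{thm:4} makes $\M a$ fully faithful on every hom-groupoid, both in the global and in the relative (stacky) version.

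The substantive step is to check that $\M a$ intertwines the assembly maps~\eqref{eq:40} and~\eqref{eq:39} up to coherent natural isomorphism. Specifically, for a braided $n$-\bfl $E$ from $H_{1,\bullet}\times\dots\times H_{n,\bullet}$ to $G_\bullet$ and $m_i$-\bfls $F_i$ from $K_{i,1,\bullet}\times\dots\times K_{i,m_i,\bullet}$ to $H_{i,\bullet}$, the plan is to exhibit a natural equivalence
\begin{equation*}
u\bigl( E(F_1,\dots,F_n)\bigr) \iso u(E)\bigl(u(F_1),\dots,u(F_n)\bigr)
\end{equation*}
in $\grdnadd(\stK_{1,1},\dots,\stK_{n,m_n};\stG)$. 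Using Remark~\ref{rem:4}, both sides are computed pointwise as fibers of $(G_1,G_0)$-torsors. The right hand side at $(x_{1,1},\dots,x_{n,m_n})$ is the fiber of $E$ above the tuple $(u(F_i)(x_{i,1},\dots,x_{i,m_i}))_{i=1}^n$, each entry of which is itself a fiber of $F_i$; the left hand side is the fiber above the same base point of the quotient $(F_1\times\dots\times F_n) \varprod^{H_{1,1}\times\dots\times H_{n,1}}_{H_{1,0}\times\dots\times H_{n,0}} E$ from Definition~\ref{def:5}. The universal property of the defining fiber product, combined with the equivariance of the trivializations $s_i$ encoded in the quotient relation~\eqref{eq:37}, provides a canonical identification of these two fibers. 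Unwinding the partial products $\times_{i,j}$ built in the proof of Theorem~\ref{thm:5}—they are, by construction, the composites of the $i$\textsuperscript{th} partial product of $E$ with the $j$\textsuperscript{th} partial product of $F_i$—shows at once that this identification is multi-additive in the sense of Definition~\ref{def:1}.

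The remaining task, and the main obstacle, is coherence: one must check that the pseudo-associator of the juxtaposition product supplied by Proposition~\ref{prop:11} transports under $u$ to the associator of composition in $\M\grst$ from Proposition~\ref{prop:9}, and likewise for the identity 1-cells $(\id_{G_\bullet};\varnothing)$ of Proposition~\ref{prop:12}. Since both composition laws are ultimately expressed via fiber products, quotients, and the universal torsor and stack constructions recalled in sect.~\ref{sec:preliminaries}, the required comparison 2-cells are uniquely pinned down by their pointwise action on fibers, and the coherence diagrams reduce to the formal commutativity of iterated pullback squares—together with the compatibility of the brackets $\braces--$ on the coefficient sides, which is already built into the definition of a braided \bfl. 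Together with essential surjectivity on objects and the arity-wise equivalences of Theorem~\ref{thm:4}, this yields the claimed equivalence of bi-multicategories, both in the global form and, by the same argument applied to $\stnbfl$ and $\stnadd$, in the relative form over variable $S\in \T$.
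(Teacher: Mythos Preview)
Your proposal is correct and follows essentially the same route as the paper: essential surjectivity from the unary equivalence of \cite[Theorem 5.3.6]{ButterfliesI}, hom-wise equivalence from Theorem~\ref{thm:4}, and then compatibility of $u$ with the compositions via the fiber-product description of its quasi-inverse. The paper is terser on the last point—it simply says the compatibility ``follows from the fiber product construction of the quasi-inverse to $u$''—whereas you spell out the pointwise identification of $u(E(F_1,\dots,F_n))$ with $u(E)(u(F_1),\dots,u(F_n))$ and the coherence check; the paper also adds a small remark (via Lemma~\ref{lem:1}) that presentations by \emph{braided} crossed modules exist, which you might want to mention when invoking essential surjectivity in the braided setting.
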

\begin{proof}
  Recall that a functor $F\colon M\bC\to \M\bD$ between bi-multicategories is an equivalence if for each tuple $(x;y_1,\dots,y_n)$ of objects of $\M\bC$ the functor $F_{(x;y_1,\dots,y_n)}\colon \grdHom_\bC (y_1,\dots,y_n;x) \to \grdHom_\bD(F(y_1), \dots, F(y_n); F(x))$ is an equivalence of categories, and the underlying ordinary homomorphism $[F]_1\colon \bC\to \bD$ is essentially surjective.\footnote{$[F]_1$ is the homomorphism we obtain by restricting $F$ to arrows of arity equal to one.}

  The associate stack functor at the level of the underlying bicategories $a\colon \xmod\to \grst$ is an equivalence, hence, in particular, essentially surjective. The equivalence is proved in \cite[Theorem 5.3.6]{ButterfliesI} without the braiding hypothesis, however Theorem 4.3.1 in loc.\ cit. applies to the braided case as well by \S 7, ibid., and presentations by braided crossed modules are easily obtained by using Lemma~\ref{lem:1}.  Thus essential surjectivity follows.

  The equivalence at the level of the multi-arrow categories directly follows from Theorem~\ref{thm:4}.  Finally we check that the functor $u$ of Theorem~\ref{thm:4} preserves the (horizontal) compositions up to coherent isomorphism. Like the single variable case, this follows from the fiber product construction of the quasi-inverse to $u$.
\end{proof}
\begin{remark}
  \label{rem:7}
  The situation becomes decidedly more pleasant if we confine ourselves to symmetric objects. We can consider a symmetric variant of the above based on the bicategories $\sxmod$ and $\sgrst$. The equivalence in Theorem~\ref{thm:6} restricts to between these new entities:
  \begin{equation*}
    \M a\colon \M\sxmod \lisoto \M\sgrst,
  \end{equation*}
  which, by Theorem~\ref{thm:4}, both ought to be regarded as multicategories enriched over symmetric group-like groupoids. Furthermore, using the version of $\braces{-}{-}$  based on $\stnadd$, $\sgrst$ becomes closed. Indeed one has the analog of \cite[Theorem 2]{MR0340371}, in that
  \begin{equation*}
   \stHom_\bC (\stK\wr \stH,\stG) \iso \stHom_\bC(\stK,\braces{\stH}{\stG}),
  \end{equation*}
  where $\bC=\sgrst$ (or even $\bC=\sgrst/\SS$ using extraordinary structure briefly discussed in sect.~\ref{sec:formalism}—see \cite{MR0340371} for all the details). A proof can be obtained along the same lines, proceeding from Theorem~\ref{thm:4} and sect.~\ref{sec:formalism}. We will not pursue this further.
\end{remark}

\part{}\label{part:part3}
\section{Bimonoidal structures and weakly categorical rings }
\label{sec:bimon-struct}

For a braided stack $\stR$, a biadditive bifunctor $m\colon \stR\times \stR\to \stR$ gives a binary law which is automatically distributive with respect to the  monoidal structure of $\stR$ by virtue of \eqref{eq:7} in Definition~\ref{def:1}. If $m$ is itself part of a monoidal structure, that is, there exists $\mu\colon m(m,\id)\Rightarrow m(\id,m)$ satisfying the standard pentagon identity, then $\stR$ is said to be bimonoidal. In fact it satisfies the axioms of a categorical ring in the usual sense \cite{MR2369166}, except possibly the requirement for the underlying group-like structure of $\stR$ to be symmetric. (One axiom was apparently missing in \cite{MR2369166}, and the gap was filled in \cite{MR3085798}, thus completing the classification.)

From sections~\ref{sec:auxiliary} and~\ref{sec:formalism}, the binary operation $m$ and $\id$ are objects of $\M\stR=\lbrace \stR,\stR\rbrace$, and $\mu\colon m(m,\id) \To m(\id,m)$ is a morphism of $\M\stR$, with the pentagon identity expressing the equality between two morphisms thereof. Rather than using the classical, ``biased,'' definition of the monoidal  structure, it is more convenient to exploit the multi-categorical structure comprising all the multilinear functors. Now, biased and unbiased definitions yield equivalent structures \cite{MR2094071}, hence we can simply define it in terms of the multicategory $\M\grst$.  
\begin{definition}[See Definition~\ref{def:11}]
  \label{def:6}
  Let $\stR$ be a braided stack. A \emph{(weak) categorical ring structure} on $\stR$ is a lax monoidal functor $(t,\theta) \colon \NN\to \M\stR$. 
  A \emph{(weak) ring-like stack} is a braided stack $\stR$ equipped with such a structure, namely a pseudo-monoid in the 2-multicategory $\M\grst$.\footnote{In the sequel we shall use ``weak'', when referring to bimonoidal structures, in this sense; it will not refer to the laxness of the monoidal structures.}
\end{definition}
In plainer language, $\NN$ represents all the $n$-ary operations in a ``disembodied'' form. Thus, an object $n$ of $\NN$ corresponds to an operation with $n$ abstract variables. The functor $t$ assigns to $n$ an actual $t(n)\colon \stR\times \dots \times \stR\to \stR$ in $\grdnbfl(\stR^n;\stR)$. Given $n$ operations of various arities, they can be substituted into one of arity $n$, as hinted above. This set of operations is represented by the object $(n;m_1,\dots,m_n)$ of $\NN\wr\NN$ (see Definitions \ref{def:14} and~\ref{def:11}) and the substitution, resulting in an operation of arity $m_1+\dots +m_n$, gives a morphism $\NN\wr\NN\to \NN$ which has the properties of a monoidal structure. This is mapped by $t$ into the composition of actual multi-additive multi-functors of $\stR$, satisfying the same combinatorial structure. In summary, the functor $(t,\theta)$ gives a collection of multi-additive $t(n)$ subject to isomorphisms of functors of the form $\theta_{n;m_1,\dots,m_n}\colon t(m_1+\dots +m_n) \to t(n)(t(m_1),\dots, t(m_n))$, controlling the compositions.
\begin{remark}
  \label{rem:8}
  The 2-multicategory $\M\grst$ has of course more structure than the abstract situation discussed in sections~\ref{sec:formalism} and~\ref{sec:formalism-monoid}. In particular, the objects of $\grst$, being themselves categories, have an internal structure, which we can use to define the wreath product $\NN \wr \stG$ for any $\stG$ following sect.~\ref{sec:formalism}, or, literally, the arguments of~\cite[\S 2.1]{MR0340371}. Briefly, an object of $\NN\wr\stG$ consists of a string $(n;x_1,\dots, x_n)$ (where the label $n$ is in effect redundant) with $x_i$ an object of $\stG$, $i=1,\dots,n$. There are no morphisms between objects with different integer labels, and a morphism $(n;x_1,\dots, x_n)\to (n;x'_1,\dots,x'_n)$ is simply an $n$-tuple of morphisms in $\stG$.
  
  It follows that by a special case of the adjunction discussed in~\cite{MR0340371}, which in the present case can be verified by hand, the lax monoidal functor providing the exterior monoidal structure on $\stR$ corresponds to
  \begin{equation*}
    (\tilde t,\tilde\theta)\colon \NN\wr \stR \lto \stR,
  \end{equation*}
  which is required to satisfy the diagrams displayed in Remark~\ref{rem:19}, which define the structure of a \emph{pseudo algebra} over $\NN$.
\end{remark}

We drop the ``weak'' attribute is the underlying braiding is symmetric. In such a case, we will see the fibers of $\stR$ are categorical rings in the usual sense, so that $\stR$ becomes the stack analog of the categorical rings described in e.g. \cite{MR2369166}. 
\begin{remark}
  \label{rem:9}
  For a symmetric $\stR$, we can define the ring-like structure in the same way, but working in $\sgrst$. In such case the pseudo-algebra structure of $\stR$ can be verified directly from the adjunction mentioned in Remark~\ref{rem:7} above.
\end{remark}
A pseudo monoid $\stR$ in $\M\grst$ ought to correspond to one in $\M\xmod$.  More precisely, any pseudo monoid $\stR$ ought to be equivalent to one whose underlying stack is the one associated to a presentation by a braided crossed module, i.e.\ an object of $\xmod$. This will be made precise below. First, we can write the analog of Definition~\ref{def:6}, namely:
\begin{definition}
  \label{def:7}
  A \emph{(weakly) ring-like crossed module} is a pseudo monoid in the bi-multicategory $\M\xmod$, that is, a  braided crossed module $R_\bullet : R_1 \overset{\del}{\to} R_0$ equipped with a pseudo monoidal functor
  \begin{equation*}
    (t,\theta) \colon \NN \lto \M R_\bullet.
  \end{equation*}
  We drop the ``weak'' attribute if the underlying braiding is symmetric.
\end{definition}
An operation of arity $n$ is realized in this case by an $n$-\bfl $E_n=t(n)$, an object of $\grdnadd (R_\bullet^n;R_\bullet)$, composed according to~\eqref{eq:70}, realized by the juxtaposition product
\begin{equation}
  \label{eq:41}
  \theta_{n;m_1,\dots,m_n}\colon E_{m_1+\dots +m_k}\lto E_n( E_{m_1},\dots, E_{m_n}),
\end{equation}
plus the coherence condition expressed by the full diagram~\eqref{eq:72}, which reads
\begin{equation}
  \label{eq:42}
  \vcenter{\xymatrix{%
      E_k (E_{l_1},\dots,E_{l_k}) \ar[d] & E_h \ar[r] \ar[l] &
      E_n (E_{j_1},\dots,E_{j_n}) \ar[d] \\
      E_n (E_{m_1},\dots,E_{m_n}) (E_{l_1},\dots,E_{l_k}) \ar[rr] &&
      E_n (E_{m_1}(E_{l_1},\dotsc),\dots,E_{m_n}(\dots,E_{l_k}))
    }}
\end{equation}
The bottom arrow is the association isomorphism in the \bfl juxtaposition.  For $n\leq 4$, with all the sequences emanating from $n=4$ as in sect.~\ref{sec:formalism-monoid}, we obtain the ``generalized pentagon'' diagram~\eqref{eq:74}.

Theorem~\ref{thm:6} and the fact that pseudo monoids are preserved by pseudo functors (Definition~\ref{def:13}) guarantee that a weakly ring-like $R_\bullet$ gives rise, through the associated stack construction, to a weakly ring-like stack. More interesting is the converse direction, namely:
\begin{proposition}
  \label{prop:13}
  If $(\stR,t,\theta)$ is a weakly categorical ring and $R_1\to R_0 \to \stR$ a presentation, then $R_\bullet : R_1\to R_0$ is a pseudo monoid in $\M\xmod$.
\end{proposition}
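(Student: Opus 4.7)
My plan is to transport the pseudo-monoid structure from $\stR$ back to $R_\bullet$ along the bi-multicategorical equivalence $\M a \colon \M\xmod \lisoto \M\grst$ of Theorem~\ref{thm:6}. Since $R_\bullet$ is a presentation of $\stR$, we have $\M a(R_\bullet) \iso \stR$ in $\grst$, and by Theorem~\ref{thm:4} (packaged inside Theorem~\ref{thm:6}) the equivalence $\M a$ induces for every $n$ an equivalence of groupoids
\[
\M a_n \colon \grdnadd(R_\bullet^n;R_\bullet) \lisoto \grdnadd(\stR^n;\stR).
\]
These $\M a_n$ assemble into an equivalence of endo-2-multicategories $\M R_\bullet \lisoto \M \stR$ which intertwines the juxtaposition-product assembly~\eqref{eq:40} with the composition assembly~\eqref{eq:39} up to the coherent isomorphisms supplied by the pseudo-functoriality of $\M a$.

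Using this equivalence I would define the lax monoidal functor $(t',\theta') \colon \NN \to \M R_\bullet$ by a standard transport-of-structure. For each $n$, essential surjectivity of $\M a_n$ lets me pick a multi-\bfl $E_n \in \grdnadd(R_\bullet^n;R_\bullet)$ together with an isomorphism $\alpha_n \colon \M a_n(E_n) \isoto t(n)$; I set $t'(n) \coloneqq E_n$. Full faithfulness of $\M a_{m_1+\dots+m_n}$ then produces a unique morphism
\[
\theta'_{n;m_1,\dots,m_n} \colon E_{m_1+\dots+m_n} \lto E_n(E_{m_1},\dots,E_{m_n})
\]
whose image under $\M a$, after conjugation by the $\alpha$'s and by the pseudo-functor comparison cells that measure the gap between $\M a(E_n(E_{m_1},\dots,E_{m_n}))$ and $t(n)(t(m_1),\dots,t(m_n))$, equals the given $\theta_{n;m_1,\dots,m_n}$.

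Verifying the generalized pentagon coherence~\eqref{eq:42}---equivalently, for $n\leq 4$, the diagram~\eqref{eq:74}---for $(E_\bullet,\theta')$ is then a direct consequence of the same coherence for $(\stR,t,\theta)$: pushing both legs of~\eqref{eq:42} through $\M a$ and normalizing by the $\alpha$'s and the pseudo-functor comparison cells yields the analogous diagram for $\stR$, which commutes by hypothesis; full faithfulness of $\M a$ on the relevant multi-arrow category lifts this equality back to $\M R_\bullet$. Hence $(R_\bullet, t', \theta')$ is a pseudo monoid in $\M\xmod$.

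The main obstacle is not conceptual but the bookkeeping of comparison cells: one has to check that the pseudo-functor constraints of $\M a$---which encode its weak preservation of the juxtaposition product of multi-\bfls---interact coherently with the transported $\theta'$, so that no spurious constraints appear in~\eqref{eq:42}. This is the standard transport-of-algebras argument for a pseudo-functorial equivalence of bi-multicategories, implicit in the discussion of sections~\ref{sec:formalism} and~\ref{sec:formalism-monoid}; because $\M a$ is a genuine equivalence there is no obstruction and the construction above yields the desired structure.
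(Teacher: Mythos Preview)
Your proposal is correct and follows essentially the same route as the paper: the paper's proof is the one-line observation that Theorem~\ref{thm:6} yields an equivalence $\M a_R\colon \lbrace R_\bullet,R_\bullet\rbrace \isoto \lbrace\stR,\stR\rbrace$, after which pseudo-monoid structures transport along it; the explicit quasi-inverse the paper sketches (via $E_n = v(t(n))$ and the fiber-product construction) is precisely your choice of $E_n$ together with the comparison isomorphisms $\alpha_n$. One small notational slip: on the crossed-module side the multi-arrow groupoids should be written $\grdnbfl(R_\bullet^n;R_\bullet)$ rather than $\grdnadd(R_\bullet^n;R_\bullet)$.
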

\begin{proof}
  From Theorem~\ref{thm:6} we get an equivalence $\M a_R \colon \lbrace R_\bullet,R_\bullet \rbrace \isoto \lbrace \stR,\stR\rbrace$.
\end{proof}
Here is an explicit construction of a quasi-inverse to $\M a_R$. By Theorem~\ref{thm:4}, or rather the generalization of the proof of Theorem~\ref{thm:1}, the functor $u$ has a quasi-inverse $v$ computed by the fiber product construction
%% \begin{equation*}
%%   \xymatrix{%
%%     & E_n \ar[dl] \ar[dr] \\
%%     R_0\times\dots \times R_0  \ar[d] \ar[dr] & & R_0 \ar[dl] \\
%%     \stR\times \dots \times \stR \ar[r]_>>>>{t(n)} & \stR
%%   }
%% \end{equation*}
which yields an $n$-\bfl $E_n = v(t(n))$; for a composition $t(n) ( t(m_1),\dots,t(m_n))$ we obtain a morphism
\begin{equation*}
  v( t(n) ( t(m_1),\dots,t(m_n)) ) \lisoto E_n(E_{m_1},\dots, E_{m_n}).
\end{equation*}
Thus, for any
\begin{equation*}
  \theta_{n;m_1,\dots,m_n}\colon t(m_1+\dots +m_n) \to t(n) ( t(m_1),\dots,t(m_n))
\end{equation*}
in $\M \stR$, we obtain
\begin{equation*}
  E_{m_1+\dots +m_n} \lisoto v( t(n) ( t(m_1),\dots,t(m_n)) ) \lisoto E_n(E_{m_1},\dots, E_{m_n}),
\end{equation*}
eventually arriving at~\eqref{eq:42}.

The notion of \strong{morphism} for both weakly categorical rings and crossed modules is straightforward. In each instance, the notion is a special case of that of morphism of pseudo-monoid examined in sect.~\ref{sec:formalism-monoid}.
\begin{notation*}
  From now on it will be convenient to refer to the monoidal structure of $\stR$ as a group-like object as the ``intrinsic'' or ``internal'' one, and denote it by a plus. Correspondingly, the relative unit object will be denoted by $0_\stR$, or simply $0$, if no confusion is bound to arise. (This retroactively justifies the choice of the attribute ``biadditive'' for the functor $m$.) The second monoidal structure $(m,\mu)$ will be referred to as the ``extrinsic'' or ``external'' one. The result of the application of $m$ will be denoted by a juxtaposition. The corresponding unit object, if it exists, will be denoted by $I_\stR$ or simply $I$.
\end{notation*}

\section{The presentation of a categorical ring}
\label{sec:pres-categ-ring}

In this section we show that the presentation of a (weak) categorical ring or ring-like stack is a crossed module with certain additional properties. In particular, its zeroth and first homotopy sheaves are, respectively, a ring and a bimodule over that ring, as in the standard case.

Let $\stR$ be as above.  Let $A = \pi_0(\stR)$ and $M=\pi_1(\stR)$ be the sheaves of connected components and of automorphisms of the (additive) identity object.  Since the definition of $\pi_i$ applies to the underlying (group-like) stack, by the usual arguments $A$ and $M$ are also zeroth and first homotopy sheaves of any crossed module $R_\bullet$ used to present $\stR$. In fact the homotopy kernel of the projection $\varpi \colon \stR \to A$ is identified with the Picard stack $\stM = \tors (M)$  of $M$-torsors. One has the familiar diagram
\begin{equation*}
  \xymatrix{%
    M \ar[d] \ar@[blue][r]^\imath & R_1 \ar@[blue][d]^\del \ar[r]& 0 \ar[d]\\
    0 \ar[d]\ar[r] & R_0 \ar[d]^\pi \ar@[blue][r]^q & A \ar@{=}[d] \\
    \stM \ar[r] & \stR \ar[r]^\varpi & A
    \save "1,1"."2,3" *[F-:magenta]\frm{} \restore
  }
\end{equation*}
with homotopy-exact columns and where the top two levels of each column (the part within the box above) provide a presentation of the item immediately below. The colored sequence in it is the standard extension of $A$ by $M$ by way of the crossed module $R_\bullet$. Of course, the top two rows, considered as a sequence of complexes of length one, is exact only in the sense that the group-like categories or stacks they determine form a short exact sequence.

For objects $X_1,X_2,\dots, X_n$ of $\stR$ define the result of applying the $n$-ary operation $t(n)$ simply by $X_1X_2\cdots X_n$. Then, by defining $[X_1][X_2]\cdots [X_n] \coloneqq [X_1X_2\cdots X_n]$, we immediately get that $\varpi\colon \stR\to A$ is a morphism of pseudo monoids, so that $A$ is a ring—unital if $\stR$ possesses a unit object $I_\stR$ for the external monoidal structure. (It follows that $\stM$ behaves like a bilateral ideal in $\stR$, namely the monoidal structure on $\stR$, and more precisely its binary operation, restricts to a pair of actions:
\begin{equation*}
  \stR\times \stM \lto \stM \quad \text{and} \quad
  \stM\times \stR\lto \stM.\text{)}
\end{equation*}
While there is no direct map $R_0\times R_0\to R_0$, the exterior monoidal structure $m_2=t(2)\colon \stR\times \stR\to \stR$ is ``covered'' by the diagram
\begin{equation}
  \label{eq:43}
  \vcenter{%
  \begin{xy}
    %vertices
    (0,0)*+{R_0\times R_0}="r1";%
    (10,10)*+{E}="e";%
    (20,0)*+{R_0}="r2";%
    % arrows
    {\ar@{->}_p "e"; "r1"};%
    {\ar@{->}^\jmath "e"; "r2"}
  \end{xy}}
\end{equation}
which is part of the structure of the biextension.  Given a pair $(x,y)\in R_0\times R_0$, and the choice of a point in the fiber $e\in E_{x,y}$, we get a ``value'' $\jmath (e)\in R_0$.  The latter is of course only defined up to shifting $e$ by the action of $R_1$, namely $\jmath (e\, r) = \jmath(e)+\del r$, where $r\in R_1$.  The connected component $q(\jmath(e)))\in A$ is well-defined and independent of all choices. Moreover, by Theorem~\ref{thm:1}, or rather its proof, we have $E_{x,y} = m_2(\pi (x),\pi (y))$, and therefore $\varpi (E_{x,y}) = \varpi (\pi(x))\varpi (\pi(y))= q(x)q(y)$. We conclude that, with $e\in E_{x,y}$ as above, $q(\jmath(e))=q(x)q(y)$.  Put differently, the diagram displayed above, if interpreted as a ``virtual ring structure'' for $R_0$, covers the multiplication map of $A$.  This consideration extends to any number of variables.

Hence the following standard fact holds:
\begin{proposition}
  \label{prop:14}
  \label{lem:7}
  $M$ is an $A$-bimodule.
\end{proposition}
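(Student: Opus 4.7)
The plan is to read off the bimodule structure on $M = \pi_1(\stR)$ from the external monoidal structure of $\stR$ restricted to the Picard substack $\stM \iso \tors(M)$, which is the homotopy kernel of $\varpi \colon \stR \to A$. Since $\varpi$ is a morphism of pseudo-monoids (as already used just above the proposition to conclude that $A$ is a ring) and $0 \in A$ is absorbing for the ring product, the binary operation $m = t(2)$ maps both $\stR \times \stM$ and $\stM \times \stR$ into $\stM$. So each object $X \in \stR$ produces functors $L_X = m(X,-) \colon \stM \to \stM$ and $R_X = m(-,X) \colon \stM \to \stM$.

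Both $L_X$ and $R_X$ are additive functors of Picard stacks, by the biadditivity of $m$ (the isomorphisms $\lambda^1$ and $\lambda^2$ of Definition~\ref{def:1}). Every additive endofunctor of $\tors(M)$ is determined uniquely by an abelian group endomorphism of $M$, obtained by acting on automorphisms via the canonical identification $\Aut_{\stM}(P) \iso M$. This assigns to each $X \in \stR$ endomorphisms $l(X), r(X) \in \End_\ZZ(M)$; the natural isomorphism $L_X \iso L_{X'}$ associated to any arrow $X \to X'$ in $\stR$ shows that both $l$ and $r$ descend to maps $A = \pi_0(\stR) \to \End_\ZZ(M)$.

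It remains to verify that $l$ is a ring homomorphism, $r$ an anti-homomorphism, and that the two actions commute. Each property corresponds to one piece of the pseudo-monoid structure $(t, \theta)$ on $\stR$. Additivity $l(a + a') = l(a) + l(a')$ and $r(a + a') = r(a) + r(a')$ come from $\lambda^1$ and $\lambda^2$ respectively. Multiplicativity $l(aa') = l(a) \circ l(a')$ is the associator of $(t, \theta)$ evaluated at $(X, X', P)$: it supplies $L_{XX'} \iso L_X \circ L_{X'}$, which becomes an equality in $\End_\ZZ(M)$ since $M$ is a set. The bimodule compatibility $l(a) r(a') = r(a') l(a)$ is the same associator at $(X, P, Y)$. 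All higher coherence collapses because $M$ has no nontrivial automorphisms, so the pentagon~\eqref{eq:74} plays no direct role here; the only mild technicality is verifying that the assignment $X \mapsto l(X)$ really descends to $A$, which is just naturality of the structural isomorphisms and is the step I would write out in full detail.
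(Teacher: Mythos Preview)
Your argument is correct and takes essentially the same approach as the paper, which also extracts the action of $A$ on $M$ from the biadditive functor $m_2$ applied to automorphisms and then checks that the result depends only on the isomorphism class in $\pi_0(\stR)$. Your phrasing via additive endofunctors of $\tors(M)$ is a slightly more categorical repackaging of the paper's direct manipulation of $\Aut(0_\stR)$ through the identification $X + 0_\stR \iso X$, but the underlying content is the same.
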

We recall the main idea of the proof, as it will be needed to translate the above fact in terms of the presentation of $\stR$ and the multi-extensions associated to its second monoidal structure.
\begin{proof}[Proof of~\ref{prop:14} (Sketch)]
  Recall that the standard way to relate the automorphisms of $0_\stR$ to those of any other object $X$ of $\stR$ is to use the diagram
  \begin{equation*}
    \xymatrix@C+1pc{%
      *!<-3pt,0pt>{X + 0_\stR} \ar[r]^{\id_X+\alpha} \ar[d]
      & *!<-3pt,0pt>{X + 0_\stR} \ar[d] \\
      X \ar[r]_{\tilde\alpha} & X
    }
  \end{equation*}
  Left and right multiplications by an object $Y$ (i.e.\ the additive functors $m_2(Y,-)$ and $m_2(-,Y)$) translate the above diagram into the corresponding ones for automorphisms $Y\alpha$ and $\alpha Y$. If $\beta\colon Y\to Z$, so that $[Y]=[Z]\in A$, then we arrive at the square
  \begin{equation*}
    \xymatrix@C+1pc{%
      XY + 0_\stR \ar[r]^{\id+\alpha Y} \ar[d]_{\id_X\beta + \id_0}
      & XY + 0_\stR \ar[d]^{\id_X\beta + \id_0} \\
      XZ + 0_\stR \ar[r]_{\id + \alpha Z}
      & XZ + 0_\stR 
    }
  \end{equation*}
  from which we conclude that $\alpha Y=\alpha Z$.  The situations for $Y\alpha$, $Z\alpha$ is analogous.
\end{proof}
Combining this with Proposition~\ref{prop:13} we have:
\begin{corollary}
  \label{cor:1}
  Every categorical ring $\stR$ has a presentation
  \begin{equation*}
    \xymatrix@1{0 \ar[r]& M\ar[r]& R_1\ar[r]^\del& R_0\ar[r]& A\ar[r]& 0,}
  \end{equation*}
  where $R_\bullet:R_1\to R_0$ is a braided crossed module, $A=\pi_0(\stR)$ a ring and $M=\pi_1(\stR)$ an $A$-bimodule.\qed
\end{corollary}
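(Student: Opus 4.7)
The plan is to assemble the pieces that have already been established in sections~\ref{sec:pres-categ-ring} and earlier, rather than prove anything substantially new. First I would invoke the standard presentation theory recalled in section~\ref{sec:cat-groups-cross}: since the underlying group-like stack of $\stR$ is braided, it admits a presentation by a braided crossed module $R_\bullet\colon R_1\overset\del\to R_0$, so that $\stR\iso [R_1\to R_0]\sptilde$. By the very definition of the homotopy sheaves of a crossed module, $\ker\del=\pi_1(\stR)=M$ and $\coker\del=\pi_0(\stR)=A$, which gives the four-term exact sequence
\begin{equation*}
  \xymatrix@1{0 \ar[r]& M\ar[r]& R_1\ar[r]^\del& R_0\ar[r]& A\ar[r]& 0}
\end{equation*}
at the level of sheaves of groups on $\T$.

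Next I would transport the pseudo-monoid structure from $\stR$ to $R_\bullet$ via Proposition~\ref{prop:13}, which is a direct consequence of the equivalence of bi-multicategories $\M\xmod\lisoto\M\grst$ of Theorem~\ref{thm:6}. This equips $R_\bullet$ with its ring-like structure (a system of $n$-\bfls $E_n$ satisfying~\eqref{eq:42}), making $R_\bullet$ a weakly ring-like crossed module in the sense of Definition~\ref{def:7}.

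For the statement that $A$ is a ring and $M$ an $A$-bimodule, I would rely on the discussion surrounding diagram~\eqref{eq:43} and on Proposition~\ref{prop:14}. The first shows that $\varpi\colon \stR\to A$ is a morphism of pseudo-monoids (since multiplication $q(\jmath(e))=q(x)q(y)$ descends independently of the choice of lift $e\in E_{x,y}$), so the external operation on $\stR$ induces a bilinear, associative multiplication on $A$; the axioms transfer because $\varpi$ is a morphism of pseudo-monoids and $A$ is discrete (coherence becomes equality). Proposition~\ref{prop:14}, whose proof sketch already appears just above the corollary, gives the $A$-bimodule structure on $M$.

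There is no real obstacle to overcome: the corollary is a packaging statement. The only small point to watch is the compatibility of the ring structure on $A$ with the $A$-bimodule structure on $M$, but this is automatic from the fact that both structures are induced by the single external operation on $\stR$ via $\varpi$ and the identification $M=\Aut_\stR(0_\stR)$, so that module associativity reduces, through the diagrams in the proof of Proposition~\ref{prop:14}, to the pseudo-associator $\mu$ of the external monoidal structure.
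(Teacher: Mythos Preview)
Your proposal is correct and follows essentially the same approach as the paper: the corollary is stated with a \qed and is introduced by ``Combining this with Proposition~\ref{prop:13} we have,'' so the intended proof is exactly the assembly of the presentation theory of \S\ref{sec:cat-groups-cross}, Proposition~\ref{prop:13}, the discussion around~\eqref{eq:43} showing $A$ is a ring, and Proposition~\ref{prop:14} for the $A$-bimodule structure on $M$. Your added remark on compatibility via the pseudo-associator is a harmless elaboration of what the paper leaves implicit.
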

In fact, something more can be said about the presentation, despite $R_0$ having the weaker structure given by~\eqref{eq:43}.  In addition, we will need to describe the $A$-bimodule structure of $M$ in terms of~\eqref{eq:43} and the biextension $E$.

Consider the morphism examined in Remark~\ref{rem:4}:  for an arrow $x \to x' = x + \del r$ of $[R_1\to R_0]\sptilde$, with $x\in R_0$ and $r\in R_1$, we have
the morphism of $(R_1,R_0)$-torsors $\rho_{y}\colon E_{x,y}\to E_{x',y}$, $e\mapsto e\times_1 s_1(r,y)$. If $\del r=0$, so $r\in \pi_1(\stR) = M$, then $\rho_{y}$ is an automorphism of $E_{x,y}$, hence $\rho\in M$ as well (depending on $x,y$), since in general $\Aut_{\sttors(R_1,R_0)}(E)\iso M_{R_0\times R_0}$.  Similarly, we construct an automorphism $\lambda_x\colon E_{x,y}\to E_{x,y}$ corresponding to the automorphism of $y\in R_0$  given by $r\in R_1$.
\begin{lemma-def}
  \label{lem:8}
  The automorphism $\rho_y$ (resp.\ $\lambda_x$) is identified with a section $-ry$ (resp.\ $-xr$) of $M$ such that if $y,y'\in R_0$ (resp.\ $x,x'\in R_0)$ and $q(y)=q(y')\in A$ (resp. $q(x)=q(x')\in A$), then $ry=ry'$ (resp.\ $xr=x'r$).

  With the above notations, define $ar\coloneqq xr$ and $rb\coloneqq ry$, where $a=q(x),b=q(y)\in A$.
\end{lemma-def}
\begin{remark}
  \label{rem:10}
  The apparently bizarre choice of inserting an inverse (denoted additively) is due to the point raised in Remark~\ref{rem:1}.
\end{remark}
\begin{proof}[Proof of the Lemma.]
  The identification between the automorphisms of $E$ and $M$ is standard: for any section $e\in E_{x,y}$ we have $\rho_y(e) = e\, r_{\rho_y}(e)$, with $r_{\rho_y}(e)\in M$.  On the other hand, this must be equal to $e\times_1 s_1(r,y)$, the latter coming from the composition
  \begin{equation*}
    E_{x,y} \lto E_{x,y} \wedge^{R_1} E_{0,y} \lto E_{x,y}.
  \end{equation*}
  Recall from sect.~\ref{sec:biext-braid} that, analogously to the standard case of abelian biextensions \cite{MR0354656-VII}, for $x,y\in R_0$, the fibers $E_{x,0}$ and $E_{0,y}$ are canonically identified with the unit $(R_1,R_0)$-torsor: the identification
  \begin{equation*}
    R_1 \lisoto E_{0,y},\quad 0_{R_1} \longmapsto e_y,
  \end{equation*}
  sends the unit section of $R_1$ (which we write as $0$ according to the current additive convention) to the central section $e_y\in E_{0,y}$. Note that as a result $\jmath (e_y) = 0_{R_0}$. By the above, we have
  \begin{equation}
    \label{eq:44}
    s_1(r,y) = e_y\, (-ry),
  \end{equation}
  where $ry \coloneqq -r_{\rho_y}(e_y)\in R_1$, and the juxtaposition stands for the right $R_1$-action. In fact, we have $ry\in M$, since $\jmath\circ s_1$ is trivial (cf.\ sect.~\ref{sec:butterflies}).

  Now, assume we have $y\to y' = y + \del u$, $u\in R_1$. We have
  \begin{equation*}
    \vcenter{\xymatrix@C+1pc@R-1pc{%
      & E_{0,y} \ar[dd]^u \\
      R_1 \ar[ur]^{e_y} \ar[dr]_{e_{y'}} \\
      & E_{0,y'} 
    }}\qquad e \longmapsto e\times_2 s_2(0,u)
  \end{equation*}
  Using the identities~\eqref{eq:24} and the restriction condition~\ref{item:4} in Definition~\ref{def:3} we calculate:
  \begin{align*}
    s_1(r,y)\times_2 s_2(0,u) & = s_1(r,y)\times_2 s_2(\del r,u) \\
                              & = s_1(r,y)\times_2 s_1(r, \del u) \\
                              & = s_1(r,y + \del u) \\
                              & = s_2(r, y').
  \end{align*}
  Using~\eqref{eq:44} the above calculation gives
  \begin{align*}
    e_y\, (ry) \times_2s_2(0,u) & = e_y\times_2 (ry)\,s_2(0,u) \\
                                & = e_y\times_2 s_2(0,u)\,(ry) \\
                                & = e_{y'} \, (ry')\,.
  \end{align*}
  The fact that the identification is canonical gives $e_y\times_2 s_2(0,u) = e_{y'}$ and hence $ry = ry'$.

  The situation with $e_x\in E_{x,0}$, $xr$, and $x'r$ is entirely analogous.
\end{proof}
\begin{remark}
  \label{rem:11}
In the extension of $A$ by $M$ above, $R_1$ and $R_0$ are only groups, in general, even though the crossed module $R_\bullet$ they form comes equipped with a braiding. In particular, unless certain strong triviality conditions on the biextension $E_2=t(2)\in \grdbfl(R_\bullet,R_\bullet; R_\bullet)$ hold, $R_0$ is \emph{not} a ring, in general. This remark applies to stock categorical rings as well, which therefore admit a presentation of the above type with $R_\bullet$ a braided symmetric crossed module.  This is in sharp contrast with the so-called \emph{Ann-Categories} \autocites(see)(){MR2065328}{MR2458668}{MR3085798}, whose underlying categories are Picard groupoids. In a companion paper \cite{ann2015} we prove that Picard stacks with a ring-like structure of this type admit presentations given by crossed bimodules, namely crossed modules such as in the presentation above where $R_0$ is a ring and $R_1$ is an $R_0$-bimodule, with $\del$ a bimodule morphism satisfying an appropriate version of the Pfeiffer identity.  The general comparison between these structures is intricate and it will be addressed elsewhere.
\end{remark}

\section{Decomposition of categorical rings and the cohomology of rings}
\label{sec:decomp}

Let $\stR$, $A$, and $M$ be as above.  In general, a \strong{decomposition} \cite{MR95m:18006} of $\stR$ consists in the choice of local data (objects, morphisms) subordinate to a hypercovering of $A$ leading to the calculation of cohomological invariants of $\stR$.  We use the pseudo-monoidal structure carried by a presentation of $\stR$ for this purpose.

We let $L_i^\bullet(A)$, $i=2,3$ be the sharp truncations of the iterated bar complexes $B(A,i)$, $i=2,3$ of $A$ as an abelian group. By \cite[\S 11]{MR0098773} it carries a product structure, defined explicitly up to cells in $B(A,2)$.  For $i=2,3$ we let $H^3_i(A,M)$ denote the (hyper)cohomology groups computed using the multiplicative bar construction over $L^i_\bullet(A)$. From the previous reference we have the $H^3_3(A,M)\iso \HML^3(A,M)$, the third Mac~Lane cohomology of $A$ with values in $M$.  Furthermore, we let $\widetilde H^3_2(A,M)$ denote the group of twisted classes satisfying~\eqref{eq:51} below.
\begin{theorem}
  \label{thm:7}
  There is a bijective correspondence between equivalence classes of  weak ring-like stacks $\stR$, with $A=\pi_0(\stR)$ and $M=\pi_1(\stR)$, and twisted classes in $\widetilde H^3_2(A,M)$ defined below (cf.\ Definition~\ref{def:8}).  In the unital case, i.e.\ when the external monoidal structure of $\stR$ has a unit element, $A$ is likewise unital and the underlying braiding of $\stR$ is necessarily symmetric. Hence the weak ring-like structure of $\stR$ is fully ring-like, and $[\stR]\in \widetilde H^3_2(A,M)\iso \HML^3(A,M)$.
\end{theorem}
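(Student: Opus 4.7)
The plan is to decompose $\stR$ via its presentation by a braided crossed module and extract classifying cocycles, first from the underlying braided group-like structure and then from the pseudo-monoid layer; finally analyze the rigidity imposed by a unit. Fix a presentation $R_\bullet : R_1 \to R_0$ of $\stR$ as in Corollary~\ref{cor:1}, so that $A = \pi_0(\stR) = \coker\del$ and $M = \pi_1(\stR) = \ker\del$. Choose local data subordinate to a hypercovering of $A$: local sections $s\colon A\to R_0$ of the projection $q$, 2-cochains $\sigma(a,b)\in R_1$ with $\del\sigma(a,b) = s(a) + s(b) - s(a+b)$, and a 3-cochain in $M$ expressing the braiding of $R_\bullet$. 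This produces the classical invariant of the underlying braided group-like stack, namely a class in $H^4(K(A,2),M)$ computed via the sharp truncation $L_2^\bullet(A)$ of $B(A,2)$ rather than via $B(A,3)$, since symmetry is not yet assumed.

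Next, I would use the pseudo-monoidal structure $(t,\theta)$ to install the multiplicative layer on top. Over the local sections $(s(a),s(b))$, the biextension $E_2 = t(2)$ is described by the nonabelian 1-cocycles $(g_1,g_2,x)$ of~\eqref{eq:12}--\eqref{eq:20}; via the trivializations of Lemma~\ref{lem:4}, these determine a 2-cochain $f(a,b)\in R_0$ covering the multiplication in $A$. The coherence isomorphisms $\theta_{2;2,1},\theta_{2;1,2}\colon E_3\to E_2(E_2,\id), E_2(\id,E_2)$ translate, once evaluated on our local data, into a 3-cochain $\mu\colon A^3\to M$, and the generalized pentagon~\eqref{eq:74} at $n\leq 4$ forces the coboundary $\delta\mu$ to equal a correction term built from $x, g_1, g_2$ and the braiding $\braces{-}{-}$. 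By Proposition~\ref{prop:1} and Lemma-Definition~\ref{lem:8}, this correction is precisely the twisting built into $\widetilde H^3_2(A,M)$. A routine verification, carried out in the style of the cocycle manipulations already displayed in~\eqref{eq:29}--\eqref{eq:30}, shows that changing local data alters $(f,\mu)$ by a twisted coboundary in the same group, and conversely that every twisted cocycle can be reverse-engineered into a pseudo-monoid structure on $R_\bullet$; by Proposition~\ref{prop:13}, this yields a weak ring-like $\stR$. This establishes the first part of the theorem.

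For the unital case, a unit object $I_\stR$ descends to a ring unit $1\in A$ together with a local representative $e_I\in R_0$ with $q(e_I)=1$. The heart of the argument---and the expected main obstacle---is showing that the existence of this unit forces $\braces{a}{b}+\braces{b}{a}=0$ in $M$ for all $a,b\in A$, i.e.\ symmetry of the underlying braiding. The strategy is an Eckmann-Hilton type collapse: the unit constraints $I_\stR X\simeq X\simeq X I_\stR$, combined with the biadditivity of $m = t(2)$ and the explicit interchange law~\eqref{eq:9}, applied to points of $E_{1,1}, E_{1,a}, E_{a,1}$ via the trivializations of Lemma~\ref{lem:4}, produce a coherence diagram whose commutativity pins the antisymmetric part of $\braces{-}{-}$ to zero on $A$. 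Once symmetry is established, the twist in $\widetilde H^3_2(A,M)$ vanishes and $(f,\mu)$ becomes a genuine cocycle in $H^3_2(A,M)$. The isomorphism $H^3_2(A,M)\iso \HML^3(A,M)$ then follows from Mac~Lane~\cite[\S 11]{MR0098773}: once $L_2^\bullet(A)$ carries a product structure---precisely what our pseudo-monoidal data supplies---the multiplicative bar cohomologies computed over $B(A,2)$ and $B(A,3)$ agree, and the latter is by definition the third Mac~Lane cohomology $\HML^3(A,M)$.
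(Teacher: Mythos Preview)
Your overall strategy---decompose the underlying braided group-like structure to obtain the $(f_+,g_+)$ component living in $H^4(K(A,2),M)$, then extract the remaining cocycle data from the biextension/pseudo-monoid layer, and finally verify the twisted cocycle conditions---matches the paper's. The reconstruction direction (Proposition~\ref{prop:17}) and the identification $\widetilde H^3_2(A,M)\iso\HML^3(A,M)$ in the unital case via Mac~Lane's result are also handled the same way.

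Two points where your proposal diverges from the paper and where care is needed:

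\textbf{Cocycle bookkeeping.} You describe the invariant as a pair ``2-cochain $f(a,b)\in R_0$ plus a 3-cochain $\mu$''. The paper's class is a 5-tuple $\xi=(f,\alpha_1,\alpha_2,f_+,g_+)$, where $\alpha_1,\alpha_2$ measure the failure of the partial multiplication maps of $E_2$ to match the chosen local sections (diagrams~\eqref{eq:57} and relations~\eqref{eq:59}). These intermediate cochains are not optional: they are what carry the equations in blocks~\eqref{eq:47}--\eqref{eq:49}, and the ``twist'' defining $\widetilde H^3_2$ lives precisely in the $\alpha_i$-equations~\eqref{eq:52}. Your sketch does not surface them, so as written it cannot produce the full twisted cocycle condition.

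\textbf{The unital case.} Your proposed Eckmann--Hilton argument at fibers $E_{1,a},E_{a,1}$ is not how the paper proceeds, and it is unclear it would work: the braiding correction in the interchange law~\eqref{eq:9} is the braiding of the \emph{target} crossed module acting on the torsor, not directly the symmetry datum $g_+$ of the additive structure, so the link you need is not immediate. The paper's argument (Lemma~\ref{lem:9}) is purely cohomological and much shorter: one simply sets $a=1$ or $c=1$ in the twisted equations~\eqref{eq:52}, which collapses them to $g_+(b,c)+g_+(c,b)=0$, i.e.\ the last equation of~\eqref{eq:50}. This simultaneously kills the twist and shows the braiding is symmetric at the level of the cocycle, with no recourse to the biextension geometry.
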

The statement follows from Proposition~\ref{prop:16}, Corollary~\ref{cor:2}, and Proposition~\ref{prop:17} below. The rest of this section is devoted to the details of the proof of the theorem.

\subsection{The bar complex}
\label{sec:complexes}

We establish the notation for the iterated bar complex of the abelian group $A$. We quote, with minor changes in the notation, from~\cite[\S 7]{MR3180827}.  The part of $B^\infty(A)$ of interest is the following complex, written homologically:
\begin{equation*}
  L^3_\bullet(A) :
  \xymatrix@1{%
    \ZZ [A] & \ZZ[A^2] \ar[l]_{\del_1} & \ZZ[A^3]\oplus \ZZ[A^2] \ar[l]_(0.6){\del_2} &
    \ZZ[A^4]\oplus \ZZ[A^3] \oplus \ZZ[A^3] \oplus \ZZ[A^2] \ar[l]_(0.6){\del_3} 
  }
\end{equation*}
We place the complex in homological degrees $[0,3]$, with generators:
\begin{center}
  \begin{tabular}[c]{ c | c  c  c  c | }
    \text{Degree} & \multicolumn{4}{|c|}{\text{Generators}} \\ \hline\hline
    0 & \vphantom{\rule{0pt}{3ex}} $[a]$
      & & & \\
    1 & $[a \binmid_1 b]$
      & & & \\ 
    2 & $[a \binmid_1 b \binmid_1 c]$ & $[a\binmid_2 b]$
      & &  \\ 
    3 & $[a \binmid_1 b \binmid_1 c \binmid_1 d]$
      & $[a \binmid_1 b \binmid_2 c]$
      & $[a \binmid_2 b \binmid_1 c]$ & $[a\binmid_3 b]$
  \end{tabular}
\end{center}
The differential is given by:
\begin{align*}
  \del_1 [a\binmid_1 b] & = [b] - [a+b] + [a] \\
  \del_2 [a\binmid_1 b\binmid_1 c] &= [b\binmid_1 c]
                                     -[a+b\binmid_1 c] + [a\binmid_1 b+c]
                                     -[a\binmid_1 b] \\
  \del_2 [a\binmid_2 b] & = [a\binmid_1 b] - [b\binmid_1 a] \\
  \del_3 [a\binmid_1 b\binmid_1 c\binmid_1 d] &= [b\binmid_1 c\binmid_1 d]
                                                - [a+b\binmid_1 c\binmid_1 d]
                                                + [a\binmid_1 b+c\binmid_1 d]
                                                - [a\binmid_1 b\binmid_1 c+d]
                                                + [a\binmid_1 b\binmid_1 c] \\
  \del_3 [a\binmid_1 b\binmid_2 c] &= [a\binmid_1 b\binmid_1 c]
                                     - [a\binmid_1 c\binmid_1 b]
                                     + [c\binmid_1 a\binmid_1 b]
                                     - [b\binmid_2 c] +[a+b\binmid_2 c] -[a\binmid_2 c] \\
  \del_3 [a\binmid_2 b\binmid_1 c] &= [a\binmid_1 b\binmid_1 c]
                                     - [b\binmid_1 a\binmid_1 c]
                                     + [b\binmid_1 c\binmid_1 a]
                                     + [a\binmid_2 c] -[a\binmid_2 b+c] +[a\binmid_2 b] \\
  \del_3 [a\binmid_3 b] &= [a\binmid_2 b] + [b\binmid_2 a].
\end{align*}
The subcomplex $L^2_\bullet(A)$ is the one obtained by dropping the component $\ZZ[A^2]$ of $L^3_3(A)$, namely the one generated by symbols $[a\binmid_3 b]$. The degrees are shifted in a way compatible with the reset needed to form the infinite bar construction, so in effect we have $L^i_\bullet(A) = B(A,i)[-i]$, for $i=2,3$, and therefore
\begin{equation*}
  H_2(L^2_\bullet(A)) \iso H_{4}(K(A,2)),\qquad
  H_2(L^3_\bullet(A)) \iso H_{5}(K(A,3)).
\end{equation*}
In the stable situation, this shift is compatible with the degrees in the $Q$ construction.

\subsection{Product structure}
\label{sec:product-l2a}

Let now $A$ be considered with its ring structure.  We describe the product structure on $L^2_\bullet(A)$ and $L^3_\bullet(A)$ \cite[see][\S 11]{MR0098773}. The non zero products among the generators, \emph{up to products taking values in degrees $\leq 2$,}\footnote{The remaining part of the product structure does not appear in \cite{MR0098773} and it is unknown, to the author, at least, whether it is explicitly available.} are the following:
\begin{alignat*}{2}
  [a][b] &= [ab] \\
  [a][b\binmid_1 c]&=[ab\binmid_1 ac]&\qquad [a\binmid_1 b][c]&=[ac\binmid_1 bc]\\
  [a][b\binmid_1 c\binmid_1 d]&=[ab\binmid_1 ac\binmid_1 ad] &\qquad
  [a\binmid_1 b\binmid_1 c][d]&=[ad\binmid_1 bd\binmid_1 cd] \\
  [a][b\binmid_2 c]&=[ab\binmid_2 ac]&\qquad [a\binmid_2 b][c]&=[ac\binmid_2 bc]
\end{alignat*}
and the most interesting one\footnote{There appears to be a discrepancy in the second slot of the second term of the right hand side, presumably a misprint in \cite{MR0098773}.}:
\begin{equation*}
  [a\binmid_1 b][c\binmid_1 d] =
  [ac\binmid_1 bc\binmid_1 ad+bd] -[ac\binmid_1 ad\binmid_1 bc+bd]
  +[ad\binmid_1 bc\binmid_1 bd] -[bc\binmid_1 ad\binmid_1 bd]
  -[bc\binmid_2 ad].
\end{equation*}
With this product $L^2_\bullet(A)$ and $L^3_\bullet(A)$ become DGAs.  Furthermore,
both are equipped with the augmentation $\eta\colon L^k_\bullet(A)\to A$ given by $\eta([a])=a$, for $a\in A$, and zero in all other degrees.  Thus, they become augmented DGAs to which we can apply the (reduced) bar construction $\Bar B_{i,\bullet}(A) \coloneqq \Bar B_N(L^i_\bullet(A),\eta)$ of loc.\ cit.\ (more details below).  We are interested in the resulting cohomology groups $H^3(\Hom(\Bar B_{i,\bullet}(A),M))$, where $M$ is an $A$-bimodule as above. As remarked, for $i=3$ we have $H^3(\Hom(\Bar B_{3,\bullet}(A),M)) \iso \HML^3(A,M)$.

\subsection{The multiplicative bar construction and third cohomology of rings}
\label{sec:bar-construction}

To avoid typographical clutter, the generators of $\Bar B_{i,\bullet}(A)$ are denoted $\llbracket u_1,\dots,u_n\rrbracket$, where the $u_k$s are homogeneous elements of $L^i_\bullet(A)$. The degree is
\begin{equation*}
  \deg \llbracket u_1,\dots,u_n\rrbracket =
  n + \abs{u_1} +\dots + \abs{u_n},
\end{equation*}
where we let $\abs{u_k}=\deg u_k$. We quote \cite[p.\ 323]{MR0098773} the expression for the differential in the bar complex; it is $\del_{\mathrm{tot}} = \del' + \del''$, where
\begin{gather*}
  \del' \llbracket u_1,\dots,u_n\rrbracket = - \sum_{i=1}^n (-1)^{\epsilon_{i-1}}
  \llbracket u_1,\dots,\del u_i,\dots, u_n\rrbracket \\
  \del'' \llbracket u_1,\dots,u_n\rrbracket =
  \eta(u_1) \llbracket u_2,\dots,u_n\rrbracket
  +\sum_{i=1}^{n-1} (-1)^{\epsilon_i}
  \llbracket u_1,\dots,u_iu_{i+1},\dots, u_n\rrbracket
  + (-1)^{\epsilon_n} \llbracket u_1,\dots, u_{n-1}\rrbracket\eta(u_n),
\end{gather*}
where we have set $\epsilon_i=\deg \llbracket u_1,\dots,u_i\rrbracket$.  On the right hand side of the expression for $\del'$ the inner $\del u_i$ indicates the differential in the complex $L^i_\bullet(A)$.  

The cells of total dimension (=degree) 3 are:
\begin{center}
  \begin{tabular}[c]{c | c c |}
    $n$ & \multicolumn{2}{c|}{Generators} \\ \hline \hline
    3 & \vphantom{\rule{1em}{3ex}}$\llbracket [a], [b], [c] \rrbracket$ & \\
    2 & $\llbracket [a\binmid_1 b] , [c] \rrbracket$ & $\llbracket [a], [b\binmid_1 c] \rrbracket$ \\
    1 & $\llbracket [a\binmid_1 b\binmid_1 c]\rrbracket$
      & $\llbracket [a\binmid_2 b]\rrbracket$
  \end{tabular}
\end{center}
Therefore a 3-dimensional cochain over $\Bar B_{i,\bullet}(A)$ with values in $M$ is a 5-tuple $\xi=(f,\alpha_1,\alpha_2,f_+,g_+)$, where
\begin{gather*}
  f \colon A^3 \lto M \\
  \alpha_1\colon A^2\times A \lto M,\quad
  \alpha_2\colon A\times A^2 \lto M \\
  f_+\colon A^3 \lto M,\quad g_+\colon A^2 \lto M.
\end{gather*}
To write the cocycle condition $\delta\xi=\xi\circ (\del'+\del'') = 0$ in explicit form we need the cells of dimension four:
\begin{center}
  \begin{tabular}[c]{c | c c c c c |}
    $n$ & \multicolumn{5}{c|}{Generators} \\ \hline\hline
    4 & \vphantom{\rule{1em}{3ex}}$\llbracket [a], [b], [c], [d] \rrbracket$
      & & & &\\
    3 & $\llbracket [a\binmid_1 b] , [c], [d] \rrbracket$
      & $\llbracket [a], [b\binmid_1 c], [d] \rrbracket$
      & $\llbracket [a], [b], [c\binmid_1 d] \rrbracket$ & & \\
    2 & $\llbracket [a\binmid_1 b], [c\binmid_1 d]\rrbracket$
      & $\llbracket [a\binmid_1 b\binmid_1 c], [d]\rrbracket$
      & $\llbracket [a\binmid_2 b], [c]\rrbracket$
      & $\llbracket [a], [b\binmid_1 c\binmid_1 d]\rrbracket$
      & $\llbracket [a], [b\binmid_2 c]\rrbracket$ \\
    1 & $\llbracket [a\binmid_1 b\binmid_1 c\binmid_1 d]\rrbracket$
      & $\llbracket [a\binmid_1 b\binmid_2 c]\rrbracket$
      & $\llbracket [a\binmid_2 b\binmid_1 c]\rrbracket$
      & $\llbracket [a\binmid_3 b]\rrbracket$\textdagger &
  \end{tabular}
\end{center}
The element marked with a (\textdagger) would only figure in the bar complex $\Bar B_{3,\bullet}(A)$. 
In explicit form, the cocycle condition on the five components of $\xi$ is quite complex, consisting of several equations. For ease of reading we arrange them in different groups we have (see below for specific comments):
\begin{subequations}
  \label{eq:45}
\begin{equation}
  \label{eq:46}
  a\,f(b,c,d) -f(ab,c,d) + f(a,bc,d) -f(a,b,cd) +f(a,b,c)\, d = 0
\end{equation}
\begin{equation}
  \label{eq:47}
  \begin{aligned}
    f(b,c,d) -f(a+b,c,d) +f(a,c,d) &=
    \alpha_1(ac,bc;d) -\alpha_1(a,b;cd) +\alpha_1(a,b;c)\, d \\
    -f(a,c,d) +f(a,b+c,d) -f(a,b,d) &= 
    a\, \alpha_1(b,c;d) -\alpha_1(ab,ac;d) -\alpha_2(a;bd,cd) +\alpha_2(a;b,c)\, d \\
    f(a,b,d) -f(a,b,c+d) +f(a,b,c) &=
    a\, \alpha_2(b;c,d) -\alpha_2(ab;c,d) +\alpha_2(a;bc,bd)
  \end{aligned}
\end{equation}
\begin{multline}
  \label{eq:48}
  f_+(ac,bc,ad+bd) -f_+(ac,ad,bc+bd) + f_+(ad,bc,bd) -f_+(bc,ad,bd) -g_+(bc,ad) = \\
  \alpha_1(a,b;d) -\alpha_1(a,b;c+d) +\alpha_1(a,b;c)
  +\alpha_2(b;c,d) -\alpha_2(a+b;c,d) +\alpha_2(a;c,d) \\
\end{multline}
\begin{equation}
  \label{eq:49}
  \begin{aligned}
    f_+(ad,bd,cd) -f_+(a,b,c)\,d &=
    -\alpha_1(b,c;d) +\alpha_1(a+b,c;d) -\alpha_1(a,b+c;d) +\alpha_1(a,b;d) \\
    f_+(ab,ac,ad) -a\, f_+(b,c,d) &=
    \alpha_2(a;c,d) -\alpha_2(a;b+c,d) +\alpha_2(a;b,c+d) -\alpha_1(a;b,c) \\
    g_+(ac,bc) -g_+(a,b)\, c &= -\alpha_1(a,b;c) +\alpha_1(b,a;c) \\
    g_+(ab,ac) -a\, g_+(b,c) &= \alpha_2(a;b,c) -\alpha_2(a;c,b)
  \end{aligned}
\end{equation}
\begin{equation}
  \label{eq:50}
  \begin{gathered}
    f_+(b,c,d) -f_+(a+b,c,d) +f_+(a,b+c,d) -f_+(a,b,c+d) +f_+(a,b,c) = 0\\[.5ex]
    \begin{aligned}
      f_+(a,b,c) -f_+(a,c,b) +f_+(c,a,b) &= g_+(b,c) -g_+(a+b,c) +g_+(a,c) \\
      f_+(a,b,c) -f_+(b,a,c) +f_+(b,c,a) &= -g_+(a,c) +g_+(a,b+c) -g_+(a,b)
    \end{aligned}\\[.5ex]
    g_+(a,b) + g_+(b,a) = 0 \,.
  \end{gathered}
\end{equation}
\end{subequations}
\begin{remarks}
  \label{rem:12}
  \hfill
  \begin{enumerate}
  \item The last group of equations~\eqref{eq:50} is closed and it is the condition for the pair $(f_+,g_+)$ to be an Eilenberg Mac~Lane cocycle. Thus $(f_+,g_+)$ represents a class in $H^4(K(A,2),M)$ or $H^5(K(A,3),M)$, depending on whether the last term is included. 
  \item The block~\eqref{eq:49} can be re-written as follows.  Let $\lambda_a$ and $\rho_a$ denote the left and right multiplications by $a\in A$. Also, write $a\,f_+$ and $f_+\,a$ for the action of $a\in A$ on the values of $f_+$.  Same for $g_+$. 
    Then we can rewrite the block as
    \begin{equation*}
      \begin{aligned}
        \rho_d^*f_+ -f_+ \,d&= -\alpha_1(-,-;d)\circ \del_3 \\
        \lambda_a^*f_+ -a\, f_+ &= \alpha_2(a;-,-)\circ \del_3 \\
        \rho_c^*g_+ -g_+\, c &= -\alpha_1(-,-;c)\circ \del_2 \\
        \lambda_a^*g_+ -a\, g_+ &= \alpha_2(a;-,-)\circ \del_2
      \end{aligned}
    \end{equation*}
    The block~\eqref{eq:49} expresses the invariance of the \emph{class} of $(f_+,g_+)$ under left and right multiplication in $A$.
 \item Equation~\eqref{eq:46} has the familiar form of a Hochschild cocycle. (As it might be expected, it arises from the associativity constraint on the exterior monoidal structure of $\stR$, as it will be shown below.) The behavior of $f$ with respect to the additivity, i.e. its failure to be multilinear, is expressed by the equation block~\eqref{eq:47}.
  \item As it will be explained below, the meaning of the somewhat more intricate relation~\eqref{eq:48}, is that it expresses the compatibility (interchange law) between the partial composition laws of the exterior monoidal structure of $\stR$. It relates to the last of the relations in the Mac~Lane's product structure of sect.~\ref{sec:product-l2a}.
  \end{enumerate}
\end{remarks}
It is well known that $H_5(K(A,3))\iso A/2A$ and that $H^5(K(A,3),M)\iso \Hom(A/2A,M)\iso \Hom(A,{}_2M)$~\cites[\S 23]{MR0065162}[see also][\S 7.3]{MR3180827}. The former isomorphism is given by $a\mapsto [a\binmid_2a]$, the latter sends the class of $[(f_+,g_+)]$ to the (linear) map $a\mapsto g_+(a,a)$. In fact, dropping the last of equations~\eqref{eq:50}, the same assignment gives a quadratic map from $A$ to $M$. (Recall that $q\colon A \to M$ is quadratic if $q(na) = n^2q(a)$ for all $n\in \ZZ$, and if the associated symmetric function $\Delta q(a,b) = q(a+b) -q(a) -q(b)$, $a,b\in A$, is bilinear.) Thus $H^5(K(A,3),M)\iso \Hom(\Gamma_2(A),M)$, where $\Gamma_2(A)$ is the Whitehead functor, i.e.\ the degree four component of the divided power algebra $\Gamma_\bullet(A)$ over $A$.

Recall that $m\in M$ is \emph{central} if $am=ma$ for all $a\in A$. Let $M^A$ be the $A$-module of central elements of $M$. Assume that $\xi$  represents a class in $H^3(\Hom(\Bar B_{2,\bullet}(A),M))$, that is, the pair $(f_+,g_+)$ satisfies, as part of the full set $\xi=(f,\alpha_1,\alpha_2,f_+,g_+)$, equations~\eqref{eq:49} and~\eqref{eq:50}, \emph{except the last one.} We have the following observation.
\begin{proposition}
  \label{prop:15}
  Let $A$ be unital. We have an isomorphism
  \begin{equation*}
    H^3(\Hom(\Bar B_{2,\bullet}(A),M))\iso H^3(\Hom(\Bar B_{3,\bullet}(A),M))\iso \HML^3(A,M).
  \end{equation*}
  Moreover, the map $\HML^3(A,M)\to \Hom(A,{}_2M)$ \cite[\S 11]{MR0098773} factors through ${}_2M^A\iso \Hom_A(A,{}_2M^A)$.
\end{proposition}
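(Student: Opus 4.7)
The plan is to handle the two isomorphisms and the factorization separately, relying on \cite[\S 11]{MR0098773} for the delicate cohomological comparison and on direct manipulation of~\eqref{eq:49} for the factorization.

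First I would set up the comparison $H^3(\Hom(\Bar B_{2,\bullet}(A),M))\iso H^3(\Hom(\Bar B_{3,\bullet}(A),M))\iso \HML^3(A,M)$. The inclusion $L^2_\bullet(A)\hookrightarrow L^3_\bullet(A)$ of augmented DGAs induces a map of multiplicative bar complexes, and the tabulated cells show that in total degrees $\leq 4$ the only new generator is the 4-cell $\llbracket[a\binmid_3 b]\rrbracket$. This imposes exactly the additional cocycle condition $g_+(a,b)+g_+(b,a)=0$, i.e.\ the last line of~\eqref{eq:50}. Since the 2- and 3-cells coincide, 3-coboundaries agree on both sides, so the natural restriction on $H^3$ is injective. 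The surjectivity, along with the identification with $\HML^3(A,M)$, is Mac~Lane's result in~\cite[\S 11]{MR0098773}, which I would cite; it relies on the product structure on $L^2_\bullet(A)$ and on unitality of $A$.

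For the factorization $\HML^3(A,M)\to \Hom_A(A,{}_2M^A)\iso {}_2M^A\hookrightarrow \Hom(A,{}_2M)$, I would fix a $\Bar B_{3,\bullet}$-cocycle $\xi=(f,\alpha_1,\alpha_2,f_+,g_+)$ and set $q(a)\coloneqq g_+(a,a)$. Skew-symmetry of $g_+$ gives $2q(a)=0$. Specializing the third equation of~\eqref{eq:49} to $a=b$ and the fourth to $b=c$ collapses the right-hand sides (cf.\ Remark~\ref{rem:12}), producing
\begin{equation*}
q(ac)=q(a)\,c, \qquad q(ab)=a\,q(b).
\end{equation*}
Unitality of $A$, which follows from a unit object of $\stR$ through $A=\pi_0(\stR)$, allows me to set $a=1$ in the first and $b=1$ in the second, giving $q(a)=q(1)\,a=a\,q(1)$ for all $a\in A$. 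Hence $q(1)\in {}_2M\cap M^A={}_2M^A$, and $q$ is the bimodule map $a\mapsto a\,q(1)=q(1)\,a$. Through the canonical identification of bimodule maps $A\to {}_2M^A$ with central 2-torsion elements, this is exactly the desired factorization.

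The main obstacle is the surjectivity in the first comparison. Direct calculation with the bar differential shows that a 3-coboundary modifies $g_+$ only by a skew function, so the symmetric part $g_+(a,b)+g_+(b,a)$ is a genuine cohomological invariant on $H^3(\Hom(\Bar B_{2,\bullet},M))$. Forcing it to vanish on an arbitrary class is not a formal consequence of~\eqref{eq:46}--\eqref{eq:50}; instead, it requires Mac~Lane's explicit cocycle modification using the multiplication on $L^2_\bullet(A)$, notably the term $[bc\binmid_2 ad]$ appearing in the product $[a\binmid_1 b]\cdot[c\binmid_1 d]$, which is where unitality enters essentially. The plan is to invoke~\cite[\S 11]{MR0098773} rather than reproduce this calculation.
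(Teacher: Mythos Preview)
Your proof is correct, but the paper takes a shorter route and you have misdiagnosed the surjectivity obstacle. The paper uses the very bimodule computation you carry out for the factorization to prove the first isomorphism as well. The paragraph before the proposition records that for any $\Bar B_{2,\bullet}$-cocycle the map $q(a)=g_+(a,a)$ is \emph{quadratic}, and that under the Eilenberg--Mac~Lane identification of $H^4(K(A,2),M)$ with quadratic maps the classes with skew $g_+$ correspond exactly to $\Hom(A,{}_2M)$. Your specialization of~\eqref{eq:49} shows $q$ is an $A$-bimodule map once $A$ is unital, hence additive; quadratic plus additive forces $q(-a)=q(a)=-q(a)$, so $q\in\Hom(A,{}_2M)$. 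Since the symmetric part of $g_+$ is a coboundary invariant determined by the Eilenberg--Mac~Lane class, this gives $g_+(a,b)+g_+(b,a)=0$ outright---every $\Bar B_{2,\bullet}$-cocycle is already a $\Bar B_{3,\bullet}$-cocycle, no modification required. The citation to \cite{MR0098773} is used only for the second isomorphism. Your black-box appeal to Mac~Lane for surjectivity works but misses the paper's point, namely that~\eqref{eq:49} together with unitality is exactly what forces skewness of $g_+$; this is the mechanism behind Corollary~\ref{cor:2}.
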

\begin{proof}
  Let $q$ be the map $a\mapsto g_+(a,a)$.
  The last two equations of~\eqref{eq:49} imply that $q$ is an $A$-bimodule homomorphism. As such, it is determined by a central element in $M$. This proves the first isomorphism. The second follows from~\cite{MR0098773}.
\end{proof}
\begin{remark}
  \label{rem:13}
  Because the complexes $\Bar B_{2,\bullet}(A)$ and $\Bar B_{3,\bullet}(A)$ are equal below degree $3$, their cohomologies coincide for $n\leq 2$. Combined with \cite[\S 6]{MR1702420}, we have that the statement of Proposition~\ref{prop:15} holds in general when $H^\bullet$ is interpreted as hypercohomology.
\end{remark}

The following variant of the previous constructions will be useful below. Given a 5-tuple $\xi$ as above, consider the map $\beta_\xi \colon B_{2,4}(A)_2\to M$ given by (the subscript $2$ refers to the elements in $B_{2,4}(A)$ with $n=2$):
\begin{equation*}
  \beta_\xi (\llbracket u_1, u_2\rrbracket) =
  \begin{cases}
    g_+(ab,ac) + g_+(ac,ab) & \llbracket u_1, u_2\rrbracket =
    \llbracket [a], [b\binmid_2 c]\rrbracket, \\
    g_+(ac,bc) + g_+(bc,ac) & \llbracket u_1, u_2\rrbracket =
    \llbracket [a\binmid_2 b],[c]\rrbracket, \\
    0 & \text{all other cases.}
  \end{cases}
\end{equation*}
Further, we define $\beta_\xi$ to be zero on the other components of $B_{2,4}(A)$. Consider the twisted cocycle equation condition:
\begin{equation}
  \label{eq:51}
  D\xi\coloneqq \delta\xi + \beta_\xi = 0.
\end{equation}
This replaces the last two equations in the block~\eqref{eq:49} with
\begin{equation}
  \label{eq:52}
  \begin{aligned}
    -g_+(bc,ac) -g_+(a,b)\, c &= -\alpha_1(a,b;c) +\alpha_1(b,a;c) \\
    -g_+(ac,ab) -a\, g_+(b,c) &= \alpha_2(a;b,c) -\alpha_2(a;c,b)
  \end{aligned}
\end{equation}
and it still drops the last one from~\eqref{eq:50}. Observe that for $i=3$ equation~\eqref{eq:51} is vacuously equal to the set~\eqref{eq:45}.
\begin{remark}
  \label{rem:14}
  Note that replacing $\xi$ with $\xi'=\xi+\delta \nu$ has the effect of adding to $g_+$ the alternation of a map $h_+\colon A\times A\to M$, so that $\beta_\xi$ does not depend on the particular choice of $g_+$ within its class modulo coboundaries. Hence we get a well defined class of solutions of equation~\eqref{eq:51} modulo adding coboundaries.
\end{remark}
\begin{definition}
  \label{def:8}
  Let us denote by $\widetilde H^3_2(A,M)$ the hypercohomology group of classes satisfying~\eqref{eq:51} in degree three, relative to $A$ (cf.\ Remark~\ref{rem:13}).
\end{definition}
There is an evident map $\HML^3(A,M)\to \widetilde H^3_2(A,M)$, which, at least in the cases we like to consider, is an isomorphism.  Indeed we have
\begin{lemma}
  \label{lem:9}
  Let $A$ be unital. Then~\eqref{eq:51} and~\eqref{eq:45} are equivalent.  Therefore $\widetilde H^3_2(A,M)\iso \HML^3(A,M)$.
\end{lemma}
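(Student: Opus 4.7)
The plan is to prove both implications at the cochain level (after normalization) and then deduce the cohomology isomorphism. Since $A$ is unital, each class in $H^3(\Hom(\Bar B_{i,\bullet}(A),M))$ admits a normalized representative, i.e.\ a cochain all of whose components vanish whenever any entry equals $1_A$; this is the standard reduced multiplicative bar construction of an augmented unital DGA. Remark~\ref{rem:14} guarantees that $\beta_\xi$ is unchanged by coboundary shifts, so equation~\eqref{eq:51} descends to cohomology and we may freely assume our representative $\xi=(f,\alpha_1,\alpha_2,f_+,g_+)$ is normalized.

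The implication~\eqref{eq:45}$\Rightarrow$\eqref{eq:51} is immediate: the last equation of~\eqref{eq:50} states $g_+(a,b)+g_+(b,a)=0$, which forces $\beta_\xi\equiv 0$ directly from its definition; the modified equations~\eqref{eq:52} then reduce identically to the last two of~\eqref{eq:49}, and every remaining component of $D\xi=0$ already appears in~\eqref{eq:45}.

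For the converse, let $\xi$ be a normalized cochain satisfying~\eqref{eq:51}. Substitute $c=1_A$ in the first equation of~\eqref{eq:52}; by normalization $\alpha_1(a,b;1_A)=\alpha_1(b,a;1_A)=0$, and the equation collapses to
\begin{equation*}
  -g_+(b,a)-g_+(a,b)=0,
\end{equation*}
yielding the antisymmetry $g_+(a,b)+g_+(b,a)=0$, which is precisely the equation of~\eqref{eq:50} missing from $D\xi=0$. Once antisymmetry is in hand, $\beta_\xi$ vanishes and the modified equations~\eqref{eq:52} revert to the standard last two of~\eqref{eq:49}; combined with the rest of $D\xi=0$ this is exactly $\delta\xi=0$ in $\Hom(\Bar B_{3,\bullet}(A),M)$, i.e.\ the full system~\eqref{eq:45}. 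The induced map $\widetilde H^3_2(A,M)\to \HML^3(A,M)$ thus inverts the restriction map coming from $L^2_\bullet(A)\hookrightarrow L^3_\bullet(A)$.

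The main delicate point is justifying the normalization step. This rests on the classical result that for a unital augmented DGA the reduced subcomplex of cells involving the augmentation unit is acyclic in the multiplicative bar construction (Mac~Lane, \S11 of~\cite{MR0098773}), so that every cohomology class admits a representative vanishing on such cells. This is exactly where unitality of $A$ enters: without a two-sided identity one cannot evaluate at $c=1_A$ nor arrange $\alpha_1(-,-;1_A)=0$, and the obstruction encoded by $\beta_\xi$ genuinely distinguishes $\widetilde H^3_2(A,M)$ from $\HML^3(A,M)$.
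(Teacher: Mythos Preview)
Your proof is correct and follows essentially the same route as the paper: substitute $c=1_A$ (or $a=1_A$) in~\eqref{eq:52} to force $g_+(a,b)+g_+(b,a)=0$, whence $\beta_\xi=0$ and the two systems coincide. The only difference is that you spend a paragraph justifying passage to a normalized representative, whereas the paper's bar construction $\Bar B_N$ is already the \emph{reduced} one (the subscript $N$), so cochains automatically vanish on cells containing the unit $[1_A]$ of the DGA $L^i_\bullet(A)$; in particular $\alpha_1(a,b;1_A)=\alpha_2(1_A;b,c)=0$ holds for every cochain, not merely for a chosen representative in its class. Your detour through Remark~\ref{rem:14} and the acyclicity of degenerate cells is therefore unnecessary, though not incorrect.
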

\begin{proof}
  Choosing $a$ or $c=1$ in~\eqref{eq:52} shows that the last equation in in~\eqref{eq:50} holds. (Again, use Remark~\ref{rem:13} for the general situation.)
\end{proof}

\subsection{Decomposition of $\stR$}
\label{sec:decomposition-str}

For convenience of notation, let us write $[\del \colon R_1\to R_0] \equiv [\del \colon R\to \Lambda]$. Recall that since we denote the interior monoidal structure of $\stR$ additively, we do the same for $R$ and $\Lambda$ in the presentation.
\begin{remark}[\textbf{Warning}]
  In the following we use a set theoretic-type notation. However, in this sheaf theoretic context, the notation $a\in A$, or $a_1,\dots,a_n\in A$ means these are generalized points of $A$ of the form, say, $a\colon U\to A$, etc. where $U$ is an object of the topos. In fact, in order to properly carry out the hypercohomology calculations we will have to choose hypercovers of $A$ and of various simplicial objects associated to it, such as the various Eilenberg-Mac~Lane objects $K(A,2)$ and $K(A,3)$. We will proceed formally as in the pointwise case, and systematically appeal to the hypercohomology spectral sequence as in Remark~\ref{rem:13}. The actual hypercohomology arguments based are made precise in Appendix~\ref{sec:hyper}.
\end{remark}

For a point $a\in A$, we let $\Lambda_a=a^*\Lambda$ be the fiber. For any multi-extension $E_n$ in the weak ring-like structure of $R\to \Lambda$ denote by $E_{a_1,\dots, a_n}$ the pullback of $E$ to $\Lambda_{a_1}\times \dots\times \Lambda_{a_n}$, where $a_1,\dots,a_n\in A$.

There is an obvious morphism $\Lambda_{a_1}\times \dots \times \Lambda_{a_n}\to \Lambda_{a_1+\dots+a_n}$ covering the $n$-fold iteration of $+\colon A\times A\to A$.  If we assume a choice for a point $x_a\in \Lambda_a$ has been made for all points $a\in A$, i.e. we have a section $x$ of $q\colon \Lambda\to A$, then $x_a+x_b$ (the image of $(x_a,x_b)$ under $\Lambda_a\times \Lambda_b\to \Lambda_{a+b}$) and $x_{a+b}$ will in general be different. This gives rise to $\sigma\colon A\times A\to R$ by way of
\begin{equation*}
  x_{a+b} = x_a + x_b + \del \sigma_{a,b}.
\end{equation*}
In fact the objects $X_a\coloneqq \pi(x_a)$ of $\stR$ provide a decomposition of the sequence $\stM\to \stR\to A$, where $\stR$ is considered as a gerbe over $A$ \cite[\S 7]{MR95m:18006}. Note that the above corresponds to the morphism
\begin{equation*}
  \sigma_{a,b}\colon X_{a+b} \lto X_a + X_b
\end{equation*}
of $(R,\Lambda)$-torsors. Therefore we have the classical fact that the difference between the two possible comparisons between $X_a+X_b+X_c$ and $X_{a+b+c}$, and the application of equations~\eqref{eq:2} to $x_a$, $x_b$, and $x_c$ determines a pair $(f_+,g_+)$ satisfying the cocycle equations~\eqref{eq:50} above, hence a class in $H^4(K(A,2),M)$ or $H^5(K(A,3),M)$ \cite[see, e.g.\ ][]{MR1250465,MR1702420}. For future reference, the relevant relations are \cite[cf.\ ][]{MR95m:18006}:
\begin{equation}
  \label{eq:53}
  \begin{aligned}
    \sigma_{b,c} + \sigma_{a,b+c} - f_+(a,b,c)
    &= {\sigma_{a,b}}^{x_c} + \sigma_{a+b,c}, \\
    -g_+(a,b)+\sigma_{a,b} &= \braces {x_a} {x_b} + \sigma_{b,a}.
  \end{aligned}
\end{equation}
(There is an obvious generalization to $n$ variables, but the classical situation is sufficient to describe the decomposition with respect to the ``$+$'' operation of $\stR$.)

For the multiplication, i.e.\ the exterior monoidal structure, the reasoning at the end of sect.~\ref{sec:bimon-struct} implies we have instead
\begin{equation}
  \label{eq:54}
  \vcenter{%
  \begin{xy}
    %vertices
    (12,12)*+{E_{a_1,\dots,a_n}}="e";%
    (0,0)*+{\Lambda_{a_1}\times \dots \times \Lambda_{a_n}}="l1";%
    (24,0)*+{\quad\Lambda_{a_1\dots a_n}}="l2";%
    %arrows
    {\ar@{->}_{p} "e"; "l1"};%
    {\ar@{->}^{\jmath} "e"; "l2"};
  \end{xy}%
  }
\end{equation}
covering the multiplication in $A$. Using the choice of a section of the fibers $\Lambda_a$ as above, we have the isomorphism of $(R,\Lambda)$-torsors
\begin{equation}
  \label{eq:55}
  e_{a_1,\dots,a_n}\colon E_{a_1,\dots,a_n}\lisoto X_{a_1\dotsm a_n} = (R,x_{a_1\dotsm a_n})
\end{equation}
which follows from~\eqref{eq:54} and again the end of sect.~\ref{sec:bimon-struct}. We can assume this isomorphism is realized by the choice of a point $e_{a_1,\dots,a_n}\in E_{a_1,\dots,a_n}$ such that $\jmath(e_{a_1,\dots,a_n})=x_{a_1\dotsm a_n}$.

We can write the morphism~\eqref{eq:41} with respect to this choice of local data.  According to the proof of Theorem~\ref{thm:5}, let $a_{1,1},\dots,a_{1,m_1},\dots,a_{n,1},\dots, a_{n,m_n}\in A$.  For $i=1,\dots, n$ define $b_i = a_{i,1}\dotsm a_{i,m_i}$. Consider the points $e_i=e_{a_{i,1},\dots,a_{i,m_i}}\in E_{a_{i,1},\dots,a_{i,m_i}}$ and $e_{b_1,\dots,b_n}\in E_{b_1,\dots,b_n}$. Then $[e_1,\dots,e_n, e_{b_1,\dots,b_n}]$ is a point of the composition $E_n(E_{m_1},\dots, E_{m_n})$ over $\Lambda_{a_{1,1}}\times\dots \times \Lambda_{a_{n,a_n}}$, and $\jmath([e_1,\dots,e_n,e_{b_1\dots, b_n}]) = \jmath(e_{b_1,\dots,b_n}) = b_1\dotsm b_n$. In other words, we have an isomorphism of $(R,\Lambda)$-torsors
\begin{equation*}
  E_n(E_{m_1},\dots, E_{m_n})_{a_{1,1},\dots,a_{n,m_n}}  \lisoto (R, x_{b_1\dotsm b_n}).
\end{equation*}
On the other hand, since $E_{m_1+\dots+m_n}$ covers the multiplication $(a_{1,1},\dots,a_{n,m_n})\to a_{1,1}\dotsm a_{n,m_n}=b_1\dotsm b_n$, we have another isomorphism
\begin{equation*}
  \bigl(E_{m_1+\dots+m_n}\bigr)_{a_{1,1},\dots,a_{n,m_n}} \lisoto (R,x_{b_1\dotsm b_n})
\end{equation*}
determined by a chosen section of $E_{m_1+\dots+m_n}$.  Thus, the morphism~\eqref{eq:41} amounts to an automorphism
\begin{equation}
  \label{eq:56}
  f_{n;m_1,\dots,m_n}(a_{1,1},\dots,a_{n,m_n}) \colon (R,x_{b_1\dotsm b_n}) \lisoto (R,x_{b_1\dotsm b_n}),
\end{equation}
which, using standard facts about $(R,\Lambda)$-torsors, we identify with an element of $M$.

\subsection{Cocycle computations}
\label{sec:cohom-class}

We compute the full class determined by $\stR$ from the multi-extension structure of the presentation $R\to \Lambda\to \stR$ and prove that it satisfies the full set of equations~\eqref{eq:45}.

\subsubsection{Setup}

First, consider the following (not necessarily commutative) diagrams of $(R,\Lambda)$-torsor morphisms
\begin{subequations}
  \label{eq:57}
  \begin{gather}
    \vcenter{%
      \xymatrix{%
        E_{a,c}\wedge E_{b,c} \ar[r] \ar[d] & E_{x_a+x_b,x_c}\ar[r] & E_{a+b,c} \ar[d]\\
        (R,x_{ac})\wedge (R,x_{bc}) \ar@{=}[r]
        & (R,x_{ac}+x_{bc}) \ar[r]^{\sigma_{ac,bc}^{-1}} & (R,x_{(a+b)c})
      }} \\
    \intertext{and}
    \vcenter{%
      \xymatrix{%
        E_{a,b}\wedge E_{a,c} \ar[r] \ar[d] & E_{x_a,x_b+x_c}\ar[r] & E_{a,b+c} \ar[d]\\
        (R,x_{ab})\wedge (R,x_{ac}) \ar@{=}[r]
        & (R,x_{ab}+x_{ac}) \ar[r]^{\sigma_{ab,ac}^{-1}} & (R,x_{a(b+c)})
      }}
  \end{gather}
\end{subequations}
arising from the two partial product laws of $E=E_2$. The vertical arrows are the identification~\eqref{eq:55}. Both diagrams in~\eqref{eq:57} can be construed as defining an automorphism of their respective lower right corners, which can be identified with an element of $M$: let them be $-\alpha_1(a,b;c)$ and $\alpha_2(a;b,c)$ respectively.

On the one hand the top rows can be written as
\begin{equation}
  \label{eq:58}
  e_{a,c}\wedge e_{b,c} \mapsto e_{a+b,c}\, g_1(a,b;c), \qquad
  e_{a,b}\wedge e_{a,c} \mapsto e_{a,b+c}\, g_2(a;b;c),
\end{equation}
by invoking~\eqref{eq:12}.  On the other hand, following the bottom part, we have
\begin{equation*}
  e_{a,c}\wedge e_{b,c} \mapsto \sigma_{ac,bc}^{-1}, \qquad
  e_{a,b}\wedge e_{a,c} \mapsto \sigma_{ab,ac}^{-1}.
\end{equation*}
Comparing the two we get the relations
\begin{equation}
  \label{eq:59}
  -\alpha_1(a,b;c) =g_1(a,b;c) + \sigma_{ac,bc},\qquad
  \alpha_2(a;b,c) =g_2(a;b,c) + \sigma_{ab,ac}.
\end{equation}

\subsubsection{The relations~\eqref{eq:49}}
Consider the diagrams:
\begin{subequations}
  \label{eq:60}
  \begin{gather}
    \label{eq:61}
    \vcenter{\xymatrix@C+1pc{%
        E_{x_{a+b}+x_c,x_d} \ar[r]^{\sigma_{a+b,c}} & E_{a+b+c,d} \ar[dd]\\
        E_{x_a+x_b+x_c,x_d} \ar[u]^{\sigma_{a,b}^{x_c}} \ar[d]_{\sigma_{b,c}} \\
        E_{x_a+x_{b+c},x_d} \ar[r]_{\sigma_{a,b+c}} & E_{a+b+c,d}}}
    \qquad
    \vcenter{\xymatrix@C+1pc{%
        E_{x_a,x_{b+c}+x_d} \ar[r]^{\sigma_{b+c,d}} & E_{a,b+c+d} \ar[dd]\\
        E_{x_a,x_b+x_c+x_d} \ar[u]^{\sigma_{b,c}^{x_d}} \ar[d]_{\sigma_{c,d}} \\
        E_{x_a,x_b+x_{c+d}} \ar[r]_{\sigma_{b,c+d}} & E_{a,b+c+d}}} \\
    \intertext{and}
    \label{eq:62}
    \vcenter{\xymatrix{E_{x_a+x_b,x_c} \ar[r]^{\sigma_{a,b}} &  E_{a+b,c} \\
        E_{x_b+x_a,x_c} \ar[u]^{\eta^1} \ar[r]_{\sigma_{b,a}}  &  E_{a+b,c} \ar[u]}}
    \qquad
    \vcenter{\xymatrix{E_{x_a,x_b+x_c} \ar[r]^{\sigma_{b,c}} &  E_{a,b+c} \\
        E_{x_a,x_c+x_b} \ar[u]^{\eta^2} \ar[r]_{\sigma_{c,b}}  &  E_{a,b+c} \ar[u]}}
  \end{gather}
\end{subequations}
Both diagrams~\eqref{eq:60} are parts of more extended ones, giving rise to relations linking $f_+$ and $g_+$ to the other quantities comprising a $5$-tuple satisfying relations~\eqref{eq:45} as follows.

First observe that by applying Lemma~\ref{lem:8}, the right vertical give the automorphisms corresponding to $f_+(a,b,c)\,d$ (resp.\ $a\,f_+(b,c,d)$) for~\eqref{eq:61}, and $g_+(a,b)\, c$ (resp.\ $a\,g_+(b,c)$) for~\eqref{eq:62}.

Then from~\eqref{eq:57} and~\eqref{eq:61} form the obvious associativity diagrams for the morphisms $E_{a,d}\wedge E_{b,d}\wedge E_{c,d}\to E_{a+b+c,d}$ and $E_{a,b}\wedge E_{a,c}\wedge E_{a,d}\to E_{a,b+c+d}$. Using the cocycle decomposition~\eqref{eq:15} (and Lemma~\ref{lem:8} for the right vertical arrows of~\eqref{eq:60}) we arrive at:
\begin{equation}
  \label{eq:63}
  \begin{aligned}
    g_1(a+b,c;d) + g_1(a,b;d)^{x_{cd}} & = g_1(a,b+c;d) + g_1(b,c;d) +f_+(a,b,c)\,d,\\
    g_2(a;b+c,d) + g_2(a;b,c)^{x_{ad}} & = g_2(a;b,c+d) + g_2(a;c,d) +a\,f_+(b,c,d),
  \end{aligned}
\end{equation}
(The difference with equations~\eqref{eq:16} and~\eqref{eq:18} arises because the top rows of~\eqref{eq:57}, contrary to the actual partial multiplication morphisms, lack associativity.) Using~\eqref{eq:59},~\eqref{eq:63}, and the first of~\eqref{eq:53}, we obtain the first two of the cocycle relations~\eqref{eq:49}.

The commutativity diagrams obtained from~\eqref{eq:62} and~\eqref{eq:57} can be analyzed in an analogous manner, utilizing the second of~\eqref{eq:53}. However, we do not directly obtain the other two equations in the block~\eqref{eq:49}; instead, we arrive at their ``flipped'' counterpart~\eqref{eq:52}.

\subsubsection{The interchange relation~\eqref{eq:48}}

We check the compatibility law~\eqref{eq:5} after pulling back to $\Lambda_a\times\Lambda_b \times \Lambda_c\times \Lambda_d$.  Use the diagram
\begin{equation}
  \label{eq:64}
  \vcenter{\xymatrix@C-3pc@R-1pc{%
    {(E_{a,c}\wedge E_{b,c})\wedge (E_{a,d}\wedge E_{b,d})} \ar[rrrr] \ar[dd] \ar[dr] & & & &
    {(E_{a,c}\wedge E_{a,d})\wedge (E_{b,c}\wedge E_{b,d})} \ar[dl] \ar[dd] \\
    & E_{x_a+x_b,x_c}\wedge E_{x_a+x_b,x_d} \ar[dl]\ar[dr] & *+[Fo:magenta]{\text{I}} &
    E_{x_a,x_c+x_d}\wedge E_{x_b,x_c+x_d} \ar[dl]\ar[dr]\\
    E_{a+b,c}\wedge E_{a+b,d} \ar[dr] \ar@<-1ex>@/_/[dddrr] & *+[Fo:magenta]{\text{II}} &
    E_{x_a+x_b,x_c+x_d} \ar[dl]\ar[dr] & *+[Fo:magenta]{\text{II}} &
    E_{a,c+d}\wedge E_{b,c+d} \ar[dl] \ar@<+1ex>@/^/[dddll]\\
    & E_{x_{a+b},x_c+x_d} \ar[ddr] & *+[Fo:magenta]{\text{III}} & E_{x_a+x_b,x_{c+d}} \ar[ddl]\\ \\
    & & E_{a+b,c+d}
  }}
\end{equation}
where the triangles commute by definition of the morphisms determined by the top rows of~\eqref{eq:57}; the pentagon (marked I) commutes by the compatibility law; the squares II and the square III are obviously commutative by functoriality (an explicit calculation uses Proposition~\ref{prop:1} and the equations~\eqref{eq:24}).  Thus the diagram commutes, and we can use the cocycle equation~\eqref{eq:20} directly, written additively, which gives:
\begin{equation*}
  g_2(a+b;c,d) + g_1(a,b;c)^{x_{(a+b)d}} + g_1(a,b;d) =
  g_1(a,b;c+d) + g_2(a;c,d)^{x_{b(c+d)}} + g_2(b;c,d) -\braces{x_{bc}}{x_{ad}}^{x_{bd}}.
\end{equation*}
Inserting equations~\eqref{eq:59} and using~\eqref{eq:53} finally gives the third block~\eqref{eq:48} of the cocycle relation~\eqref{eq:45}.

\subsubsection{The relations~\eqref{eq:47} and~\eqref{eq:46}}
\label{sec:step-four}

We specialize the expression~\eqref{eq:56} for the morphism~\eqref{eq:41} to the tuples $(2;2,1)$ and $(2;1,2)$.  The morphism $\mu$ given by~\eqref{eq:73} corresponds to the element
\begin{equation*}
  f(a,b,c) = -f_{(2;1,2)}(a,b,c) +f_{(2;2,1)}(a,b,c)\in M.
\end{equation*}
More precisely, using the isomorphism with $(R,x_{abc})$ as a reference trivialization, we can identify $f(a,b,c)$ with an automorphism of $E_3$ pulled back to $\Lambda_a\times \Lambda_b\times \Lambda_c$, hence with a section of $M$ over it. Explicitly, locally on $\Lambda_a\times \Lambda_b\times \Lambda_c$, we have the morphism of $(R,\Lambda)$-torsors
\begin{equation}
  \label{eq:65}
  \mu_{a,b,c}\colon E_2(E_2,I)_{a,b,c} \lto E_2(I,E_2)_{a,b,c},
\end{equation}
which, using the composition of multi-extensions given in section~\ref{sec:comp-n-bfls}, we can write as
\begin{equation}
  \label{eq:66}
  \mu ( [e_{a,b},0_c,e_{ab,c}] ) =  [0_a,e_{b,c},e_{a,bc}] -f(a,b,c),
\end{equation}
where $[e_{a,b},e_{ab,c}]\in E_2(E_2,I)_{a,b,c}$ and $[e_{b,c},e_{a,bc}]\in E_2(I,E_2)_{a,b,c}$.
(Recall the additivity in the notation; we extend it to the action of $R$ on torsors. The brackets denote the class under the action of $R$.  The identity map is represented by the trivial butterfly, and here $0_a$ represents the unit section of the underlying trivial torsor $\Lambda\times R$ at the point $x_a$, say.) We have $\jmath ([e_{a,b},e_{ab,c}]) =\jmath([e_{b,c},e_{a,bc}]) = x_{abc}$, so we can see directly that $f(a,b,c)\in M$.

As an isomorphism of tri-extensions, $\mu$ is a homomorphism for each of the three partial laws. Writing these conditions for~\eqref{eq:65}, we must compute the maps along the following diagram
\begin{equation*}
  \xymatrix{%
    E_2(E_2,I)_{a,c,d} \wedge^R E_2(E_2,I)_{b,c,d} \ar[r]^(.6){\times_1} \ar[d]_{\mu\wedge\mu}&
    *+!<-15pt,0pt>{E_2(E_2,I)_{x_a+x_b,x_c,x_d}} \ar[r]^>>>>{\sigma_{a,b}} \ar[d]^\mu &
    *+!<-5pt,0pt>{E_2(E_2,I)_{a+b,c,d}} \ar[d]^\mu\\
    E_2(I,E_2)_{a,c,d} \wedge^R E_2(E_2,I)_{b,c,d} \ar[r]_(.6){\times_1}&
    *+!<-15pt,0pt>{E_2(I,E_2)_{x_a+x_b,x_c,x_d}} \ar[r]_>>>>{\sigma_{a,b}} &
    *+!<-5pt,0pt>{E_2(I,E_2)_{a+b,c,d}}
  }
\end{equation*}
and the other two expressing the compatibility (or lack thereof) with the second and third partial laws.  Because there are some new elements compared to the calculations which have appeared thus far, we sketch some of the details.  In particular, to compute the two horizontal maps in the second square above we need the explicit form of the trivializations $s_1$ for both tri-extensions. Similarly for the other two diagrams. According to the last part of section~\ref{sec:comp-n-bfls}, the form of $s^{2,1}_1$ for $E_2(E_2,I)$ is:
\begin{equation*}
  s^{2,1}_1(r,x_c,x_d) = [s_1(r,x_c),0_{x_d},1_{x_d}],\quad r=\sigma_{a,b},
\end{equation*}
where $0_{x_d}$ denotes the unit section of $\Lambda\times R$ computed at $x_d\in\Lambda$, and $1_{x_d}$ denotes the unit section of $E_2\rvert_{\{0\}\times \Lambda}\iso \Lambda\times R$ similarly computed at $x_d$. Thus, the map denoted by $\sigma_{a,b}$ in the top row is given by
\begin{equation*}
  e\longmapsto e\times_1 s_1(\sigma_{a,b},x_c,x_d).
\end{equation*}
Similarly, for the analogous map in the bottom row we must use the expression
\begin{equation*}
  s^{1,2}_1(r,x_c,x_d) = [0_{\del r},e_{c,d},s_1(r,x_c)],\quad r=\sigma_{a,b},
\end{equation*}
with a similar interpretation of the notation.

Thus, the upper path to the lower right corner gives
\begin{align*}
  \mu\bigl(
  [e_{a,c},e_{ac,d}] \times_1 [e_{b,c},e_{bc,d}] \times_1
  [s_1(\sigma_{a,b},x_c),0_{d},1_{x_d}] \bigr)
  &= \mu [e_{a,c}\times_1 e_{b,c}\times_1 s_1(\sigma_{a,b},x_c),0_d,e_{ac,d}\times_1 e_{bc,d}] \\
  &= \mu [e_{a+b,c} + g_1(a,b;c), 0_{d},e_{ac,d}\times_1 e_{bc,d}] \\
  &= \mu [e_{a+b,c}, 0_{d},e_{ac,d}\times_1 e_{bc,d}\times_1s_1(-g_1(a,b;c),x_d)] \\
  &= \mu [e_{a+b,c}, 0_{d},e_{ac,d}\times_1 e_{bc,d}\times_1s_1(\sigma_{ac,bc}+\alpha_1(a,b;c),x_d)] \\
  &= \mu [e_{a+b,c}, 0_{d},e_{ac+bc,d}] + g_1(ac,bc,d) -\alpha_1(a,b;c)d \\
  &=  [0_{a+b},e_{c,d},e_{a+b,cd}] + g_1(ac,bc;d) -f(a+b,c,d) -\alpha_1(a,b;c)d
\end{align*}
where in the next to last we have used~\eqref{eq:24}, the first of~\eqref{eq:58}, and Lemma~\ref{lem:8}; to obtain the last we have used~\eqref{eq:66}.

On the other hand, the lower path to the lower right corner gives, with similar calculations

\begin{multline*}
  \mu[e_{a,c},e_{ac,d}] \times_1 \mu[e_{b,c},e_{bc,d}] \times_1
  [0_{\del \sigma_{a,b}},e_{c,d},s_1(\sigma_{a,b},x_{cd})]
   = [0_{a+b}, e_{c,d}, e_{a+b,cd}] + g_1(a,b;cd) -f(a,c,d) -f(b,c,d).
\end{multline*}
Comparing the two expressions and using~\eqref{eq:59} yield the first equation of block~\eqref{eq:47}. The others are obtained via identical means.

The last equation~\eqref{eq:46} becomes now the easiest to obtain, as a condition satisfied by the automorphism $\mu$ upon considering the five possible pullbacks to the quadri-extension $E_4$, as per the pentagon diagram~\eqref{eq:74}. We leave the details to the reader.

\subsection{The class of a ring-like stack}
\label{sec:class-ring-like}

Assembling the steps in sect.~\ref{sec:cohom-class}, we have the following
\begin{proposition}
  \label{prop:16}
  Let $\stR$ be a (weakly) ring-like stack with $\pi_0(\stR)=A$ and $\pi_1(\stR)=M$.  A decomposition of $\stR$ determines a twisted cocycle $\xi=\xi_\stR$ satisfying~\eqref{eq:51} with the same $A$-bimodule structure. An equivalence $\stR\to \stR'$ gives rise to two twisted cocycles $\xi_\stR$ and $\xi_{\stR'}$ differing by a coboundary, hence equivalence classes are in one-to-one correspondence with elements of $\widetilde H^3_2(A,M)$.  
\end{proposition}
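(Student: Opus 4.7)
The first assertion is essentially the content of sect.~\ref{sec:cohom-class}, whose computations I would organize as follows. The pair $(f_+,g_+)$ is read off from the decomposition of the interior monoidal structure of $\stR$ via the relations~\eqref{eq:53}, and represents the class of the underlying braided gerbe $\stR\to A$. The pair $(\alpha_1,\alpha_2)$ is defined by~\eqref{eq:59} from the partial multiplication cocycles $g_1,g_2$ of the biextension $E_2=t(2)$, and $f$ is the automorphism~\eqref{eq:66} arising from the pseudo-monoid associator $\mu$. The twisted cocycle identity $D\xi=0$ then splits into the blocks already verified: the associativity of the two partial laws applied to the diagrams~\eqref{eq:60}, via~\eqref{eq:63}, yields the first two equations of~\eqref{eq:49}; the braided analogs yield the twisted equations~\eqref{eq:52}; the interchange diagram~\eqref{eq:64} yields~\eqref{eq:48}; the multiplicativity of the associator~\eqref{eq:65} in its three slots yields~\eqref{eq:47}; and the generalized pentagon~\eqref{eq:74} for $\mu$ yields~\eqref{eq:46}.

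For the coboundary and equivalence part I would argue as follows. Fix $\stR$ and compare two decompositions. They differ by the data of a $0$-cochain $u\colon A\to R$ intertwining the two section choices $x_a,x'_a$ of $q\colon\Lambda\to A$, together with a family of corrections $r_{a_1,\dots,a_n}\in R$ relating the two chosen base points of the multi-extensions $E_n$. These assemble, via the same identifications underlying the constructions of $\sigma,g_1,g_2,f$, into a $2$-cochain $\nu$ on $\Bar B_{2,\bullet}(A)$, and bookkeeping of the change induced in each of the five components of $\xi$ (following the same pattern as~\eqref{eq:63}, \eqref{eq:64}, and~\eqref{eq:66}) shows $\xi'-\xi=\delta\nu$. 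The twisting $\beta_\xi$ in~\eqref{eq:51} depends on $g_+$ only through its cohomology class by Remark~\ref{rem:14}, so the resulting class in $\widetilde H^3_2(A,M)$ is well defined. An equivalence $F\colon \stR\lisoto\stR'$ induces $\pi_0(F)=\id_A$ and $\pi_1(F)=\id_M$, and transports any decomposition of $\stR$ to one of $\stR'$ whose $5$-tuple is identified with $\xi_\stR$; combined with independence of decomposition, $[\xi_{\stR'}]=[\xi_\stR]$.

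The main obstacle I expect is the exhaustive bookkeeping for the coboundary calculation: although each individual correction is a routine application of the identities~\eqref{eq:2}, \eqref{eq:24}, and Lemma~\ref{lem:8}, the interaction between shifting $x_a\mapsto x_a+\del u_a$ and shifting the base points $e_{a_1,\dots,a_n}$ generates numerous cross terms that must be shown to telescope into $\delta\nu$ modulo the twisting $\beta_\xi$. Once injectivity of the induced map on equivalence classes is established in this way, surjectivity is obtained by the inverse construction: from a twisted cocycle $\xi$ one first reconstructs from $(f_+,g_+)$ a braided crossed module $R\to\Lambda$ as in~\cite{MR1250465}, then uses $\alpha_1,\alpha_2$ and $\sigma$ to recover the partial multiplication cocycles $g_1,g_2$ via~\eqref{eq:59} and define the biextension $E_2$, and iterates the juxtaposition product of sect.~\ref{sec:comp-n-bfls} to define all the $E_n$; the component $f$ provides the pseudo-monoid associator, and the remaining equations of~\eqref{eq:45} (twisted as in~\eqref{eq:51}) guarantee that these data assemble into a pseudo-monoid in $\M\xmod$, whence Theorem~\ref{thm:6} produces a weakly ring-like stack realizing $[\xi]$.
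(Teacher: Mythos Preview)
Your first paragraph is exactly the paper's approach: the first assertion is simply the content of sect.~\ref{sec:cohom-class}, and your block-by-block organization matches the paper's calculations.

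For the coboundary/equivalence part you take a genuinely different route. The paper argues structurally: an equivalence $\stR\to\stR'$ is a morphism of pseudo-monoids (Definition~\ref{def:12}), and the pseudo-monoid structure is preserved across the pseudo-functor $\M a\colon\M\xmod\to\M\grst$ (Definition~\ref{def:13}, Proposition~\ref{prop:13}); the cocycles must therefore be cohomologous. Your approach instead shifts the decomposition data $(x_a, e_{a_1,\dots,a_n})$ directly and tracks the change in each component of $\xi$, assembling the corrections into a $2$-cochain $\nu$. Both are valid. Your method has the virtue of exhibiting $\nu$ concretely and staying close to the classical cocycle manipulations already in play; the paper's argument is shorter but opaque about what the coboundary actually is. The bookkeeping obstacle you anticipate is real but routine, and Remark~\ref{rem:14} handles the twisting exactly as you say. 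One small caution: the claim that an equivalence $F$ induces $\pi_0(F)=\id_A$ and $\pi_1(F)=\id_M$ is an implicit convention (we classify stacks with \emph{fixed} identifications $\pi_i(\stR)\iso A,M$), not a consequence of $F$ being an equivalence; make that explicit.

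Finally, the surjectivity argument in your last paragraph is the content of Proposition~\ref{prop:17}, which the paper treats separately. Its inclusion here is not wrong---the bijectivity clause in the statement does need it---but the paper's proof of Proposition~\ref{prop:16} defers that direction.
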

\begin{proof}
  The first statement is a consequence of the preceding calculations. The statement about the equivalence follows from the definition of morphism of pseudo-monoid  and the fact that the structure is preserved across pseudo-functors between multi-bicategories (cf.~\ref{def:12} and~\ref{def:13}), in particular between~\ref{prop:13} 

  Since the only difference between all these complexes occurs in degrees 3 and 4, the statements carry over to the hypercohomology situation.
\end{proof}
Let us use the notation $[\stR]$ for the class determined by $\stR$. Thus of $[\stR]=[\xi_\stR]$.  As we have seen, a consequence of Proposition~\ref{prop:15} and Lemma~\ref{lem:9} is that in the unital case the classes in $\widetilde H^3_2(A,M)$ lift to $\HML^3(A,M)$. 
\begin{corollary}
  \label{cor:2}
  Let $\stR$ be as above, with in addition a unit object for the exterior monoidal structure. Then $A$ is unital and there exists an equivalence $\stR\isoto \stR'$ where the underlying categorical group structure of $\stR'$ is braided symmetric. Hence $[\stR] = [\stR']\in \HML^3(A,M)$.\qed
\end{corollary}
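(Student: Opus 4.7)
The plan is to combine Proposition~\ref{prop:16}, Lemma~\ref{lem:9}, and Proposition~\ref{prop:15} with a direct check of unitality, and to unpack the consequence of the untwisted cocycle equation on the decomposition data of $\stR$.

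First, the unitality of $A$ is immediate: since $\varpi\colon \stR\to A$ is a morphism of pseudo-monoids (cf.\ the discussion following~\eqref{eq:43}), the class $[I]\in A$ of the exterior unit object $I\in \stR$ is a two-sided multiplicative identity in $A$.

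For the symmetric braiding claim, I would apply Proposition~\ref{prop:16} to produce, from a chosen decomposition of $\stR$, a twisted cocycle $\xi_\stR=(f,\alpha_1,\alpha_2,f_+,g_+)$ representing $[\stR]\in \widetilde{H}^3_2(A,M)$. Since $A$ is unital, Lemma~\ref{lem:9} promotes the twisted equation~\eqref{eq:51} to the full untwisted system~\eqref{eq:45}, in particular to the last relation of~\eqref{eq:50}, namely $g_+(a,b)+g_+(b,a)=0$. Summing the second identity in~\eqref{eq:53} with its swap $(a,b)\leftrightarrow (b,a)$ yields
\begin{equation*}
  \braces{x_a}{x_b}+\braces{x_b}{x_a}=-\bigl(g_+(a,b)+g_+(b,a)\bigr)=0
\end{equation*}
on the chosen representatives $\{x_a\}_{a\in A}$. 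Using the $R$-equivariance and bilinearity encoded in~\eqref{eq:2}, together with the surjectivity of $q\colon \Lambda\twoheadrightarrow A$, this symmetry extends from representatives to the full braiding on $\Lambda\times\Lambda$; hence the presenting crossed module $R\to \Lambda$ is symmetric braided in the sense of sect.~\ref{sec:braid-cross-mod}.

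One may then take $\stR'$ to be the associated stack of this now-symmetric braided presentation (equivalently, the standard categorical ring reconstructed from the untwisted cocycle via the inverse of the decomposition procedure of sect.~\ref{sec:decomposition-str}--\ref{sec:cohom-class}); the bijective correspondence in Proposition~\ref{prop:16} yields the equivalence $\stR\isoto \stR'$, and $[\stR]=[\stR']\in \widetilde{H}^3_2(A,M)\iso \HML^3(A,M)$ by Lemma~\ref{lem:9}. The main obstacle I anticipate is the extension of the symmetry of $\braces{-}{-}$ from the representatives to all of $\Lambda\times\Lambda$: it amounts to matching the coboundary freedom in $\xi_\stR$ (re-choice of decomposition) with gauge transformations of the presentation, and verifying compatibility with the crossed module axioms~\eqref{eq:2}. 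Everything else is a formal consequence of the classification already established in Part~\ref{part:part3}.
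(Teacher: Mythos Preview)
Your proposal is correct and follows the route the paper intends: the corollary is marked \qed\ because it is meant as a formal consequence of Proposition~\ref{prop:16} and Lemma~\ref{lem:9}, and you have unpacked exactly this. The anticipated obstacle—extending the identity $\braces{x_a}{x_b}+\braces{x_b}{x_a}=0$ from the chosen representatives to all of $\Lambda\times\Lambda$—is in fact routine. The identities~\eqref{eq:2} give $\braces{\del r}{y}\braces{y}{\del r}=e$ for all $r\in R$ and $y\in\Lambda$; since the $\Lambda$-action on $R$ is by group automorphisms, a two-step calculation (first replace $y$ by $x_b\,\del s$, then $x$ by $x_a\,\del r$) shows the symmetry propagates along translates by $\Im\del$, and locality of the section $a\mapsto x_a$ finishes by descent. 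Note that your argument actually shows the given presentation—hence $\stR$ itself—is braided symmetric, so one may simply take $\stR'=\stR$; this is consistent with the phrasing in Theorem~\ref{thm:7}.
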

We briefly address the question of recovering $\stR$ (up to equivalence) from $[\stR]$. Consider a class $[\xi]\in \widetilde H^3_2(A,M)$.  A portion of $\xi$ will represent a class in $H^4(K(A,2),M)$, possibly lifting into the stable range.  Let $\xi_+$ denote this projection.  Standard techniques \cite[\S 7.6--7]{MR95m:18006} allow to reconstruct a braided (possibly symmetric) stack $\stR=\stR_\xi$ from $\xi_+$, equipped with $\varpi\colon \stR\to A$ fitting into the standard short exact sequence $\stM\to \stR\to A$, with $\stM=\sttors (M)$.  Briefly, $\xi_+$ determines a 2-gerbe over a simplicial model of $K(A,2)$ or $K(A,3)$, suitably re-scaled so that the relevant class appears in degree three.  $\stR$ is obtained by gluing trivial gerbes with band $M\to 0$ over $A$ along $\xi_+$. (We must supplement the cocycles in loc.\ cit.\ with those parts pertaining to the braiding structure.) The class of $\stR$ is that of $\xi_+$, and therefore it is equipped with a decomposition~\eqref{eq:53}.
\begin{proposition}
  \label{prop:17}
  Let $\stR$, $\xi$, and $\xi_+$ be as above. Let $R\to \Lambda\to \stR$ be a presentation by a braided crossed module.  Then $R\to \Lambda$ carries a bi-extension whose class is $\xi$.
\end{proposition}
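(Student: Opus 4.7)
The plan is to reverse-engineer the cocycle computation of sect.~\ref{sec:cohom-class}. By hypothesis, the standard reconstruction of $\stR$ from $\xi_+$ fixes a presentation $R\to\Lambda\to\stR$ together with a section $a\mapsto x_a\in \Lambda_a$ whose associated $\sigma$ satisfies~\eqref{eq:53} with the $(f_+,g_+)$ portion of $\xi$. The task is therefore to build, from the remaining components $(f,\alpha_1,\alpha_2)$ of $\xi$, a pseudo-monoid structure on $R_\bullet$ in $\M\xmod$ whose class, computed as in Proposition~\ref{prop:16}, returns $\xi$.

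First, I would define the binary multi-extension $E$ by specifying $x(a,b):=x_{ab}$ for its equivariant section into $\Lambda$, and declaring the partial product cocycles by inverting~\eqref{eq:59}:
\begin{equation*}
g_1(a,b;c) := -\alpha_1(a,b;c) - \sigma_{ac,bc}, \qquad g_2(a;b,c) := \alpha_2(a;b,c) - \sigma_{ab,ac}.
\end{equation*}
The associativity cocycle conditions~\eqref{eq:16} and~\eqref{eq:18} then follow by rearranging the first two equations of the block~\eqref{eq:47} using the first identity in~\eqref{eq:53}, while~\eqref{eq:17} and~\eqref{eq:19} hold automatically because $\alpha_i$ is $M$-valued and $\sigma$ precisely measures the defect of $x_\bullet$ to be additive. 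The interchange relation~\eqref{eq:20} for $E$ is then~\eqref{eq:48} rewritten in terms of $(g_1,g_2)$ via the second identity of~\eqref{eq:53}, reading diagram~\eqref{eq:64} in reverse. This yields a biextension $E\in\grdbiext(\Lambda,\Lambda;R_\bullet)$; the \bfl trivializations $s_1,s_2$ of the $\del$-pullbacks are constructed by solving the equations~\eqref{eq:29} and~\eqref{eq:30} explicitly in terms of $\sigma$ and the $g_i$.

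Next, to obtain a full pseudo-monoid $(t,\theta)\colon\NN\to\M R_\bullet$ — not merely the biextension — I would build the higher $n$-\bfls $E_n$ by iterating $E$ via the juxtaposition product of sect.~\ref{sec:comp-n-bfls}, with structural section $(x_{a_1},\dots,x_{a_n})\mapsto x_{a_1\dotsm a_n}$, and define the coherence morphisms $\theta_{n;m_1,\dots,m_n}$ from the $f$ component of $\xi$. In particular, the associator $\mu$ is pinned down by~\eqref{eq:66} with automorphism value $f(a,b,c)\in M$. The generalized pentagon~\eqref{eq:74}, after pullback to $\Lambda_a\times\Lambda_b\times\Lambda_c\times\Lambda_d$, reduces exactly to the Hochschild-type relation~\eqref{eq:46}, which holds by hypothesis.

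The main obstacle is the bookkeeping of higher coherences: one must check that once $\mu$ is fixed by $f$, all of the $\theta_{n;m_1,\dots,m_n}$ are consistently determined and the full diagram~\eqref{eq:42} commutes. This follows because Theorem~\ref{thm:6} transports the problem to the side of multilinear functors in $\M\grst$, where the biased/unbiased coherence theorem of \cite{MR2094071} forces the higher structure from the binary associator. Having built the pseudo-monoid, one then simply observes that running the cocycle computation of sect.~\ref{sec:cohom-class} on this structure, against the originally chosen decomposition $\{x_a\}$, recovers $\xi$ in all five components by construction; hence the class of $E$, in the sense of Proposition~\ref{prop:16}, is $\xi$.
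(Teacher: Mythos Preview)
Your approach is essentially the same as the paper's: reverse the cocycle calculation of sect.~\ref{sec:cohom-class} by defining $E_{a,b}=\Lambda_a\times\Lambda_b\times R$ with structural section $x_{ab}$, recover $g_1,g_2$ from~\eqref{eq:59}, verify the interchange via~\eqref{eq:48}, and impose $\mu$ through~\eqref{eq:66} with pentagon given by~\eqref{eq:46}.

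One slip: the associativity of the partial laws does not come from the block~\eqref{eq:47} but from the first two equations of~\eqref{eq:49}. Those are the relations that, combined with~\eqref{eq:59} and the first of~\eqref{eq:53}, yield~\eqref{eq:63}, which is precisely the (twisted) associativity constraint for $g_1,g_2$ with the $f_+$ correction. The block~\eqref{eq:47} concerns the failure of the Hochschild cochain $f$ to be multilinear and enters later, when checking that $\mu$ is a homomorphism for each partial law. Also note that what one actually verifies is~\eqref{eq:63}, not the bare~\eqref{eq:16}--\eqref{eq:18}; the extra $f_+$ term is absorbed by the $\sigma$-shifts built into the top rows of~\eqref{eq:57}.

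Your closing paragraph on higher coherences via the biased/unbiased equivalence goes beyond what the paper's sketch does (it stops at the biextension plus $\mu$), but the idea is sound and makes the pseudo-monoid claim more honest.
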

\begin{proof}[Proof (Sketch)]
  The main point is to reverse the computation of the cohomology class, using the equations for the cocycle $\xi$ (either~\eqref{eq:45} or their twisted form~\eqref{eq:51}) to obtain a well-defined biextension $E\to \Lambda\times \Lambda$ providing $(R,\Lambda)$ with a pseudo-monoid structure.

  Starting with the implementation of~\eqref{eq:54}, for all $a,b\in A$ define $E_{a,b}\to \Lambda_a\times \Lambda_b$ as $E_{a,b}=\Lambda_a\times \Lambda_b\times R$ equipped with $\jmath\colon E_{a,b}\to \Lambda$ given by $0_R\mapsto x_{ab}$. From~\eqref{eq:59}, given $\xi$ and the decomposition~\eqref{eq:53}, we compute the nonabelian cocycles $g_1$ and $g_2$ we can use to define partial laws
  \begin{equation*}
    \times_1\colon E_{a,c}\wedge^R E_{b,c}\lto E_{a+b,c}\,,\qquad
    \times_2\colon E_{a,b}\wedge^R E_{a,c}\lto E_{a,b+c}\,,
  \end{equation*}
  which are well defined by equations~\eqref{eq:63} and~\eqref{eq:64}.

  Note that, unless $a=0$, the pullback of $\del\colon R\to \Lambda$ along $\Lambda_a\to \Lambda$ is trivial, whereas it is isomorphic to $\Im\del$ for $a=0$. This ensures (again from equation~\eqref{eq:54} and choosing a normalized section $x$) the triviality conditions required by Definition~\ref{def:3} to have a full-fledged \bfl are satisfied.

  Finally, we use~\eqref{eq:65} and~\eqref{eq:66} to define $\mu\colon E_2(E_2,I)\to E_2(I,E_2)$; the identity
  \begin{equation*}
    g_1(ac,bc;d) -f(a+b,c,d) -\alpha_1(a,b;c)d = g_1(a,b;cd) -f(a,c,d) -f(b,c,d),
  \end{equation*}
 and its companions found above, together with~\eqref{eq:46} and~\eqref{eq:47}, ensure $E$ satisfies the required pentagonal structure.
\end{proof}

\appendix

\part{}
\section{Multi-variable compositions}
\label{sec:formalism}

In this technical addendum we give a brief treatment of multivariable functor calculus in a bicategory. Our approach is descriptive and explicit, and it is aimed at a definition of pseudo-monoid suitable for the applications in the text to multi-additive functors and multi-linear \bfls (cf.\ sects.~\ref{sec:auxiliary} and following). 

We resort to multi categorical-based ideas, in fact we borrow some of Kelly's clubs formalism \parencite[see][]{MR0340371,MR0364393}, which is convenient in the present context.\footnote{See also \cite{MR1337494} for further applications to (symmetric) monoidal categories.} We include permutations, even though this is slightly more general than needed in the main part of the text. Permutations can be included at nearly no additional cost, covering the symmetric monoidal case, which is what the formalism was originally designed to do.  As a result, the formalism can still be used to symmetrize the (external) monoidal structures described in the text. Unlike \cite{MR0340371}, we need our objects to inhabit a bicategory, as opposed to a 2-category, due to the fact that crossed modules equipped with \bfls as morphisms form a genuine bicategory equivalent to the 2-category of group-like stacks.

We define a \strong{bi-multicategory} $\bC$ the structure defined by the following data:\footnote{Without the extra structure given by permutations, the name ``bi-multicategory'' appears in \cite{pisani2014}, where it denotes the bicategory-analog of a multicategory: a $\Cat$-enriched multicategory with weakly associative composition. For enrichment over simplicial sets, see also \cite{2011arXiv1111.4146R}, which also incorporates permutations.}
\begin{itemize}
\item A class of objects $x,y,\dots$
\item For each tuple $(y_1,\dots,y_n;x)$ of objects, a groupoid of arrows $\grdHom_{\bC}(y_1,\dots,y_n;x)$. The cells are denoted
  \begin{equation*}
    \xymatrix@1{ **[l](y_1,\dots,y_n) \rtwocell^f_g{\;\alpha} & x}.
  \end{equation*}
\item For each object $x$, a functor $\imath_x\colon \mathbf{1}\to \grdHom_{\bC}(x;x)$, where $\mathbf{1}$ is the singleton category. The resulting distinguished object is the identity arrow $\id_x\colon (x) \to x$. 
\item Compositions functors
  \begin{gather*}
    \grdHom_{\bC}(y_1, \dots, y_n; x) \times \grdHom_{\bC}(z_{1,1},\dots,z_{1,m_1};y_1)\times \dots
    \times \grdHom_{\bC}(z_{n,1}\dots,z_{n,m_n};y_n) \lto \grdHom_{\bC}(z_{1,1},\dots,z_{n,m_n};x) \\
    (f; g_1,\dots,g_n) \lto f(g_1,\dots,g_n).
  \end{gather*}
\item Associativity data for the composition and the identities as in a bicategory.
\item For each $n$ and for each tuple $(y_1,\dots,y_n;x)$, an action by $\Sigma_n$, that is, a functor
  \begin{equation*}
    \xi^*\colon \grdHom_{\bC}(y_{\xi(1)},\dots,y_{\xi(n)};x) \lto \grdHom_{\bC} (y_1,\dots,y_n;x)
  \end{equation*}
  such that $(\xi\eta)^* \iso \eta^*\xi^*$ and the composition functors are equivariant for this action. Moreover, these isomorphisms are subject to appropriate coherence conditions.
\end{itemize}
The definition of pseudo- (or lax-)functor $F\colon \bC\to \bB$ between multi-bicategories is \emph{mutatis mutandis} the same as for bicategories.  For the symmetric structure, we add the condition that the functors
\begin{equation*}
  F_{y_1,\dots,y_n;x}\colon \grdHom_{\bC}(y_1,\dots,y_n;x)\to \grdHom_{\bB}(F(y_1),\dots,F(y_n);F(x)) 
\end{equation*}
preserve the $\Sigma_n$-action, for all $n$.

The last item in the data list intuitively affords for cells of the form
\begin{equation}
  \label{eq:67}
  \vcenter{%
    \xymatrix@R-1.5pc{%
      **[l](y_1,\dots,y_n) \ar[dd] \ar@/^/[dr] \ddrtwocell<\omit>{} \\
      & x\\
      **[l](y_{\xi(1)},\dots y_{\xi(n)}) \ar@/_/[ur] & {}
    }
  }
\end{equation}
Dropping all $\grdHom_{\bC}$-groupoids except those of the form $\grdHom_{\bC}(y;x)$, i.e.\ those of valence (or arity) one, we obtain an ordinary bicategory, the \emph{underlying bicategory} of $\bC$. Also, a \strong{2-multicategory} is a bi-multicategory in which all the associativity and identity data are strict. If all the groupoids $\grdHom_{\bC}(-;-)$ are discrete we obtain an ordinary multicategory.

The following is the analog of the ``generalized functor category'' in \textcites[\S 2]{MR0340371}. 

\begin{definition}
  \label{def:10}
  Let $x,y$ be two objects of $\bC$. Then $\lbrace y,x\rbrace$ denotes the category whose objects are pairs $(n,f)$, where $n$ is a natural number and $f$ is an object of $\grdHom_\bC(y,\dots,y;x)$ ($y$ is repeated $n$ times); morphisms $(n,f)\to (m,g)$ only exist if $n=m$ and consist of a permutation $\xi\in\Sigma_n$ and a morphism $\alpha\colon f \to \xi^*(g)$. The composition of morphisms is dictated by the composition in $\bC$. Explicitly, if $(\eta,\beta)\colon (n,g)\to (n,h)$, then $(\eta,\beta)\circ (\xi,\alpha) = (\eta\xi,\xi^*(\beta)\circ\alpha)$ modulo the (unnamed) coherence morphism $\xi^*\eta^* \iso (\eta\xi)^*$.
\end{definition}
\begin{remark}
  \label{rem:18}
  Let $\SS$ be the skeletal category of finite sets with permutations as morphisms. ($\SS$ can be obtained as the core, i.e.\ the largest subgroupoid, of the Segal category $\Gamma$ of finite sets.) Thus  an object of $\SS$ can be identified with a natural number. For any two objects $x,y$ of $\bC$ there results an obvious projection functor $p\colon \lbrace y,x\rbrace\to \SS$. In fact, $\lbrace y,x\rbrace$ can be obtained as the Grothendieck construction applied to the functor $F\colon \SS\to \Cat$ defined by $F(n) = \grdHom_{\bC}(y,\dots,y;x)$ and $F(\xi)=\xi^*$.
\end{remark}
For the following definition is the analog of the operations defined in \textcites[\S 2.1, \S 2.2]{MR0340371}. 
\begin{definition}
  \label{def:14}
  For all objects $x,y,z$ of $\bC$:
  \begin{enumerate}
  \item
    $\lbrace y,x\rbrace \wr \lbrace z,y\rbrace$ is the category whose objects are lists $(f;g_1,\dots,g_n)$, where $n=p(f)$; morphisms $(f;g_1,\dots,g_n)\to (f';g'_1,\dots,g'_n)$ (there are no morphisms if $p(f)\neq p(f')$) consist of a morphism $\alpha\colon f\to \xi^*(f')$ and for $1\leq i \leq n$, morphisms $\beta_i \colon g_{\xi(i)}\to \eta_{\xi(i)}^*(g'_i)$, where $\xi=p(f)\in \Sigma_n$, $p(g_i)$ $\eta_i=p(g_i)\in \Sigma_{n_i}$. Such a morphism is denoted $(\alpha; \beta_1,\dots,\beta_n)$.
  \item
    There is functor, called the \strong{assembly map}
    \begin{equation*}
      \medcirc \colon
      \lbrace y,x\rbrace \wr \lbrace z,y\rbrace\lto
      \lbrace z,x\rbrace,
    \end{equation*}
    sending the object $(f;g_1,\dots,g_n)$ to the composition $f(g_1,\dots,g_n)$ and the morphism $(\alpha; \beta_1,\dots,\beta_n)$ to the pasting of $\alpha$ and the $n$-tuple $(\eta_{\xi(1)},\dots,\eta_{\xi(n)})$, which is denoted $\alpha(\beta_1,\dots,\beta_n)$ for simplicity, following Kelly, again.
  \end{enumerate}
\end{definition}
Note that the details of the constructions stated in a direct fashion in Definitions~\ref{def:10} and~\ref{def:14} can be deduced from the combinatorics of the composition in the bi-multicategory $\bC$. Similarly, the associativity for the composition given by the assembly map is also inherited from the ambient bi-multicategory, hence, for objects $x,y,z,w$ of $\bC$ we have:
\begin{equation}
  \label{eq:68}
  \vcenter{\xymatrix{%
    \bigl(\lbrace y,x\rbrace \wr \lbrace z,y\rbrace\bigr) \wr \lbrace w,z\rbrace
    \ar[rr]^{\mathit{ass.}} \ar[d]_{\medcirc\wr\id}  &&
    \lbrace y,x\rbrace \wr \bigl(\lbrace z,y\rbrace \wr \lbrace w,z\rbrace\bigr)
    \ar[d]^{\id\wr\medcirc} \\
    \lbrace z,x\rbrace \wr \lbrace w,z\rbrace \ar[r] &
    \lbrace w,x\rbrace \ultwocell<\omit>  &
    \lbrace y,x\rbrace \wr \lbrace w,y\rbrace \ar[l] 
  }}
\end{equation}

\begin{definition}[\protect{%
    \textcites{MR0340371,MR0364393}}]
  \label{def:9}
  A \strong{Club} is a bi-multicategory with one object.
\end{definition}
Given a club with unique object $*$, set $\cat{k} = \lbrace *,*\rbrace$. By abuse of notation we refer to $\cat{k}$ itself as a club. Then the assembly map and~\eqref{eq:68} give a monoidal structure $\medcirc\colon \cat{k}\wr \cat{k}\to \cat{k}$.

Examples of clubs are:
\begin{itemize}
\item $\SS$ itself, with projection $\id\colon \SS\to \SS$.
\item The natural numbers identified with a discrete subcategory $\NN$ of $\SS$, with projection $\iota\colon \NN\hookrightarrow \SS$.
\end{itemize}
In both cases $\cat{k}=\SS$ and $\cat{k}=\NN$, the composition morphism $\cat{k}\wr \cat{k}\to \cat{k}$ sends the object $(n;m_1,\dots,m_n)$ to $m_1+\dots+m_n$. In the case of $\NN$, however, we dispense with the symmetric group action. 
\begin{remark}[Notation]
  \label{rem:15}
  \textcites{MR0340371} uses the notations $\medcirc$ (resp.\ $\mu$) where we use $\wr$ (resp.\ $\medcirc$). The symbol $\wr$ usually denotes the so-called wreath product.
  
  The wreath product $F\wr\cat{D}$ (or simply $\cat{C}\wr \cat{D}$) of two categories $\cat{C}$ and $\cat{D}$, the first equipped with a functor $F\colon \cat{C}\to \Gamma$,\footnote{$\Gamma$ is the Segal category of finite sets.} is defined in a way somewhat similar to Definition~\ref{def:14} \cite[see e.g.{}][]{Berger2007230}. In particular, objects are tuples of the form $(c;d_1,\dots,d_n)$, where $F(c)$ has cardinality $n$ and $d_1,\dots,d_n$ are objects of $\cat{D}$. Morphisms, however are slightly different.
\end{remark}

\section{(Unbiased) Monoids and monoidal structures}
\label{sec:formalism-monoid}

We keep the notations and environment of section~\ref{sec:formalism}. $\bC$ is a bicategory identified with the underlying bicategory of a bi-multicategory $\M\bC$.  We add another piece of notation: for an object $x$, let $\M x=\lbrace x,x\rbrace$. Having observed that this is a monoid for the composition, a monoid object in $\bC$ is defined in the expected way. Recall that $\NN$ is a club, as discussed in the previous section. Recall that a lax monoidal functor is one that respects the monoidal structures up to coherent natural transformation.\footnote{The distinction between lax and op-lax, i.e.\ the direction of the 2-arrows in the natural transformations is immaterial as we work with bicategories whose 2-arrows are isomorphisms.}
\begin{definition}
  \label{def:11}
  A \strong{(pseudo-)monoid} in $\bC$ is an object $x$ of $\M\bC$ equipped with a lax monoidal functor $(t,\theta) \colon \NN\to \M x$. A \emph{symmetric} (pseudo-)monoid would is the same thing with the club $\SS$, instead.
\end{definition}
In the next diagrams we spell out the conditions for $(t,\theta)$ to be a lax monoidal functor. Explicitly, the lax monoidal functor is given by a diagram
\begin{equation}
  \label{eq:69}
  \vcenter{\xymatrix{%
      \NN\wr\NN \ar[r]^\medcirc \ar[d]_{t\wr t} \drtwocell<\omit>{\theta}
      & \NN \ar[d]^t \\
      \M x\wr \M x \ar[r]_\medcirc & \M x
    }}
\end{equation}
From it we have the coordinate expression for the natural transformation $\theta$, which assigns to the object $(n;m_1,\dots,m_n)\in \NN\wr\NN$ the morphism
\begin{equation}
  \label{eq:70}
  \theta_{n;m_1,\dots,m_n}\colon t(m_1+\dots +m_k) \lto t(n)( t(m_1),\dots, t(m_n) )
\end{equation}
in $\M x$.  These data are subject to be compatible with the associativity conditions for the assembly maps of both $\NN$ and $\M x$, namely they must satisfy the following commutative diagram of natural transformations:
\begin{equation}
  \label{eq:71}
  \vcenter{\xymatrix{%
      (\M x\wr \M x)\wr \M x \ar[rrrr]^{\mathit{ass}}
      \ar[ddd]_{\medcirc\wr \Id}
      &&&& \M x\wr (\M x\wr \M x) \ar[ddd]^{\Id\wr\medcirc} \\
      & (\NN \wr \NN)\wr \NN \ddltwocell<\omit>{\theta\wr t}
      \ar[ul]^{(t\wr t)\wr t}
      \ar[d]_{\medcirc\wr\Id} \ar[rr]^{\mathit{ass}} &&
      \NN\wr (\NN \wr \NN) \ar[ur]_{t\wr (t\wr t)} 
      \ar[d]^{\Id\wr \medcirc}\\
      & \NN\wr \NN \ar[r]_\medcirc \drtwocell<\omit>{\theta}
      \ar[dl]^{t\wr t} &
      \NN \ar[d]^t &
      \NN\wr\NN \ar[dr]_{t\wr t} \ar[l]^\medcirc \\
      \M x \wr \M x \ar[rr]_\medcirc  & & \M x \urtwocell<\omit>{\;\theta} & &
      \M x\wr \M x \ar[ll]^\medcirc \uultwocell<\omit>{t\wr\theta}
    }}
\end{equation}
In~\eqref{eq:71} the top quadrangle and the small pentagon are strictly commutative. For the quadrangle, it follows from the functoriality of the associator, whereas for the small pentagon the associativity morphism reads
\begin{equation*}
  ((n;m_1,\dots,m_n);l_1,\dots,l_k) \longmapsto (n;(m_1;l_1,\dotsc),\dots,(m_n;\dotsc,l_k)),
\end{equation*}
where $k=m_1+\dots +m_n$;  both paths evaluate to $h=l_1+\dots +l_k = j_1 + \dots + j_n$, where for $i=1,\dots,n$ we have $j_i = l_{m_{i-1}+1} + \dots + l_{m_{i-1}+m_i}$. Lastly, the back face of the diagram, that is the large pentagon, is just~\eqref{eq:68} specialized to the same object.  Thus, the commutativity in diagram~\eqref{eq:71}, when expressed in coordinates, reads as follows:
\begin{equation}
  \label{eq:72}
  \vcenter{\xymatrix{%
      t(k) (t(l_1),\dots,t(l_k)) \ar[d] & t(h) \ar[r] \ar[l] & t(n)(t(j_1),\dots,t(j_n)) \ar[d] \\
      t(n) (t(m_1),\dots,t(m_n)) (t(l_1),\dots,t(l_k)) \ar[rr] && t(n)(t(m_1)(t(l_1),\dotsc),\dots,t(m_n)(\dots,t(l_k))
    }}
\end{equation}
\begin{remark}
  \label{rem:16}
  By construction, the pseudo-monoid structure on $x$ given by the club morphism $t\colon \NN\to \M x$ possesses $n$-ary operations $t(n)$ for arbitrary values of $n$. If $x$ has an internal structure (namely objects, as in the main text), then this implements a monoidal structure in ``unbiased'' form \cite[see e.g.][]{MR2094071}, with the $n$-ary unprivileged operations of all degrees. In this case diagram~\eqref{eq:71} expresses the coherence condition in unbiased form, namely, that the two possible ways to remove parentheses from an expression must coincide as it is apparent from the coordinate version~\eqref{eq:72}.
\end{remark}
\begin{remark}
  \label{rem:19}
  Consider the singletons $\lbrace x \rbrace$ as a copy of the singleton category $\one$. Any category with products gives rise to a multicategory in a standard way \cite[Example 2.16]{MR2094071}. Applied to $\lbrace x\rbrace$, there is singleton worth of morphisms from $x$ to $x$ of arbitrary arity. The definition of the $\wr$-operation introduced in Appendix~\ref{sec:formalism} can be copied in order to define a category $\NN\wr x$ (dropping the braces from the notation for convenience). One could show (for example, by formally checking the adjunction properties of $\wr$ and $\lbrace -,-\rbrace$ as in\cite{MR0340371}) that $(t,\theta)$ in Definition~\ref{def:11} correspond to a functor
  \begin{equation*}
    \tilde t \colon \NN\wr x \lto x
  \end{equation*}
  and a natural transformation
  \begin{equation*}
    \xymatrix{%
      (\NN\wr\NN)\wr x \ar[rr] \ar[d]_{\medcirc \wr x}
      \drtwocell<\omit>{\tilde\theta}
      &&  \NN\wr (\NN\wr x) \ar[d]^{\NN\wr \tilde t} \\
      \NN\wr x \ar[r]_{\tilde t} & x 
      & \NN\wr x \ar[l]^{\tilde t}
    }
  \end{equation*}
  By definition, $(\tilde t,\tilde \theta)$ equip $x$ with a structure of $\NN$-\strong{(pseudo-)algebra}. We will not work with this variant.
\end{remark}
One can consider morphisms of monoids as follows. Denote $\NN\wr x$ (cf.\ Remark~\ref{rem:19}) by $\cat{T}x$. Let $f\colon y\to x$ be a (unary) arrow. Then $f$ determines a functor $\cat{T}f\colon \cat{T}y\to \cat{T}x$ and two functors
\begin{equation*}
  f_* \lbrace y,y\rbrace \lto \lbrace y,x \rbrace \qquad
  \cat{T}f^* \lbrace x,x\rbrace \lto \lbrace y,x\rbrace
\end{equation*}
by post-composing with $f$ or pre-composing with $\cat{T}f$, respectively.
\begin{definition}
  \label{def:12}
  Let $x,y$ be (pseudo-)monoids in $\bC$. A \strong{morphism of monoids} is a pair $(f\colon y\to x,\lambda)$ fitting the diagram
  \begin{equation*}
    \xymatrix{%
      \NN \ar[r]^{t_y} \ar[d]_{t_x} \drtwocell<\omit>{\lambda}
      & \lbrace y,y\rbrace  \ar[d]^{f_*} \\
      \lbrace x,x\rbrace \ar[r]_{\cat{T}f^*} & \lbrace y,x\rbrace
    }
  \end{equation*}
  and compatible with the pseudo-algebra condition~\eqref{eq:69}.
\end{definition}
The compatibility between the above diagram and~\eqref{eq:69} means that in the coordinates of~\eqref{eq:70} we have
\begin{equation*}
  \xymatrix{%
    f\circ t_y(m_1+\dots +m_k) \ar[r]^{f\theta_y} \ar[d]_{\lambda_{m_1+\dots+m_n}} &
    f\circ ( t_y(n)( t_y(m_1),\dots, t_y(m_n) )) \ar[r]^*+[Fo]{\scriptstyle 1}&
    (f\circ  t_y(n))( t_y(m_1),\dots, t_y(m_n) ) \ar[d]^{\lambda_n\circ\Id} \\
    t_x(m_1+\dots +m_k) \circ \cat{T}f \ar[d]_{\theta_x\cat{T}f} & &
    (t_x(n)\circ \cat{T}f)( t_y(m_1),\dots, t_y(m_n) )  \ar[d]^*+[Fo]{\scriptstyle 2} \\
    (t_x(n)( t_x(m_1),\dots, t_x(m_n) )) \circ \cat{T}f \ar[r]_*+[Fo]{\scriptstyle 3} &
    t_x(n)(( t_x(m_1),\dots, t_x(m_n) ) \circ \cat{T}f) &
    t_x(n)(f\circ t_y(m_1),\dots, f\circ t_y(m_n) )
    \ar[l]^(0,45)*+{\scriptstyle{\Id\circ\lambda_{m_1,\dots,m_n}}}
    }
\end{equation*}
where the numbered arrows result from associativity isomorphism for composition.

Pseudo-monoids behave in the expected manner with respect to pseudo-functors. Specifically, let $F\colon \M\bC\to \M\bB$ be a pseudo-functor, which by virtue of our identification, we think of as coming from an pseudo-functor $F\colon \bC\to \bB$ between bicategories. It is clear that for any objects $x,y$ of $\bC$, $F$ induces a functor
\begin{equation*}
  F_{(y);x} \colon \lbrace y,x\rbrace \lto \lbrace F(y),F(x)\rbrace.
\end{equation*}
\begin{definition}
  \label{def:13}
  Let $x$ be a pseudo-monoid in $\bC$. Then $F(x)$ acquires the structure of a pseudo-monoid in $\bB$ by virtue of the composition:
  \begin{equation*}
    \xymatrix@1{\NN \ar[r]^t & \M x \ar[r]^{F_\bullet}& \M F(x),}
  \end{equation*}
  with the full pseudo-monoid structure for $\M F(x)$ results from the composition
\begin{equation*}
  \xymatrix{%
    \NN\wr \NN \ar[r]^{t\wr t} \ar[d]_\medcirc &
    \M x\wr \M x \ar[r]^(0.4){F_\bullet \wr F_\bullet} \ar[d]^\medcirc &
    \M F(x) \wr \M F(x) \ar[d]^\medcirc \\
    \NN \ar[r]^t & \M x \ar[r]^{F_\bullet} \ultwocell<\omit>{\theta} & \M F(x)
    \ultwocell<\omit>{\epsilon}
  }
\end{equation*}
\end{definition}

\section{Pentagons}
\label{sec:pentagons}

Pentagon diagrams express the coherence condition in a monoidal category. This condition is replaced by a diagram of the form~\eqref{eq:71} or~\eqref{eq:72} if the monoidal structure is given in unbiased form \cite{MR2094071}.  The actual pentagon can be recovered if these diagrams are specialized to arities equal to $4$ and the monoidal category comes from a monoid inhabiting a 2-category. This shows the equivalence of the biased and unbiased definitions.  A slight generalization of the pentagon occurs if the monoid inhabits a bicategory, and both these situations arise in the main text. In addition,  conditions arising from pentagons are cocycle conditions in appropriate cohomology theories.  It is useful to compute them once and for all in the general setting of a monoid object in a bicategory.

To begin with, observe that by applying~\eqref{eq:69} and \eqref{eq:70} to the objects $(2;2,1)$ and $(2;1,2)\in \NN\wr \NN$ we obtain morphisms $\theta_{2;2,1}\colon t(3)\to t(2)(t(2),t(1))$, $\theta_{2;1,2}\colon t(3)\to t(2)(t(1),t(2))$, and combining these two we obtain
\begin{equation}
  \label{eq:73}
  \mu \colon t(2)(t(2),t(1)) \lto t(2)(t(1),t(2)).
\end{equation}
The pentagons (or the diagrams related to them) arise from the decomposition of the operation $t(4)$, by way of~\eqref{eq:72}, down to terms only involving the binary and unary operations $t(2)$ and $t(1)$, respectively. We can assume that the unary operation $t(1)\colon x\to x$ coincides with the identity $\id_x$. These decompositions can be encoded by working our way along the small pentagon in the diagram~\eqref{eq:71}, so for instance one of them corresponds to the sequences
\begin{equation*}
  \xymatrix{%
    ((2;2,1);2,1,1)  \ar[rr] \ar[d] & & (2; (2;2,1), 1) \ar[d] \\
    (3; 2, 1, 1) \ar[r] & 4 & (2; 3, 1) \ar[l]
  }
\end{equation*}
(We have simply written $1$ in place of the more accurate but cumbersome expression $(1;1)$.)

There are six distinct sequences including the one above. Their starting points, counted from the upper left corner of the small pentagon in the diagram~\eqref{eq:71}, are:
\begin{enumerate}
\item \label{item:8}  $((2;2,1);2,1,1)$,
\item \label{item:9}  $((2;2,1);1,2,1)$,
\item \label{item:10} $((2;1,2);1,2,1)$,
\item \label{item:11} $((2;1,2);1,1,2)$,
\item \label{item:12} $((2;1,2);2,1,1)$,
\item \label{item:13} $((2;2,1);1,1,2)$.
\end{enumerate}
Let us also use the notations $m_i = t(i)$, $i=1,\dots,4$, with the special provision $m_1=\id_x=\id$.  With these, applying~\eqref{eq:72} and~\eqref{eq:71}, we obtain the diagram on page~\pageref{eq:74}:
\begin{landscape}
  \begin{equation}
    \label{eq:74}
    \vcenter{%
  \xymatrix@C-2pc@R+1pc{%
    & & m_2(\id(m_2),m_2(\id,\id)) \ar[rr] & & m_2(\id,m_2)(m_2,\id,\id) \\
    & m_2(m_2(\id,\id),\id(m2)) \ar[ddl] & & *+[Fo]{\ref{item:12}} & &  m_2(m_2,\id)(m_2,\id,\id) \ar@{.>}@[magenta][ul]_\mu \\
    & & m_2(m_2,m_2) \ar[uu] \ar[ul] \ar@{.>}@[blue][uurr] \ar@{.>}@[blue][dll] & & m_3(m_2,\id,\id) \ar@{.>}@[red][ll] \ar[uu] \ar[ur] \\
    m_2(m_2,\id)(\id,\id,m_2) \ar@{.>}@[magenta][dd]_\mu & *+[Fo]{\ref{item:13}} & & & & *+[Fo]{\ref{item:8}} & m_2(m_2(m_2,\id),\id) \ar[uul] \ar@{.>}@[magenta][dd]^\mu \\
    & m_3(\id,\id,m_2) \ar[dl] \ar[ul] \ar@{.>}@[red][uur] & & m_4 \ar[uul] \ar[uur] \ar[rr] \ar[ll] \ar[ddl] \ar[ddr] & & m_2(m_3,\id) \ar[ur] \ar[dr] \\
    m_2(\id, m_2)(\id,\id,m_2) & *+[Fo]{\ref{item:11}} & & & & *+[Fo]{\ref{item:9}} & m_2(m_2(\id,m_2),\id) \ar[ddl] \\
    & & m_2(\id,m_3) \ar[dd] \ar[dl] & & m_3(\id,m_2,\id) \ar[dd] \ar[dr] \\
    & m_2(\id,m_2(\id,m_2)) \ar[uul] & & *+[Fo]{\ref{item:10}} & &  m_2(m_2,\id)(\id,m_2,\id) \ar@{.>}@[magenta][dl]^\mu \\
    & & m_2(\id,m_2(m_2,\id)) \ar[rr] \ar@{.>}@[magenta][ul]^\mu  & & m_2(\id,m_2)(\id,m_2,\id)
  }}
\end{equation}
\end{landscape}
The six petals correspond to the indicated sequences and are numbered accordingly. We have marked the (magenta) arrows resulting from the morphism~\eqref{eq:73}.  Note that there are five of them.  In particular:
\begin{itemize}
\item If $\M\bC$ is \emph{unital,} the petals~\ref{item:12} and~\ref{item:13} collapse to the red arrows.
\item If furthermore $\M\bC$ is in fact a 2-multicategory, i.e.\ the 2-categorical analog of a multicategory, so that $\bC$ itself is a genuine 2-category, the associativity morphisms at the tops of all petals reduce to identities. As a result, the outer perimeter reduces to a standard pentagon
\end{itemize}
\begin{equation}
  \label{eq:75}
  \vcenter{%
  \xymatrix@C-2pc@R+1pc{%
    & m_2(m_2,m_2) \ar[dddddl]_\mu\\
    & & & & & m_2(m_2(m_2,\id),\id) \ar[dd]^\mu  \ar[llllu]_\mu \\
    & & m_4 \ar@{.>}@[magenta][uul] \ar@{.>}@[magenta][rr]  \ar@{.>}@[magenta][ddl] \ar@{.>}@[magenta][ddr] & & m_2(m_3,\id) \ar@{.>}@[magenta][ur] \ar@{.>}@[magenta][dr] \\
    & & & & & m_2(m_2(\id,m_2),\id) \ar[dddllll]^\mu\\
    & m_2(\id,m_3) \ar@{.>}@[magenta][dd] \ar@{.>}@[magenta][dl] & & m_3(\id,m_2,\id) \ar@{.>}@[magenta][ddll] \ar@{.>}@[magenta][urr] \\
    m_2(\id,m_2(\id,m_2)) \\ 
    & m_2(\id,m_2(m_2,\id)) \ar[ul]^\mu 
  }}
\end{equation}

\section{Some lemmas on symmetric braidings}
\label{sec:lemmas}

\begin{lemma}
  \label{lem:10}
  Let $(\cat{C},\otimes,c)$ be a strictly associative braided monoidal category. For every $x,y,z,w$, the diagram
  \begin{equation*}
    \xymatrix@C+1.5pc{%
      x\otimes y\otimes z\otimes w
      \ar[r]^{1\otimes c_{y,z}\otimes 1}
      \ar[d]_{c_{x,y}\otimes c_{z,w}} &
      x\otimes z\otimes y\otimes w
      \ar[d]^{c_{x\otimes z,y\otimes w}} \\
      y\otimes x\otimes w\otimes z
      \ar[r]_{1\otimes c_{x,w}\otimes 1} &
      y\otimes w\otimes x\otimes z
    }
  \end{equation*}
  commutes if and only if the braiding $c$ is symmetric.
\end{lemma}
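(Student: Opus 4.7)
The plan is to expand the right-hand vertical arrow $c_{x\otimes z,\, y\otimes w}$ using the hexagon axioms, and then compare the two paths around the square. The key ingredients are the two hexagons
\begin{equation*}
c_{a,\, b\otimes c} = (1_b\otimes c_{a,c})\circ (c_{a,b}\otimes 1_c), \qquad
c_{a\otimes b,\, c} = (c_{a,c}\otimes 1_b)\circ (1_a\otimes c_{b,c}),
\end{equation*}
applied twice---first splitting the target $y\otimes w$, then the source $x\otimes z$---which produce the factorization
\begin{equation*}
c_{x\otimes z,\, y\otimes w} = (1_y\otimes c_{x,w}\otimes 1_z)\circ(1_y\otimes 1_x\otimes c_{z,w})\circ(c_{x,y}\otimes 1_z\otimes 1_w)\circ(1_x\otimes c_{z,y}\otimes 1_w).
\end{equation*}

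For the ``if'' direction I compute the upper path of the square by post-composing this factorization with the top arrow $1_x\otimes c_{y,z}\otimes 1_w$. The last two factors then merge into $1_x\otimes (c_{z,y}\circ c_{y,z})\otimes 1_w$, which is the identity under the symmetry hypothesis. The remaining three factors rearrange, via the decomposition $c_{x,y}\otimes c_{z,w} = (1_y\otimes 1_x\otimes c_{z,w})\circ(c_{x,y}\otimes 1_z\otimes 1_w)$, into exactly the lower path $(1_y\otimes c_{x,w}\otimes 1_z)\circ(c_{x,y}\otimes c_{z,w})$.

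For the ``only if'' direction I would simply specialize the square to $x = w = I$. Then $c_{x,y}$, $c_{z,w}$, and $c_{x,w}$ all collapse to identities, the top-left and bottom-right corners both reduce to $y\otimes z$, the top edge reduces to $c_{y,z}$, and the right edge $c_{x\otimes z,\, y\otimes w}$ reduces to $c_{z,y}$. Commutativity of the square therefore reads $c_{z,y}\circ c_{y,z} = 1_{y\otimes z}$, which is precisely the definition of a symmetric braiding.

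The calculation is mechanical and I foresee no real obstacle; the only point requiring care is to apply the two hexagon identities in the correct order when expanding $c_{x\otimes z,\, y\otimes w}$, so that $c_{z,y}$ lands adjacent to the image of the top edge and the symmetry hypothesis can be invoked in a single step.
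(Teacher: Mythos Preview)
Your proof is correct and is essentially the computational unpacking of the paper's argument. Your hexagon factorization
\[
c_{x\otimes z,\, y\otimes w}=(1_y\otimes c_{x,w}\otimes 1_z)\circ(c_{x,y}\otimes c_{z,w})\circ(1_x\otimes c_{z,y}\otimes 1_w)
\]
is exactly the content of what the paper calls the ``naturally occurring diagram'' (the same square but with top arrow $1\otimes c_{z,y}\otimes 1$ pointing the other way), which the paper simply asserts always commutes. So for the ``if'' direction you and the paper are doing the same thing, only you write out the hexagon steps explicitly.

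The one genuine difference is in the ``only if'' direction. The paper observes that the original square and the always-commuting square differ only by replacing $c_{z,y}^{-1}$ with $c_{y,z}$ on the top edge; since the other three sides agree and are isomorphisms, commutativity of the original forces $1\otimes c_{y,z}\otimes 1 = 1\otimes c_{z,y}^{-1}\otimes 1$, hence symmetry. You instead specialize $x=w=I$. Both work, but note that your specialization implicitly uses that $c_{I,y}$, $c_{z,I}$, $c_{I,I}$ reduce to identities, which strictly speaking requires the unit (not just associativity) to be strict, or else a short appeal to the standard identity $\rho=\lambda\circ c_{-,I}$ and the unit coherence. The paper's comparison-of-diagrams argument avoids this small bookkeeping and handles both directions in one stroke.
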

\begin{proof}
  The naturally occurring diagram relating $c_{x\otimes z,y\otimes w}$ to $c_{x,y}\otimes c_{z,w}$ is
  \begin{equation*}
    \xymatrix@C+1.5pc{%
      x\otimes y\otimes z\otimes w
      \ar@{<-}[r]^{1\otimes c_{z,y}\otimes 1}
      \ar[d]_{c_{x,y}\otimes c_{z,w}} &
      x\otimes z\otimes y\otimes w
      \ar[d]^{c_{x\otimes z,y\otimes w}} \\
      y\otimes x\otimes w\otimes z
      \ar[r]_{1\otimes c_{x,w}\otimes 1} &
      y\otimes w\otimes x\otimes z
    }
  \end{equation*}
  which is valid with respect to any braiding. This is equal to the diagram in the statement if and only if $c_{y,z}$ is the inverse of $c_{z,y}$.
\end{proof}
\begin{lemma}
  \label{lem:11}
  Let $(\cat{C},\otimes,c)$ be a strictly associative braided monoidal category. If the braiding $c$ is symmetric, the following diagram
  \begin{equation*}
    \xymatrix@C+1.5pc{%
      x\otimes  y\otimes  z\otimes  u\otimes  v\otimes  w
      \ar[rr]^{1\otimes c_{y  z,u  v}\otimes 1}
      \ar[d]_{c_{x,y}\otimes 1\otimes 1\otimes c_{v,w}}
      & & x\otimes  u\otimes  v\otimes  y\otimes  z\otimes  w
      \ar[d]^{c_{x.u}\otimes 1\otimes 1\otimes c_{z,w}}\\
      y\otimes x\otimes  z\otimes  u\otimes  w\otimes  v
      \ar[d]_{1\otimes c_{x  z,u  w}\otimes 1}
      & &
      u\otimes  x\otimes  v\otimes  y\otimes  w\otimes  z
      \ar[d]^{1\otimes c_{x  v,y  w}\otimes 1}\\
      y\otimes  u\otimes  w\otimes  x\otimes  z\otimes  v
      \ar[rr]_{c_{y.u}\otimes 1\otimes 1\otimes c_{z,v}}
      & &
      u\otimes  y\otimes  w\otimes  x\otimes  v\otimes  z
    }
  \end{equation*}
  commutes.
\end{lemma}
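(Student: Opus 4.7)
The plan is to handle the two implications separately. For the \emph{only if} direction, I would specialize the six objects by setting $x=w=I$. Since the monoidal structure is assumed strict, the unit constraints become identities: the source collapses to $y\otimes z\otimes u\otimes v$, and the compound braidings of the hexagon simplify drastically, as $c_{x,y}$, $c_{v,w}$, $c_{x,u}$, $c_{z,w}$ all become identities, $c_{x\otimes z,u\otimes w}$ reduces to $c_{z,u}$, and $c_{x\otimes v,y\otimes w}$ to $c_{v,y}$. Expanding the only surviving compound braiding $c_{y\otimes z,u\otimes v}$ via one of the hexagon axioms as a composition of four elementary braidings on adjacent factors, the equality of the two paths reduces after cancellation of matching subpaths to the identity $c_{v,y}\circ c_{y,v}=\id$, which is exactly the symmetry condition for the pair $(y,v)$; varying which two of the six objects are set to $I$ yields symmetry for all pairs. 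One could alternatively specialize to reduce to the square of Lemma~\ref{lem:10} directly, but the route above gives the symmetry identity without intermediary.

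For the \emph{if} direction, under the assumption that $c$ is symmetric one can invoke the coherence theorem for symmetric monoidal categories: any two morphisms built from iterated tensor products of braiding isomorphisms that induce the same permutation of the tensor factors must coincide. A direct inspection shows that both paths of the hexagon realize the same permutation of the six factors, namely $(x,y,z,u,v,w)\mapsto (u,y,w,x,v,z)$, the double transposition $(1\ 4)(3\ 6)$, hence they agree.

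The main obstacle is avoiding the coherence-based shortcut in the \emph{if} direction if one wants an elementary argument. A fully hands-on verification would expand each of the three compound braidings $c_{y\otimes z,u\otimes v}$, $c_{x\otimes z,u\otimes w}$, $c_{x\otimes v,y\otimes w}$ via the hexagon axioms into sequences of four elementary braidings on adjacent factors, and then transform the two resulting words of roughly a dozen elementary braidings one into the other using naturality to commute braidings acting on disjoint tensor factors and the symmetry identity $c_{b,a}c_{a,b}=\id$ to cancel inverse pairs. This is a finite, mechanical check, but the bookkeeping is genuinely tedious and is the only non-trivial aspect of the proof; using the coherence theorem for symmetric monoidal categories shortcuts this combinatorial detail.
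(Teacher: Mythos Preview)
Your argument is correct, but it proceeds differently from the paper's. The paper does not treat the two implications separately; instead it uses Lemma~\ref{lem:10} as a rewriting device: each of the three compound braidings $c_{yz,uv}$, $c_{xz,uw}$, $c_{xv,yw}$ is replaced, via the square of Lemma~\ref{lem:10}, by the corresponding tensor product of elementary braidings. After this substitution the big hexagon factors as two juxtaposed copies of the Yang--Baxter hexagon
\[
\xymatrix@C+1pc{
a\otimes b\otimes c \ar[r]^{1\otimes c_{b,c}} \ar[d]_{c_{a,b}\otimes 1} & a\otimes c\otimes b \ar[d]^{c_{a,c}\otimes 1} \\
b\otimes a\otimes c \ar[d]_{1\otimes c_{a,c}} & c\otimes a\otimes b \ar[d]^{1\otimes c_{a,b}} \\
b\otimes c\otimes a \ar[r]_{c_{b,c}\otimes 1} & c\otimes b\otimes a
}
\]
which commutes in any braided monoidal category. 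Thus the original hexagon commutes if and only if Lemma~\ref{lem:10} applies, i.e.\ if and only if $c$ is symmetric, and both directions fall out at once. Your route---specializing $x=w=I$ for necessity and invoking the coherence theorem for symmetric monoidal categories for sufficiency---is shorter on paper but trades an elementary, self-contained reduction for a black-box appeal to coherence. (A small remark: once you set $x=w=I$, the identity $c_{v,y}\circ c_{y,v}=\id$ already holds for \emph{all} pairs $(y,v)$, so the comment about varying which two objects are set to $I$ is unnecessary.)
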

\begin{proof}
  Use Lemma~\ref{lem:10} to replace $c_{yz,uv}$ with $c_{y,u}\otimes c_{z,v}$ and similarly for $c_{xz,uw}$ and $c_{xv,yw}$. This converts the diagram in the statement into the following one
  \begin{equation*}
    \xymatrix@C+1.5pc{%
      x\otimes  y\otimes  u\otimes  z\otimes  v\otimes  w
      \ar[rr]^{1\otimes c_{y,u}\otimes c_{z,v}\otimes 1}
      \ar[d]_{c_{x,y}\otimes 1\otimes 1\otimes c_{v,w}}
      & & x\otimes  u\otimes  y\otimes  v\otimes  z\otimes  w
      \ar[d]^{c_{x.u}\otimes 1\otimes 1\otimes c_{z,w}}\\
      y\otimes  x\otimes  u\otimes  z\otimes  w\otimes  v
      \ar[d]_{1\otimes c_{x,u}\otimes c_{z,w}\otimes 1}
      & &
      u\otimes  x\otimes  y\otimes  v\otimes  w\otimes  z
      \ar[d]^{1\otimes c_{x,y}\otimes c_{v,w}\otimes 1}\\
      y\otimes  u\otimes  x\otimes  w\otimes  z\otimes  v
      \ar[rr]_{c_{y.u}\otimes 1\otimes 1\otimes c_{z,v}}
      & &
      u\otimes  y\otimes  x\otimes  w\otimes  v\otimes  z
    }
  \end{equation*}
  which consists of two juxtaposed copies of the same kind of hexagon:
  \begin{equation*}
    \xymatrix{%
      x\otimes  y\otimes  u
      \ar[rr]^{1\otimes c_{y,u}}
      \ar[d]_{c_{x,y}\otimes 1}
      & & x\otimes  u\otimes  y
      \ar[d]^{c_{x.u}\otimes 1} \\
      y\otimes  x\otimes  u
      \ar[d]_{1\otimes c_{x,u}}
      & &
      u\otimes  x\otimes  y
      \ar[d]^{1\otimes c_{x,y}}\\
      y\otimes  u\otimes  x
      \ar[rr]_{c_{y.u}\otimes 1}
      & &
      u\otimes  y\otimes  x
    }
  \end{equation*}
  The latter commutes in any braided monoidal (strictly associative) category.
\end{proof}
A \strong{permutative} category is a symmetric monoidal category $(\cat{C},\otimes,c)$ in which associativity and unitality hold strictly.
 Thus the diagrams in Lemmas~\ref{lem:10} and \ref{lem:11} commute if and only if the category $(\cat{C},\otimes,c)$ is permutative. On the other hand, the statements of Lemma~\ref{lem:10} and Lemma~\ref{lem:11} hold in general. Requiring that $\cat{C}$ be strictly associative merely simplifies the diagrams and overall legibility. For example, in the general case the diagram in the statement of Lemma~\ref{lem:10} would need replacing with:
  \begin{equation*}
    \xymatrix@C+1.5pc{%
      x\otimes (z\otimes (y\otimes w)) &
      \ar[l]
      (x\otimes z)\otimes (y\otimes w)
      \ar[r]^{c_{x\otimes z,y\otimes w}} &
      (y\otimes w)\otimes (x\otimes z)
      \ar[r] &
      y\otimes (w\otimes (x\otimes z))\\
      x\otimes ((z\otimes y)\otimes w) \ar[u] &&&
      y\otimes ((w\otimes x)\otimes z) \ar[u] \\
      x\otimes ((y\otimes z)\otimes w)
      \ar[u]^{1\otimes c_{y,z}\otimes 1} \ar[d] &&&
      y\otimes ((x\otimes w)\otimes z)
      \ar[u]_{1\otimes c_{x,w}\otimes 1} \ar[d] \\
      x\otimes (y\otimes (z\otimes w)) &
      \ar[l]
      (x\otimes y)\otimes (z\otimes w) 
      \ar[r]_{c_{x,y}\otimes c_{z,w}}&
      (y\otimes x)\otimes (w\otimes z)
      \ar[r] &
      y\otimes (x\otimes (w\otimes z))           
    }
  \end{equation*}
  with a similarly modified proof. A similar, but more complicated modifications apply to the diagram in the proof of Lemma~\ref{lem:11}.

\section{Hypercohomology computations}
\label{sec:hyper}

In order to carry out a full, explicit cohomology computation for the class of a ring-like stack $\stR$, it is necessary to resolve various simplicial objects related to the ring $A$, notably the Eilenberg-Mac~Lane's $K(A,n)$ for $n=2,3$, by way of certain bisimplicial objects.

In the main text, and specifically in sect.~\ref{sec:decomp}, 
we simply referred to the spectral sequence for the hypercohomology of the simplicial object $K(A,1)$\footnote{As done in ref. \cite[\S 6]{MR1702420}, discussing the passage from a monoidal category to a monoidal stack} and focus on its $E^{0,3}$-term, showing that it is isomorphic to $\HML^3(A,M)$ proper. While this is appropriate to show the agreement with Mac~Lane cohomology, it is important to provide the main outline of the full calculation.

It is easier to illustrate the full computation in the (known) cases of $H^3(K(A,1),M)$, or $H^4(K(A,2),M)$ first, that is, by just considering the underlying structure of braided (symmetric) stack of $\stR$. The full case just calls for the addition of more data following the same pattern.

As it is well known, $\stR$ is only a gerbe over $A$ \cite[see e.g.\ ][]{MR1771927}, and  the decomposition of $\varpi\colon \stR\to A$ requires choosing local objects. Thus we need to choose covers $\xi_n\colon U_{n,0}\to A^n$, which we complete to an hypercover of $K(A,1)$. Following ref.\ \cite{MR676809}, this means an augmented bisimplicial object
  \begin{equation*}
    U_{\bullet,\bullet} \lto K(A,1)_\bullet,
  \end{equation*}
  such that each $U_{n,\bullet}\to A^n$ is a hypercover. (In fact, and it is convenient to do so, $U$ can be arranged so that its horizontal zero level $U_{\bullet,0}\to K(A,1)$ is an acyclic fibration: this ensures it becomes finer in the horizontal direction so as to guarantee the existence of local choices.) We denote by $d_{i,h}$ (resp.\ $d_{i,v}$) the face maps of $U$ in the horizontal (resp.\ vertical) direction, where the ``horizontal direction'' is the direction of $K(A,1)$.

  Let us use the conventions:
  \begin{enumerate}
  \item $a = \xi_1 \colon U_{1,0}\to A$,
  \item $(a,b) = \xi_2\colon U_{2,0} \to A\times A$,
  \item $(a,b,c) = \xi_3 \colon U_{3,0}\to A\times A\times A$,
  \end{enumerate}
  etcetera, where of course the meaning of the letters $a, b, c, \dotsc$ depends on the level $n$ and it is determined, in effect, by the simplicial face maps. Thus, at level $n=2$ we have
  \begin{align*}
    a &= d_2 \xi_2 = \xi_1 d_{h,2} \\
    b &= d_0 \xi_2 = \xi_1 d_{h,0} \\
    a+b  &= d_1 \xi_2 = \xi_1 d_{h,1},
  \end{align*}
  whereas at level $n=3$ we have, with the same conventions:
  \begin{align*}
    a &= d_2 d_3\xi_3 = \xi_1 d_{h,2} d_{h,3}\\
    b &= d_0 d_3\xi_3 = \xi_1 d_{h,0} d_{h,3}\\
    c &= d_0 d_1 \xi_3 = \xi_1 d_{h,0}d_{h,1} \\
    a + b &= d_1 d_3\xi_3 = \xi_1 d_{h,1} d_{h,3}\\
    b + c &= d_0 d_2\xi_3 = \xi_1 d_{h,0} d_{h,2}\\
    a + b +c &= d_1 d_2\xi_3 = \xi_1 d_{h,1} d_{h,2}.
  \end{align*}
  
  The choice of an object $X$ of the pullback $\stR_a = \xi_1^*\stR$ determines the class of $\stR$, as a gerbe over $A$ with band $M$, in $H^2(A,M)$. This class is obtained by computing it in the standard way along the hypercover $U_{1,\bullet}\to A$. As it is well known, the class itself is represented by an object of $\alpha\in M(U_{1,2})$, which is simply the defect in the commutativity of the triangle formed by the three possible pullbacks to $U_{1,2}$ of the isomorphism $\varphi\colon d_{1,v}^*X\isoto d_{0,v}^*X$ defined over $U_{1,1}$ \cite[see][]{MR95b:18009,MR95m:18006}. As a result, $\alpha$ is a Čech cocycle relative to the hypercover $U_{1,\bullet}$: 
  \begin{equation}
    \label{eq:26}
    d_{0,v}^*\alpha +d_{1,v}^*\alpha -d_{2,v}^*\alpha + d_{3,v}^*\alpha = 0\,.
  \end{equation}
  In the horizontal direction, by successively pulling back $X$ along the horizontal face maps at vertical level $n=0$ we obtain objects $X_a$, $X_{a+b}, \dotsc$ (recall the convention above) providing decompositions for the various pullbacks $d_{i,h}^*\dots d_{j,h}^*\stR$ over $A^n$.

  At level $n=2$, over $U_{2,0}$, we get an isomorphism
  \begin{equation*}
    \sigma_{a,b} \colon X_a + X_b \lisoto X_{a+b},
  \end{equation*}
  which is only compatible with the various isomorphisms $\varphi_a\colon d_{1,v}^*X_a\isoto d_{0,v}^*X_a$ up to an automorphism, say $\delta\in M(U_{2,1})$, of $d_{0,v}^*X_{a+b}$. A calculation of the pullbacks to $U_{2,2,}$ yields
  \begin{equation}
    \label{eq:77}
    d_{0,v}^*\delta -d_{1,v}^*\delta +d_{2,v}^*\delta =
    -d_{0,h}^*\alpha +d_{1,h}^*\alpha -d_{2,h}^*\alpha\,.
  \end{equation}
  Observe that \emph{modulo $\alpha$,} the element $\delta$ gives the class in $H^1(A\times A,M)$ of the sheaf $\Hom(X_a+X_b,X_{a+b})$.

  Also, still over $U_{2,0}$, the braiding of $\stR$ gives the diagram
  \begin{equation*}
    \xymatrix{
      X_a + X_b \ar[d]_{\sigma_{a,b}} \ar[r] & X_b + X_a \ar[d]^{\sigma_{b,a}} \\
      X_{a + b} \ar[r]_{g_+(a,b)} & X_{a + b} 
    }
  \end{equation*}
  defining $g_+(a,b)\in M(U_{2,0})$. If the braiding is in addition symmetric, then the previous diagram yields
  \begin{equation*}
    g_+(a,b) + g_+(b,a) = 0,
  \end{equation*}
  namely $g_+$ is antisymmetric under the map determined by the pullback of $U_{2,\bullet}$ by the swap map of $A^2$. If we denote this operation by $\tau$, then the above relation would be written $g_+ + \tau^*g_+=0$

  Similarly, an analysis of the associativity of the monoidal structure of $\stR$ yields $f_+(a,b,c)\in M(U_{3,0})$, as an automorphism of the object $X_{a+b+c}$ of $d_{2,h}^*d_{1,h}^*\stR$ over $A^3$. Under pullback of both $f_+$ and $g_+$ to $U_{3,1}$ we obtain:
  \begin{equation}
    \label{eq:78}
    \begin{aligned}
      d_{0,v}^*f_+ -d_{1,v}^*f_+ &= d_{0,h}^*\delta -d_{1,h}^*\delta +d_{2,h}^*\delta -d_{3,h}^*\delta \\
      d_{0,v}^*g_+ -d_{1,v}^*g_+ &= -\delta + \tau^*\delta.
    \end{aligned}
  \end{equation}
  Finally, the familiar relations describing a (symmetric) braiding arise by analyzing the behavior of $f_+$ when pulled back to $U_{4,0}$ and that of $g_+$ when pulled back to $U_{3,0}$, yielding the standard cocycle relation
  \begin{equation*}
    d_{0,h}^*f_+ -d_{1,h}^*f_+ +d_{2,h}^*f_+ -d_{3,h}^*f_+ + d_{4,h}^*f_+ = 0,
  \end{equation*}
  plus the two equations arising from Mac~Lane's hexagonal diagrams (and the antisymmetry condition above, if $\stR$ is symmetric).

  Again, observe that \emph{modulo $\delta$,} $f_+$ and $g_+$ define objects in $H^0(A^4,M)$ and $H^0(A^3,M)$ satisfying the standard cocycle relations for a class in $H^4(K(A,2),M)$ (or $H^5(K(A,3),M)$). This is the relevant part in the discussion of the class arising from a braided (or symmetric) monoidal stack as above. 

  In summary, the quadruplet $(f_+,g_+,\delta,\alpha)$ defines a cocycle of degree 3 with respect to the total complex defined as follows. Let $B(A,k)$ be the iterated bar construction on $A$, where $k=2,3$. If $k=1$, then $B(A,1)$ is simply the complex $\ZZ[K(A,1)]\sptilde$.\footnote{A down-shifted version is the complex denoted $L^2(A)$ or $L^3(A)$ in the paper.} With a mild abuse of language, for each (vertical) level $n$ let $B(U_{\bullet,n},k)$ denote the complex obtained in a way analogous to $B(A,k)$ from the abelian sheaves $\ZZ[U_{\bullet,n}]$.  Thus we get an augmented simplicial object
  \begin{equation*}
    \xymatrix{%
      \cdots \ar@<1ex>[r] \ar[r] \ar@<-1ex>[r]&
      B(U_{\bullet,1},k) \ar@<0.5ex>[r] \ar@<-0.5ex>[r] & B(U_{\bullet,0},k) \ar[r]^\xi  & B(A,k).
    }
  \end{equation*}
  Then we form (neglecting $B(A,k)$) the total complex with respect to the vertical direction to obtain the necessary complex.

  A glance at the complex used in the main text, section~\ref{sec:decomp}, namely the bar complex built on top of a DGA structure on the complexes $L^2(A)$ (or  $L^3(A)$),  reveals that the same construction with the hypercover
  \begin{equation*}
    U_{\bullet,\bullet} \to K(A,1)
  \end{equation*}
  would also work in this case, with the difference that in total degree 3 we have the quintuplet used in the paper, namely $(f,\alpha_1,\alpha_2,f_+,g_+)$ \emph{plus} the automorphisms $\alpha$ and $\delta$ as above arising from the decomposition of $\stR$ over $A$ and $A\times A$, supplemented by an additional one, $\epsilon\in M(U_{2,1})$, arising from the descent condition on the pullback of the various biextensions $E_{a,b}$ and their higher analogs.

  To accommodate for these changes, we must form simplicial objects corresponding to the bar complexes $\Bar B_{k,\bullet}(A)$, $k=2,3$, in section~\ref{sec:product-l2a}. That is, for each complex $B(U_{\bullet,n},k)$ above we apply the (normalized) bar construction $\Bar B_k(U_{\bullet,n}) \coloneqq \Bar B(B(U_{\bullet,n},k),\eta)$ to form the augmented simplicial object
  \begin{equation}
    \label{eq:81}
    \xymatrix{%
      \cdots \ar@<1ex>[r] \ar[r] \ar@<-1ex>[r]&
      \Bar B_k(U_{\bullet,1}) \ar@<0.5ex>[r] \ar@<-0.5ex>[r] & \Bar B_k(U_{\bullet,0}) \ar[r]^\xi  & \Bar B_{k,\bullet}(A)\,,
    }
  \end{equation}
  for $k=2,3$. In addition to our notational conventions, we must consider appropriate ones for the multiplicative structures: we consider products $ab=(d_2\xi_2)(d_0\xi_2)$, $abc=(d_2d_3\xi_3) (d_0d_3\xi_3) (d_0d_1\xi_3)$, etc.\ and use the same notation for the corresponding maps at any level in the vertical direction.

  To explain the required steps to obtain the full class in some more detail, consider the biexact bifunctors
  \begin{equation*}
    m_{a,b} \colon \stR_a\times \stR_b \lto \stR_{ab}
  \end{equation*}
  defined over $U_{2,0}$ (again, recall the above convention about naming the components of $\xi_1,\xi_2,\dots$) corresponding to the biextension $E_{a,b}$ introduced in sect.~\ref{sec:decomposition-str}, and their higher arity analogs. In terms of local objects, we have an isomorphism
  \begin{equation*}
    m_{a,b} \colon X_aX_b \lisoto X_{ab}\,,
  \end{equation*}
  which we can safely indicate with the same name, between objects of $\stR_{ab}$. Analogously to the case of the sum operation, $m_{a,b}$ is only compatible with the (horizontal) pullbacks of $\phi\colon d_{1,v}^*X\isoto d_{0,v}^*X$ up to an automorphism $\epsilon\in M(U_{2,1})$. For added clarity, let us specify the labels as $\epsilon(a,b)$. Comparing the three possible pullbacks to $U_{2,2}$ yields the relation
  \begin{equation}
    \label{eq:79}
    d_{0,v}^*\epsilon(a,b) -d_{1,v}^*\epsilon(a,b) +d_{2,v}^*\epsilon(a,b)
    = -a\alpha(b) + \alpha(ab) -\alpha(a)b\,,
  \end{equation}
  where a notation like $\alpha(ab)$ means the pullback of $\alpha\in M(U_{1,2})$ along the analog $\Bar B_k(U_{2,2})\to \Bar B_k(U_{1,2})$ of the map $ab$, as mentioned above.

  The remaining three relations are found by analyzing the behavior of the diagrams~\eqref{eq:65},~\eqref{eq:57} which define the quantities $f(a,b,c)$, $\alpha_1(a,b;c)$, and $\alpha_2(a;b,c)$, under pullback from $U_{3,0}$ to $U_{3,1}$. They can be readily computed by interpreting $f(a,b,c)$, $\alpha_1(a,b;c)$, and $\alpha_2(a;b,c)$ as 2-arrows in the diagrams
  \begin{equation*}
    \xymatrix@C-1.5pc{%
      & \stR_a \times \stR_b\times \stR_c 
      \ar[dl]_{m_{a,b}\times 1} \ar[dr]^{1\times m_{b,c}} \\
      \stR_{ab} \times \stR_c \ar[dr]_{m_{ab,c}} &&
      \stR_a \times \stR_{bc} \ar[dl]^{m_{a,bc}} \\
      & \stR_{abc} \uutwocell<\omit>{f(a,b,c)}
    }
  \end{equation*}
  and
  \begin{equation*}
    \vcenter{%
      \xymatrix@C-1.5pc{%
        & \stR_a \times \stR_b\times \stR_c 
        \ar[dl]_{m_{a,c}\times m_{b,c}} \ar[dr]^{\sigma_{a,b}\times 1} \\
        \stR_{ac} \times \stR_{bc} \ar[dr]_{m_{ac,bc}} &&
        \stR_{a+b} \times \stR_{c} \ar[dl]^{m_{a+b,c}} \\
        & \stR_{(a+b)c} \uutwocell<\omit>{-\alpha_1(a,b;c)}
      }}\,,\qquad
    \vcenter{%
      \xymatrix@C-1.5pc{%
        & \stR_a \times \stR_b\times \stR_c 
        \ar[dl]_{m_{a,b}\times m_{a,c}} \ar[dr]^{1\times \sigma_{b,c}} \\
        \stR_{ab} \times \stR_{ac} \ar[dr]_{m_{ab,ac}} &&
        \stR_a \times \stR_{b+c} \ar[dl]^{m_{a,b+c}} \\
        & \stR_{a(b+c)} \uutwocell<\omit>{\alpha_2(a;b,c)}
      }}\,,
  \end{equation*}
  and then comparing, for each diagram, the two possible pullbacks to $U_{3,1}$. We obtain the following relations:
  \begin{equation}
    \label{eq:80}
    \begin{aligned}
      d_{0,v}^*f(a,b,c) -d_{1,v}^* f(a,b,c)
      &= -a\epsilon(b,c) + \epsilon(ab,c) -\epsilon(a,bc) +\epsilon(a,b)c \\
      d_{0,v}^*\alpha_1(a,b;c) -d_{1,v}^* \alpha_1(a,b;c)
      &= -\delta(ac,bc) + \delta(a,b)c -\epsilon(a,c) + \epsilon(a+b,c) -\epsilon(b,c) \\
      d_{0,v}^*\alpha_2(a,b;c) -d_{1,v}^* \alpha_2(a,b;c)
      &= \delta(ab,ac) - a\delta(b,c) +\epsilon(a,b) -\epsilon(a,b+c) +\epsilon(a,c)\,.
    \end{aligned}
  \end{equation}
  In summary, the relations~\eqref{eq:26}, \eqref{eq:77} and~\eqref{eq:79}, and \eqref{eq:78} and~\eqref{eq:80}, together with the ones found in the main text, sect.~\ref{sec:cohom-class}, over $U_{4,0}$, show the entire collection
  \begin{equation*}
    ((f,\alpha_1,\alpha_2,f_+,g_+);(\delta,-\epsilon),\alpha)
  \end{equation*}
  forms a cocycle of total degree 3 in the double complex obtained from~\eqref{eq:81} for $k=3$ (minus the term in degree $-1$, of course). In particular, the relations~\eqref{eq:78} and~\eqref{eq:80} feature the horizontal differential of $(\delta,-\epsilon)$, which has horizontal degree 2, according to sect.~\ref{sec:bar-construction}. Similarly, the relations~\eqref{eq:77}, \eqref{eq:79} feature the horizontal differential of $\alpha$, with horizontal degree 1.

  %%%%%%%%%%%%%%%%%%%%%%%%%%%%%%%%%%%%%%%%%%%%%%%%%

\phantomsection  % required if using hyperref
\addcontentsline{toc}{section}{References}  
\printbibliography
\end{document}